\documentclass[11pt]{amsart}
\usepackage{amscd,verbatim,amssymb}
\usepackage[colorlinks,linkcolor=blue,citecolor=blue,urlcolor=red]{hyperref}

\newcommand{\Pic}{\operatorname{Pic}}
\newcommand{\NS}{\operatorname{NS}}
\newcommand{\Hom}{\operatorname{Hom}}
\newcommand{\uHom}{\underline{\operatorname{Hom}}}
\newcommand{\End}{\operatorname{End}}
\newcommand{\Ext}{\operatorname{Ext}}

\newcommand{\Ker}{\operatorname{Ker}}
\newcommand{\IM}{\operatorname{Im}}
\newcommand{\Coker}{\operatorname{Coker}}
\newcommand{\Corr}{\operatorname{Corr}}
\newcommand{\oCorr}{\overline{\Corr}}
\newcommand{\oCH}{\operatorname{\overline{CH}}}
\newcommand{\Alb}{\operatorname{Alb}}
\newcommand{\LAlb}{\operatorname{LAlb}}
\newcommand{\RPic}{\operatorname{RPic}}
\newcommand{\Ab}{\operatorname{\mathbf{Ab}}}
\newcommand{\Abs}{\operatorname{\mathbf{AbS}}}
\newcommand{\op}{{\operatorname{o}}}
\newcommand{\cl}{\operatorname{cl}}
\newcommand{\Spec}{\operatorname{Spec}}
\newcommand{\car}{\operatorname{char}}

\newcommand{\gr}{\operatorname{gr}}
\newcommand{\cd}{\operatorname{cd}}
\newcommand{\ab}{{\operatorname{ab}}}

\newcommand{\sA}{\mathcal{A}}
\newcommand{\sB}{\mathcal{B}}
\newcommand{\sC}{\mathcal{C}}
\newcommand{\sE}{\mathcal{E}}
\newcommand{\sF}{\mathcal{F}}
\newcommand{\sH}{\mathcal{H}}
\newcommand{\sM}{\mathcal{M}}

\newcommand{\A}{\mathbf{A}}
\newcommand{\C}{\mathbf{C}}
\newcommand{\G}{\mathbb{G}}
\renewcommand{\L}{\mathbb{L}}
\newcommand{\Q}{\mathbf{Q}}
\newcommand{\Z}{\mathbf{Z}}
\newcommand{\un}{\mathbf{1}}
\newcommand{\by}{\xrightarrow}

\newcommand{\iso}{\by{\sim}}

\newcommand{\inj}{\hookrightarrow}

\newcommand{\surj}{\rightarrow\!\!\!\!\!\rightarrow}
\newcommand{\Surj}{\relbar\joinrel\surj} 
\newcommand{\et}{{\mathrm{\acute{e}t}}}

\newcommand{\cont}{{\mathrm{cont}}}
\newcommand{\tors}{{\mathrm{tors}}}

\renewcommand{\phi}{\varphi}
\renewcommand{\epsilon}{\varepsilon}

\newcounter{spec}
\newenvironment{thlist}{\begin{list}{\rm{(\roman{spec})}}%
{\usecounter{spec}\labelwidth=20pt\itemindent=0pt\labelsep=10pt}}%
{\end{list}}%

\numberwithin{equation}{section}

\newtheorem{thm}{Theorem}[section]
\newtheorem{lemma}[thm]{Lemma}
\newtheorem{prop}[thm]{Proposition}
\newtheorem{cor}[thm]{Corollary}
\theoremstyle{definition}
\newtheorem{defn}[thm]{Definition}
\newtheorem{nota}[thm]{Notation}
\theoremstyle{remark}
\newtheorem{rk}[thm]{Remark}

\newtheorem{qn}[thm]{Question}

\begin{document}

\title{Divided powers on abelian varieties}
\author{Bruno Kahn}
\address{CNRS, Sorbonne Université and Université Paris Cité, IMJ-PRG\\ Case 247\\4 place
Jussieu\\75252 Paris Cedex 05\\France}
\email{bruno.kahn@imj-prg.fr}
\begin{abstract}
We prove the existence of divided powers in étale Chow groups of abelian varieties over a separably closed field, and hence of an integral lift of the Fourier transform, away from the characteristic and up to $2$-torsion. The method is to lift the Deninger-Murre Chow-Künneth projectors to integral ones, and draw consequences. Several techniques used here are new.
\end{abstract}
\keywords{Abelian varieties, étale Chow groups, divided powers}
\subjclass[2020]{14K05, 14C25}
\date{June 5, 2026}
\maketitle

\tableofcontents

\section{Introduction}

Let $A$ be an abelian variety of dimension $g$ over a field $k$. In \cite[Rem. 4.1]{ln}, Hélène Esnault gave an example, with $g=2$, where there exists a divisor class $L\in CH^1(A)$ such that $L^2$ is not divisible by $2$ in the Chow group $CH^2(A)$ and even in its étale version $CH^2_\et(A)$, where
\[CH^i_\et(A):=H^{2i}_\et(A,\Z(i))\] 
(see \cite{cycle-etale} for étale motivic cohomology). This is significant because, for any value of $g$, the degree of $L^g$ is divisible by $g!$ \cite[\S16]{mumford}. We recall her argument in \S \ref{s2} and give a slightly stronger statement, where $CH^2_\et(A)$ is replaced by $H^4_\cont(A,\Z_2(2))$ (continuous étale cohomology).   

In the same negative direction, Philip Engel, Olivier de Gaay Fortman and Stefan Schreieder have proven in \cite[Th. 1.1]{EGS} that, for $k=\C$, $g\ge 4$ and $A$ very general, principally polarised by the divisor $\Theta$, the cohomology class of any cycle of codimension $c\in [2,g-1]$ is an even multiple of $\cl(\Theta)^c/c!$, hence a fortiori that $\Theta^c$ is not divisible by $c!$ in $CH^c(A)$. In \cite{EGS2}, the multiple $2$ is replaced by $6$ for $g=6$.

Suppose $k$ separably closed. If $g=2$, then  $CH^2(A)_{\deg=0}$ is divisible and, moreover, $CH^2(A)\allowbreak\to CH^2_\et(A)$ is an isomorphism  thanks to the exact sequence
\[0\to CH^2(A)\to CH^2_\et(A)\to H^0(A,\sH^3_\et(\Q/\Z(2)))\to 0\]
of \cite[Prop. 2.9]{cycle-etale} and the fact that $H^0(A,\sH^3_\et(\Q/\Z(2)))=0$ since $\cd(k(A))=2$. Thus Esnault's example cannot exist in this case. Similarly, the following proposition shows that the counterexamples of \cite{EGS,EGS2} disappear when we replace Chow groups by étale Chow groups. Here $g$ is arbitrary.

\begin{prop}\label{t1} Let $x\in CH^i_\et(A)$. Then $x^r$ is divisible by $(r!)_p$ in $CH^{ir}_\et(A)$ for any $r\ge 1$, where $p$ is the exponential characteristic of $k$ and $(n)_p$ denotes the largest divisor of an integer $n$ which is prime to $p$.
\end{prop}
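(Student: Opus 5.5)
The plan is to reduce divisibility in $CH^{ir}_\et(A)$ to a vanishing statement in étale cohomology with finite coefficients, and then to use that $H^*_\et(A,\Z_l)$ is an exterior algebra, which has divided powers in even degrees.

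\emph{Step 1 (reduction to cohomology mod $m$).} Let $m$ be prime to $p$. Étale motivic cohomology with $\Z/m$ coefficients is étale cohomology with coefficients in $\mu_m^{\otimes n}$ \cite{cycle-etale}. The triangle $\Z(n)\by{m}\Z(n)\to\Z/m(n)$ therefore gives an exact sequence
\[CH^n_\et(A)\by{m} CH^n_\et(A)\by{\cl_m} H^{2n}_\et(A,\mu_m^{\otimes n}).\]
Hence an element $y\in CH^n_\et(A)$ is divisible by $m$ if and only if $\cl_m(y)=0$. Write $(r!)_p=\prod_{l} l^{a_l}$ over primes $l\neq p$. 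Divisibility by coprime integers $l^{a_l}$ for each $l$ implies divisibility by their product, using a Bézout relation in the group $CH^{ir}_\et(A)$. It therefore suffices to show that $\cl_{l^a}(x^r)=0$ for each prime $l\ne p$, where $l^a$ is the exact power of $l$ dividing $r!$.

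\emph{Step 2 (multiplicativity and $l$-adic lift).} The cycle class maps $\cl_{l^a}$ are ring homomorphisms, since they are induced by the morphism of ring objects $\Z(\ast)\to\Z/l^a(\ast)$. Hence $\cl_{l^a}(x^r)=\cl_{l^a}(x)^r$. The maps $\cl_{l^b}(x)$ for $b\ge1$ are compatible under reduction, so they define a class $c\in H^{2i}_\cont(A,\Z_l(i))$ whose reduction mod $l^a$ is $\cl_{l^a}(x)$. Because $k$ is separably closed, we may trivialise the Tate twists and work with $\Z_l$ coefficients. The cup product then gives $\cl_{l^a}(x)^r$ as the reduction of $c^r$.

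\emph{Step 3 (divided powers in the exterior algebra).} Over a separably closed field, $H^*_\et(A,\Z_l)\cong\Lambda^*_{\Z_l}H^1_\et(A,\Z_l)$ with $H^1$ free of rank $2g$. This ring is torsion free and graded-commutative, so it embeds in $H^*\otimes\Q_l$. An even-degree element $c$ is a sum $\sum_j e_j$ of products of an even number of degree-one generators. Each such $e_j$ has square zero, and the $e_j$ commute with one another. Consequently
\[c^r = r!\sum_{j_1<\dots<j_r} e_{j_1}\cdots e_{j_r},\]
so $c^r=r!\,\gamma_r(c)$ with $\gamma_r(c)$ integral. Reducing mod $l^a$ gives $\cl_{l^a}(x^r)=0$, and Step 1 concludes.

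The main point to verify carefully is Step 1. This requires the identification of $\Z/m(n)$ étale motivic cohomology with $\mu_m^{\otimes n}$ for $m$ prime to $p$, and the compatibility of the Bockstein sequence and cycle class map with products. Both should follow directly from \cite{cycle-etale}. The argument is insensitive to $l=2$, consistent with the absence of a $2$-torsion caveat in the statement.
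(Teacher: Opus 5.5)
Your proof is correct and is essentially the paper's argument. In both, the key input is that $H^*_\cont(A,\Z_l)$ is a torsion-free exterior algebra, so its even part has divided powers and $\cl_l(x^r)=\cl_l(x)^r$ is divisible by $r!$.

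The only difference is the descent step. You use the finite-coefficient exact sequence $CH^n_\et(A)\by{m}CH^n_\et(A)\to H^{2n}_\et(A,\mu_m^{\otimes n})$ to get divisibility by $l^a$ directly. The paper instead applies the integrality lemma (Corollary \ref{c8}, which rests on the divisible kernel and torsion-free cokernel of $\cl_l$ in Proposition \ref{p4a} a)) to $x^r/r!$.
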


The proof, given in \S \ref{s4}, is almost trivial in view of a result of \cite{cycle-etale}; I feel sheepish not to have noticed it at the time. (In fact, \cite[Th. 1.22 and (1.8)]{glr} is already sufficient.)

In Proposition \ref{t1}, we could say that $CH^*_\et(A)$ has ``weak'' divided powers; note that in view of torsion in this ring, they are by no means uniquely determined. 
This raises the issue of strong divided powers, i.e. the existence of functions $\gamma_r:CH^i_\et(A)\to CH^{ir}_\et(A)$ such that, identically
\begin{enumerate}
\item $\gamma_0(x)=1$, $\gamma_1(x)=x$;
\item $\gamma_n(\lambda x) = \lambda^n \gamma_n(x)$ for any $\lambda$;
\item $\gamma_n(x+y) = \sum_{r+s=n} \gamma_r(x)\gamma_s(y)$:
\item $\gamma_m(x)\gamma_n(x) = \binom{m+n}{m} \gamma_{m+n}(x)$;
\item $\gamma_n(\gamma_m(x))= \frac{(mn)!}{(m!)^n n!} \gamma_{mn}(x)$.
\end{enumerate}

It turns out that the solution to this problem is essentially positive --- except that, besides the exponential characteristic, the prime $2$ comes to haunt us: one \emph{cannot} refine Proposition \ref{t1} to obtain such divided powers, see Remark \ref{r1} below. However, the $2$-torsion subgroup ${}_2CH^*_\et(A)[1/p]$ is an ideal in the ring $CH^*_\et(A)[1/p]$; write $\oCH^*_\et(A)$ for the quotient.\footnote{Since the  torsion subgroup of $CH^*_\et(A)[1/p]$ is divisible (Proposition \ref{p4} c)), $CH^*_\et(A)[1/p]$ and $\oCH^*_\et(A)$ are isomorphic as groups but probably not as rings.} Then:

\begin{thm}\label{t5} a) There exists a canonical divided power structure on the ideal $\oCH^{>0}_\et(A)$ of $\oCH^*_\et(A)$.\\
b) If $f:A\to B$ is a homomorphism of abelian varieties, then $\gamma_n(f^*x) = f^*\gamma_n(x)$ for any $x\in  \oCH^{>0}_\et(B)$.
\end{thm}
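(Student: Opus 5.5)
We construct the divided powers through the Beauville decomposition, made integral away from $p$ and $2$-torsion. The abstract says the method is to lift the Deninger--Murre Chow--Künneth projectors to integral ones. So the first step is to produce, in étale Chow correspondences $CH^g_\et(A\times A)[1/p]$, orthogonal idempotents $\pi_j$ summing to the diagonal. They should satisfy $[n]^*\pi_j=n^j\pi_j$ modulo $2$-torsion, or more precisely modulo an ideal killed in $\oCH$.

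I would build them by lifting the rational projectors. The torsion in $CH^*_\et(A)[1/p]$ is divisible (Proposition \ref{p4} c)) and is controlled by $H^*_\et(A,\Q/\Z(*))$. On this group $[n]^*$ acts through the exterior algebra, so it has eigenvalues $n^j$ on $H^j$, and an idempotent-lifting argument should then work. This yields an integral Beauville decomposition
\[\oCH^i_\et(A)=\bigoplus_s \oCH^i_{(s)},\qquad [n]^*=n^{2i-s}\ \text{on}\ \oCH^i_{(s)},\]
compatible with products and with $f^*$ for homomorphisms $f$, since $f$ commutes with $[n]$.

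Next, reduce to homogeneous elements. Write $x=\sum_s x_s$. Axioms (2)--(3) force
\[\gamma_n(x)=\sum_{\sum n_s=n}\prod_s\gamma_{n_s}(x_s),\]
so it suffices to define $\gamma_n$ on each piece $\oCH^i_{(s)}$ with $i>0$ and check the axioms there. For $x\in\oCH^i_{(s)}$ I would like to set $\gamma_n(x)=x^n/n!$. This requires showing that $x^n$ is uniquely divisible by $n!$ in $\oCH^{in}_\et(A)$ up to $p$.

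Existence of the division comes from Proposition \ref{t1}. For uniqueness of the chosen quotient, I would choose it inside the correct eigenspace $\oCH^{in}_{(ns)}$. Torsion elements in a given eigenspace are then detected through $H^*(A,\Q/\Z)$ with eigenvalue $n^{j}$. This is where the prime $2$ enters: a torsion element killed by $n!$ and lying in a fixed eigenspace should vanish once $2$-torsion is discarded. This step is delicate, because a torsion element $t$ with $[m]^*t=m^kt$ need not vanish, so genuine uniqueness fails. The remedy is to define $\gamma_n$ canonically, not as ``the'' quotient. I would use the Fourier transform $\mathcal F$ constructed integrally from the Poincaré class $\ell$ on $A\times\hat A$, with $\exp(\ell)=\sum\ell^k/k!$ obtained via Proposition \ref{t1} on $A\times\hat A$. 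Then I would define
\[\gamma_n(x)=\mathcal F^{-1}\bigl(\text{Pontryagin } n\text{-fold power of }\mathcal F(x)\bigr)/n!.\]
Equivalently, I would use $\Sigma_n$-symmetrisation: take $\gamma_n(x)=\mu_*(x^{\times n})_{\Sigma_n}$ pulled back along the diagonal. The point is that pushforward along $A^n\to A^n/\Sigma_n$ gives a canonical divided power, as with symmetric powers of cycles.

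The axioms then hold rationally. Since $\oCH^*_\et$ modulo torsion embeds into $\oCH^*\otimes\Q$, and the torsion is handled through the eigenspace argument, the identities (1)--(5) hold modulo torsion. They should then hold exactly, provided each side is constructed by the same canonical formula. Part b) follows from naturality: homomorphisms commute with $[n]$, with the projectors $\pi_j$ (via $f^*\pi_j^B=\pi^A_j f^*$ up to the ideal), and with diagonals and products. The main obstacle I expect is the first step: constructing integral, mutually orthogonal projectors. This means controlling the torsion in $CH^g_\et(A\times A)$ well enough to lift idempotents, and this is exactly where $2$-torsion obstructs.
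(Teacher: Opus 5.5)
\paragraph{A gap at the decisive step.} Your overall architecture matches the paper's: integral Chow--K\"unneth projectors, a Beauville decomposition, $\gamma_n$ defined on homogeneous pieces, and extension by $\gamma_T(x)=\prod_s\gamma_T(x_s)$. The proposal breaks exactly where $x^n/n!$ has to be well defined, and the remedy you substitute is circular.

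You build $\sF_A$ from $\exp(\ell)=\sum_k\ell^k/k!$ ``via Proposition \ref{t1}''. That proposition only says $\ell^k$ is divisible by $k!$. The quotient is determined only up to elements of the nonzero group ${}_{k!}CH^k_\et(A\times\hat A)[1/p]$, so $\sF_A$ is not canonically defined. In the paper, $\sF_A=\sum_n\gamma_n(\ell)$ is \emph{deduced} from Theorem \ref{t5}, not used to prove it.

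Moreover, $\sF_A$ interchanges intersection and Pontryagin products, so $\sF_A^{-1}(\sF_A(x)^{*n})=\pm x^n$. Your formula is therefore $\pm x^n/n!$ again, with exactly the same ambiguity. The symmetrisation variant is not a construction either. It uses nothing specific to étale motivic cohomology, so it would equally produce $x^n/n!$ in ordinary Chow groups. That is impossible by the examples of Engel--de Gaay Fortman--Schreieder recalled in the introduction.

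Finally, ``the identities hold modulo torsion, hence exactly'' is not an argument unless you know both sides lie in torsion-free groups.

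\paragraph{The missing idea.} Uniqueness does \emph{not} fail if you use projector images rather than eigenspaces. Your worry is correct for $B_s=\{x\mid n_A^*x=n^{2i-s}x\ \forall n\}$, which contains ${}_{w}CH^i_\et(A)[1/p]$ with $w=w_{2i-s,2i-1}$ (Proposition \ref{p7}). But for $CH^i_{\et,s}(A)=\pi^{2i-s}_A CH^i_\et(A)[1/p]$, Lemma \ref{l8} shows that all torsion sits in the piece $s=1$. The reason is as follows.
\begin{itemize}
\item $n_A^*$ acts on torsion by $n^{2i-1}$ (Proposition \ref{p4} e)), while $2n_A^*=2n^j$ on the $\pi^j_A$-piece.
\item Hence $\pi^j_A(CH^i_\et(A)[1/p]_\tors)$ has finite exponent for $j\ne 2i-1$.
\item It is also divisible, so it is $0$.
\end{itemize}
Thus $CH^i_{\et,s}(A)$ is torsion-free for $s\ne1$ and divisible for $s\ne0$.

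For $x\in CH^i_{\et,s}(A)$ and $n\ge2$, $x^n$ lies in the piece of index $ns\ne1$ (Corollary \ref{c7}, up to a $2$-torsion term if $s=0$). So $x^n/n!$ is uniquely defined:
\begin{itemize}
\item if $s\ne0$, by unique divisibility;
\item if $s=0$, in the torsion-free group $CH^{ni}_{\et,0}(A)$, via Corollary \ref{c8} and divided powers in $H^{2*}(A,\Z_l)$.
\end{itemize}
Passing to $\oCH^*_\et(A)$ uses two facts: $2$-torsion lies in $s=1$, and products of torsion classes vanish (Proposition \ref{p4} d)). Together they give $(x_1+y)^n=x_1^n$ (Proposition \ref{p11}). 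Each identity of degree $\ge2$ then reduces to homogeneous pieces, where both sides lie in a torsion-free summand, so it holds because it holds rationally.

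\paragraph{Canonicity and part b).} These need an ingredient your plan lacks. Lifting the rational projectors (Jannsen's lemma plus a Hochschild $1$-cocycle correction) only gives the weak system of Theorem \ref{t4}. That system satisfies $2n_A^*\pi^i_A=2n^i\pi^i_A$ and has indeterminacy $1+z$ with $4z=0$. For it, the $\gamma_n$ depend on the choice, and the projectors commute with $f^*$ only after multiplying by $4$ (Theorem \ref{t4a}).

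The paper removes this by constructing étale integral DM projectors (Theorem \ref{t3}). This uses an exact computation for elliptic curves, deformation over the connected moduli of principally polarised abelian varieties, and Zarhin's trick. The images of these projectors in $\oCH^g_\et(A\times A)$ are canonical and commute with $f^*$ (Theorem \ref{t7} c), d)). Part b) then follows from the same torsion-freeness argument.
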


A consequence of Theorem \ref{t5} is that the Chern character of any line bundle over $A$ admits a canonical lift to $\oCH^*_\et(A)$. (Here we use the fact that $CH^*(A)\otimes \Q\iso CH^*_\et(A)\otimes \Q$ \cite[Th. 2.6 c)]{cycle-etale}.) As a special case, we get part of the following corollary.

\begin{cor}\label{c12}  The Fourier transform of Beauville \cite{beauville} admits a canonical  lift $\sF_A$ to $\oCH^*_\et(A)$ verifying the  identities of loc. cit., Prop. 3 (i), (ii) and (iii): inversion formula, exchange of intersection product and Pontryagin product and commutation with isogenies. 
\end{cor}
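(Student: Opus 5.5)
The plan is to transport Beauville's rational construction to $\oCH^*_\et(A)$, using Theorem \ref{t5} to make sense of the exponentials involved. Let $P$ be the normalised Poincaré bundle on $A\times \hat A$ and $\ell=c_1(P)\in CH^1_\et(A\times\hat A)$.

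\textbf{Step 1: the kernel.} By Theorem \ref{t5} a), applied to the abelian variety $A\times\hat A$, the element
\[ e^{\ell}:=\sum_{n\ge 0}\gamma_n(\ell)\in \oCH^*_\et(A\times\hat A) \]
is well defined, since the sum is finite. It is a canonical lift of $ch(P)$, because $CH^*\otimes\Q\iso CH^*_\et\otimes\Q$.

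\textbf{Step 2: the transform.} I define
\[ \sF_A(x)=p_{2*}\bigl(p_1^*x\cdot e^{\ell}\bigr). \]
Here I need that pullback, pushforward and products on $CH^*_\et$ preserve the ideal ${}_2CH^*_\et[1/p]$, so that they descend to $\oCH^*_\et$. Pullback and products do so trivially, and pushforward does too, being additive.

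\textbf{Step 3: the identities.} I will redo Beauville's proofs of Prop.~3 in this setting. His arguments use only three kinds of input:
\begin{enumerate}
\item the projection formula and base change, which hold integrally in étale motivic cohomology;
\item the theorem of the square, i.e.\ $(m\times 1)^*\ell=p_{13}^*\ell+p_{23}^*\ell$ on $A\times A\times\hat A$, and $(1\times\phi)^*\ell=(\hat\phi\times 1)^*\ell$ for isogenies $\phi$;
\item the exponential identity $e^{a+b}=e^ae^b$, which is axiom (iii) of divided powers, together with compatibility of $\gamma_n$ with pullback along homomorphisms, which is Theorem \ref{t5} b).
\end{enumerate}
Applying (iii) to the identities in (ii) gives $(m\times1)^*e^{\ell}=p_{13}^*e^{\ell}\cdot p_{23}^*e^{\ell}$. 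The exchange of intersection and Pontryagin products then follows formally, and so does commutation with isogenies.

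\textbf{Step 4: inversion, the main obstacle.} The inversion formula $\sF_{\hat A}\circ\sF_A=(-1)^g(-1)^*$ is where I expect the difficulty. Beauville reduces it to the computation
\[ p_{13*}\bigl(p_{12}^*e^{\ell}\cdot p_{23}^*e^{\hat\ell}\bigr)=(-1)^g[\Gamma_{-1}], \]
which relies on Mukai's sheaf-level result $R\hat\pi_*P=k(0)[-g]$. Integrally, I would argue in two steps:
\begin{itemize}
\item First, rewrite the product of exponentials as $e^{(1\times\mu)^*\ell'}$ for the appropriate homomorphism, using Step 3.
\item Second, compute $p_*e^{\ell}$ over $\hat A$ as $(-1)^g[0]$, via Grothendieck--Riemann--Roch in the form $p_*\gamma_n(\ell)=0$ for $n\ne g$ and $p_*\gamma_g(\ell)=(-1)^g[0]$.
\end{itemize}
Since this equality is a priori only known rationally, the difference is torsion. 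To kill the torsion I would use the divisibility of torsion (Proposition \ref{p4} c)) together with a Chow–Künneth weight argument. Working with the integral Deninger–Murre projectors, multiplication by $n$ acts on the relevant pieces by $n^{j}$, and the identity is compatible with $[n]^*$. Taking $n$ prime to the torsion order should force the torsion discrepancy to vanish in $\oCH^*_\et$.
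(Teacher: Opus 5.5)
Your architecture matches the paper's. You take $\sF_A$ to be the correspondence $e^\ell$ built from the canonical divided powers, and you get (ii) and (iii) from $e^{x+y}=e^xe^y$, the theorem of the square and Theorem \ref{t5} b). Everything then hinges on an integral version of $\pi_*(e^\ell)=\pm[0]$, which is Proposition \ref{p18}. Note in passing that the half $\sF(x\cdot y)=(-1)^g\sF(x)*\sF(y)$ of (ii) needs (i), so it is not yet formal at Step 3.

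The gap is the mechanism you propose in Step 4 for killing the torsion discrepancy $\delta\in\oCH^j_\et$. Combine relations $n^*\delta=n^a\delta$ with the fact that $n^*$ acts as $n^{2j-1}$ on torsion (Proposition \ref{p4} e)). The best this yields is $2\delta=0$. Taking $n$ prime to the order of $\delta$ gives nothing at the prime $2$, since for $n$ odd $n^a-n^{2j-1}$ is even.

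This is not a technicality: $\oCH^j_\et(A)$ contains nonzero torsion satisfying every such relation. For $p\ne 2$ and $1\le j\le g$, the torsion of $CH^j_\et(A)[1/p]$ is divisible and nonzero (Proposition \ref{p4} c)). So there is $x\in{}_4CH^j_\et(A)[1/p]$ not killed by $2$. For every $n$ and every $a\ge 1$, the integer $n^{2j-1}-n^a$ is even, so $n^*x-n^ax=(n^{2j-1}-n^a)x\in{}_2CH^j_\et(A)[1/p]$. Its image $\bar x\neq 0$ in $\oCH^j_\et(A)$ is therefore an eigenvector of every $n^*$ with any eigenvalue you like. Consequently Step 4 proves the inversion formula, and the second half of (ii), only after multiplication by $2$. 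This is exactly what the paper's closing remark says one gets without canonical projectors.

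The missing idea is to use the graded pieces themselves rather than eigenvalues.
\begin{itemize}
\item Since $\ell\in\oCH^1_{\et,0}(A\times\hat A)$, you have $\gamma_i(\ell)\in\oCH^i_{\et,0}(A\times\hat A)$ by construction.
\item For the canonical systems of DM projectors, coming from the étale integral ones of Theorem \ref{t3}, the commutation $f^*\circ\pi^i_B=\pi^i_A\circ f^*$ and its transpose for $f_*$ hold exactly in $\oCorr_\et$ (Lemma \ref{l12} b), Theorem \ref{t7} c), d)). This is an identity of correspondences, strictly stronger than eigenvalue information on individual elements.
\item It gives $\pi_*\gamma_i(\ell)\in\oCH^{i-g}_{\et,0}(A)$. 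This group is torsion-free (Theorem \ref{t7} b)) and also contains $[0]$.
\end{itemize}
Hence Beauville's rational identity lifts, and the rest goes through as you describe. This is precisely where the hard removal of the factor $2$ in Theorem \ref{t3} (deformation to products of elliptic curves) is indispensable.
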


In \S \ref{s11.5}, we show that several identities of Beauville in \cite{beauville} have integral lifts, and use this to give an integral lift of Scholl's formula in \cite{scholl} for the Chow-Künneth projectors of Deninger-Murre when $A$ is principally polarised, and of Suh's formula in \cite{suh} for Jacobians of curves (see below for more on the Deninger-Murre projectors).

\begin{cor}\label{c8a} The augmentation ideal of $\oCH_*^\et(A)$ provided with the Pontryagin product admits a canonical divided power structure.
\end{cor}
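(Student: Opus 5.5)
The plan is to transport the divided power structure of Theorem \ref{t5} through the integral Fourier transform of Corollary \ref{c12}. Beauville's $\sF_A$ exchanges Pontryagin product and intersection product up to sign, and the integral lift $\sF_A$ on $\oCH^*_\et(A)$ satisfies the same exchange formula. Concretely, I would use
\[
\sF_A(x * y) = \sF_A(x)\cdot \sF_A(y)
\quad\text{and}\quad
\sF_A\circ\sF_{\hat A} = (-1)^g [-1]^*.
\]
This gives a ring isomorphism from $(\oCH^\et_*(A),*)$ to $(\oCH^*_\et(\hat A),\cdot)$, up to the harmless sign twist. I would check that $(-1)^g[-1]^*$ is a ring automorphism for both products, so that the twist causes no trouble. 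The product on the right is the intersection product on $\hat A$.

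\textbf{Augmentation ideal.} The next step is to identify the augmentation ideal $I=\Ker(\deg:\oCH_0^\et(A)\to\Z)$, extended by all $\oCH_i^\et(A)$ with $i>0$, with an ideal on the dual side. I would show that $\sF_A(I)$ is contained in $\oCH^{>0}_\et(\hat A)$, that is, that the codimension-$0$ component of $\sF_A(x)$ is $\deg(x)$ times $[\hat A]$. This is the integral lift of the known rational statement. Since $\sF_A$ is built from the Poincaré bundle $e^{\ell}$, only $\ell^0=1$ contributes to codimension $0$, and that contribution is $p_{2*}(p_1^*x)=\deg(x)\cdot 1$ after projecting. Applying the inversion formula then gives equality, $\sF_A(I)=\oCH^{>0}_\et(\hat A)$.

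\textbf{Definition and verification.} I would then define
\[
\gamma_n^*(x):=\sF_A^{-1}\bigl(\gamma_n(\sF_A(x))\bigr) \qquad (x\in I),
\]
using Theorem \ref{t5} a) for $\hat A$. Axioms (1)--(5) transfer formally, because $\sF_A$ is additive, exchanges the products, and sends the unit $[0]$ to $1$. For axiom (1) one needs $\sF_A([0])=1$, which holds since $\ell$ restricted to $0\times\hat A$ is trivial. Canonicity follows because $\sF_A$ and the $\gamma_n$ of Theorem \ref{t5} are canonical.

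\textbf{Main obstacle.} The main difficulty will be getting the exchange formula exactly right integrally, modulo $2$-torsion, in the $\oCH$ quotient. Specifically, I need that $\sF_A$ really is multiplicative from Pontryagin to intersection product on $\oCH$, with no stray signs or factors. If only the opposite direction, from intersection product to Pontryagin product up to $(-1)^g$, is available from Corollary \ref{c12}, I would apply it on $\hat A$ and compose with the inversion formula to deduce the required direction.
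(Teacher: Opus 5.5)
This is essentially the paper's route. By Corollary~\ref{c12}~(i),(ii), $\sF_A$ is a bijective ring homomorphism $(\oCH_*^\et(A),*)\to(\oCH^*_\et(\hat A),\cdot)$ sending $[0]$ to $1$, and the Pontryagin divided powers are obtained by transporting those of Theorem~\ref{t5}; this is how they enter Corollary~\ref{p14}~b).

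One step is misjustified, though it is easily repaired: it is not true that only $\ell^0$ contributes to codimension $0$. For $x\in\oCH^i_\et(A)$ with $d=g-i>0$, the term $p_{2*}p_1^*x$ vanishes. The codimension-$0$ part of $\sF_A(x)$ is instead $p_{2*}(p_1^*x\cdot\gamma_d(\ell))$, and you must show this is $0$. Since $\oCH^0_\et(\hat A)=\Z[1/p]$ is torsion-free, it suffices to check this rationally. There the integer in question is the degree of the restriction of $p_1^*x\cdot\ell^d/d!$ to the fibre $A\times\{0\}$, on which the normalised $\ell$ is trivial, so it vanishes.

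Two minor remarks:
\begin{itemize}
\item No check on $(-1)^g(-1)^*$ is needed, since the inverse of a bijective ring homomorphism is automatically a ring homomorphism. In fact, for odd $g$ this map is not multiplicative, as it sends $1$ to $-1$.
\item $\sF_A$ does not preserve codimension, so you are using the divided powers of Theorem~\ref{t5} on inhomogeneous elements, i.e.\ their extension via $\gamma_T(x+y)=\gamma_T(x)\gamma_T(y)$. Alternatively, you can work on the pieces $\oCH^i_{\et,s}$, which $\sF_A$ exchanges by Corollary~\ref{p14}~a).
\end{itemize}
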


\begin{rk}\label{r1} Corollary \ref{c12} is optimal:  by \cite[Th. 3.11]{mo-po}, one cannot lift its Fourier transform to $CH^*_\et(A)[1/p]$ in general, already when $A$ is an elliptic curve (note that Chow groups and étale Chow groups agree in this case).
\end{rk}

We now come to Chow-Künneth projectors. Here we don't need to use $\oCH^*_\et(A)$. For convenience, here is a definition.

\begin{defn}\label{d4} Let $\sA\to S$ be an abelian scheme of relative dimension $g$, where $S$ is a connected smooth scheme over a field of exponential characteristic $p$. Orthogonal projectors with sum $1$  $(\pi^i)_{0\le i\le 2g}$ in $CH^g(\sA\times_S \sA)$ (resp. in $CH^g_\et(\sA\times_S \sA)[1/p]$) are called \emph{integral} (resp. \emph{étale integral}) Deninger-Murre projectors (DM projectors, for short) if
\begin{itemize}
\item For any $i$ and any prime $l\ne p$, $\cl_l(\pi^i)$ is the $i$-th Künneth projector of $H^*(\sA_{\bar \eta},\Q_l)$, where $\sA_{\bar \eta}$ is the geometric generic fibre of $\sA$; here $\cl_l$ is the $l$-adic cycle class map (resp. the étale $l$-adic cycle class map  of  \cite[\S 3A]{cycle-etale}).
\item For any $i$ and any integer $n$, $n_\sA^*\circ \pi^i=n^i\pi^i$, where $n_A$ is multiplication by $n$ on $\sA$. 
\end{itemize}
\end{defn}

\begin{thm}\label{t3} a) If $A$ is an abelian $k$-variety with $k$ separably closed, the rational DM projectors of \cite{dm} lift to a system of étale integral DM projectors $(\pi^i_A)$. This set is unique up to conjugation by an element of the form $1+x$ with $2x=0$.\\
b) If $B$ is another abelian $k$-variety, then $2\pi^i_A\circ f^* = 2f^*\circ \pi^i_B$ in $CH^{g}_\et(A\times B)[1/p]$ for any homomorphism $f:A\to B$.  Here $g=\dim A$.\\
c) If $A$ is principally polarised in a), we may choose $(\pi^i_A)$ \emph{self-conjugate}, i.e. ${}^t\pi^i_A = \pi^{2g-i}_A$.
\end{thm}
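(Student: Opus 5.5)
We propose to lift the rational projectors by working one prime $l\ne p$ at a time, and to use the structure of $CH^*_\et(A\times A)[1/p]$ provided by \cite{cycle-etale}. Set $R=\End_{CH}(h(A))=CH^g_\et(A\times A)[1/p]$, a ring under composition. Its torsion subgroup $T$ is divisible (Proposition \ref{p4} c)) and is a two-sided ideal, and $R/T$ is torsion-free with $(R/T)\otimes\Q=CH^g(A\times A)_\Q$ \cite[Th. 2.6 c)]{cycle-etale}.

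The first key tool is the action of the multiplications $n_A$. On $R$ we have the operators $x\mapsto n_A^*\circ x$, which commute with one another. Rationally, $CH^g(A\times A)_\Q$ splits as $\bigoplus_i V_i$, where $n_A^*$ acts on $V_i$ by $n^i$ (Beauville). The DM projectors $\pi^i$ are the idempotents of this decomposition, so they lie in the subring fixed by this action.

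To lift them we use the Lagrange-interpolation idempotents in the polynomial algebra generated by $N=2_A^*$ acting on $R$:
\[
e_i=\prod_{j\ne i}\frac{N-2^j}{2^i-2^j}.
\]
The denominators $2^i-2^j$ are not invertible in $\Z[1/p]$. To repair this we combine several $n$ (e.g. $n=2,3,\dots$) prime by prime. For each $l\ne p$ we choose $n$ with $n$ a primitive root modulo a high power of $l$, so that $n^i-n^j$ is an $l$-adic unit for $0\le i\ne j\le 2g$. This works provided $l>2g+1$ or $l$ is odd. For $l=2$ the powers $n^i$ can never be pairwise distinct mod $2$, and this is where the ambiguity "$1+x$ with $2x=0$" appears.

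Concretely, we show that on $R\otimes\Z_{(l)}$ the operator $n_A^*$ satisfies $\prod_i(n_A^*-n^i)=\tau$ with $\tau$ landing in $T$. On $T$ itself we use divisibility and an $l$-adic Hensel/idempotent-lifting argument: idempotents lift uniquely modulo a nil (here, divisible torsion) ideal up to conjugation. This yields integral orthogonal idempotents $\pi^i$ in $R[1/p]$ lifting the DM projectors, with $n_A^*\circ\pi^i=n^i\pi^i$. The cycle class condition then follows from the rational statement and the fact that $\cl_l$ kills no torsion-free information beyond the rational class.

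The step we expect to be the main obstacle is controlling $T$. Idempotent lifting modulo a divisible torsion ideal is not automatic, since $T$ is not nilpotent. We plan to handle it using the fact that $T\cong (\Q/\Z)^{r}$-type groups coming from $H^{2g-1}_\et(A\times A,\Q/\Z(g))$ (via \cite{cycle-etale}) on which $n_A^*$ acts through explicit eigenvalues $n^i$ on the Künneth components of étale cohomology. There the eigen-decomposition can be computed directly. Any two lifts then differ by an inner automorphism by $1+x$ with $x\in T$, and the compatibility with all $n_A^*$ forces $x$ to be killed by $\gcd(n^i-n^j)$, which is $2$ when $l=2$. This gives the uniqueness in a).

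For b), we note that $f^*$ commutes with $n^*$. Hence $\pi^i_A\circ f^*$ and $f^*\circ\pi^i_B$ both lie in the $n^i$-eigenspace. Their difference is therefore a torsion element killed by all $n^i-n^j$, hence by $2$.

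For c), we use the polarisation to identify $A$ with $\hat A$. Transposition sends the $n^i$-eigenspace to the $n^{2g-i}$-eigenspace, since ${}^t(n_A^*)=n_{A*}$ and $n_{A*}n_A^*=n^{2g}$. Thus the family $({}^t\pi^{2g-i})$ is another lift, and it is conjugate to $(\pi^i)$ by some $1+x$. Replacing $\pi^i$ by a "square root" of this conjugation, which is possible since $2x=0$ forces $(1+x)^2=1+x^2$, we expect to obtain a self-conjugate family.
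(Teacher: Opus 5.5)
Your argument for a) has two genuine gaps. First, interpolation in the operators $n_A^*$ cannot produce $l$-integral lifts at any prime $l\le 2g$, odd or not. If $l\nmid n$ then $n^{j+l-1}\equiv n^j \pmod l$, and if $l\mid n$ then $n^j\equiv 0$ for $j\ge 1$. So every $\Z_{(l)}$-linear combination of the $n_A^*$ (for any choice of $n$'s) acts on $H^j_l(A)$ and on $H^{j+l-1}_l(A)$ by scalars congruent mod $l$, whereas the Künneth projector acts by $1$ and $0$. Hence ``$l$ odd'' does not suffice (already $l=3$, $g\ge 2$ fails), and at $l=2$ you get no integral lift even modulo torsion.

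The missing idea is the paper's Step 1. The $l$-adic Künneth projectors are integral because $H^*_\cont(A,\Z_l)$ is a torsion-free exterior algebra with integral Künneth formula and Poincaré duality. Since $\cl_l$ has divisible kernel and torsion-free cokernel (Proposition \ref{p4a} a)), the integrality lemma (Corollary \ref{c8}) makes the DM projectors integral in $R/T$. Lifting to $R$ is then easy, because $T\circ T=0$ (Corollary \ref{c2}): $T$ is square-zero, so it is not the obstacle you identify.

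Second, and more seriously, idempotent lifting only gives $\pi^i_0$ with $n_A^*\circ\pi^i_0-n^i\pi^i_0\in T$. Torsion correspondences shift the Künneth degree by exactly one (Lemma \ref{l6}), so this defect is a $1$-cocycle with values in a bimodule of level $(i,i-1)$. Conjugating by $1+x$ changes it by a coboundary, and Proposition \ref{l7} only shows that its class is killed by $w_{i,i-1}=2$ for $i\ge 2$. That is an a priori obstruction to \emph{existence}, which you conflate with the uniqueness ambiguity and never address. Nothing prime-by-prime can make it vanish. The paper kills it globally: an explicit computation for elliptic curves (Theorem \ref{t6}), stability under products (Lemma \ref{p1}), local constancy of the obstruction classes in families (Proposition \ref{pDM}), connectedness of the moduli of principally polarised abelian varieties, and Zarhin's trick with the converse part of Lemma \ref{p1}. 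The same shift-by-one structure is what makes your uniqueness statement and b) come out with the factor $2$; for non-adjacent degrees, $w_{i,j}$ can have odd prime factors.

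For c), $({}^t\pi^{2g-i})$ is an étale integral DM family only if $\pi^j\circ(n_A)_*=n^{2g-j}\pi^j$ holds integrally. From $(n_A)_*\circ n_A^*=n^{2g}$ you only obtain this after composing on the right with $n_A^*$, and that cannot be cancelled on torsion. The identity follows for $n$ odd or $4\mid n$, but for $n\equiv 2\pmod 4$ a $2$-torsion invariant $g^j(2)$ survives. Its vanishing is exactly the condition for self-conjugate DM projectors to exist (Remark \ref{r3}, Lemma \ref{l17}).

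Your argument uses the polarisation only vacuously (transposition on $A\times A$ needs none), so it would prove c) for every $A$, which the paper leaves open (Question \ref{q6}). The paper instead uses that the standard projectors of elliptic curves are self-conjugate, and deforms within the connected moduli of principally polarised abelian varieties.

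Finally, the ``square root'' step does not work as stated. A $y$ with $2y=x\neq 0$ is not $2$-torsion, so conjugating by $1+y$ leaves the set of DM families. What is needed is ${}^tx=x$ (from Theorem \ref{t4} c)) and a factorisation $1+x={}^t(1+y)(1+y)$ with $2y=0$, built from the Künneth components of $x$.
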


(In b), the factor $2$ is rather unsubstantial: among the correspondences $\pi^j_A\circ f^*\circ \pi^i_B$ for $i\ne j$, the only possibly nonzero ones are for $j=i-1$, and those are $2$-torsion.)

The proof of Theorem \ref{t3} is in two steps. First a weaker statement up to multiplication by $2$,  Theorem \ref{t4}. Then we get rid of this factor $2$ by a deformation argument which rests on the (non trivial!) special case of elliptic curves: Theorem \ref{t6} and Proposition \ref{pDM}. The reader may consult Theorem \ref{t7} for a more complete and cleaner statement in $\oCH^*_\et(A)$.

The following corollary was my initial motivation for this work.

\begin{cor}\label{c4} Let $\Ab$ be the category of abelian $k$-varieties, with morphisms the homomorphisms of abelian varieties. Let $\sM$ be the category of effective Chow motives with integral coefficients \cite{scholl}. Then the additive functor of \cite[Cor. 5.2]{scholl}
\[h^1\otimes \Q:\Ab\otimes \Q\to \sM\otimes \Q\] 
induced by the first Chow-Künneth projectors of \cite{dm} lifts to an additive, fully faithful functor 
\[h^1_\et:\Ab[1/p]\to \sM_\et,\] 
where $\sM_\et$ is the category of effective étale Chow motives (see \S \ref{s1}).
\end{cor}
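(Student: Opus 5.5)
We define $h^1_\et(A):=(A,\pi^1_A)$ in $\sM_\et$, using the étale integral DM projector $\pi^1_A$ of Theorem \ref{t3} a). On a homomorphism $f:A\to B$ we want the morphism $h^1_\et(B)\to h^1_\et(A)$ (or the covariant version, depending on the convention of \cite{scholl}) to be the correspondence $\pi^1_A\circ f^*\circ \pi^1_B$. We then have to check four things: that this is well defined up to the ambiguity in Theorem \ref{t3} a), that it is functorial, that it is additive, and that it is fully faithful.

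\textbf{Independence of choices.} Theorem \ref{t3} a) only fixes $(\pi^i_A)$ up to conjugation by some $1+x$ with $2x=0$. Conjugating gives canonically isomorphic objects, via the isomorphism induced by $\pi^1\circ(1+x)\circ\pi^1$. So the functor is well defined up to unique isomorphism, provided the morphisms are compatible with these identifications. A cleaner alternative is to fix one choice of $(\pi^i_A)$ for each $A$.

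\textbf{Functoriality.} This is the key point. We must show
\[\pi^1_A f^*\pi^1_B\,g^*\pi^1_C=\pi^1_A f^*g^*\pi^1_C.\]
Theorem \ref{t3} b) gives $\pi^1_A f^*=f^*\pi^1_B$ only after multiplying by $2$. The remark after it gives more: the off-diagonal terms $\pi^j_A f^*\pi^i_B$ vanish unless $j=i-1$, and those are $2$-torsion. Insert $1=\sum_i\pi^i_B$ between $f^*$ and $g^*$. The defect is then
\[\sum_{i\ne1}\pi^1_A f^*\pi^i_B\, g^*\pi^1_C .\]
A term survives only if $1=i-1$ and $i=1-1$, which is impossible. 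Hence $\pi^1_A f^*\pi^i_B g^*\pi^1_C=0$ for every $i\ne 1$, and functoriality follows.

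\textbf{Additivity.} I would use the standard fact that $(f+g)^*$ acts on $h^1$ additively. Rationally this is Scholl's argument through the relation with the group law $m:A\times A\to A$: one has $(f+g)^*=\Delta^*(f\times g)^*m^*$, and $\pi^1$ kills the $h^0$ and $h^{\ge2}$ parts of the Künneth decomposition of $m^*$. Integrally, I would reproduce this by computing $m^*\circ\pi^1_A$ in terms of the projectors of $A\times A$. Here one needs $\pi^1_{A\times A}=p_1^*\pi^1_Ap_{1*}+p_2^*\pi^1_Ap_{2*}$, which I expect follows from the uniqueness up to $2$-torsion conjugation together with the same degree bookkeeping as above.

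\textbf{Full faithfulness.} This is where I expect the real work to be. We need
\[\Hom_{\sM_\et}(h^1_\et(B),h^1_\et(A))=\pi^1_A\, CH^g_\et(A\times B)[1/p]\,\pi^1_B\cong\Hom(A,B)[1/p].\]
Rationally this is the Deninger–Murre/Scholl theorem. The plan is:

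1. Identify $\pi^1_A CH^*_\et(A\times B)\pi^1_B$ with a subgroup of $CH^1_\et(A\times B)$ of divisorial correspondences, using the weight property $n^*\pi^i=n^i\pi^i$ of Definition \ref{d4}.
2. Identify these with $\Hom(A,B^\vee{}^\vee)=\Hom(A,B)$ via $\Pic$, with $CH^1_\et=\Pic$.
3. Control torsion, using the divisibility of torsion in Proposition \ref{p4} c).

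Faithfulness follows because $\cl_l(\pi^1_A f^*\pi^1_B)$ recovers $T_l(f)$, which is injective on $\Hom(A,B)\otimes\Z_l$ for all $l\ne p$.

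Fullness is the main obstacle. A correspondence $\alpha=\pi^1_A\alpha\pi^1_B$ is rationally of the form $f^*$ with $f\in\Hom(A,B)\otimes\Q$. I would show that $f$ is integral away from $p$ by checking that its $l$-adic realisation preserves $T_l$. Then $\alpha-f^*$ is torsion in $CH^*_\et[1/p]$ and fixed by $\pi^1$ on both sides. The remaining step is to prove that such torsion vanishes, using the weight argument: multiplication by $n$ acts on it both as $n$ (through $\pi^1_B$) and compatibly with $n^*$ on the torsion of $CH^g_\et$. Showing that this torsion vanishes is the step I would attack first.
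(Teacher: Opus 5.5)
Your outline is correct in substance. The functoriality step and the main line of the fullness argument work. For full faithfulness you take a genuinely different route from the paper, and two steps are left open; both are closed by a result already in the paper.

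\textbf{Functoriality.} Your argument is correct as written. For $i\ne 1$ the term $(\pi^1_A f^*\pi^i_B)\circ(\pi^i_B g^*\pi^1_C)$ could be nonzero only if $i=2$ and $i=0$ simultaneously. The paper instead rewrites the defect through $c_{2,1}-c_{1,0}$ and $d_{3,2}-d_{2,1}$ and kills it with Corollary~\ref{c2}.

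\textbf{The two open steps.} You leave open additivity and the vanishing of the torsion difference $\alpha-h^1_\et(f)$. Both follow from Proposition~\ref{p10} with $i=j=1$, which says that $\sM_\et(h^1_\et(B),h^1_\et(A))$ is torsion-free. Indeed, if $x=\pi^1_A x\pi^1_B$ is torsion, Lemma~\ref{l6} gives $x\pi^1_B=\pi^0_A x$, hence $x=\pi^1_A\pi^0_A x=0$. This is the precise form of your weight argument: $n_A^*$ acts on $x$ both by $n$ and by $n^0=1$.
\begin{itemize}
\item Additivity then needs no formula for $\pi^1_{A\times A}$. The correspondence $\pi^1_A\bigl((f+g)^*-f^*-g^*\bigr)\pi^1_B$ vanishes rationally by Scholl, so it is a torsion element of a torsion-free group, hence zero. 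Your $m^*$ route, as sketched, rests on an unverified expectation.
\item Drop your steps 1--2, which identify $\pi^1_A CH^g_\et\pi^1_B$ with divisorial correspondences. Your concrete fullness argument never uses them, and they are not justified integrally.
\item In the integrality step, say explicitly why $l$-integrality of the realisation forces $f\in\Hom(A,B)\otimes\Z_{(l)}$. If $l^r f$ is a homomorphism whose action on $T_l$ is divisible by $l^r$, it kills $A[l^r]$ and so factors through $l^r_A$.
\end{itemize}

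\textbf{Comparison with the paper.} The paper obtains full faithfulness (Corollary~\ref{c11}) from the adjunction of Theorem~\ref{tA1}. There $h^1_\et$ is the restriction of a fully faithful left adjoint to an enriched Picard/Albanese functor. The key identification is $\sM_\et(h^1_\et(A),h_\et(X))\cong\Abs(\widetilde{\Alb}_X,A)[1/p]$ for every smooth projective $X$, proved as follows:
\begin{itemize}
\item torsion is controlled by Proposition~\ref{p13};
\item the cokernel is shown torsion-free via $\cl_l$;
\item Jacobians are handled by Lemma~\ref{l14};
\item a general $A$ is reached as an almost direct summand of a Jacobian.
\end{itemize}
Your route is shorter, but it takes Scholl's rational full faithfulness as an input, and that rests on a hard Lefschetz argument. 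The paper's route avoids hard Lefschetz, recovers Scholl's rational statement, and gives the stronger universal property against all $h_\et(X)$, not only $h^1_\et(B)$.
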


I doubt that this functor lifts to a functor $\Ab\to \sM$, although I don't have a counterexample.

\begin{rk} The proof of the full faithfulness of $h^1_\et$ rests on an adjunction statement closely related to the functors $\LAlb$ and $\RPic$ of \cite{bvk}: Theorem \ref{tA1}. This recovers the full faithfulness of $h^1\otimes \Q$ \cite[Cor. 5.10]{scholl}\footnote{In the proof of loc. cit., the 4th line of the computation should be dropped.} with a completely different proof, which does not involve a hard Lefschetz isomorphism theorem.
\end{rk}

While in \cite{dm} the existence of the Chow-Künneth projectors is deduced from that of the Fourier transform, here we proceed in the opposite way, proving first Theorem \ref{t3}, deducing Theorem \ref{t5} and finally Corollary \ref{c12}. Contrary to my initial expectation, Proposition \ref{t1} is not used.

\subsection*{Abelian schemes}  In \S \ref{s9}, we extend the above story to abelian schemes $\sA$ over a smooth base $S$, up to losing some torsion. In the first version of this paper, I had claimed that ``all the above results hold'' after inverting a suitable integer. Not only was the proof totally insufficient, but such a claim cannot be true since divided powers have no reason to exist in general, e.g. if $\sA=S$. The correct version is this:

\enlargethispage*{20pt}

\begin{thm}\label{c1} Let $S$ be a smooth scheme over a field $k$, and let $\sA$ be an abelian $S$-scheme of relative dimension $g$. Let
\[M= \inf(2g,\cd(S)+2).\] 
Then\\
a) There is a unique set $(\pi^i_\sA)_{0\le i\le 2g}$ of DM projectors in $CH^g_\et(\sA\times_S \sA)[1/M!p]$. To just get Chow-Künneth projectors (without requiring the DM condition), it suffices to invert $N!p$ where
\[N=  \inf(2g,\cd(S)).\]
b) Let $\xi\in CH^1_\et(\sA)$ be the class of a relatively symmetric line bundle (i.e. $(-1)_\sA^*\xi = \xi$). Then, for any $i\ge 1$, there exists a unique element $\gamma_i(\xi)\in \pi^{2i}_\sA CH^i_\et(\sA)[1/M!p]$ such that $i!\gamma_i(\xi)=\xi^i$. The divided power identities (1) -- (4) hold.\\
c) In $CH^g_\et(\sA\times_S \hat{\sA})[1/M!p]$, there exists an integral version of the Deninger-Murre Fourier transform enjoying the properties of \cite[Cor. 2.22]{dm}.
\end{thm}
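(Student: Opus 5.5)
The approach is to use the eigenspace decomposition for the action of $n_\sA^*$ on étale motivic cohomology of $\sA\times_S\sA$, and to control denominators through the Leray spectral sequence for $\sA\times_S\sA\to S$ together with the cohomological dimension of $S$.

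\emph{Step 1: the group is killed by a product of differences $n^a-n^b$.} On the relative cohomology sheaves $R^j f_*\Z_l$ of $f:\sA\times_S\sA\to S$, the endomorphism $(1\times n)^*$ acts by $n^j$ in the second variable. The Leray spectral sequence for étale motivic cohomology (with $\Z/l^\nu$ coefficients, then passing to the limit) has $E_2^{a,b}$ vanishing for $a>\cd(S)$. Hence $H^{2g}_\et(\sA\times_S\sA,\Z(g))[1/p]$ carries a filtration of length at most $\cd(S)+1$. On each graded piece, $(1\times n)^*$ acts through a single weight after reduction; on the divisible, rational part it acts semisimply by the rational Deninger–Murre theory. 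Consequently the operator
\[\prod_{i}\bigl((1\times n)^*-n^i\bigr)\]
is nilpotent, with nilpotence controlled by the length of the filtration.

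\emph{Step 2: construct the projectors.} Take $n=2$ (or a family of $n$). Write $\pi^i$ as the Lagrange interpolation polynomial
\[\prod_{j\ne i}\frac{(1\times n)^*-n^j}{n^i-n^j},\]
corrected by the standard idempotent-lifting procedure for a nilpotent error term. The denominators appearing are products of $n^i-n^j$ over $|i-j|\le$ the relevant range. Only weights within distance $\cd(S)+2$ can interact through the filtration: the correction terms involve Leray degrees up to $\cd(S)$, and the passage from $\Z_l$ to motivic coefficients uses the Hochschild–Serre/Bloch–Kato input of \cite{cycle-etale}, which costs $2$ more. Choosing the $n$ coprime to $l$, and using that $\gcd_n(n^m-1)$ divides $m!$-type numbers (Bernoulli/von Staudt denominators), the primes needed are those dividing $M!$. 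This gives the DM projectors, which are unique because idempotents that are polynomials in a fixed operator are unique. Uniqueness here is cleaner than in Theorem \ref{t3} because $2$ is inverted whenever $M\ge2$. If the DM condition is dropped, the Künneth condition alone only needs the Leray range, giving $N!$. Orthogonality and the sum-to-one condition follow formally, and the cycle class condition is checked generically via Theorem \ref{t3}.

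\emph{Step 3: divided powers and Fourier transform.} For b), because $\xi$ is symmetric, $n^*\xi=n^2\xi$, so $\xi^i\in\pi^{2i}CH^i_\et$. Since $\cl$ commutes with the projectors, we set $\gamma_i(\xi)=\xi^i/i!$, which is legitimate because $i\le g\le M$ whenever $\pi^{2i}\ne0$. Identities (1)–(4) then follow from the corresponding identities after inverting $M!$. Uniqueness holds because $\pi^{2i}CH^i_\et[1/M!p]$ has no $i!$-torsion: the torsion there is killed by $n^{2i}-n^{j}$ type factors. For c), we define
\[\sF=\pi\text{-component of }\exp(\ell)=\sum_i\gamma_i(\ell),\]
where $\ell$ is the Poincaré class, normalised to be symmetric. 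We then verify Deninger–Murre's identities integrally by the same eigenvalue argument, since each identity holds rationally and both sides lie in eigenspaces with no torsion after inverting $M!p$.

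\emph{Main obstacle.} The hard step is Step 1/2: getting the precise bound $M=\inf(2g,\cd(S)+2)$ requires controlling exactly which weight-differences $n^a-n^b$ appear in the nilpotent part, which in turn depends on the exact length of the Leray filtration on étale motivic cohomology. I would first try to prove the bound using finite coefficients and the vanishing of $H^a(S,-)$ for $a>\cd(S)$.
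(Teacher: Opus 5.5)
\textbf{Step 2 cannot reach the stated bound.} Anything obtained by Lagrange interpolation in the operators $n_\sA^*$, followed by idempotent lifting, lies in the commutative algebra generated by the $n_\sA^*$. Its $l$-adic class therefore acts on each $H^i$ of the geometric generic fibre by a $\Z_l$-combination of the scalars $n^i$. By Fermat, $n^l\equiv n \pmod l$ for every integer $n$. So for any prime $l\le 2g$, such an element acts on $H^1$ and on $H^l$ by scalars congruent mod $l$. Hence the Künneth projector onto $H^1$ is never of this form $l$-integrally, and the interpolation denominators $n-n^l$ are never $l$-units. Concretely, for an abelian surface over a separably closed field $M=2$, so the theorem produces $\pi^1$ $3$-integrally, whereas $n^3-n$ is divisible by $3$ for every $n$.

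Your route therefore inverts at least all primes $\le 2g$. That agrees with $M!$ only when $\cd(S)\ge 2g-2$, and the Chow-Künneth claim with $N!$ (e.g.\ $N=0$ for $S$ a separably closed point) is out of reach. The claims that only weights ``within distance $\cd(S)+2$'' interact, and that passing to motivic coefficients ``costs $2$ more'', do not correspond to an actual mechanism. Moreover, interpolation plus idempotent lifting only makes $(n^*_\sA-n^i)\pi^i$ nilpotent, not zero, so ``this gives the DM projectors'' needs a further argument in any case.

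The missing idea is to start from integrality in cohomology rather than from the $n^*$. The $l$-adic Künneth projectors are integral because $H^*(A_{\bar\eta},\Z_l)$ is a torsion-free exterior algebra with integral Künneth formula and Poincaré duality. The paper proceeds as follows:
\begin{itemize}
\item It lifts these projectors through the Leray edge map, which on the centraliser of the $n^*_\sA$ becomes bijective after inverting $N(2g)\sim\inf(2g,\cd(S))!$ (Proposition \ref{p22}).
\item It lifts further to $CH^g_\et$ by the integrality lemma (Corollary \ref{c8}).
\item It obtains genuine orthogonal idempotents because the torsion ideal is nilpotent (Lemma \ref{l10}). This needs Corollary \ref{c2} over a separable closure of $k(S)$, which your Step 1 does not supply.
\item Only then is the DM condition imposed, by killing a Hochschild $1$-cocycle on each Leray graded piece of the torsion. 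There the left and right weights are $i-a-1$ and $i$, so the obstruction is killed by $w_{i,i-a-1}$. Its prime divisors satisfy $l-1\mid a+1$ with $a\le\cd(S)$; via Lemma \ref{l2}, this is the true source of $\cd(S)+2$.
\end{itemize}
Uniqueness is proved the same way (conjugacy by $1+z$ with $z$ torsion, and induction on the filtration), not by interpolation.

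\textbf{Part b) has a similar error, and c) needs care.} Your justification ``$i\le g\le M$'' is false, since $M=\inf(2g,\cd(S)+2)$ can be $2$. So $i!$ is not inverted, and the divisibility of $\xi^i$ by $i!$ is the real content. The paper gets it from four ingredients: the divided power structure on the torsion-free ring $\bigoplus_i H^0(S,R^{2i}\pi_*\hat\Z(i))$; the identification of $\pi^{2i}_\sA H^{2i}_\cont(\sA,\hat\Z(i))[1/M!p]$ with $H^0(S,R^{2i}\pi_*\hat\Z(i))[1/M!p]$; the integrality lemma; and the torsion-freeness of $\pi^{2i}_\sA CH^i_\et(\sA)[1/M!p]$ (Lemma \ref{l13}). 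In c), $\sA\times_S\hat\sA$ has relative dimension $2g$. To keep $M$ rather than $\inf(4g,\cd(S)+2)$, you must use product projectors as in Remark \ref{r7} rather than reapplying a) to $\sA\times_S\hat\sA$.
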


Here $\cd(S)$ is the étale cohomological dimension of $S$; recall that $\cd(S) \le \cd(k)+2\dim S$, and even $\cd(S) \le \cd(k)+\dim S$ if $S$ is affine. 

A similar bound was obtained  in \cite[Th. 5.5]{moonenetal} for ordinary Chow groups, where $M$ is replaced by $2g+\dim S+1$ (there are more integral results in §4 of this paper). The proofs are completely different: those of \cite{moonenetal} rest essentially on Pappas' integral refinement of the Riemann-Roch theorem \cite{pappas}, while the present proofs rest on the excellent control one has on the torsion subgroup of $CH^*_\et(\sA)[1/p]$, see \S\S \ref{s1}, \ref{s3} and \ref{s12.1}.

\subsection*{Algebraic and étale  cohomology classes} We come back to the case of a separably closed base field $k$. Call an $l$-adic cohomology class of a smooth $k$-variety $X$ \emph{algebraic} (resp. \emph{étale}) if it is in the image of the $l$-adic cycle class map (resp. of the étale $l$-adic cycle class map). If $k\subseteq \C$, both cycle class maps admit refinements to maps to Betti cohomology (in the étale case, thanks to \cite[Déf. 2.4]{ayoub-betti}), and we can talk of algebraic and étale  classes in Betti cohomology. The subgroups of algebraic and étale  classes coincide rationally and the latter is pure in $H^*(X)$ (see Proposition \ref{p4a} a) below); therefore, when $X$ is projective, it is contained in the subgroup of Hodge or Tate classes. This readily shows that the (rational) Hodge or Tate conjectures are equivalent to their integral versions involving étale  classes (see \cite{ro-sri} for developments of this remark). When the latter conjectures are not known, étale  classes thus allow us to reformulate the issue of an ``integral Hodge or Tate conjecture'' in the following unconditional form:

\enlargethispage*{20pt}

\begin{qn} When are étale  classes algebraic?
\end{qn}

In this spirit, the results of \cite{EGS,EGS2} mentioned at the beginning of this introduction show that, in general, algebraic classes are not stable under divided powers. By contrast, the previous results show:

\begin{cor}\label{c3} For abelian varieties, étale  classes are stable under divided powers and the Fourier transform. The Künneth projectors are étale.  The groups of étale  classes of $A$ and its dual $\hat{A}$ are isomorphic.
\qed
\end{cor}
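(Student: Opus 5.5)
We deduce Corollary \ref{c3} from the integral results above, passing to $l$-adic (or Betti) cohomology through the étale cycle class map $\cl_l$. The étale classes of $A$ form the image of $\cl_l:CH^*_\et(A)\otimes\Z_l\to H^{2*}(A,\Z_l(*))$. First we check that this map factors through $\oCH^*_\et(A)$. The target is torsion-free, since $H^*(A,\Z_l)$ is an exterior algebra on a free module. Hence $\cl_l$ kills all torsion, in particular ${}_2CH^*_\et(A)[1/p]$, and the image of $\oCH^*_\et(A)$ is exactly the group of étale classes. Moreover $\cl_l$ is a ring homomorphism. The same remarks apply to the Betti refinement of \cite{ayoub-betti} when $k\subseteq\C$.

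For stability under divided powers, let $c=\cl_l(x)$ be an étale class in positive degree, with $x\in\oCH^{>0}_\et(A)$. Then $\cl_l(\gamma_n(x))$ is étale, and
\[n!\,\cl_l(\gamma_n(x))=\cl_l(x^n)=c^n.\]
Since the cohomology is torsion-free, $\cl_l(\gamma_n(x))$ is the unique class $c^n/n!$. So the divided powers of $c$ exist in cohomology and are étale; this is Theorem \ref{t5} a).

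For the Fourier transform, we use the canonical lift $\sF_A$ of Corollary \ref{c12}. It is a correspondence $\oCH^*_\et(A)\to\oCH^*_\et(\hat A)$, given by the divided-power exponential of the Poincaré class in $\oCH^*_\et(A\times\hat A)$. Its cohomology class is the topological Fourier transform $\exp(\cl(P))$, which is integral on the exterior algebra. Compatibility of $\cl_l$ with pull-back, push-forward and products then shows that the cohomological Fourier transform maps étale classes of $A$ into étale classes of $\hat A$. By symmetry, and by the inversion formula of Corollary \ref{c12} (which holds up to sign and $(-1)^*$, and these preserve étale classes), it also maps étale classes of $\hat A$ back into those of $A$. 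Since the cohomological Fourier transform is an isomorphism of integral cohomology, these two inclusions give an isomorphism between the groups of étale classes of $A$ and of $\hat A$.

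For the Künneth projectors, Theorem \ref{t3} a) gives étale integral DM projectors $\pi^i_A\in CH^g_\et(A\times A)[1/p]$. By Definition \ref{d4}, $\cl_l(\pi^i_A)$ is the $i$-th Künneth projector; it is integral, so the Künneth projectors are étale.

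The only delicate point is the Fourier transform step: we must verify that the cycle class of the lifted $\sF_A$ is the integral topological Fourier transform. We would do this by using $n!\,\gamma_n=(\cdot)^n$ together with torsion-freeness, as in the divided-power step, and then invoking Beauville's rational identity.
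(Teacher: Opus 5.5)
Your proposal is correct and matches the paper's intended argument, which it leaves as an immediate \qed: apply the étale cycle class map to Theorem \ref{t5}, Corollary \ref{c12} and Theorem \ref{t3} a). Torsion-freeness of $\hat H^*(A)$ gives $\cl(\gamma_n(x))=\cl(x)^n/n!$ and $\cl(\sF_A)=\exp(\cl(\ell))$, and the inversion formula then makes the transform a bijection between the étale classes of $A$ and those of $\hat A$. Two small tidy-ups: étale $l$-adic classes are the $\Z_l$-span of $\cl_l(\oCH^*_\et(A))$, so you should use identities (2)--(3) in cohomology to pass divided powers to $\Z_l$-combinations, and the appeal to Beauville's rational identity in your last paragraph is unnecessary.
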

 
 To conclude, these results confirm that étale Chow groups are a receptacle in which (basically) ``everything goes well'', in contrast to ordinary Chow groups: étalification in some sense removes the asperities of torsion in the latter.\footnote{For example, the ``étale Griffiths group'' is divisible by Proposition \ref{p4a} a), contrary to the classical one in characteristic $0$ \cite{bl-es,schoen}.} These asperities, on the other hand, are e.g. sources of counterexamples to rationality, as for example in \cite[Cor. 1.4]{EGS}. My viewpoint is that both theories are relevant and one should play with them according to needs. See \S\S \ref{sB.6} and \ref{sB.7} for possible interactions of the present ideas with ordinary Chow groups.

\subsection*{Acknowledgement} I thank Olivier Benoist for pointing out the preprint \cite{EGS2}, and Alexei Skorobogatov for a helpful discussion.

\section{Étale motivic cohomology of abelian varieties}\label{s1}

Let $k$ be a field of exponential characteristic $p$, and take a prime number $l$, possibly equal to $p$. Recall

\begin{prop}[{\cite[Cor. 3.5]{cycle-etale} and \cite[Prop. 1]{indec}}]\label{p4a} Let $X$ be a smooth $k$-variety. Let $(i,j)\in \Z$.\\
a)  The étale cycle class map of  \cite[\S 3A]{cycle-etale}
\[\cl_l:H^j_\et(X,\Z(i))\otimes \Z_l\to H^j_\cont(X,\Z_l(i))\]
has divisible kernel and torsion-free cokernel. Here  the target is Jannsen's continuous étale cohomology \cite{jannsen} if $l\ne p$ and logarithmic Hodge-Witt cohomology if $l=p$.\\
b) Suppose $k$ algebraically closed and $X$  projective. For  $j\ne 2i$, $H^j_\et(X,\Z(i))\otimes \Z_{(l)}$ is a direct sum of a group of finite exponent and a divisible group; in particular, $H^j_\et(X,\Z(i))\otimes \Q_l/\Z_l=0$ and the image of $\cl_l$ is finite if $l\ne p$.
\end{prop}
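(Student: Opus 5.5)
This proposition is quoted from \cite[Cor. 3.5]{cycle-etale} and \cite[Prop. 1]{indec}. The plan is to recover it from the structure of étale motivic cohomology with finite coefficients.

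For a), the starting point is that $\Z(i)_\et$ with $\Z/l^n$ coefficients is quasi-isomorphic to $\mu_{l^n}^{\otimes i}$ when $l\ne p$ (Suslin--Voevodsky rigidity plus étale sheafification), and to $\nu_n(i)[-i]$ when $l=p$ (Geisser--Levine). The coefficient sequences $0\to\Z(i)\by{l^n}\Z(i)\to\Z/l^n(i)\to 0$ then give short exact sequences
\[0\to H^j_\et(X,\Z(i))/l^n\to H^j_\et(X,\Z/l^n(i))\to {}_{l^n}H^{j+1}_\et(X,\Z(i))\to 0.\]
Taking $\varprojlim_n$ and comparing with Jannsen's continuous cohomology $H^j_\cont(X,\Z_l(i))$, which sits in a Milnor sequence with $\varprojlim^1 H^{j-1}(X,\Z/l^n(i))$, I would identify $\cl_l$ with the composite
\[H^j_\et(X,\Z(i))\otimes\Z_l\to \varprojlim_n H^j_\et(X,\Z(i))/l^n\to H^j_\cont(X,\Z_l(i)).\]
This identification should follow from the $\lim^1$ vanishing of the Mittag-Leffler-type system $H^{j-1}/l^n$ together with a diagram chase. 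I then read off the two conditions separately.

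\emph{Kernel.} The kernel of $G\otimes\Z_l\to\varprojlim G/l^n$ is the group of $l$-divisible elements. Here one needs the kernel to be genuinely divisible, not just $l$-adically divisible. This is where I would use that étale motivic cohomology satisfies a Hochschild--Serre/continuity property with respect to the base field, which reduces to finitely generated situations where divisible and $l$-divisible agree.

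\emph{Cokernel.} The cokernel is a quotient of $T_l H^{j+1}_\et(X,\Z(i))$, the Tate module, which is torsion-free. This is the main point of a), and I expect the argument above to handle it.

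For b), I would use the Weil conjectures together with Deligne's purity. Reduce first to $X$ and its cycles defined over a finitely generated field, using a specialization and spreading argument; $k$ algebraically closed is used here. The torsion of $H^j_\et(X,\Z(i))\otimes\Z_{(l)}$ is then a quotient of $H^{j-1}(X,\Q_l/\Z_l(i))$. For $j\ne 2i$, a weight argument shows that its divisible part maps to zero in $T_l H^{j}$, and that the image in $H^j(X,\Z_l(i))$ is torsion, hence finite. This gives $H^j_\et\otimes\Q_l/\Z_l=0$ and the finiteness of the image of $\cl_l$.

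The expected main obstacle is the weight argument over a non-finitely-generated algebraically closed $k$. It requires knowing that the cohomology is a colimit over finitely generated models, with compatible Frobenius weights, and controlling $\varprojlim^1$ terms. I would try first to follow \cite{indec}: show the group is an extension of a finite-exponent group by a divisible one via the structure of $\Q_l/\Z_l$-cohomology modulo its maximal divisible subgroup.
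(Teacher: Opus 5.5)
Your argument for a) has a genuine gap, and b) inherits it. (The paper gives no proof of its own here; it quotes \cite{cycle-etale} and \cite{indec}.)

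\textbf{The factorisation through $\varprojlim_n G/l^n$ does not exist.} Write $G=H^j_\et(X,\Z(i))$. The $\varprojlim^1$ term in Jannsen's sequence
\[0\to \varprojlim^1_n H^{j-1}_\et(X,\Z/l^n(i))\to H^j_\cont(X,\Z_l(i))\to \varprojlim_n H^j_\et(X,\Z/l^n(i))\to 0\]
is not $\varprojlim^1$ of the surjective system $H^{j-1}/l^n$. By your own coefficient sequences it is isomorphic to $\varprojlim^1_n {}_{l^n}G$, with transition maps multiplication by $l$. This is nonzero in general, for instance for $\bigoplus_n \Z/l^n$. So there is no map $\varprojlim_n G/l^n\to H^j_\cont(X,\Z_l(i))$ through which $\cl_l$ factors, and both your kernel and cokernel computations rest on it.

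\textbf{The kernel.} The group $\bigcap_n l^n(G\otimes\Z_l)$ need not be divisible, as you noticed. The proposed repair, reducing by continuity to ``finitely generated situations where divisible and $l$-divisible agree'', is not an argument. Étale motivic cohomology over a finitely generated field is far from finitely generated (already $H^1_\et(k,\Z(1))=k^*$), and nothing forces the two notions to coincide.

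\textbf{The cokernel.} ``A quotient of $T_lH^{j+1}$, which is torsion-free'' is a non sequitur, since quotients of torsion-free groups can have torsion. The direction is also reversed: $T_lH^{j+1}_\et(X,\Z(i))$ is a quotient of $\Coker\cl_l$. The remaining piece, which comes from the discrepancy between $G\otimes\Z_l$, $\varprojlim_n G/l^n$ and the $\varprojlim^1$ term, is left uncontrolled.

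\textbf{The fix: work with complexes, not groups.} The input you quote says exactly that $\Z(i)_\et\otimes^L\Z/l^n\to \mu_{l^n}^{\otimes i}$ (resp. $\nu_n(i)[-i]$ for $l=p$) is a quasi-isomorphism. Hence the cone $C$ of the morphism $\Z(i)_\et\otimes\Z_l\to R\varprojlim_n \Z/l^n(i)$ defining $\cl_l$ satisfies $C\otimes^L\Z/l=0$. Equivalently, the triangles $\Z(i)_\et\otimes\Z_l\to \Z(i)_\et\otimes\Q_l\to \Q_l/\Z_l(i)$ and $\Z_l(i)^c\to\Q_l(i)^c\to\Q_l/\Z_l(i)$ share their third term. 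So $l$ acts invertibly on $C$, and every $H^r_\et(X,C)$ is a $\Q_l$-vector space. The exact sequence
\[H^{j-1}_\et(X,C)\to H^j_\et(X,\Z(i))\otimes\Z_l\by{\cl_l} H^j_\cont(X,\Z_l(i))\to H^j_\et(X,C)\]
then gives at once a divisible kernel and a torsion-free cokernel. Here one uses that $\Z_l$ is flat and étale cohomology commutes with filtered colimits. No $\varprojlim^1$ bookkeeping is needed.

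\textbf{Part b).} Your weight strategy is right for $l\ne p$, and the ``main obstacle'' is not one. Every class comes from a finitely generated field of definition $k_0$, and its image lies in $H^j(X,\Z_l(i))^{G_{k_0}}$. For $j\ne 2i$ this is torsion by Deligne, so $\IM\cl_l\subseteq H^j_\cont(X,\Z_l(i))_\tors$ is finite. The direct-sum decomposition, however, comes from combining this with a): the kernel is divisible, the image is finite, divisible subgroups split off, and one then descends from $\Z_l$ to $\Z_{(l)}$. So b) depends on the gap in a). Also, for $l=p$, where the statement only asserts finite exponent, Deligne's theorem does not apply and a crystalline or de Rham--Witt weight argument is required.
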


\begin{defn}\label{d2} If $M$ is an abelian group, we say that an element $\alpha\in M\otimes \Q$ is \emph{integral} (with respect to $M$) if it comes from some element of $M$.
\end{defn}

\begin{rk}\label{r2} In Definition \ref{d2},\\
a) $\alpha$ is integral with respect to $M$ if and only if it is integral with respect to $M/M_\tors$, and its lift is unique in the latter case.\\
b) To be integral is the same as to vanish in $M\otimes \Q_l/\Z_l$ for all $l$. This shows that $\alpha$ is integral with respect to $M$ if and only if $\alpha\otimes \Q_l$ is integral with respect to $M\otimes \Z_l$ for all $l$.
\end{rk}

\begin{cor}[Integrality lemma]\label{c8} Let $\alpha\in H^j_\et(X,\Q(i))$, and let $N>0$. Suppose that $\cl_l(\alpha)\in H^j_\cont(X,\Q_l(i))$ is integral with respect to $H^j_\cont(X,\Z_l(i))$ for all $l\nmid N$. Then $\alpha$ is integral with respect to $H^j_\et(X,\Z(i))[1/N]$.
\end{cor}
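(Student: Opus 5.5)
By Remark \ref{r2} b), it suffices to show that the image of $\alpha$ in $H^j_\et(X,\Z(i))[1/N]\otimes \Q_l/\Z_l$ vanishes for every prime $l$. Equivalently, $\alpha\otimes\Q_l$ should be integral with respect to $H^j_\et(X,\Z(i))\otimes\Z_l$ for every $l$. For $l\mid N$ there is nothing to prove, since $l$ is invertible in $H^j_\et(X,\Z(i))[1/N]$ and hence $H^j_\et(X,\Z(i))[1/N]\otimes\Q_l/\Z_l=0$. So we may fix a prime $l\nmid N$ and work with $M_l:=H^j_\et(X,\Z(i))\otimes\Z_l$. Write $M_l\otimes\Q = H^j_\et(X,\Q(i))\otimes\Q_l$, and let $\cl_l\otimes \Q$ be the induced map to $H^j_\cont(X,\Q_l(i))$.

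Next we would use Proposition \ref{p4a} a). The map $\cl_l:M_l\to H:=H^j_\cont(X,\Z_l(i))$ has divisible kernel $D$ and torsion-free cokernel $C$. Set $I:=M_l/D\subseteq H$. Consider the element $\beta:=\alpha\otimes 1\in M_l\otimes\Q$; we want it to come from $M_l$. By hypothesis, $\cl_l(\beta)$ lies in the image of $H\to H\otimes\Q$. We first reduce to the group $I$. Since $D$ is divisible, tensoring the exact sequence $0\to D\to M_l\to I\to 0$ with $\Q_l/\Z_l$ shows that $M_l\otimes\Q_l/\Z_l\to I\otimes\Q_l/\Z_l$ is injective. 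Indeed $\operatorname{Tor}(I,\Q_l/\Z_l)\to D\otimes\Q_l/\Z_l=0$, so the kernel of that map is zero. Hence it suffices to show that the image of $\beta$ in $I\otimes\Q$ is integral with respect to $I$, i.e.\ that it vanishes in $I\otimes\Q_l/\Z_l$. (Equivalently one can use the long exact sequence directly.)

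Now consider $0\to I\to H\to C\to 0$ with $C$ torsion-free. Tensoring with $\Q_l/\Z_l$ gives an exact sequence $\operatorname{Tor}(C,\Q_l/\Z_l)=0\to I\otimes\Q_l/\Z_l\to H\otimes\Q_l/\Z_l$, so $I\otimes\Q_l/\Z_l$ injects into $H\otimes\Q_l/\Z_l$. The image of $\beta$ in $I\otimes\Q_l/\Z_l$ maps to the class of $\cl_l(\beta)$ in $H\otimes\Q_l/\Z_l$, which is zero by the integrality hypothesis. Therefore it is zero already in $I\otimes\Q_l/\Z_l$, and then in $M_l\otimes \Q_l/\Z_l$ by the previous step. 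This holds for all $l\nmid N$, and trivially for $l\mid N$, so Remark \ref{r2} b) gives integrality of $\alpha$ with respect to $H^j_\et(X,\Z(i))[1/N]$.

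The only delicate point is the bookkeeping identifying ``integral'' with vanishing in $-\otimes\Q_l/\Z_l$ after $\otimes\Z_l$. For a group $M$, $M\otimes\Q/\Z=\bigoplus_l M\otimes\Q_l/\Z_l$, and $M\otimes\Q_l/\Z_l=(M\otimes\Z_l)\otimes_{\Z_l}\Q_l/\Z_l$. One must also be careful with the case $l=p$: there the target is logarithmic Hodge–Witt cohomology, but Proposition \ref{p4a} a) covers it uniformly, so no separate argument is needed. I expect no real obstacle; the key input is that torsion-freeness of the cokernel kills $\operatorname{Tor}$ and divisibility of the kernel kills $D\otimes\Q_l/\Z_l$.
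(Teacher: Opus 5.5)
Your proof is correct and is essentially the paper's argument: the paper simply says the result follows from Remark \ref{r2} b) and Proposition \ref{p4a} a) via an easy diagram chase. You carry out that chase explicitly. Divisibility of the kernel kills $D\otimes\Q_l/\Z_l$, torsion-freeness of the cokernel kills $\operatorname{Tor}(C,\Q_l/\Z_l)$, and together these give injectivity of $H^j_\et(X,\Z(i))\otimes\Q_l/\Z_l\to H^j_\cont(X,\Z_l(i))\otimes\Q_l/\Z_l$ for $l\nmid N$, while the primes $l\mid N$ are trivial.
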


\begin{proof}  In view of remark \ref{r2} b), this follows from Proposition \ref{p4a} a) via an easy diagram chase.
\end{proof}

\begin{prop}[{\cite[Th. 15.1]{milne}}]\label{p5} Let $A$ be an abelian $k$-variety of dimension $g$, with $k$ separably closed. If $l\ne p$, $H^*_\cont(A,\Z_l)$ is an exterior algebra on $H^1_\cont(A,\Z_l)$; in particular $H^*_\cont(A,\Z_l)$ is torsion-free.
\end{prop}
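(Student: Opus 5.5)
The proof will follow the classical computation via the Albanese/Picard description of $A$ over a separably closed field, with $l\ne p$ fixed and $A$ of dimension $g$. The target statement (Milne, Th.~15.1) has two parts: that $H^*_\cont(A,\Z_l)$ is the exterior algebra on $H^1_\cont(A,\Z_l)$, and that it is torsion-free.

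\emph{Step 1: degree one.} First identify $H^1_\cont(A,\Z_l)=\varprojlim_n H^1_\et(A,\Z/l^n)$. Each $H^j_\et(A,\Z/l^n)$ is finite because $k$ is separably closed, so the $\varprojlim^1$ terms vanish and continuous cohomology agrees with the naive inverse limit. Finite étale $\Z/l^n$-torsors over $A$ correspond, via the isogeny $l^n:A\to A$ and Serre--Lang, to $\Hom(A[l^n],\Z/l^n)$. Passing to the limit gives
\[
H^1_\cont(A,\Z_l)\simeq \Hom(T_lA,\Z_l),
\]
which is free of rank $2g$.

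\emph{Step 2: the cup-product map.} Cup product gives a graded-commutative algebra map
\[
\phi_n:\Lambda^*_{\Z/l^n}H^1_\et(A,\Z/l^n)\to H^*_\et(A,\Z/l^n).
\]
Classes in odd degree square to zero for $l$ odd. For $l=2$ one needs more care: the Bockstein and $Sq^1$ vanish on $H^1$, because each class in $H^1(A,\Z/2^n)$ lifts to $\Z/2^{n+1}$ by Step 1. So $x^2=\beta(x)=0$, and $\phi_n$ is defined in all cases.

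We prove $\phi_n$ is an isomorphism by induction on $g$, or better by a Künneth/fibration argument. The cleanest route is the one Milne uses: show it first for $n=1$. Then $H^*(A,\mathbf{F}_l)$ is $\Lambda^*$ of a $2g$-dimensional space, so the total dimension is $2^{2g}$. This total dimension equals that of $H^*(A,\mathbf{F}_l)$, which one gets from the Lefschetz/Euler-characteristic count $\sum(-1)^i b_i=0$ plus the fact that the Betti numbers with $\Q_l$ coefficients are $\binom{2g}{i}$.

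\emph{Main obstacle.} The key point is injectivity of $\phi_1$ in top degree, i.e.\ that the product of a basis of $H^1$ is nonzero in $H^{2g}(A,\mathbf{F}_l)\simeq \mathbf{F}_l$. Poincaré duality then propagates injectivity to all degrees, since the pairing on the exterior algebra is perfect. To prove the top-degree product is nonzero, I would pull back along an isogeny from a product of elliptic curves when available. Otherwise I would use the multiplication map $m:A\times A\to A$: because $m^*x=x\otimes1+1\otimes x$ for $x\in H^1$, $H^*(A)$ is a Hopf algebra over $\mathbf{F}_l$ with $H^1$ primitive. Borel's structure theorem for connected commutative Hopf algebras over a perfect field, combined with the total dimension $2^{2g}$ (which requires bounding total dimension by $2^{2g}$ via the universal coefficients from $\Z_l$-cohomology of rank $\binom{2g}{i}$), forces the exterior structure.

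\emph{Step 3: passing to $\Z_l$.} With $\phi_1$ an isomorphism, all $H^j(A,\mathbf{F}_l)$ have dimension $\binom{2g}{j}$. Hence all $H^j(A,\Z/l^n)$ are free $\Z/l^n$-modules of rank $\binom{2g}{j}$, because $H^j(\Z/l^n)\to H^j(\Z/l^{n-1})$ is surjective, as the Bocksteins vanish. Taking the inverse limit shows that $H^*_\cont(A,\Z_l)$ is free and that $\varprojlim\phi_n$ is an isomorphism, which is the claimed exterior algebra structure and torsion-freeness.
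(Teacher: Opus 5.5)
The paper gives no proof of its own here; it cites Milne's Theorem 15.1, whose proof is the Hopf-algebra argument you allude to. Your Steps 1 and 3 are fine. So is the Poincaré-duality argument that a nonzero top product makes $\phi_1$ injective.

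The gap is the claim that $H^*(A,\mathbf{F}_l)$ has total dimension $2^{2g}$. You need it both for surjectivity of $\phi_1$ and as the input to Borel's theorem, and neither source you offer gives it:
\begin{itemize}
\item The Euler characteristic is the same with $\mathbf{F}_l$ and $\Q_l$ coefficients, so it is blind to torsion.
\item Universal coefficients applied to $H^*_\cont(A,\Z_l)$ with ranks $\binom{2g}{i}$ give $\dim_{\mathbf{F}_l}H^i(A,\mathbf{F}_l)=\binom{2g}{i}+t_i+t_{i+1}$, where $t_i$ is the number of cyclic summands of $H^i_\cont(A,\Z_l)_\tors$. This is only a lower bound. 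Equality holds exactly when $H^*_\cont(A,\Z_l)$ is torsion-free, which is the statement you are proving.
\end{itemize}
So the key step is circular. Moreover, the $\Q_l$-Betti numbers $\binom{2g}{i}$ are normally deduced from this very theorem, so they would need an independent source. Your fallback, pulling back along an isogeny from a product of elliptic curves, is not available in general. Even when it is, if $l$ divides the degree then the pull-back is zero on $H^{2g}(-,\mathbf{F}_l)$, so it cannot detect the top product.

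The fix is to feed Borel's theorem the vanishing $H^r_\et(A,\mathbf{F}_l)=0$ for $r>2g$ (cohomological dimension of a $g$-dimensional variety over a separably closed field) instead of a total dimension. The argument then runs as follows.
\begin{itemize}
\item Use $m^*$ together with the Künneth isomorphism for $A\times A$ with $\mathbf{F}_l$-coefficients to get the Hopf structure.
\item Borel's theorem then gives an algebra isomorphism $H^*(A,\mathbf{F}_l)\simeq\bigotimes_i\mathbf{F}_l[x_i]/(x_i^{h_i})$ with homogeneous $x_i$ and all $h_i\ge 2$.
\item Decomposables start in degree $2$, so the degree-one generators form a basis of $H^1$. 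By your Step 1 there are exactly $2g$ of them.
\item The nonzero monomial $\prod_i x_i^{h_i-1}$ has degree $\sum_i \deg(x_i)(h_i-1)\ge 2g$. Equality holds if and only if there are no generators of degree $\ge 2$ and $h_i=2$ for all $i$.
\item Vanishing above degree $2g$ forces equality, so $\phi_1$ is an isomorphism.
\end{itemize}
As a bonus, $x^2=0$ on $H^1$ comes out for free, also when $l=2$, so your separate Bockstein discussion becomes unnecessary. This is Milne's argument, and your Step 3 then finishes the proof.
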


The same holds for Betti cohomology when $k=\C$ \cite[\S 1, (4)]{mumford}.

\begin{prop}\label{p4} In Proposition \ref{p4a}, suppose that $X=A$ is an abelian variety. Suppose also $k$ separably closed and $l\ne p$. Then\\
c) There is a canonical isomorphism
\[H^{j-1}_\cont(A,\Z_l)\otimes \Q_l/\Z_l(i)\iso H^j_\et(A,\Z(i))\{l\}.\]
If $j\ne 2i$, $H^j_\et(A,\Z(i))[1/p]$ is divisible.\\
d) If $(i,j),(i',j')$ are two pairs, the cup-product
\[ H^j_\et(A,\Z(i))\times H^{j'}_\et(A,\Z(i'))\to H^{j+j'}_\et(A,\Z(i+i'))\]
restricts to $0$ on torsion subgroups. \\
e) For $n\in \Z$, the contravariant action of $n_A\in \End(A)$ on $H^j_\et(A,\Z(i))_\tors$ is multiplication by $n^{j-1}$ and its covariant action is multiplication by $n^{2g-j+1}$, at least away from $p$.\\
f) If $j\ne 2i$, $H^j_\et(A,\Z(i))[1/p]$ is divisible.
\end{prop}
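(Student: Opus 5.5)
The plan is to compute the $l$-primary torsion from the coefficient sequence, then treat the ring structure and the action of $n_A$ through the resulting description, and finally deduce divisibility.

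\textbf{Part c).} For $l\ne p$ and every $\nu$, étale motivic cohomology with finite coefficients agrees with étale cohomology: $H^{j-1}_\et(A,\Z/l^\nu(i))\simeq H^{j-1}_\et(A,\mu_{l^\nu}^{\otimes i})$. Passing to the colimit over $\nu$ gives the exact sequence
\[0\to H^{j-1}_\et(A,\Z(i))\otimes\Q_l/\Z_l\to H^{j-1}_\et(A,\Q_l/\Z_l(i))\to H^j_\et(A,\Z(i))\{l\}\to 0.\]
Since $k$ is separably closed, Proposition \ref{p5} shows that $H^*_\cont(A,\Z_l)$ is torsion-free. Hence
\[H^{j-1}_\et(A,\Q_l/\Z_l(i))=H^{j-1}_\cont(A,\Z_l)\otimes\Q_l/\Z_l(i).\]
This group is divisible of finite corank, equal to the rank of $H^{j-1}_\cont$.

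The main obstacle is to show that the left-hand term vanishes, i.e.\ $H^{j-1}_\et(A,\Z(i))\otimes\Q_l/\Z_l=0$. I would obtain this from Proposition \ref{p4a} a). After tensoring with $\Z_l$, the kernel of $\cl_l$ is divisible, so it dies in $\otimes\Q_l/\Z_l$. The image of $\cl_l$ is a submodule of the finitely generated torsion-free module $H^{j-1}_\cont(A,\Z_l(i))$ with torsion-free cokernel, hence a direct summand. The difficulty is that this image $I$ might still contribute $I\otimes\Q_l/\Z_l\neq 0$.

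To get around this, I would not use the naive sequence but the compatible map of coefficient sequences between étale motivic and continuous cohomology. The continuous side gives
\[H^{j-1}_\cont(\Z_l)\otimes\Q_l/\Z_l\hookrightarrow H^{j-1}(\Q_l/\Z_l)\to H^j_\cont(\Z_l)\{l\}=0.\]
The image of $H^{j-1}_\et(\Z(i))\otimes\Q_l/\Z_l$ in $H^{j-1}(\Q_l/\Z_l)$ is $I\otimes\Q_l/\Z_l$. So the canonical isomorphism should read
\[\bigl(H^{j-1}_\cont/I\bigr)\otimes\Q_l/\Z_l\iso H^j_\et\{l\}.\]
The claimed statement then requires either $I=0$ or a reinterpretation. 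I expect the intended identification uses that $I$ is exactly the image coming from the torsion-free part, and the Kummer-type boundary realises $H^{j-1}_\cont\otimes\Q_l/\Z_l$ via a Bockstein. In the write-up I would check carefully, using Proposition \ref{p4a} b) and the fact that $\cl_l$ has finite image when $j-1\neq 2i$, that the $I$-contribution is killed. This is the delicate point of the whole proof.

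\textbf{Parts d) and e).} For d), take torsion classes $x=\partial a$ and $y=\partial b$ with $a,b$ in $\Q_l/\Z_l$-cohomology that lift to $\Z_l$-adic classes by c). Then
\[x\cdot y=\partial(a\cdot\partial b)=\partial(a\cdot 0)=0,\]
because $\partial b$ lifted to $\Z_l$-cohomology vanishes by torsion-freeness, so $\tilde a\cdot y=0$. For e), I would use that the isomorphism in c) is functorial. On $H^{j-1}_\cont(A,\Z_l)=\Lambda^{j-1}H^1$, the pullback $n_A^*$ acts by $n^{j-1}$. The pushforward is dual under Poincaré duality and therefore acts by $n^{2g-(j-1)}$.

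\textbf{Part f).} This follows from c) together with Proposition \ref{p4a} b). The group $H^j_\et[1/p]$ is an extension of a torsion-free part by the torsion part, and the torsion part is divisible by c). By Proposition \ref{p4a} b), $H^j\otimes\Q_l/\Z_l=0$ for all $l\neq p$, so the quotient by torsion is $l$-divisible for each $l\neq p$. An extension of two divisible groups is divisible, which gives f).
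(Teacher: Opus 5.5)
\textbf{The gap is in c).} You correctly reduce c) to the vanishing of $H^{j-1}_\et(A,\Z(i))\otimes\Q_l/\Z_l$, i.e.\ to $I=0$. But you then leave this as ``the delicate point'', suggest the statement may need a ``reinterpretation'', and never close it, so as written c) is not proved.

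The missing step is one line, and it is exactly the paper's argument. Proposition~\ref{p4a}~b), applied in degree $j-1$, says outright that $H^{j-1}_\et(A,\Z(i))\otimes\Q_l/\Z_l=0$ whenever $j-1\neq 2i$. In your own terms: by Proposition~\ref{p4a}~b) the image $I$ of $\cl_l$ in degree $j-1$ is finite. It lies in the torsion-free group $H^{j-1}_\cont(A,\Z_l(i))$ (Proposition~\ref{p5}), hence $I=0$. So $H^{j-1}_\et(A,\Z(i))\otimes\Z_l=\Ker\cl_l$ is divisible by Proposition~\ref{p4a}~a), and it dies after tensoring with $\Q_l/\Z_l$. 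Your exact sequence, together with $H^{j-1}_\et(A,\Q_l/\Z_l(i))=H^{j-1}_\cont(A,\Z_l)\otimes\Q_l/\Z_l(i)$, then gives the isomorphism of c).

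\textbf{Your hesitation is nonetheless justified in one case.} Both your argument and the paper's need $j\neq 2i+1$. For $j=2i+1$, statement c) is false as written, and your corrected formula $(H^{j-1}_\cont/I)\otimes\Q_l/\Z_l\iso H^j_\et(A,\Z(i))\{l\}$ is the right one. For $(i,j)=(0,1)$ one has $H^1_\et(A,\Z)=0$, while $H^0_\cont(A,\Z_l)\otimes\Q_l/\Z_l=\Q_l/\Z_l$. For $(i,j)=(1,3)$ the torsion $H^3_\et(A,\Z(1))\{l\}=\mathrm{Br}(A)\{l\}$ has corank $b_2-\rho$, not $b_2$. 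The paper only ever uses c) with $j=2i$ (torsion in $CH^i_\et$). So the right fix is to add the hypothesis $j\neq 2i+1$ and finish the proof there, rather than leaving it open.

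\textbf{Parts d), e), f) are unaffected.} They only use that $H^j_\et(A,\Z(i))\{l\}$ is a functorial quotient of the divisible group $H^{j-1}_\cont(A,\Z_l)\otimes\Q_l/\Z_l(i)$, and your first step gives this for every $(i,j)$. Your arguments there are correct and differ only mildly from the paper's.
\begin{itemize}
\item For d), you use $\partial a\cdot y=\pm\partial(a\cdot y)=\pm\partial(a\cup\cl_l(y))$ with $\cl_l(y)=0$ by torsion-freeness (Lemma~\ref{l4}). The paper simply observes that a pairing of torsion divisible groups factors through their tensor product, which is $0$.
\item For f), you combine divisibility of the torsion with $l$-divisibility of the torsion-free quotient. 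The paper instead notes that $\cl_l$ has finite, hence zero, image in degree $j\neq 2i$, so all of $H^j_\et(A,\Z(i))\otimes\Z_l=\Ker\cl_l$ is divisible.
\end{itemize}
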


\begin{proof} In c), the isomorphism is the composition of two  isomorphisms
\[H^{j-1}_\cont(A,\Z_l)(i)\otimes \Q_l/\Z_l\iso H^{j-1}_\et(A,\Q_l/\Z_l(i)) \iso H^j_\et(A,\Z(i))\{l\} \]
where the second one follows from Proposition \ref{p4a} b) and the first holds by Proposition \ref{p5}. 
By Proposition \ref{p4a} b), if $j\ne 2i$ then the image of the étale cycle class map is finite, hence $0$ by Proposition \ref{p5}, hence $H^j_\et(A,\Z(i))[1/p]$ is divisible by Proposition \ref{p4a} a). 
d) follows from c), since the tensor product of two torsion divisible groups is $0$, and e) also follows from c). Finally, f) follows from Propositions \ref{p4a} and \ref{p5}.
\end{proof}

\subsection*{Correspondences and motives} We shall use three categories of (pure, effective) motives: $\sM$ (Chow motives), $\sM_\et$ (étale Chow motives) and $\hat{\sM}$ (cohomological motives).  There are $\otimes$-functors
\begin{equation}\label{eq29}
\sM\by{\alpha^*} \sM_\et \by{\hat{R}} \hat{\sM}.
\end{equation}
 
 If $X$ is a smooth projective $k$-variety, we write $h(X)$, $h_\et(X)$, $\hat{h}(X)$ for its motive in $\sM,\sM_\et,\hat{\sM}$ respectively. If $Y$ is another, connected, one, morphisms from its motive to that of $X$ are elements of the groups of correspondences
\[CH^{\dim Y}(Y\times X),\; CH^{\dim Y}_\et(Y\times X)[1/p],\; \hat{H}^{2\dim Y}(Y\times X,\dim Y)\]
where
\[\hat{H}^j(Z,i):= \prod_{l\ne p}H^j_\cont(Z,\Z_l(i)).\]

This extends to nonconnected $Y$ by additivity as usual. The functor $\hat{R}$ is induced by the étale cycle class maps $\cl_l$, and $\alpha^*\otimes \Q$ is an equivalence of categories \cite[Th. 2.6 c)]{cycle-etale}. Correspondences $\gamma,\delta$ from $Z$ to $Y$ and from $Y$ to $X$ compose according to the usual rule
\[\delta\circ \gamma = (p_{13})_*(p_{12}^*\delta \cdot p_{23}^*\gamma)\]
where $p_{ij}$ are the projections of $X\times Y\times Z$ on the two-fold factors. To justify that composition is well-defined and associative in $\sM_\et$, the simplest is to use the formalism of the six operations for étale motives, as in \cite{ayoub} or \cite{cis-deg}.\label{page} \footnote{\label{f2} If one wishes not to invert $p$ in characteristic $p$, the six operations are not available anymore because $\A^1$-invariance is lost. See \cite[App. A]{cell0} for a programme to define such a refined category; to the best of my knowledge, the required properties have still not been written up.} 

If $k= \C$, the adic realisation $\hat{R}$ factors through the Betti realisation
\[\sM_\et\by{R_B}\sM_B\]
where $\sM_B$ is the category of cohomological motives with respect to Betti cohomology $H_B$, and $H^j_B(Z,\Z(i))\otimes\prod_l\Z_l\iso \hat{H}^j(Z,i)$ by Artin's comparison theorem. The reader only interested in this case can replace $\hat{H}$ by $H_B$ in the sequel.

We shall also use the notation
\[\sM^\ab\by{\alpha^*} \sM_\et^\ab \by{\hat{R}} \hat{\sM}^\ab\]
for the thick (= full, stable under direct summands) subcategories generated by motives of abelian varieties; similarly for $\sM_B^\ab$. We then have

\begin{cor}\label{c2} Composition in $\sM_\et^\ab$ restricts to $0$ on torsion correspondences.
\end{cor}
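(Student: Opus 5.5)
Reading Corollary \ref{c2} correctly: if $\gamma$ and $\delta$ are composable correspondences in $\sM_\et^\ab$ that are \emph{both} torsion, then $\delta\circ\gamma=0$. The plan is to reduce to Proposition \ref{p4} d) by writing the composite as a push-forward of a cup product of two torsion classes on an abelian variety.

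\emph{Reduction.} Morphisms in $\sM_\et^\ab$ are direct summands (via idempotent correspondences) of morphisms between motives $h_\et(A)$ of abelian varieties, and composition is compatible with these summand inclusions and projections. So we may assume $\gamma\in CH^{\dim C}_\et(C\times B)[1/p]$ and $\delta\in CH^{\dim B}_\et(B\times A)[1/p]$, with $A,B,C$ abelian varieties, both torsion. Components of $\gamma,\delta$ attached to connected components are again torsion, and every connected component of a torsor is (non-canonically, over separably closed $k$) an abelian variety. In particular, the products $C\times B$, $B\times A$ and $A\times B\times C$ are abelian varieties.

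\emph{Main step.} By the composition rule,
\[\delta\circ\gamma=(p_{13})_*\bigl(p_{12}^*\delta\cdot p_{23}^*\gamma\bigr),\]
computed on the abelian variety $X=A\times B\times C$ (with the factor order as in the paper's convention). Pull-backs are group homomorphisms, so $p_{12}^*\delta$ and $p_{23}^*\gamma$ are torsion classes in $H^{2\dim B}_\et(X,\Z(\dim B))[1/p]$ and $H^{2\dim C}_\et(X,\Z(\dim C))[1/p]$ respectively. Proposition \ref{p4} d) then says the cup product of two torsion classes on the abelian variety $X$ vanishes. Applying $(p_{13})_*$ gives $\delta\circ\gamma=0$.

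\emph{Points to check.} First, Proposition \ref{p4} d) is stated for $H^j_\et(A,\Z(i))$, while we work with $[1/p]$. This causes no problem: $p$-primary torsion disappears after inverting $p$, and the $l$-primary parts for $l\ne p$ are exactly those handled by c). Second, we must know that the cup product in the composition formula is the one of Proposition \ref{p4} d). It is, since the composition is defined through the six-operation formalism and agrees with the intersection-theoretic formula. The main obstacle, such as it is, is the reduction to connected abelian varieties; everything else is formal.
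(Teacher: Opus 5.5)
Your proposal is correct and is the same argument as the paper, whose proof is the single line ``This follows from Proposition \ref{p4} d)''. You spell out what that line leaves implicit: the composite is $(p_{13})_*$ of the intersection product of two torsion classes pulled back to the abelian variety $A\times B\times C$, and that product vanishes because the prime-to-$p$ torsion is divisible. (The aside about torsors is unnecessary, since the objects are abelian varieties from the start.)
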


\begin{proof} This follows from Proposition \ref{p4} d).
\end{proof}

\section{Proof of Proposition \ref{t1}}\label{s4}

By Proposition \ref{p5}, the  even part of the cohomology algebra $H^*_\cont(A,\Z_l)$ has a divided power structure \cite{papy,revoy}. Thus $\cl_l(x^r)=\cl_l(x)^r$ is divisible by $r!$ and we conclude with Corollary \ref{c8}, applied to $\alpha=\frac{x^r}{r!}$. \qed

\begin{rk} \label{r0} If $x\in H^j_\et(A,\Z(i))$ with $j\ne 2i$, then $x^r$ is divisible by \emph{any} positive integer in $H^{jr}_\et(A,\Z(ir))[1/p]$ by Proposition \ref{p4} f). If $j$ is odd and $p\ne 2$, we even have $x^2=0$ because $2x^2=0$ \emph{a priori} \cite[Th. 15.9]{mvw}.
\end{rk}

\section{Esnault's example, refined}\label{s2}

Here we take $g=2$. Further, we assume that $A$ is the Jacobian of a smooth projective curve $C$ having a rational point $c$. This provides an embedding
\[i:C\inj A\]
sending $c$ to $0$ and realising $i(C)$ as a theta divisor. 

\begin{prop}\label{p0} Let $K_C\in \Pic(C)$ be the class of the canonical divisor on $C$, and let $x$ be the class of $i(C)$ in $\Pic(A)$. Then $\deg(x^2)=2$ and the Albanese class of $x^2-2[0]\in CH^2(A)_0$ in $A(k)$ is $K_C-2[c]$. In particular, $x^2$ is divisible by $2$ in $CH^2(A)$ or $CH^2_\et(A)$ if and only if $K_C$ is divisible by $2$ in $\Pic(A)$. If $p\ne 2$, the same holds for $\cl_2(x)$.
\end{prop}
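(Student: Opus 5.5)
We compute the self-intersection of the theta divisor through the Albanese map. Since $g=2$, the divisor $x=[i(C)]$ is a principal polarisation, so $\deg(x^2)=2!=2$ by Riemann–Roch on abelian varieties \cite[\S16]{mumford}.

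For the Albanese class, we deform one copy of $C$ by a translation. The class $x$ is algebraically equivalent to the class of the translate $t_a(i(C))$ for any $a\in A(k)$. For general $a$, the intersection $i(C)\cdot t_a(i(C))$ is transverse and consists of two points. By the classical description of $\Theta\cap\Theta_a$ on a genus $2$ Jacobian, these points are $a+i(c_1)$ and $a+i(c_2)$, where $c_1+c_2\in C^{(2)}$ is determined by $a$ through the Abel–Jacobi map.

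We cannot simply replace $x^2$ by $x\cdot t_a^*x$, since $t_a^*x-x$ is a nonzero class in $\Pic^0$. Instead we use the standard formula for the Albanese of a product of divisors:
\[\Alb(x\cdot y)=\Alb(x\cdot y')+\text{(a correction linear in } y-y'\text{)},\]
which is cleanest to evaluate via Beauville's/Mumford's relation between $\Alb$ and $\phi_x$. The alternative we would actually run is to restrict to $C$. By the projection formula,
\[x^2=i_*(i^*x),\]
and $i^*x=i^*\mathcal O(\Theta)$ is a divisor class on $C$ of degree $2$. By the classical computation (Riemann's theorem, or adjunction $K_C=(K_A+\Theta)|_C=\Theta|_C$ since $K_A=0$), we get $i^*\mathcal O(\Theta)=K_C$. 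So $x^2=i_*K_C$ in $CH^2(A)$, where $i$ sends $c$ to $0$. The Albanese map $CH_0(A)_0\to A(k)$ composed with $i_*$ on degree-zero cycles is the Abel–Jacobi map of $C$, which is the identity $\Pic^0(C)=A$. Hence
\[\Alb(x^2-2[0])=\Alb\bigl(i_*(K_C-2[c])\bigr)=K_C-2[c].\]
This adjunction step, checking that no translation ambiguity enters, is the only real point.

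For the divisibility statement, we first use that $CH^2(A)=CH^2_\et(A)$ by the introduction. Next, $T=\ker(\Alb)\subset CH^2(A)_0$ is divisible: it is uniquely divisible by Roitman away from $p$, and divisible in any case by Bloch/Beauville. The degree map has image $\Z$, and $A(k)$ is divisible. Suppose $x^2=2y$. Then $\deg y=1$, so $y=[0]+z$ with $z\in CH^2(A)_0$, and $K_C-2[c]=2\Alb(z)$. This is divisible by $2$ in $\Pic^0(C)$, which is equivalent to $K_C$ being divisible by $2$ in $\Pic(C)$, since $K_C$ has even degree. Conversely, if $K_C-2[c]=2b$, choose $z_0$ with $\Alb(z_0)=b$. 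Then $x^2-2[0]-2z_0\in T$, and $T$ is $2$-divisible, so $x^2$ is divisible by $2$.

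For the $2$-adic class with $p\ne2$, we use Proposition \ref{p4a} a) with Proposition \ref{p4} c). The group $CH^2_\et(A)\otimes\Z_2$ maps onto $H^4(A,\Z_2(2))\cong\Z_2$, and the kernel of this map is divisible. Unwinding this, one gets the refinement in $H^4_\cont$ by composing with the $2$-adic Abel–Jacobi/Bloch map, so that $\Alb(x^2-2[0])$ is detected $2$-adically modulo $2$ in $A[2^\infty]$-coinvariants. We expect this last refinement to need the most care.
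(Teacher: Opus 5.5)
Your core computation is the paper's: adjunction with $K_A=0$ gives $i^*x=K_C$, hence $x^2=i_*K_C$, and since $\Alb\circ i_*$ is the identity on $\Pic^0(C)=A(k)$ one gets $\Alb(x^2-2[0])=K_C-2[c]$. Your Riemann--Roch route to $\deg x^2=2$ is a fine substitute for $\deg K_C=2$.

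The gap is in the divisibility part. There you use $CH^2(A)=CH^2_\et(A)$ ``by the introduction'', the $2$-divisibility of $T=\Ker(\Alb)$ and of $A(k)$, $H^4(A,\Z_2(2))\cong\Z_2$, and Proposition~\ref{p4} c). Every one of these needs $k$ separably (or algebraically) closed. But in \S\ref{s2} the field is arbitrary: the separable-closedness convention is imposed only after this proposition, and the example it is designed for lives over $k=\C(t)$. Over an algebraically closed field both sides of the equivalence hold trivially ($\Pic^0(C)$ is divisible and $\deg K_C$ is even), so your argument only treats the case where the statement has no content. Over $\C(t)$ one has $\cd(k(A))=3$, so the introduction's argument for $CH^2(A)\iso CH^2_\et(A)$ (vanishing of $H^0(A,\sH^3_\et(\Q/\Z(2)))$) is unavailable, and neither $T$ nor $A(k)$ need be $2$-divisible. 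Hence your reduction of the étale case to the Zariski case and your proof of the converse both fail. Your final $2$-adic paragraph is not yet an argument.

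None of this machinery is needed. The converse is immediate from $x^2=i_*K_C$: if $K_C=2D$ then $x^2=2\,i_*D$. For the direct implication, write $x^2=2y$, so $\deg y=1$, and compare Albanese classes of $x^2-2[0]=2(y-[0])$. This works verbatim in $CH^2(A)$ over any $k$. It also works in $CH^2_\et(A)$ once you have an Albanese map on $CH^2_\et(A)_0$, e.g. $CH^2_\et(A)_0\to CH^2(A_{k_s})_0^{G}\to A(k_s)^{G}=A(k)$ with $G=\mathrm{Gal}(k_s/k)$. Finally, for $\cl_2$ with $p\ne 2$: Proposition~\ref{p4a} a) holds over any field. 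Its divisible kernel and torsion-free cokernel imply directly that an element of $CH^2_\et(A)$ is divisible by $2$ if and only if its image is divisible by $2$ in $H^4_\cont(A,\Z_2(2))$. This is what the paper means by ``reason as in the proof of Proposition~\ref{t1}''; no $2$-adic Abel--Jacobi map or coinvariant computation is required.
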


\begin{proof}
By \cite[Ch. II, Prop. 8.20]{hartshorne}, we have
\[K_C= i^*(K_A+[i(C)])=i^*i_*[C]\]
using that $K_A=0$ \cite[\S 4, (ii)]{mumford}. Therefore
\[ x^2=i_*i^*i_*[C] =i_*K_C.\]

This already gives $\deg(x^2) = 2g-2 = 2$, and moreover
\[x^2-2[0] = i_*(K_C-2[c])\]
hence the conclusion (here we use that the composition $A(k)=\Pic^0(A)\allowbreak\by{i_*}CH_0(A)_0\by{a} A(k)$ is the identity, where $a$ is the Albanese map.) To deal with $\cl_2(x)$, we reason as in the proof of Proposition \ref{t1}.
\end{proof}

One example in \cite[\S 4]{ln} (attributed to Serre) where the condition of Proposition \ref{p0} fails is $k=\C(t)$, $C$ given by the affine equation $y^2=x^6-x-t$.
\bigskip

\emph{From now on and until \S \ref{s9}, the field $k$ is supposed to be separably closed.}

\section{The torsion bimodule}\label{s3}

Let $X$ be a smooth projective $k$-variety. From now on, we abbreviate $H^{j}_\cont(X,\Z_l(i))$ to $H^j_l(X,i)$ and $H^j_l(X,0)$ to $H^j_l(X)$ if  $l\ne p$. 
Consider the following two actions:
\begin{itemize}
\item the étale intersection product 
\[H^{j-1}_\et(X,\Q_l/\Z_l(i))\times H^{j'}_\et(X,\Z(i'))\to H^{j+j'-1}_\et(X,\Q_l/\Z_l(i+i')),\] 
denoted by $\cdot$;
\item the $l$-adic product 
\[H^{j-1}_\et(X,\Q_l/\Z_l(i))\times H^{j'}_l(X,i')\to H^{j+j'-1}_\et(X,\Q_l/\Z_l(i+i')),\] 
denoted by $\cup$.
\end{itemize}

\begin{lemma}\label{l4} For $(x,y)\in H^{j-1}_\et(X,\Q_l/\Z_l(i))\times H^{j'}_\et(X,\Z(i'))$, we have
\[x\cdot y =\cl_l(x)\cup y.\]
\end{lemma}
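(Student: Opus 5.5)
We read the right-hand side as $x\cup\cl_l(y)$: the $l$-adic class map $\cl_l$ is defined on the integral factor $y\in H^{j'}_\et(X,\Z(i'))$, and its image in $H^{j'}_l(X,i')$ then acts on $x$ through the second pairing $\cup$. The plan is to reduce everything to finite coefficients $\Z/l^n$. There the étale motivic complex $\Z/l^n(i)$ is quasi-isomorphic to $\mu_{l^n}^{\otimes i}$ (the rigidity isomorphism in the étale topology, for $l\ne p$), and both products become the usual cup-product in étale cohomology with coefficients in roots of unity. The case $l=p$ is treated the same way, replacing $\mu_{l^n}^{\otimes i}$ by logarithmic Hodge--Witt sheaves. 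I concentrate on $l\ne p$, which is the case where the notation $H^j_l$ is used.

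\emph{Step 1: passage to finite level.} Since $X$ is quasi-compact and quasi-separated, étale cohomology commutes with filtered colimits of sheaves, so
\[H^{j-1}_\et(X,\Q_l/\Z_l(i))=\varinjlim_n H^{j-1}_\et(X,\Z/l^n(i)).\]
Both pairings are compatible with the transition maps. Hence it suffices to prove the identity for $x$ coming from some $x_n\in H^{j-1}_\et(X,\Z/l^n(i))$.

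\emph{Step 2: factoring the étale product.} The product $\cdot$ is induced by the pairing
\[\Z/l^n(i)\otimes^L\Z(i')\to\Z/l^n(i+i')\]
in the derived category of étale sheaves, which is deduced from the multiplication $\Z(i)\otimes\Z(i')\to\Z(i+i')$. This pairing factors through
\[\Z/l^n(i)\otimes^L\Z/l^n(i')\to\Z/l^n(i+i').\]
Consequently $x_n\cdot y=x_n\cdot\bar y_n$, where $\bar y_n\in H^{j'}_\et(X,\Z/l^n(i'))$ is the reduction of $y$ modulo $l^n$.

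\emph{Step 3: comparison with $\cl_l$ and the cup-product.} By construction in \cite[\S 3A]{cycle-etale}, $\cl_l(y)$ is the compatible system of the $\bar y_n$, read in $H^{j'}_\et(X,\mu_{l^n}^{\otimes i'})$ through the rigidity isomorphism. In Jannsen's continuous cohomology, the product $\cup$ of a $\Z/l^n$-class with a continuous class is defined by first projecting the continuous class to level $n$, i.e. to $H^{j'}_\et(X,\mu_{l^n}^{\otimes i'})$, and then taking the ordinary cup-product. So the statement reduces to one claim: under the rigidity isomorphisms $\Z/l^n(i)\simeq\mu_{l^n}^{\otimes i}$, the motivic product corresponds to the tensor product of roots of unity.

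\emph{Main obstacle.} The key, and only nontrivial, point is the multiplicativity of the rigidity isomorphism. I would first try to deduce it from the fact that $\Z/l^n(1)\simeq\mu_{l^n}$ is induced by the Kummer sequence of $\G_m=\Z(1)[1]$. One would then check that the isomorphism for $\Z/l^n(i)$ is obtained from the case $i=1$ by taking tensor powers, via $\Z(1)^{\otimes i}\to\Z(i)$. That this map becomes an isomorphism modulo $l^n$ in the étale topology is exactly the input of \cite{cycle-etale}, and it yields compatibility with products by construction. Once this is in place, the identity holds at each finite level $n$, and passing to the colimit as in Step 1 gives the lemma.
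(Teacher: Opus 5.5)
Your proof is correct and is essentially the paper's argument. The paper just invokes the compatibility with products of the morphism (3-2) of \cite{cycle-etale} defining $\cl_l$; your reduction to finite level plus the multiplicativity of the rigidity isomorphism $\Z/l^n(i)\simeq\mu_{l^n}^{\otimes i}$ (automatic, since that isomorphism is induced by the product map on $\Z/l^n(1)^{\otimes i}$) unpacks exactly that fact, and you are right to read the right-hand side as $x\cup\cl_l(y)$.
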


\begin{proof}  This follows from the fact that, in \cite{cycle-etale}, the morphism (3-2) (defining $\cl_l$) is compatible with products.
\end{proof}

Given an additive category $\sA$, recall that an \emph{$\sA$-bimodule} is an additive bifunctor on $\sA^\op\times \sA$. Let 
\[(\Q/\Z)'=\bigoplus_{l\ne p} \Q_l/\Z_l.\]

 For $r\in \Z$, the two actions described above provide the assignment  $(Y,X)\allowbreak\mapsto H^{2\dim Y-r}_\et(Y\times X, (\Q/\Z)'(\dim Y))$ with structures of bimodule over $\sM_\et$ and $\hat{\sM}$ respectively, and Lemma \ref{l4} implies

\begin{lemma}\label{l5} These two bimodule structures are compatible via $\hat{R}$.\qed
\end{lemma}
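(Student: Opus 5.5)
The plan is to reduce the compatibility to Lemma \ref{l4}, together with the compatibility of pull-backs and push-forwards with $\cl_l$. We make both actions explicit in terms of the composition formula. For a torsion class $x\in H^{2\dim Y-r}_\et(Y\times X,(\Q/\Z)'(\dim Y))$ and an étale correspondence $\gamma\in CH^{\dim Z}_\et(Z\times Y)[1/p]$ from $Z$ to $Y$, the $\sM_\et$-action is
\[x\circ\gamma=(p_{13})_*(p_{12}^*x\cdot p_{23}^*\gamma).\]
The $\hat{\sM}$-action is given by the same formula, with $\gamma$ replaced by $\hat R(\gamma)=(\cl_l(\gamma))_l$ and $\cdot$ replaced by $\cup$. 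The action of correspondences from $X$ to $X'$ on the left is treated in the same way, with the roles of the factors exchanged. By additivity we may work one prime $l\ne p$ at a time, with $\Q_l/\Z_l$-coefficients.

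First, the pull-backs $p_{12}^*,p_{23}^*$ along the projections of $Z\times Y\times X$ commute with $\cl_l$. This holds because $\cl_l$ is a natural transformation of contravariant functors on smooth varieties: it is induced by a morphism of complexes of étale sheaves, namely (3-2) of \cite{cycle-etale}. In particular $\cl_l(p_{23}^*\gamma)=p_{23}^*\cl_l(\gamma)$.

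Second, we apply Lemma \ref{l4} on $Z\times Y\times X$ to the pair $(p_{12}^*x,p_{23}^*\gamma)$. This gives
\[p_{12}^*x\cdot p_{23}^*\gamma=\cl_l(p_{23}^*\gamma)\cup p_{12}^*x=p_{23}^*\cl_l(\gamma)\cup p_{12}^*x,\]
up to the harmless commutation sign. The sign is the same on both sides, since both products are graded-commutative in the same way.

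Third, we apply the same push-forward $(p_{13})_*$ to both sides. This push-forward is taken in étale cohomology with $\Q_l/\Z_l$-coefficients, and it is literally the same map in both bimodule structures, because the torsion groups are the same groups in both. Hence $x\circ\gamma=x\circ\hat R(\gamma)$. The left action is handled identically.

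The main obstacle is purely foundational. One must check that the torsion bimodule over $\sM_\et$, defined via the six-operations formalism, uses a push-forward on $H^*_\et(-,\Q_l/\Z_l)$ which agrees with the classical Gysin/trace map used on the $l$-adic side. One must also check that the identification of the $\Q_l/\Z_l$-part with étale motivic cohomology, via Proposition \ref{p4a} b), is compatible with both pull-back and push-forward. To settle this, I would rely on the rigidity (Suslin–Voevodsky) identification of torsion étale motivic cohomology with étale cohomology of $\mu_{l^n}^{\otimes i}$. This identification is compatible with the six operations in \cite{ayoub} and \cite{cis-deg}. Once it is in place, the remaining verification is exactly the diagram chase above.
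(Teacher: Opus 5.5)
Your proposal is correct and follows the paper's route: the paper simply asserts that Lemma \ref{l5} follows from Lemma \ref{l4}, and you unpack exactly that, making explicit the compatibility of $\cl_l$ with pull-backs and the identification of the push-forwards on $\Q_l/\Z_l$-coefficients, both of which the paper leaves tacit. You also rightly apply $\cl_l$ to the integral class, which is the sensible reading of Lemma \ref{l4}; the commutation sign you mention is in fact $+1$, since the correspondence class lies in even degree, and with the ordering $Z\times Y\times X$ the labels $p_{12},p_{23}$ should be swapped, but neither point affects the argument.
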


We denote this common bimodule by $t(r)$. 
For abelian varieties $A,B$ with $\dim A=g$, the proof of Proposition \ref{p4} c) interprets $t(1)$ both as $H^{2g}_\et(A\times B, \Z(g))[1/p]_\tors$ and as $\hat{H}^{2g-1}(A\times B, g)\otimes \Q/\Z$. In the rest of this section, we compute the  $\hat{\sM}^\ab$-bimodule structure of $t(r)$. For this, we note that as a consequence of Proposition \ref{p5}, the Künneth formula for $H^*_l(A\times B)$ holds integrally for all $l\ne p$. Moreover, Poincaré duality also holds integrally for $A$ thanks to \cite[§11, Prop. 7]{bbki} and the fact that the trace map $H_l^{2g}(A,g)\to \Z_l$ is an isomorphism. We therefore have a Künneth decomposition in $\hat{\sM}$:
\begin{equation}\label{eq8}
\hat{h}(A) = \bigoplus_{i=0}^{2g} \hat{h}^i(A)
\end{equation}
corresponding to the Künneth projectors $\hat{\pi}^i_A$ in
\[\hat{H}^{2g}(A\times A,g)\simeq \prod_{i=0}^{2g} \End_{\hat{\Z}}(\hat{H}^i(A)).\]

We first recall the structure of the tautological $\hat{\sM}^\ab$-bimodule $t(0)=\Hom_{\hat{\sM}^\ab}$: in view of \eqref{eq8}, it may be written $\bigoplus_{i,j}\hat{\sM}(\hat{h}^i(A),\hat{h}^j(B))$, where
\[\hat{\sM}(\hat{h}^i(A),\hat{h}^j(B))=
\begin{cases}
0&\text{if } i\ne j\\
\Hom_{\hat{\Z}}(\hat{H}^i(A),\hat{H}^i(B))& \text{if } i= j.
\end{cases}
\]

In particular, $\hat{\pi}^i_B\circ f= f\circ \hat{\pi}^i_A$ for any $f\in \hat{\sM}(\hat{h}(A),\hat{h}(B))$, and the $\hat{\pi}^i_A$ are central in $\hat{H}^{2g}(A\times A,g)$.

\begin{lemma}\label{l6} Let $T(r)$ be the $\hat{\sM}^\ab$-bimodule induced by $(A,B)\mapsto \hat{H}^{2g-r}(A\times B,g)$. Then
\[T(r)(\hat{h}^i(A),\hat{h}^j(B))=
\begin{cases}
0&\text{if } j\ne i-r\\
\Hom_{\hat{\Z}}(\hat{H}^i(A),\hat{H}^{i-r}(B))& \text{if } j= i-r.
\end{cases}
\]
In particular, $\hat{\pi}^{i-r}(B)\bullet x= x\bullet \hat{\pi}^{i}(A)$ for any $x\in T(r)(\hat{h}(A),\hat{h}(B))$, where $\bullet$ denotes the bimodule action.\\
The same holds, mutatis mutandis, for $t(r) = T(r)\otimes \Q/\Z$.
\end{lemma}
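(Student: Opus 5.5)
The plan is to compute $T(r)$ directly from the integral Künneth formula and Poincaré duality, which hold for $A$ and $B$ by Proposition \ref{p5} and the remarks preceding \eqref{eq8}. By the Künneth formula,
\[\hat{H}^{2g-r}(A\times B,g)\simeq \bigoplus_{a+b=2g-r}\hat{H}^a(A)\otimes \hat{H}^b(B)(g),\]
and Poincaré duality on $A$ gives $\hat{H}^a(A)(g)\simeq \Hom_{\hat{\Z}}(\hat{H}^{2g-a}(A),\hat{\Z})$. Since all these groups are finitely generated free, we get $\hat{H}^a(A)\otimes \hat{H}^b(B)(g)\simeq \Hom_{\hat{\Z}}(\hat{H}^{i}(A),\hat{H}^{b}(B))$ with $i=2g-a$ and $b=2g-r-a=i-r$. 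Up to Tate twists, which are trivial over a separably closed field after fixing roots of unity, this exhibits $\hat{H}^{2g-r}(A\times B,g)$ as $\bigoplus_i \Hom_{\hat{\Z}}(\hat{H}^i(A),\hat{H}^{i-r}(B))$.

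The key step is to check that this decomposition is the one induced by the bimodule action of the Künneth projectors. Concretely, we must show that for $x$ in the summand indexed by $i$, we have $x\bullet\hat{\pi}^{i'}_A=\delta_{ii'}x$ and $\hat{\pi}^{j}_B\bullet x=\delta_{j,i-r}x$. For this we use the standard formula: a correspondence $x\in \hat{H}^*(A\times B)$ acts as $y\mapsto (p_B)_*(p_A^*y\cdot x)$. Composing with $\hat{\pi}^{i'}_A$, which is the projection onto $\hat{H}^{i'}(A)$, only retains the component of $x$ in $\hat{H}^{2g-i'}(A)\otimes\hat{H}^*(B)$. The same reasoning applies on the $B$ side. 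Here integrality is crucial: because all cohomology is torsion-free, the Künneth components and dual bases exist over $\hat{\Z}$, and the identification of the composition rule with composition of homomorphisms is exactly as in the rational case.

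The main obstacle is keeping track of signs and of which component corresponds to which degree. I would handle this as follows. Write $x=\sum \alpha\otimes\beta$ with $\alpha\in\hat{H}^a(A)$, and compute $x\circ\hat{\pi}^{i'}_A$ via the projection formula. The only surviving terms are those with $a=2g-i'$. Signs are irrelevant to which terms vanish, so they do not affect the statement. The identity $\hat{\pi}^{i-r}_B\bullet x=x\bullet\hat{\pi}^i_A$ then follows immediately, since both sides equal the $i$-component of $x$.

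For $t(r)$, we first use that, by Proposition \ref{p5}, $\hat{H}^{2g-r}(A\times B,g)\otimes\Q/\Z\simeq \hat{H}^{2g-r}_\et(A\times B,(\Q/\Z)'(g))$, with the second bimodule structure of Lemma \ref{l5}. We then tensor the decomposition above with $\Q/\Z$. The projector actions commute with $-\otimes\Q/\Z$, so the same orthogonality relations hold and yield the stated description with $\Hom_{\hat{\Z}}(\hat{H}^i(A),\hat{H}^{i-r}(B))\otimes\Q/\Z$.
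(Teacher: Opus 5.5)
Your proposal is correct and follows the paper's own proof. The paper likewise derives $\hat{H}^{2g-r}(A\times B,g)\simeq\bigoplus_i \Hom_{\hat{\Z}}(\hat{H}^i(A),\hat{H}^{i-r}(B))$ from the integral Künneth formula and integral Poincaré duality for $A$, then concludes ``by bookkeeping''; you simply carry that bookkeeping out explicitly. Your remark about trivialising Tate twists is unnecessary, since Poincaré duality already absorbs the twist $(g)$.
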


\begin{proof} Using again the Künneth formula and Poincaré duality, we get an isomorphism
\begin{equation}\label{eq20}
\hat{H}^{2g-r}(A\times B,g)\simeq \bigoplus_{i= 0}^{2g} \Hom_{\hat{\Z}}(\hat{H}^i(A),\hat{H}^{i-r}(B))
\end{equation}
and the lemma follows by bookkeeping.
\end{proof}

For later use, we consider a slightly more general situation, replacing $B$ by an arbitrary connected smooth projective variety $X$. By Proposition \ref{p5}, the Künneth formula for $\hat{H}^*(A\times X)$ still holds integrally. This, plus Poincaré duality for $A$, still yields the isomorphism \eqref{eq20} for any $r\in \Z$. Torsion in the cohomology of $X$ makes it difficult to talk of its integral Künneth projectors, but those of $A$ still act on the left hand side, yielding the decomposition on the right hand side. For $r=1$ and $l\ne p$, we have a short exact sequence
\begin{multline*}
0\to H^{2g-1}_l(A\times X,g)\otimes \Q_l/\Z_l\to H^{2g-1}_\et(A\times X,\Q_l/\Z_l(g))\\
\to H^{2g}_l(A\times X,g)\{l\}\to 0
\end{multline*}
from which the action of $\pi^1_l$ (the $l$-component of $\hat{\pi}^1$) cuts off the exact sequence
\begin{multline*}
0\to  \Hom_{\Z_l}(H^1_l(A),H^0_l(X))\otimes \Q_l/\Z_l\to  H^{2g-1}_\et(A\times X,\Q_l/\Z_l(g))\circ \pi^1_l\\
\to \Hom_{\Z_l}(H^1_l(A),H^1_l(X))\{l\}\to 0.
\end{multline*}

But $H^1_l(X)$ is always torsion-free and $H^0_l(X)=\Z_l$, so this exact sequence boils down to isomorphisms
\begin{equation}\label{eq21a}
A(k)\{l\}\simeq H^1_l(A)^*\otimes \Q_l/\Z_l\iso  H^{2g-1}_\et(A\times X,\Q_l/\Z_l(g))\circ \pi^1_l.
\end{equation}

\section{$1$-cocycles} \label{s6}

\subsection{Hochschild cohomology} Let $\Lambda=\Z[\Z-\{0\}]$ be the group algebra of the multiplicative monoid $\Z-\{0\}$; if $M$ is a $\Lambda$-bimodule, we have as usual its Hochschild cohomology
\[HH^r(\Lambda,M) =\Ext^r_{\Lambda\otimes \Lambda}(\Lambda,M)\]
where $\Lambda$ is considered as a $\Lambda$-bimodule via left and right action. In particular,
\[HH^1(\Lambda,M) = \frac{\{f:\Z-\{0\}\to M\mid f(mn) = m\diamond f(n) + f(m) \diamond n\}}{\{ m\mapsto m\diamond a - a \diamond m\}} \]
where $\diamond$ denotes the left and right actions. Since $\Z-\{0\}$ is commutative, the $1$-cocycle relation implies in particular the identity in $(m,n)$
\begin{equation}\label{eq6a}
m\diamond f(n) - f(n) \diamond m=n\diamond f(m) - f(m) \diamond n.
\end{equation}

\subsection{Some numerology} 

\begin{defn}\label{eq16}
For any $i>j\ge 0$, we set
\[
w_{i,j} = \gcd\nolimits_{m\ne 0}(m^i-m^{j})
\]
and let $w_{i,j}:=1$ if $i\ge 0$ and $j<0$. 
\end{defn}

A stable version appears in \cite[2.8]{soule}: Soulé's $w_{r}$ is the supremum of our $w_{i,j}$ for $i-j=r$. 

Here are some elementary properties of $w_{i,j}$:

\begin{lemma}\label{l11} a) Let $l$ be a prime number and let $s>0$. If $l$ is odd, then $l^s\mid w_{i,j}$ $\Rightarrow$ $l^s-l^{s-1}\mid i-j$. For $l=2$,  $4\mid w_{i,j}$ $\Rightarrow$ $i-j$ is even and $2^s\mid w_{i,j}$ $\Rightarrow$ $2^{s-2}\mid i-j$ for  $s>2$. In particular,  $w_{i,j}=2$ if $i-j$ is odd. \\
b) If $j=0$, then $w_{i,j}=1$.\\
c) The converse to a) holds provided $j\ge s$.
\end{lemma}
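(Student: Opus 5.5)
The plan is to prove a) by choosing a good value of $m$ modulo $l^s$, b) by inspecting $m=2$ and $m=-1$, and c) by lifting through the structure of $(\Z/l^s)^\times$.

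\emph{Part a).} We argue through the structure of the unit group $(\Z/l^s)^\times$. Suppose $l^s\mid w_{i,j}$. Then $l^s\mid m^j(m^{i-j}-1)$ for every $m\ne 0$. Pick $m$ prime to $l$ whose class generates $(\Z/l^s)^\times$; this is possible for $l$ odd, since the group is cyclic of order $l^s-l^{s-1}$. Because $m$ is a unit, we get $m^{i-j}\equiv 1 \pmod{l^s}$, and hence $l^s-l^{s-1}\mid i-j$.

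For $l=2$ the unit group is not cyclic, so two choices of $m$ are needed.
\begin{itemize}
\item If $4\mid w_{i,j}$, take $m=-1$. This gives $(-1)^{i-j}\equiv 1\pmod 4$, so $i-j$ is even.
\item If $s>2$, take $m=5$, which has order $2^{s-2}$ modulo $2^s$. This gives $2^{s-2}\mid i-j$.
\end{itemize}
For the final claim of a), suppose $i-j$ is odd. The case $l=2$ above rules out $4\mid w_{i,j}$, and the odd case rules out every odd prime, since $l^s-l^{s-1}$ is even. On the other hand $m^i-m^j=m^j(m^{i-j}-1)$ is always even: if $m$ is odd the second factor is even, and if $m$ is even then $j>0$, since $j=0$ is covered by b) (so only $j>0$ matters here), and the first factor is even. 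Hence $w_{i,j}=2$.

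Some care is needed in this last step. When $j=0$, part b) gives $w_{i,0}=1$ rather than $2$. So the statement ``$w_{i,j}=2$ if $i-j$ odd'' must implicitly assume $j\ge 1$, which is consistent with c). I will state this explicitly.

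\emph{Part b).} If $j=0$, take $m=2$ and $m=-1$. Then $w_{i,0}$ divides both $2^i-1$ and $(-1)^i-1$. For $i$ odd, the second number is $-2$, and $\gcd(2^i-1,2)=1$. For $i$ even, the second number is $0$, which gives nothing, so instead take $m=2$ and $m=3$. Note that $2^i-1$ is odd, and $3^i-1$ and $2^i-1$ share no common prime $l$ for all $i$ in general, so this pair does not settle the matter directly. A cleaner route works for every $i\ge 1$: from $m=2$ the gcd is odd, and any odd prime $l$ dividing it must divide $m^i-1$ for all $m$. Taking $m=l$ gives $l\mid l^i-1$, a contradiction. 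Hence $w_{i,0}=1$.

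\emph{Part c).} Assume $j\ge s$ and that the congruence condition of a) holds. For $m$ divisible by $l$, we have $l^s\mid m^j$, so $l^s\mid m^i-m^j$. For $m$ prime to $l$, the exponent of $(\Z/l^s)^\times$ is $l^s-l^{s-1}$ for $l$ odd, and it is $2^{s-2}$ for $l=2$, $s\ge3$ (respectively $2$ for $s=2$, and $1$ for $s=1$). This exponent divides $i-j$, so $m^{i-j}\equiv1$ and again $l^s\mid m^i-m^j$.

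Part c) is the step that needs the most care, specifically the small cases for $l=2$. For $s=1$ no hypothesis on $i-j$ is needed. For $s=2$ we need $i-j$ even. I will spell these out.
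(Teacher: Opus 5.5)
Your proof is correct and follows the paper's argument essentially verbatim: a generator of $(\Z/l^s)^\times$ for odd $l$, $m=-1$ and $m=5$ for $l=2$, the observation $l\nmid l^i-1$ for b) (the detours through $m=2,-1,3$ are unnecessary), and the split into $l\mid m$ versus $l\nmid m$ for c). Your caveat is also right: the ``in particular'' clause $w_{i,j}=2$ for $i-j$ odd needs $j\ge 1$, since $w_{i,0}=1$ by b) (e.g.\ $w_{1,0}=1$). This is consistent with how the paper later uses that clause, namely only for $j=i-1>0$.
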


\begin{proof} a) Suppose $l$ odd. Then $(\Z/l^s)^*$ is cyclic of order $l^s-l^{s-1}$. Taking an integer $m$ whose class generates this group gives a) in this case. For $l=2$, $(\Z/2^s)^*=\{\pm 1\} \times (1+4 \Z/2^s)\simeq \Z/2\times  \Z/2^{s-2}$ when $s>1$; taking either $m=-1$ when $s=2$ or $m=5$ when $s>2$ gives a) in that case. 
If $j=0$, then $l\nmid l^i-1$ proves b). Conversely, to get c) we observe that, under the hypothesis, if $l\nmid m$ then  $l^s\mid m^{i-j}-1$ and if $l\mid m$ then $l^s\mid m^j$.
\end{proof}

\subsection{Cocycles and coboundaries}\label{s6.3}
For $i> j$, define
\[R_{i,j} =\Lambda\otimes\Lambda/\langle ([m]-m^{j})\otimes 1, 1\otimes ([n]-n^i)\rangle.\]

A $\Lambda$-bimodule $M$ is \emph{of level $(i,j)$} if the action of $\Lambda\otimes \Lambda$ factors through $R_{i;j}$. Here the $1$-cocycle relation is
\begin{equation}\label{eq7}
 f(mn) =m^{j} f(n)+ n^if(m)
\end{equation} 
and \eqref{eq6a} becomes
\begin{equation}\label{eq6}
(m^i-m^j) f(n) =(n^i-n^j) f(m).
\end{equation}

For convenience, set
\[\theta(n) = \frac{n^i-n^{j}}{w_{i,j}}.\]

Then $\theta$ is a $1$-cocycle with values in $\Z$ provided with its obvious level $(i,j)$-structure, and so is $n\mapsto \theta(n)b$ for any $b\in M$; we call such $1$-cocycles \emph{refined coboundaries}.

\begin{prop}\label{l7} a) Let $M$ be a $\Lambda$-bimodule of level $(i,j)$. Then $HH^0(\Lambda,M)\allowbreak = {}_{w_{i,j}} M$ and $w_{i,j} HH^1(\Lambda,M) =0$. In particular, $HH^0(\Lambda,M)=HH^1(\Lambda,M)\allowbreak =0$ if $j= 0$.\\
b) Suppose that $j=i-1>0$ and $w_{i,j} M=2M=0$. Then $f(n)=0$ if $n$ is a square or if $4\mid n$. Moreover, $f(2n)=f(2)$ if $n$ is odd.
\end{prop}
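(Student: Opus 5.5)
For a) I work directly with the explicit formulas for $HH^0$ and $HH^1$ recalled above, specialised to level $(i,j)$. For $HH^0$: an invariant is an $a\in M$ with $m\diamond a=a\diamond m$ for all $m\ne 0$, i.e. $m^j a = m^i a$, i.e. $(m^i-m^j)a=0$ for all $m$. Since $w_{i,j}$ is a $\Z$-linear combination of finitely many of the $m^i-m^j$, this is equivalent to $w_{i,j}a=0$. So $HH^0(\Lambda,M)={}_{w_{i,j}}M$.

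For $HH^1$ I use \eqref{eq6}. Write $w_{i,j}=\sum_k c_k(m_k^i-m_k^j)$ with $c_k\in\Z$. Given a cocycle $f$, set $a=\sum_k c_k f(m_k)$. Then \eqref{eq6} gives, for every $n$,
\[w_{i,j}f(n)=\sum_k c_k(m_k^i-m_k^j)f(n)=\sum_k c_k(n^i-n^j)f(m_k)=(n^i-n^j)a .\]
Here $(n^i-n^j)a$ is exactly the coboundary $n\mapsto n\diamond a-a\diamond n$, up to a sign depending on conventions. Hence $w_{i,j}f$ is a coboundary and $w_{i,j}HH^1=0$. If $j=0$, Lemma \ref{l11} b) gives $w_{i,0}=1$, so both groups vanish.

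For b) I take $j=i-1>0$. Then $i-j=1$ is odd, so $w_{i,j}=2$ by Lemma \ref{l11} a), and $M$ is an $\mathbf{F}_2$-vector space. The cocycle relation \eqref{eq7} becomes $f(mn)=m^{i-1}f(n)+n^if(m)$ mod $2$. Since $i,i-1\ge1$, the coefficient $m^{i-1}$ is the parity of $m$, and likewise for $n^i$.

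I first compute the values at $\pm1$. From $f(1)=f(1)+f(1)$ I get $f(1)=0$. From $f(1)=f((-1)(-1))=2f(-1)=0$ I get no information, so I treat squares directly. For $n=m^2$ the relation gives $f(m^2)=(m^{i-1}+m^i)f(m)=m^{i-1}(1+m)f(m)$. Modulo $2$, the product $m(1+m)$ is always even, so $f(m^2)=0$; this covers $m=-1$ as well.

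Next, for $4\mid n$ write $n=2\cdot 2k$. The relation gives $f(n)=2^{i-1}f(2k)+(2k)^if(2)$. Both coefficients are even because $i-1\ge1$ and $i\ge1$, so $f(n)=0$ mod $2$. Finally, for $n$ odd, $f(2n)=2^{i-1}f(n)+n^if(2)=f(2)$, since $2^{i-1}$ is even and $n^i$ is odd.

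The only delicate point is keeping track of the left/right convention in \eqref{eq7}, so that the coboundary in a) matches $(n^i-n^j)a$. Either convention gives the same conclusion up to sign, so I do not expect a real obstacle.
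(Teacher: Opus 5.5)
Your proof is correct and follows the paper's argument essentially verbatim. In a) you use the same Bézout element $a=\sum_k c_k f(m_k)$ combined with \eqref{eq6}, and in b) you reduce the cocycle relation mod $2$ and evaluate it at the same special pairs; the only cosmetic difference is that for $4\mid n$ you write $n=2\cdot 2k$ rather than $n=4m$, which spares you from first establishing $f(4)=0$.
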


\begin{proof} a) The first claim is obvious.  Let now $f$ be a $1$-cocycle. Take $n_1,\dots, n_r$ such that $\gcd_s(n_s^i-n_s^j)=w_{i,j}$; choose $a_1,\dots a_r$ such that $\sum_s a_s (n_s^i-n_s^j)=w_{i,j}$, and let $b=\sum_s a_s f(n_s)$. Then, for all $n$,
\[w_{i,j}f(n) = \sum_s a_s (n_s^i-n_s^j)f(n)=\sum_s a_s (n^i-n^j)f(n_s)= (n^i-n^j) b\]
thanks to \eqref{eq6}, hence the second claim. The last one follows from Lemma \ref{l11} b).

b) In this case, \eqref{eq7} may equally be written
\[ f(mn) =m f(n)+ nf(m).\]

For $n=m$, we get the first claim;  then $f(4m)=m^j f(4) + 4^j f(m)=0$. The last identity follows similarly.
\end{proof}

\section{A weak form of Theorem \ref{t3}}\label{s7}

\subsection{A weak version of Theorem \ref{t3} a) (I)}\label{s6.1}

\begin{thm}\label{t4} a) There exists a system $(\pi^i_A)$ of orthogonal projectors of sum $1$ in $\End_{\sM_\et}(h(A))$ such that $\pi^0_A$ is given by the origin of $A$ and, for any $n\in \Z$:
\begin{itemize}
\item $n_A^*\circ \pi^1_A = n\pi^1_A$;
\item $2n_A^*\circ \pi^i_A  =2n^i\pi^i_A$ for $i>1$.
\end{itemize}
Moreover, $n_A^*\circ \pi^i_A = n^i\pi^i_A$ for all $i$ if $n$ is a square or if $4\mid n$.\\
b) Any other such system is conjugate to $(\pi^i_A)$ by an element of the form $1+z$ with $4z=0$ and $z\circ \pi^1_A=0$. Conversely, any conjugate of $(\pi^i_A)$ by such an element verifies a).\\
c) If $x\in \End_{\sM_\et}(h(A))_\tors$ is such that $(1+x)\circ \pi^i_A = \pi^i_A\circ (1+x)$ for all $i$, then $x=0$. (Hence the element $z$ of b) is unique.)
\end{thm}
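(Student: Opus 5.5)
The plan is to lift the rational Deninger–Murre projectors through the square-zero ideal of torsion correspondences, and then to correct the lifts by conjugation so that they satisfy the $n_A^*$-eigenvalue conditions, using the cocycle computations of \S\ref{s6}.

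\emph{Step 1: lift the projectors.} Write $E=\End_{\sM_\et}(h(A))=CH^g_\et(A\times A)[1/p]$ and $I=E_\tors$. By Corollary \ref{c2} we have $I\cdot I=0$, so $I$ is a two-sided square-zero ideal. By \cite[Th. 2.6 c)]{cycle-etale}, $E/I\subset E\otimes\Q=\End_{\sM\otimes\Q}(h(A))$, and the rational DM projectors $\pi^i_\Q$ live there. Their $l$-adic classes are the Künneth projectors $\hat\pi^i_A$, which are integral by Proposition \ref{p5} and integral Poincaré duality. The integrality lemma (Corollary \ref{c8}) therefore gives elements $p^i\in E$ lifting $\pi^i_\Q$. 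For $\pi^0_A$ I would take exactly the class of $\{0\}\times A$.

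The relations $p^ip^j-\delta_{ij}p^i$ and $\sum_i p^i-1$ all lie in $I$. Since $I^2=0$, the standard lifting of a complete orthogonal family of idempotents modulo a nilpotent ideal works in one step. Lift idempotents by $e\mapsto 3e^2-2e^3$, make them orthogonal successively, and absorb the defect of the sum into the last one. This produces orthogonal projectors $(\pi^i_A)$ of sum $1$. The same square-zero argument shows that any two such families are conjugate by $u=\sum_i\pi'^i\pi^i=1+z$ with $z\in I$.

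\emph{Step 2: the cocycle and its correction.} For each $i$ put $f_i(n)=n_A^*\circ\pi^i_A-n^i\pi^i_A$. This is torsion because the relation holds rationally. Torsion correspondences are identified with the bimodule $t(1)$, and the projectors act on $t(1)$ through $\hat R$ (Lemma \ref{l5}). By Lemma \ref{l6} with $r=1$, the only nonzero pieces of $t(1)$ are of the form $\pi^{i-1}\,t(1)\,\pi^i$. Since $f_i(n)=f_i(n)\circ\pi^i_A$, it lies in $M_i:=\pi^{i-1}_A\, I\,\pi^i_A$. On $M_i$, left composition with $n_A^*$ acts by $n^{i-1}$ and right composition acts by $n^i$, so $M_i$ is a $\Lambda$-bimodule of level $(i,i-1)$. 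Expanding $(mn)_A^*=m_A^*n_A^*$ shows that $f_i$ is a $1$-cocycle in the sense of \eqref{eq7}.

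By Proposition \ref{l7} a) and Lemma \ref{l11}, $w_{1,0}=1$ and $w_{i,i-1}=2$ for $i>1$. Hence $f_1(n)=n^{0}a-a\,n^{1}$ is a coboundary, and $2f_i$ is a refined coboundary $(n^i-n^{i-1})b_i$. Since $M_i$ is divisible (Proposition \ref{p4} c)), I can choose $a_i\in M_i$ with $2a_i=\pm b_i$. I then conjugate the whole family by $\prod_i(1+a_i)$; the factors commute because products of torsion elements vanish. This kills $f_1$ and makes $2f_i=0$ for $i>1$. For $i=0$ nothing changes, since $M_0=0$. Proposition \ref{l7} b) then gives $f_i(n)=0$ when $n$ is a square or $4\mid n$, which completes a).

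\emph{Step 3: uniqueness and rigidity.} For b), two systems satisfying a) are conjugate by $1+z$ with $z=\sum_i z_i$ and $z_i\in M_i$. Comparing their cocycles, the difference $n\mapsto (n^{i-1}-n^i)z_i$ must be $0$ for $i=1$ and $2$-torsion for $i>1$. This forces $z_1\in{}_{w_{1,0}}M_1=0$, so $z\circ\pi^1_A=0$, and $2(n^{i}-n^{i-1})z_i=0$ for all $n$. Using $\gcd_n(n^i-n^{i-1})=2$, the latter gives $4z_i=0$. The converse is a direct check with the same formulas. For c), decompose $x=\sum_{j,i}\pi^j x\pi^i$; only the $(i-1,i)$ pieces survive by Lemma \ref{l6}. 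Commutation with every $\pi^i_A$ forces all off-diagonal pieces to vanish, so $x=0$.

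The main obstacle I expect is Step 2. One must justify carefully that $f_i(n)$ lands precisely in the level-$(i,i-1)$ piece, with the correct left and right actions. This requires transporting the Künneth bookkeeping of Lemma \ref{l6} from $\hat{\sM}$ to torsion étale correspondences via Lemma \ref{l5}.
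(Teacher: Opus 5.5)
Your argument follows the paper's proof essentially step for step: the integrality lemma lifts the Deninger--Murre projectors modulo torsion, and Corollary~\ref{c2} (square-zero torsion ideal) lets you lift them to genuine orthogonal projectors. You then use the $1$-cocycle $n\mapsto n_A^*\circ\pi^i_A-n^i\pi^i_A$ with values in the level-$(i,i-1)$ bimodule $\pi^{i-1}_A\circ I\circ\pi^i_A$, correct it by conjugation using Proposition~\ref{l7} and divisibility, and handle b) and c) with the same Lemma~\ref{l6} bookkeeping.

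One justification is wrong, though. You write ``for $i=0$ nothing changes, since $M_0=0$.'' Conjugating by $1+\sum_i a_i$ sends $\pi^0_A$ to $\pi^0_A-a_1$, because $\pi^0_A\circ a_1=a_1$ while $a_i\circ\pi^0_A=0$. So the origin class survives only if $a_1=0$, and $M_0=0$ does not give you that.

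It is nevertheless true, for a different reason. Since $n_A$ fixes the origin, $\pi^0_A\circ n_A^*=\pi^0_A$ holds on the nose. Hence
\[f_1(n)=\pi^0_A\circ f_1(n)=\pi^0_A\circ n_A^*\circ\pi^1_A-n\,\pi^0_A\circ\pi^1_A=0.\]
Thus the condition $n_A^*\circ\pi^1_A=n\pi^1_A$ is automatic once $\pi^0_A$ is the origin, you get $a_1=0$, and only the indices $i\ge 2$ need correcting. This also fits the condition $z\circ\pi^1_A=0$ in b). The paper leaves this point implicit too.
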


We proceed in three steps. 

\subsubsection*{Step 1} Using the adic Künneth projectors $\hat{\pi}_A$ of \S \ref{s3}, Corollary \ref{c8} shows that the Chow-Künneth projectors of \cite{dm} lift from 
$CH^g(A\times A)\otimes \Q\iso CH^g_\et(A\times A)\otimes \Q$ to  a (unique) set of orthogonal projectors with sum $1$ in $CH^g_\et(A\times A)[1/p]/\tors$.

\subsubsection*{Step 2} Using Corollary \ref{c2} and  \cite[Lemma 5.4]{jannsen2}, we may lift the projectors of Step 1 to  a set of orthogonal projectors with sum $1$ $(\pi^i_0)_{i=0}^{2g}$ in $CH^g_\et(A\times A)[1/p]$.

\subsubsection*{Step 3} Now we modify the projectors of Step 2 to achieve the conditions of Theorem \ref{t4}. We choose $\pi^0_A$ as said: it certainly verifies $m_A^*\pi^0_A=\pi^0_A$. For $i>0$, consider the $\Lambda$-bimodule structure on $CH^g_\et(A\times A)[1/p]$ given by $m\diamond x\diamond n = n^i m_A^*\circ x$. By \cite[Th. 3.1]{dm}, the $1$-coboundary
\[y_i(n)=n_A^*\circ \pi^i_0  -n^i \pi^i_0\]
takes values in $CH^g_\et(A\times A)[1/p]_\tors$, hence defines a $1$-cocycle with values in this bimodule. But
\begin{equation}\label{eq4}
y_i(n)  = y_i(n)\circ \pi^i_0.
\end{equation}

Therefore, by Lemmas \ref{l5} and \ref{l6}, 
\[y_i(n)\in t_i(1) := t(1)(h^i(A),h^{i-1}(B))\]
and the bimodule $t_i(1)$ is of level $(i,i-1)$ by Proposition \ref{p4} e). Applying Proposition \ref{l7} a), we may write 
\[y_i(n) = 
\begin{cases}
(n-1)x_1 &\text{ if } i=1\\
\frac{n^i-n^{i-1}}{2}x'_i + g_i(n)  &\text{ if } i>1
\end{cases}
\]
for some $x_1,x'_i\in t_i$, with $2g_i(n)=0$. We set $g_1(n)=0$.

For $i>1$, using the divisibility of $t_i(1)$ let $x_i\in t_i(1)$ be such that $2x_i = x'_i$, and let $x=\sum_{i>0} x_i$. Define
\begin{multline}\label{eq9}
\pi^i = (1+x) \circ \pi^i_0\circ (1+x)^{-1} = \pi^i_0+x\circ  \pi^i_0- \pi^i_0\circ x\\
=  \pi^i_0+x\circ  \pi^i_0- x\circ  \pi^{i+1}_0 =  \pi^i_0+x_i-x_{i+1}
\end{multline}
where we used again Corollary \ref{c2} for the second equality and Lemmas \ref{l5} and \ref{l6} for the third.  Then, for $n\in \Z$,
\[n_A^*\circ \pi^i - n^i\pi^i = y_i(n) + (n^{i-1} -n^i)x_i = g_i(n)
\]
which proves a). The last claim follows from Proposition \ref{l7} b) applied to $g_i(n)$.

b) Still by \cite[Lemma 5.4]{jannsen2}, any other lift of the projectors of Step 2 is of the form $\tilde \pi^i=(1+z)\pi^i(1+z)^{-1}$ with $z\in CH^g_\et(A\times A)[1/p]_\tors$. The same computation as \eqref{eq9} yields
\[\tilde \pi^i=\pi^i +z_i-z_{i+1}\]
with $z_i=z\circ \pi^i=\pi^{i-1}\circ z$. 

Suppose that the $\tilde \pi^i$  also satisfy a). For $n\ne 0$, let $g_i(n)=(n^*_A-n^i)\circ \pi^i$ as in the proof of a), and let similarly $\tilde g_i(n)=(n^*_A-n^i)\circ \tilde \pi^i$. 
Then
\begin{gather*}
(n^*_A-n^i)\circ (z_i-z_{i+1}) = (n^*_A-n^i)\circ (\tilde \pi^i-\pi^i) = \tilde g_i(n) -g_i(n) \text{ if } i>1;\\
(n^*_A-n)\circ (z_1-z_{2}) =  0.
\end{gather*}

But $(n^*_A-n^i)\circ (z_i-z_{i+1}) = (n^{i-1} -n^i)z_i$ as in the proof of a). 
Since $2( \tilde g_i(n) -g_i(n))=0$ we get $4z_i=0$ for $i>1$, and $z_1=0$. Conversely, under these conditions, the same computation shows that the $\tilde \pi^i$  also satisfy a). 

In c), the condition is equivalent to $x\circ \pi^i_A = \pi^i_A \circ x$; but $x\circ \pi^i_A = \pi^{i-1}_A \circ x$ by Lemma \ref{l6}, hence 
\[\pi^i_A\circ x= \pi^i_A\circ \pi^i_A\circ x= \pi^i_A\circ x\circ \pi^i_A = \pi^i_A\circ \pi^{i-1}_A \circ x=0\]
for all $i$, and therefore $x=0$. This concludes the proof of Theorem \ref{t3} a).

\subsection{Another lifting method} Later we shall need the following proposition:

\begin{prop}\label{l15} Let $(\bar \pi^i_A)_{i=0}^{2g}$ be the set of projectors of Step 1 in the proof of Theorem \ref{t4}, and let $(\pi^i)_{i=0}^{2g}$ be a set of lifts of the $\bar \pi^i_A$, with $\sum \pi^i=1$. Then the $(\pi^i)^2$ form a set of orthogonal projectors with sum $1$.
\end{prop}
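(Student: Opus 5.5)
We write $\pi^i = \bar\pi^i + $ (lift), so that for all $i,j$,
\[\pi^i\circ\pi^j = \delta_{ij}\pi^i + \epsilon_{ij},\qquad \epsilon_{ij}\in CH^g_\et(A\times A)[1/p]_\tors.\]
This holds because the $\bar\pi^i$ are orthogonal projectors in $CH^g_\et(A\times A)[1/p]/\tors$. The whole argument rests on Corollary \ref{c2}: composition in $\sM_\et^\ab$ kills torsion. Consequently any composite of three or more factors in which an $\epsilon$ appears next to another $\epsilon$ vanishes. More importantly, $\epsilon_{ij}\circ\epsilon_{kl}=0$, and $\epsilon\circ t = t\circ\epsilon = 0$ for any torsion $t$.

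\emph{Step 1: idempotence of $(\pi^i)^2$.} Expand
\[(\pi^i)^2\circ(\pi^i)^2 = \pi^i\circ(\pi^i\circ\pi^i)\circ\pi^i = \pi^i\circ(\pi^i+\epsilon_{ii})\circ\pi^i = (\pi^i)^3 + \pi^i\circ\epsilon_{ii}\circ\pi^i .\]
Since $(\pi^i)^3 = \pi^i\circ(\pi^i+\epsilon_{ii}) = (\pi^i)^2 + \pi^i\circ\epsilon_{ii}$, the claim reduces to showing $\pi^i\circ\epsilon_{ii}=0$ and $\pi^i\circ\epsilon_{ii}\circ\pi^i=0$.

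To get these, use associativity in two ways:
\[\pi^i\circ\epsilon_{ii} = \pi^i\circ((\pi^i)^2-\pi^i) = (\pi^i)^3-(\pi^i)^2 = ((\pi^i)^2-\pi^i)\circ\pi^i = \epsilon_{ii}\circ\pi^i .\]
Also $\pi^i\circ\epsilon_{ii} = (\pi^i)^3-(\pi^i)^2$ is itself torsion.

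These two facts do not immediately give vanishing, so I would use the bimodule structure of Lemmas \ref{l5} and \ref{l6}. The torsion group $t(1)$ is a bimodule over $\hat{\sM}^\ab$, and the $\pi^i$ act on it through their images $\hat\pi^i$ under $\hat R$, which are the integral Künneth projectors. By Lemma \ref{l6},
\[\hat\pi^i\bullet\epsilon = \epsilon\bullet\hat\pi^{i+1},\]
so $\epsilon_{ii}$ decomposes into components $\epsilon\circ\hat\pi^{k}$, and composing on either side by $\pi^i$ selects different components. The identity $\pi^i\circ\epsilon_{ii}=\epsilon_{ii}\circ\pi^i$ then says that the component with source $h^{i}$ equals the component with source $h^{i+1}$. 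Since these sit in different direct summands, both vanish. Hence $\pi^i\circ\epsilon_{ii}=0$, and then $(\pi^i)^2$ is idempotent.

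\emph{Step 2: orthogonality.} For $i\ne j$,
\[(\pi^i)^2\circ(\pi^j)^2 = \pi^i\circ\epsilon_{ij}\circ\pi^j .\]
In the bimodule this is $\hat\pi^i\bullet\epsilon_{ij}\bullet\hat\pi^j$, which is nonzero only if $j=i+1$. To handle that case, I would show $\pi^i\circ\epsilon_{ij}=0$ by the same associativity trick:
\[\pi^i\circ\epsilon_{ij} = \pi^i\circ\pi^i\circ\pi^j = \epsilon_{ii}\circ\pi^j + \pi^i\circ\pi^j,\]
which expands to $\epsilon_{ii}\circ\pi^j+\epsilon_{ij}$. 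Both pieces can then be killed by the component bookkeeping above.

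\emph{Step 3: sum equals $1$.} Write
\[1 = \Bigl(\sum\pi^i\Bigr)^2 = \sum_i(\pi^i)^2 + \sum_{i\ne j}\epsilon_{ij}.\]
So it suffices that $\sum_{i\ne j}\epsilon_{ij}=0$. Composing the relation $\sum_j\pi^j=1$ with $\pi^i$ on the left gives
\[\pi^i = \sum_j\pi^i\circ\pi^j = \pi^i+\sum_j\epsilon_{ij},\]
hence $\sum_j\epsilon_{ij}=0$ for each $i$, and in particular $\epsilon_{ii}=-\sum_{j\ne i}\epsilon_{ij}$. Summing over $i$ gives $\sum_{i\ne j}\epsilon_{ij} = -\sum_i\epsilon_{ii}$, so I need $\sum_i\epsilon_{ii}=0$.

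Now write $\epsilon_{ii} = \sum_k\epsilon_{ii}\circ\hat\pi^k$ in components. By Step 1 and Lemma \ref{l6}, only the components with $k\ne i,i+1$ survive. These should be killed by the equations $\sum_j\epsilon_{ij}=0$ together with their right-sided analogues $\sum_i\epsilon_{ij}=0$, which come from composing $\sum_i\pi^i=1$ on the right.

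\textbf{Main obstacle.} The delicate point is the precise use of Lemma \ref{l6}: the $\pi^i$ are not themselves central, and only their images $\hat\pi^i$ govern the action on torsion. The risk lies in checking that every component argument in Steps 1--3 uses only this bimodule action. I expect the vanishing of $\sum_i\epsilon_{ii}$ in Step 3 to need the most care.
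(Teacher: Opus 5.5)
Your Step 1 is correct and is exactly the paper's argument for \eqref{eq.721} and \eqref{eq.727}: $\epsilon_{ii}$ commutes with $\pi^i$, and the shift in Lemma \ref{l6} forces $\pi^i\circ\epsilon_{ii}=\pi^{i-1}\circ\epsilon_{ii}=0$. Your observation that $\pi^i\circ\epsilon_{ij}\circ\pi^j=0$ for $j\neq i,i+1$ is also right.

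The gap is in Step 2 for $j=i+1$ and in Step 3, and it cannot be closed. Your associativity identity reads $\pi^i\circ\epsilon_{i,i+1}=\epsilon_{i,i+1}+\epsilon_{ii}\circ\pi^{i+1}$. The last term is the component of $\epsilon_{ii}$ with source $h^{i+1}$, which Step 1 already kills. So the bookkeeping yields $\pi^i\circ\epsilon_{i,i+1}=\epsilon_{i,i+1}$, not $0$. Combining this with Step 1 for $i$ and $i+1$ gives exactly
\[(\pi^i)^2\circ(\pi^{i+1})^2=\pi^i\circ\pi^{i+1}=\epsilon_{i,i+1},\]
and nothing in the hypotheses makes this torsion class vanish. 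Similarly, the relations $\sum_j\epsilon_{ij}=0=\sum_i\epsilon_{ij}$ do not force $\sum_i\epsilon_{ii}=0$.

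In fact the proposition as stated is false. Take an elliptic curve $E$ with the projectors $\pi^0=\{0\}\times E$, $\pi^2=E\times\{0\}$ and $\pi^1$ of Theorem \ref{t6}. Let $\alpha\in\Pic^0(E)$ be nonzero of order prime to $p$, and set $\tau=p_1^*\alpha=\alpha\times E$. A direct computation (as in Lemma \ref{l14}) gives
\begin{itemize}
\item $\pi^0\circ\tau=\tau=\tau\circ\pi^1$;
\item $\tau\circ\pi^0=\tau\circ\pi^2=\pi^1\circ\tau=\pi^2\circ\tau=0$;
\item $\tau\circ\tau=0$.
\end{itemize}
The lifts $\tilde\pi^0=\pi^0+\tau$, $\tilde\pi^1=\pi^1$, $\tilde\pi^2=\pi^2-\tau$ have sum $\Delta_E$. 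Yet $(\tilde\pi^0)^2=\tilde\pi^0$ and $(\tilde\pi^2)^2=\pi^2$. Hence $\sum_i(\tilde\pi^i)^2=\Delta_E+\tau\neq\Delta_E$ and $(\tilde\pi^0)^2\circ(\tilde\pi^1)^2=\tau\neq0$. The same works for any $A$ of positive dimension: with $T=CH^g_\et(A\times A)[1/p]_\tors$ and $(\pi^i_0)$ as in Step 2, add a nonzero $\tau\in\pi^i_0\circ T\circ\pi^{i+1}_0$ to $\pi^i_0$ and subtract it from some $\pi^m_0$ with $m\notin\{i,i+1\}$.

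The paper's proof breaks at the same spot. \eqref{eq.725} writes $a_{ij}=\hat\pi^i\bullet a_{ij}+\hat\pi^{j-1}\bullet a_{ij}$, which double counts when $j-1=i$. In the example, $a_{01}=\tau$ while the right-hand side of \eqref{eq.725} is $0$.

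What your computation does establish is an equivalence: the $(\pi^i)^2$ form orthogonal projectors with sum $1$ if and only if $\pi^i\circ\pi^{i+1}=0$ for all $i$. Under that extra hypothesis your Steps 1--2 go through. For the sum, the paper's trick is cleaner than chasing $\sum_i\epsilon_{ii}$: $1-\sum_i(\pi^i)^2$ is then an idempotent torsion element, hence $0$ by Corollary \ref{c2}.

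The places that invoke this proposition (Corollary \ref{c14}, Lemma \ref{p1}, Proposition \ref{p17}) would need that hypothesis verified. The DM condition of Corollary \ref{c14} does not supply it once $g\ge2$. For example, on $E_1\times E_2$ with product DM projectors $\pi^i_0$, replace $\pi^1_0,\pi^3_0$ by $\pi^1_0-\tau,\ \pi^3_0+\tau$, where $\tau\in\pi^1_0\circ T\circ\pi^2_0$ has order $2$ or $3$ prime to $p$.
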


\begin{proof} It is similar to that of Theorem \ref{t4} c), but a little more elaborate.  Let $a_{ij}= \pi^i\pi^j-\delta_{ij} \pi^i\in CH^{2g}_\et(A\times A)[1/p]_\tors$. For any $r$ and $l\ne p$, we have
\[\pi^r\circ a_{ij} =\hat{\pi}^r\bullet a_{ij}, \quad a_{ij}\circ \pi^r = a_{ij}\bullet \hat{\pi}^r\]
by Lemma \ref{l5}, and
\[ a_{ij}\bullet \hat{\pi}^r= \hat{\pi}^{r-1}\bullet a_{ij}\]
 by Lemma \ref{l6}.
 
Consider first the case $i=j$. Then $a_{ii}$ commutes with $\pi^i$, hence  $\hat{\pi}^i\bullet a_{ii}=\hat{\pi}^{i-1}\bullet a_{ii}$, which implies
\begin{equation}\label{eq.721}
\hat{\pi}^i\bullet a_{ii}= \hat{\pi}^{i-1}\bullet a_{ii}=0
\end{equation}
hence $\pi^i\circ a_{ii}= \pi^{i-1}\circ a_{ii}=0$ and in particular
\begin{equation}\label{eq.727}
(\pi^i)^3=(\pi^i)^2 \text{ and } (\pi^i)^4=(\pi^i)^3=(\pi^i)^2. 
\end{equation}

Suppose now that $i\ne j$. Then
\begin{align*}
(\pi^i)^2\circ \pi^j &=(\pi^i+a_{ii})\circ \pi^j =a_{ij} + a_{ii}\bullet \hat{\pi}^j = a_{ij} + \hat{\pi}^{j-1}\bullet a_{ii}\\
&= \pi^i\circ (\pi^i\circ \pi^j) =\hat{\pi}^i\bullet a_{ij}
\end{align*}
i.e.
\begin{equation}\label{eq.723}
(1-\hat{\pi}^i)\bullet a_{ij}+ \hat{\pi}^{j-1}\bullet  a_{ii}=0
\end{equation}
and playing with the equality $\pi^i\circ (\pi^j)^2= (\pi^i\circ \pi^j)\circ \pi^j$, we obtain similarly
\begin{equation}\label{eq.724}
(1-\hat{\pi}^{j-1})\bullet a_{ij}+ \hat{\pi}^i\bullet  a_{jj}=0.
\end{equation}

Applying $\hat{\pi}^r$ to \eqref{eq.723} or \eqref{eq.724}, we get $\hat{\pi}^r\bullet a_{ij}=0$ for $r\ne i,j-1$, hence
\begin{equation}\label{eq.725}
 a_{ij}=  \hat{\pi}^i\bullet  a_{ij}+ \hat{\pi}^{j-1}\bullet  a_{ij}= - \hat{\pi}^i\bullet  a_{jj}- \hat{\pi}^{j-1}\bullet  a_{ii}.
\end{equation}

This in turn implies, for $i\ne j$ 
\[(\pi^i)^2\circ \pi^j = -\hat{\pi}^i\bullet a_{jj} = -\pi^i\circ ((\pi^j)^2 - \pi^j)= \pi^i\circ \pi^j - \pi^i\circ (\pi^j)^2.\]

Composing with $\pi^i$ on the left and using \eqref{eq.727}, this gives
\begin{equation}\label{eq.728}
(\pi^i)^2\circ (\pi^j)^2=0.
\end{equation}

We have proven that the $(\pi^i)^2$ form a set of orthogonal projectors; it remains to see that their sum is $1$. But
\[1=(\sum_i \pi^i)^2=\sum_i (\pi^i)^2 +\sum_{i\ne j} a_{ij}\]
which shows that $x=\sum_{i\ne j} a_{ij}$ is idempotent. But $x\circ x=0$ by Corollary \ref{c2}, so $x=0$ and the proof is complete.
\end{proof}

\begin{cor} \label{c14} Let $(\pi^i)_{i=0}^{2g}$ be such that $\sum \pi^i=1$ and $n_A^*\circ \pi^i = n^i\pi^i$ for all $i$ and all $n\in \Z$. Then the $(\pi^i)^2$ form a set of étale integral DM projectors.
\end{cor}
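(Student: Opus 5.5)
The plan is to reduce to Proposition \ref{l15}. For that I first need to know that the given $\pi^i$ lift the projectors $\bar\pi^i_A$ of Step~1. Then I check the two DM conditions of Definition \ref{d4} directly for the squares $(\pi^i)^2$.

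\emph{Step 1: the $\pi^i$ lift the $\bar\pi^i_A$.} Let $\rho^i$ be the image of $\pi^i$ in $CH^g_\et(A\times A)\otimes\Q\simeq CH^g(A\times A)\otimes\Q$. Let $\varpi^j$ be the rational Deninger--Murre projectors of \cite{dm}; these satisfy $n_A^*\circ\varpi^j=n^j\varpi^j$ \cite[Th.~3.1]{dm}. By associativity of composition, for all $i,j$ and $n$ we have
\[n^j\,\varpi^j\circ\rho^i=(n_A^*\circ\varpi^j)\circ\rho^i=\varpi^j\circ(n_A^*\circ\rho^i)=n^i\,\varpi^j\circ\rho^i.\]
The middle equality uses that $n_A^*$ commutes with $\varpi^j$, which holds because $n_A^*=\sum_j n^j\varpi^j$. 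Taking $n=2$ gives $\varpi^j\circ\rho^i=0$ for $i\ne j$. Since $\sum_i\rho^i=1$, we get
\[\varpi^j=\sum_i\varpi^j\circ\rho^i=\varpi^j\circ\rho^j.\]
Similarly, since $\sum_j\varpi^j=1$, we get $\rho^i=\varpi^i\circ\rho^i$. Next I use the identity $n_A^*\circ\rho^i=n^i\rho^i$ to show that $\rho^i\circ\varpi^j=0$ for $j\ne i$. This follows from $\rho^i\circ n_A^*=\rho^i\circ\sum_j n^j\varpi^j$ together with the relation $\rho^i n_A^*=n_A^*\rho^i$ rationally. If that relation is awkward to establish, I would instead argue via the bimodule picture: $\rho^i$ lies in $\varpi^i\,\End(h(A))_\Q$, which is $\Hom(h(A),h^i(A))$, and this is pure of weight~$i$ for the $n_A^*$-action on both sides. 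Either way this yields $\rho^i=\varpi^i$. Reducing modulo torsion (Remark \ref{r2} a)), $\pi^i$ maps to $\bar\pi^i_A$.

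\emph{Step 2: orthogonality and the sum.} By Step 1 the family $(\pi^i)$ satisfies the hypotheses of Proposition \ref{l15}. Hence the $(\pi^i)^2$ form a set of orthogonal projectors in $CH^g_\et(A\times A)[1/p]$ with sum $1$.

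\emph{Step 3: the DM conditions.}
\begin{itemize}
\item For the cycle class, $\cl_l$ is compatible with composition, so $\cl_l((\pi^i)^2)=\cl_l(\pi^i)^2$. The class $\cl_l(\pi^i)$ equals the class of $\varpi^i$, which is the Künneth projector $\hat\pi^i$, by \cite{dm} and the torsion-freeness in Proposition \ref{p5}. Its square is again $\hat\pi^i$.
\item For multiplication by $n$, associativity gives
\[n_A^*\circ(\pi^i)^2=(n_A^*\circ\pi^i)\circ\pi^i=n^i(\pi^i)^2.\]
\end{itemize}

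The main obstacle is Step 1, specifically the one-sided argument showing $\rho^i=\varpi^i$ rationally. The hypothesis only controls composition with $n_A^*$ on the left, so identifying $\rho^i$ with $\varpi^i$ needs either the commutation of $n_A^*$ with $\rho^i$ or the weight argument through Lemma \ref{l6}-type bookkeeping on rational Chow groups.
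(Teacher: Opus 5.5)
Your argument is correct and follows the paper's route. The paper identifies the rational images of the $\pi^i$ with the Deninger--Murre projectors by quoting the uniqueness in \cite[Th.~3.1]{dm}, then applies Proposition~\ref{l15} and checks the two conditions of Definition~\ref{d4}; your Step~1 is a hand proof of that uniqueness.

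The ``main obstacle'' you flag is not one. You have already shown $\rho^i=\varpi^i\circ\rho^i$ and $\varpi^i=\varpi^i\circ\rho^i$, so $\rho^i=\varpi^i$ follows at once, and the detour through $\rho^i\circ\varpi^j$ should be dropped. Its ``purity on both sides'' justification is in fact false: the right $n_A^*$-action on $\Hom(h(A),h^0(A))\otimes\Q$ has eigenvalue $n$ on the summand $\Hom(h^1(A),h^0(A))\otimes\Q\simeq A(k)\otimes\Q$, which is nonzero in general.
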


\begin{proof} By \cite[Th. 3.1]{dm},  the condition $n_A^*\circ \bar \pi^i = n^i\bar \pi^i$ implies that the $\bar \pi_i$ are a set of orthogonal projectors.
\end{proof}

\subsection{A weak version of Theorem \ref{t3} b)}

\begin{lemma}\label{l16} Let $(\pi^i_A)$ be a system of projectors as in Theorem \ref{t4}. Then we have
\begin{equation}\label{eq18}
2\pi^i_A\circ n_A^* =2n_A^*\circ \pi^i_A
\end{equation}
for any $n\in \Z$.
\end{lemma}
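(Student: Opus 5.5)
We need $2\pi^i\circ n^* = 2n^*\circ\pi^i$. The plan is to show that the commutator $c_i(n) := \pi^i_A\circ n_A^* - n_A^*\circ\pi^i_A$ is torsion, that it lies in a single graded piece of the torsion bimodule, and that it is killed by $2$ using the eigenvalue relations of Theorem \ref{t4} a).

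\emph{Torsion.} Rationally, $\bar\pi^i$ commutes with $n^*$ by \cite[Th. 3.1]{dm}; equivalently, Künneth projectors commute with every cohomological correspondence (\S\ref{s3}). Hence $c_i(n)\in CH^g_\et(A\times A)[1/p]_\tors = t(1)$.

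\emph{Degree localisation.} By Lemmas \ref{l5} and \ref{l6}, composition with the $\pi^j_A$ on a torsion element $y$ acts through the $\hat\pi^j$, and $y\circ\pi^j = \pi^{j-1}\circ y$. I will compute $c_i(n)$ using the decomposition $n^* = \sum_j n^*\circ\pi^j$. Write
\[n^*\circ\pi^j = n^j\pi^j + g_j(n),\]
where by Theorem \ref{t4} a) we have $2g_j(n)=0$, $g_1(n)=0$, $g_0(n)=0$, and $g_j(n)$ is torsion with $g_j(n)=g_j(n)\circ\pi^j$. Then
\[\pi^i\circ n^* = \sum_j \pi^i\circ(n^j\pi^j + g_j(n)) = n^i\pi^i + \sum_j \pi^i\circ g_j(n).\]
On the other hand, $n^*\circ\pi^i = n^i\pi^i + g_i(n)$. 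Therefore
\[c_i(n) = \sum_j \pi^i\circ g_j(n) - g_i(n).\]
Each term on the right is a torsion element of $t(1)$ killed by $2$, since $2g_j(n)=0$ and composition is additive. Hence $2c_i(n)=0$, which is \eqref{eq18}.

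\emph{Expected difficulty.} The computation itself is short. The only point needing care is that $\pi^i\circ g_j(n)$ inherits $2$-torsion from $g_j(n)$, which is immediate from bilinearity of composition. For completeness, I would also check via Lemma \ref{l6} that $\pi^i\circ g_j(n)$ vanishes unless $j=i+1$, so that $c_i(n) = g_{i+1}(n) - g_i(n)$ up to the projections. This identifies the commutator explicitly but is not needed for \eqref{eq18}.
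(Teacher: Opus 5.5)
Your proof is correct and is essentially the paper's: write $n_A^*=\sum_j n_A^*\circ\pi^j_A=\sum_j\bigl(n^j\pi^j_A+g_j(n)\bigr)$ with $2g_j(n)=0$, compose on the left with $\pi^i_A$, and use orthogonality. The paper additionally invokes Lemma~\ref{l6} to identify the commutator as $g_{i+1}(n)-g_i(n)$; as you observe, this identification is not needed for the factor $2$, and your preliminary torsion and degree-localisation steps are likewise superfluous.
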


\begin{proof} We proceed as in the proof of \cite[Th. 3.1]{dm}: by Theorem \ref{t4}, we have
\[n_A^* = \sum_j n_A^*\circ \pi^j_A = \sum_j (n^j \pi^j_A + g_j(n))\]
with $2g_j(n)=0$. Moreover, $g_j(n)\circ \pi^j_A = g_j(n)$. Then
\[\pi^i_A\circ n_A^* =  \sum_j (n^j \pi^i_A\circ \pi^j_A + \pi^i_A\circ g_j(n))=n^i \pi^i_A + g_{i+1}(n)=n_A^*\circ \pi^i_A -g_i(n) + g_{i+1}(n)
\]
where we used Lemma \ref{l6}.  Hence \eqref{eq18}.
\end{proof}

\begin{thm}\label{t4a} Let $A,B$ be two abelian varieties, and let $(\pi^i_A)$, $(\pi^i_B)$ be systems of projectors as in Theorem \ref{t4}. Let $f:A\to B$ be a homomorphism. Then
\[4f^*\circ \pi^i_B = 4\pi^i_A \circ f^*\] 
for all $i$.
\end{thm}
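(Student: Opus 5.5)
We compare the two compositions through the torsion bimodule machinery of \S\ref{s3}. First, rationally (or modulo torsion) the identity $f^*\circ\pi^i_B=\pi^i_A\circ f^*$ holds, because $f$ commutes with multiplication by $n$ and the rational DM projectors are characterised by the eigenvalue condition of \cite[Th.~3.1]{dm}. Hence
\[d_i:=f^*\circ \pi^i_B-\pi^i_A\circ f^*\in CH^{g}_\et(A\times B)[1/p]_\tors,\]
which by Proposition~\ref{p4}~c) is identified with $t(1)(h(B),h(A))$. We will show that $4d_i=0$.

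The first step is to locate $d_i$ in the graded pieces of Lemma~\ref{l6}. We write $f^*=\sum_{j,k}\pi^j_A\circ f^*\circ\pi^k_B$. By Corollary~\ref{c2} torsion classes compose to zero, and by Lemmas~\ref{l5} and~\ref{l6} the projectors act on $t(1)$ through $\hat\pi$ with a shift by one. It follows that the only torsion cross terms are $e_k:=\pi^{k-1}_A\circ f^*\circ\pi^k_B$, and these lie in $t(1)(h^k(B),h^{k-1}(A))$; the diagonal terms reproduce the rational identity. Substituting, we get $d_i=e_i-e_{i+1}$ up to sign, so it suffices to show $4e_k=0$ for every $k$.

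The key step is to exploit the eigenvalue conditions of Theorem~\ref{t4} together with $f\circ n_A=n_B\circ f$, that is, $n_A^*\circ f^*=f^*\circ n_B^*$. We compose $e_k$ on the left with $n_A^*$ and on the right with $n_B^*$, and use $2n^*\circ\pi^j=2n^j\pi^j$ together with Lemma~\ref{l16}, which lets us commute $n^*$ past the projectors up to factor $2$. This gives
\[2\,n_A^*\circ e_k=2n^{k-1}e_k,\qquad 2\,e_k\circ n_B^*=2n^{k}e_k.\]
On the other hand, $n_A^*\circ e_k$ and $e_k\circ n_B^*$ can be compared through $n_A^*\circ f^*=f^*\circ n_B^*$, again after multiplying by $2$ to move $n^*$ through $\pi^{k-1}_A$ and $\pi^k_B$. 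This yields $2(n^k-n^{k-1})e_k\in 2\cdot(\text{error terms})$, where the error terms are of the form $g_j(n)$-type corrections killed by $2$. We therefore obtain $4(n^k-n^{k-1})e_k=0$ for all $n$, hence $4w_{k,k-1}e_k=0$, with $w_{k,k-1}=2$ by Lemma~\ref{l11}~a).

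That bound only gives $8e_k=0$, so the main obstacle is sharpening it to $4$. I would first try to avoid one factor of $2$ by choosing $n$ a square or $4\mid n$, where Theorem~\ref{t4}~a) gives exact eigenvalue identities without the factor $2$. For instance, with $n=-1$ squared, or $n=5$ and $n=4$, the exact relations give $(n^k-n^{k-1})\cdot 2e_k=0$ directly. Taking $n=4$ and $n=9$, whose values of $n^k-n^{k-1}$ have gcd dividing $2\cdot(\text{odd})$, together with the $l$-primary decomposition for odd $l$ (where $2$ is invertible and Proposition~\ref{l7}~a) with $j=k-1$ controls everything), would then give $4e_k=0$. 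Finally $d_i=e_i-e_{i+1}$ gives $4d_i=0$.
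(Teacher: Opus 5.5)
Your setup is the paper's proof. You isolate the only surviving off-diagonal blocks $e_k=\pi^{k-1}_A\circ f^*\circ\pi^k_B$, write $d_i=e_i-e_{i+1}$, and compare eigenvalues through $n_A^*\circ f^*=f^*\circ n_B^*$. The gap is in the step from your displayed relations to ``$4(n^k-n^{k-1})e_k=0$'': you spend the factor $2$ twice, but one factor covers the whole chain:
\[2n^{k-1}e_k=(2n_A^*\circ\pi^{k-1}_A)\circ f^*\circ\pi^k_B=2\pi^{k-1}_A\circ n_A^*\circ f^*\circ\pi^k_B=\pi^{k-1}_A\circ f^*\circ(2n_B^*\circ\pi^k_B)=2n^ke_k .\]
The four equalities use, in order, Theorem~\ref{t4}~a) for $A$, Lemma~\ref{l16}, the relation $n_A^*\circ f^*=f^*\circ n_B^*$, and Theorem~\ref{t4}~a) for $B$. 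Hence $2(n^k-n^{k-1})e_k=0$ for all $n$, and $n=-1$ gives $4e_k=0$, so $4d_i=0$. The paper does exactly this, but keeps the $2$-torsion error terms $g_k(n)$, $h_k(n)$ explicit, so that $(n^k-n^{k-1})e_k$ is a sum of two terms killed by $2$. With this correction your argument is the paper's, and no sharpening is needed.

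The sharpening paragraph, as written, is wrong and should be dropped.
\begin{itemize}
\item $(-1)^2=1$ gives the empty relation.
\item $5$ is neither a square nor divisible by $4$, so Theorem~\ref{t4}~a) gives no exact relation for it.
\item For $k\ge 2$, $\gcd(3\cdot 4^{k-1},8\cdot 9^{k-1})$ is $12$ or $24$, not $2\cdot(\text{odd})$.
\item More decisively, for $k\ge 3$ every $n$ that is a square or divisible by $4$ satisfies $8\mid n^{k-1}(n-1)$. Odd squares have $8\mid n-1$; otherwise $16\mid n^{k-1}$. So the exact relations alone can never do better than $8e_k=0$.
\item The appeal to odd primes $l$ is vacuous, since $e_k$ is already $2$-primary.
\end{itemize}
So the proof as submitted establishes only $8d_i=0$; the bookkeeping fix above is what actually gives the factor $4$.
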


(We cannot get less than a factor 4 in view of the indeterminacy in Theorem \ref{t4} b): take $A=B$, $f=1_A$.)

\begin{proof} We imitate the proof of \cite[Prop. 3.3]{dm}. Consider $c_{i,j} = \pi^j_A\circ f^*\circ \pi^i_B$. For $i\ne j$, $c_{i,j} $ is torsion by that proposition, hence  $0$ unless $j=i-1$ by Lemma \ref{l6}. In this case,
 we have
\begin{multline*}
n^i c_{i,i-1} = \pi^{i-1}_A\circ f^*\circ n^i \pi^i_B=\pi^{i-1}_A\circ f^*\circ (n_B^*\circ \pi^i_B+h_i(n))\\
=\pi^{i-1}_A\circ n_A^*\circ f^*\circ  \pi^i_B+\pi^{i-1}_A\circ f^*\circ h_i(n)\\
 = (n^{i-1} \pi^{i-1}_A + g_{i}(n)) \circ f^*\circ  \pi^i_B+\pi^{i-1}_A\circ f^*\circ h_i(n)\\
= n^{i-1} c_{i,i-1} + g_{i}(n) \circ f^*\circ  \pi^i_B+\pi^{i-1}_A\circ f^*\circ h_i(n)
\end{multline*}
where $h_i$ is a similar function for $B$ as $g_i$ is for $A$ in the proof of Lemma \ref{l16}, hence, taking the gcd over $n$ (or just $n=-1$),
\[4c_{i,i-1} = 0.\]

From the identity
\[f=\sum_{i,j} c_{i,j} =\sum_i (c_{i,i} +c_{i,i-1})\]
it follows that
\begin{equation}\label{eq10}
\pi_A^i\circ f^* - f^*\circ \pi_B^i = c_{i+1,i} - c_{i,i-1}
\end{equation}
is $4$-torsion.
\end{proof}

To this, we add

\begin{prop}\label{p10} For $A,B\in \Ab$, $\sM_\et(h^i(A),h^j(B))$ is torsion-free if $j\ne i-1$ (in particular for $i=j$), and  
\[\sM_\et(h^i(A),h^{i-1}(B))_\tors\simeq \Hom_{\hat{\Z}}(\hat{H}^i(A),\hat{H}^{i-1}(B))\otimes \Q/\Z.\]
\end{prop}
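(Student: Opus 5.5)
We read $\sM_\et(h^i(A),h^j(B))$ as $\pi^j_B\circ CH^{g}_\et(A\times B)[1/p]\circ\pi^i_A$ (with the convention on the direction of correspondences used in \S\ref{s3}). Here $(\pi^i_A),(\pi^j_B)$ are systems as in Theorem \ref{t4}, and $g=\dim A$. The plan is to identify the torsion subgroup of the whole correspondence group and then cut it by the projectors.

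\emph{Step 1: torsion of the full correspondence group.} By Proposition \ref{p4} c), applied to the abelian variety $A\times B$ with $(i,j)$ replaced by $(g,2g)$, we have
\[CH^g_\et(A\times B)[1/p]_\tors\simeq \hat{H}^{2g-1}(A\times B,g)\otimes\Q/\Z.\]
As recalled after Lemma \ref{l5}, this is the bimodule $t(1)$. By Corollary \ref{c2}, composition with a torsion correspondence factors through the adic realisation. So the actions of $\pi^i_A$ and $\pi^j_B$ on this torsion group are the actions of $\hat{\pi}^i_A$ and $\hat{\pi}^j_B$ (Lemma \ref{l5}).

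\emph{Step 2: cutting by projectors.} Lemma \ref{l6}, in its $t(r)$ version with $r=1$, computes these pieces:
\[\pi^j_B\circ t(1)\circ\pi^i_A = t(1)(\hat{h}^i(A),\hat{h}^j(B)),\]
which is $0$ if $j\ne i-1$ and equals $\Hom_{\hat{\Z}}(\hat{H}^i(A),\hat{H}^{i-1}(B))\otimes\Q/\Z$ if $j=i-1$.

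\emph{Step 3: torsion of the summand.} Since $\pi^i_A$ and $\pi^j_B$ are idempotents, $\sM_\et(h^i(A),h^j(B))$ is a direct summand of $CH^g_\et(A\times B)[1/p]$. The torsion of a direct summand $eMe'$ is $e\,M_\tors\,e'$. Combining with Step 2 gives the claimed isomorphism for $j=i-1$ and torsion-freeness otherwise.

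\emph{Step 4: independence of choices.} The result should not depend on the chosen projector systems. By Theorem \ref{t4} b), two choices differ by conjugation by $1+z$ with $z$ torsion, and conjugation gives isomorphic Hom groups. Alternatively, the computation above never used anything beyond the adic images $\hat{\pi}$, which are canonical.

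The main obstacle is justifying that the one-sided actions on the torsion subgroup really factor through $\hat{R}$. This is needed when the outer factor is not torsion, since Corollary \ref{c2} alone only kills torsion$\circ$torsion. Here Lemma \ref{l5} is the precise input: $t(1)$ is a bimodule whose $\sM_\et$-structure coincides with its $\hat{\sM}$-structure via $\hat{R}$. It is also worth checking the degree bookkeeping, namely that $t(1)$ lands in $\hat{h}^{i-1}$ rather than $\hat{h}^{i+1}$ under the paper's conventions. This is consistent with the identity $y_i(n)\in t(1)(h^i(A),h^{i-1}(A))$ used in Step 3 of the proof of Theorem \ref{t4}.
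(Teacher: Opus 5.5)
Your proposal is correct and is essentially the paper's argument. The paper likewise identifies the torsion of $CH^{g}_\et(A\times B)[1/p]$ with $t(1)$ via Proposition \ref{p4} c), uses Lemmas \ref{l5} and \ref{l6} to get $f\circ\pi^i_A=\pi^{i-1}_B\circ f$ for torsion $f$ (hence $f=\pi^j_B\circ\pi^{i-1}_B\circ f=0$ when $j\ne i-1$), and reads off the $j=i-1$ piece from Lemma \ref{l6}; the only blemish is that in Step 1 you credit Corollary \ref{c2}, which only gives torsion$\circ$torsion $=0$, whereas, as you note yourself, Lemma \ref{l5} is the correct input there.
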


\begin{proof} Same as for Theorem \ref{t4} c): we have
\[\sM_\et(h^i(A),h^j(B))=\{f\in CH^{g_A}_\et(A\times B)[1/p]\mid f= \pi^j_B\circ f = f\circ \pi^i_A\}\]
and $\sM_\et(h^i(A),h^j(B))$ is torsion for $i\ne j$ by \cite{dm}. But if $f\in \sM_\et(h^i(A),h^j(B))$ is torsion, then $f\circ \pi^i_A=\pi^{i-1}_B\circ f$  by Lemma \ref{l6}, hence $f=\pi^j_B\circ \pi^{i-1}_B\circ f=0$ if $j\ne i-1$. If $j=i-1$, the conclusion follows as usual from Lemma \ref{l6} and Proposition \ref{p4} a).
\end{proof}

\subsection{A weak version of Theorem \ref{t3} a) (II); self-conjugate projectors} We first give a proof of \cite[§3, Rem. 3)]{dm}:

\begin{lemma}\label{l9} In $CH^g(A\times A)\otimes \Q$,\\
a) The operator $n_A^*$ is invertible for any $n\ne 0$.\\
b) We have
\begin{equation}\label{eq13}
\pi^i_A\circ (n_A)_* = n^{2g-i}\pi^i_A
\end{equation}
for all $(n,i)$.\\
c) We have
\begin{equation}\label{eq14}
{}^t \pi^{2g-i}_A = \pi^{i}_A
\end{equation}
for all $i$.
\end{lemma}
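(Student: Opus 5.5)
We work throughout in $CH^g(A\times A)\otimes\Q$, using the rational DM projectors $\pi^i_A$. These are orthogonal idempotents with sum $1$ and satisfy $n_A^*\circ\pi^i_A=n^i\pi^i_A$ for all $n$ \cite[Th. 3.1]{dm}.

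For a), fix $n\ne 0$. We have $n_A^*=\sum_i n_A^*\circ\pi^i_A=\sum_i n^i\pi^i_A$. Since the $\pi^i_A$ are orthogonal with sum $1$, the element $\sum_i n^{-i}\pi^i_A$ is a two-sided inverse of $n_A^*$. (The operators $n_A^*$ commute with the $\pi^i_A$ rationally by \cite[Th. 3.1]{dm}, or directly from $\pi^i_A\circ n_A^*=\pi^i_A\circ\sum_j n^j\pi^j_A=n^i\pi^i_A$.) An alternative route: $(n_A)_*\circ n_A^*=\deg(n_A)=n^{2g}$, which is invertible in $\Q$. Thus $n_A^*$ has a left inverse, and $n_A^*\circ (n_A)_*=n^{2g}$ as well, since $n_A$ is finite flat of degree $n^{2g}$ and $n_A^*\circ(n_A)_*$ acts on the motive as composition with the graph transpose. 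Both routes give invertibility; I will use the first because it is self-contained.

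For b), I will use the projection-formula identity $(n_A)_*\circ n_A^*=n^{2g}\cdot 1$ in $\End(h(A))$. This is the correspondence identity ${}^t\Gamma_{n}\circ\Gamma_n=\deg(n_A)\Delta$ (pushforward after pullback), valid because $n_A$ is finite surjective of degree $n^{2g}$. By a), $(n_A)_*=n^{2g}(n_A^*)^{-1}=\sum_i n^{2g-i}\pi^i_A$. Composing with $\pi^i_A$ then gives \eqref{eq13}, and the same computation shows that $(n_A)_*$ also commutes with $\pi^i_A$. The only point to check is the order convention: with composition as defined in \S\ref{s1}, $n_A^*$ is the graph-transpose correspondence and $(n_A)_*$ the graph, and their composite in the right order is $\deg$ times the identity. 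I expect this bookkeeping to be the main place to be careful, not a real obstacle.

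For c), transpose the relation $n_A^*\circ\pi^{2g-i}_A=n^{2g-i}\pi^{2g-i}_A$. Transposition reverses composition and sends $n_A^*$ to $(n_A)_*$, giving ${}^t\pi^{2g-i}_A\circ(n_A)_*=n^{2g-i}\,{}^t\pi^{2g-i}_A$. The family $({}^t\pi^{2g-i}_A)_i$ is again a family of orthogonal idempotents with sum $1$, since transposition is an anti-automorphism. Write ${}^t\pi^{2g-i}_A=\sum_j {}^t\pi^{2g-i}_A\circ\pi^j_A$. From b), ${}^t\pi^{2g-i}_A\circ\pi^j_A\circ(n_A)_*=n^{2g-j}\,{}^t\pi^{2g-i}_A\circ\pi^j_A$. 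Using that $(n_A)_*$ commutes with $\pi^j_A$, the transposed relation yields
\[(n^{2g-i}-n^{2g-j})\,{}^t\pi^{2g-i}_A\circ\pi^j_A=0.\]
Choosing $n=2$, this forces ${}^t\pi^{2g-i}_A\circ\pi^j_A=0$ for $j\ne i$, so ${}^t\pi^{2g-i}_A={}^t\pi^{2g-i}_A\circ\pi^i_A$. Symmetrically, the identity $\pi^j_A\circ{}^t\pi^{2g-i}_A=0$ for $j\ne i$ follows from the same weight argument with $(n_A)_*$ acting on the right, using that $(n_A)_*$ commutes with everything in sight. Hence ${}^t\pi^{2g-i}_A=\pi^i_A\circ{}^t\pi^{2g-i}_A$. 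Summing over $i$ and using both sums equal to $1$ gives
\[\pi^i_A=\pi^i_A\circ\sum_k{}^t\pi^{2g-k}_A=\pi^i_A\circ{}^t\pi^{2g-i}_A=\sum_j\pi^j_A\circ{}^t\pi^{2g-i}_A={}^t\pi^{2g-i}_A,\]
which is \eqref{eq14}.
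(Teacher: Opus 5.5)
Your proof is correct. For a) and b) it follows the paper's argument. The paper quotes \cite[Cor. 3.2]{dm} for a), which amounts to your observation that $n_A^*=\sum_i n^i\pi^i_A$ has inverse $\sum_i n^{-i}\pi^i_A$. It then gets b) exactly as you do, from $(n_A)_*\circ n_A^*=n^{2g}$ and a).

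For c) the routes differ somewhat. The paper transposes b) to obtain $n_A^*\circ{}^t\pi^i_A=n^{2g-i}\,{}^t\pi^i_A$. Thus $({}^t\pi^{2g-i}_A)_i$ is a second decomposition of the diagonal with the Deninger--Murre eigenvalue property, and the paper concludes by a uniqueness statement (it cites the uniqueness part of \cite[Lemma 5.4]{jannsen2}).

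You instead transpose the defining relation $n_A^*\circ\pi^{2g-i}_A=n^{2g-i}\pi^{2g-i}_A$ and prove the needed uniqueness by hand. The operator $(n_A)_*=\sum_j n^{2g-j}\pi^j_A=\sum_j n^j\,{}^t\pi^j_A$ commutes with both families; this is the precise content of your ``commutes with everything in sight''. Its eigenvalues $2^{2g-j}$ for $n=2$ are pairwise distinct. Hence the cross terms $\pi^j_A\circ{}^t\pi^{2g-i}_A$ vanish for $j\ne i$, and the two decompositions of $1$ coincide.

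The paper's version is shorter but rests on an external uniqueness result. Yours is self-contained and makes transparent why one gets equality rather than mere conjugacy. Only the vanishing of $\pi^j_A\circ{}^t\pi^{2g-i}_A$ enters your final chain of equalities; the vanishing of ${}^t\pi^{2g-i}_A\circ\pi^j_A$ is redundant.

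One caution about the alternative route you mention in a) and then discard. The identity $n_A^*\circ(n_A)_*=n^{2g}$ does not follow from $n_A$ being finite flat of degree $n^{2g}$. For a finite flat $f$ only $f_*\circ f^*=\deg f$ holds in general. Here $n_A^*\circ(n_A)_*$ is the sum of the translations by the points of ${}_nA$ (for $p\nmid n$). It does equal $n^{2g}$ in $CH^g(A\times A)\otimes\Q$, but only as a consequence of a), e.g.\ via $(n_A)_*=\sum_i n^{2g-i}\pi^i_A$. So that route would be circular, and it is good that you did not rely on it.
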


\begin{proof} a) holds thanks to \cite[Cor. 3.2]{dm}. Since $(n_A)_* n_A^*=n^{2g}$, \eqref{eq13} becomes true after composing with $n_A^*$ on the right, hence b) follows from a). This implies
\[n_A^*{}^t\pi^i_A = n^{2g-i}\, {}^t\pi^i_A\, \forall (n,i),\]
hence c) by the uniqueness  part of \cite[Lemma 5.4]{jannsen2}.
\end{proof}

\begin{cor}\label{c5}  In $CH^g_\et(A\times A)[1/p]$,\\
a) The identity \eqref{eq13} remains true after multiplication by $2$. \\
b) In Theorem \ref{t4} a), one can choose the $\pi^i_A$ self-conjugate.\\
c) Let $B$ be another abelian variety, and  let $(\pi^i_A)$ $(\pi^i_B)$ be two systems of self-conjugate projectors verifying Theorem \ref{t4} a). Let $f:A\to B$ be a homomorphism. Then
\[4 \pi^i_B\circ f_* = 4f_*\circ \pi^i_A\] 
for all $i$.
\end{cor}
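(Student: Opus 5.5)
Parts a), b), c) follow from one principle: transpose the integral statements already available for $n_A^*$, and control the resulting torsion error with the bimodule computations of \S\ref{s3}.

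\emph{Part a).} Fix $(\pi^i_A)$ as in Theorem \ref{t4}. Rationally $\pi^i_A\circ (n_A)_* = n^{2g-i}\pi^i_A$ by Lemma \ref{l9} b), so the error
\[e_i(n)=\pi^i_A\circ (n_A)_* - n^{2g-i}\pi^i_A\]
is torsion. Since $(n_A)_*\circ n_A^* = n^{2g}$ holds integrally, composing $e_i(n)$ on the right with $n_A^*$ and using Lemma \ref{l16} together with Theorem \ref{t4} a) gives
\[2e_i(n)\circ n_A^* = 2\pi^i_A n^{2g} - 2n^{2g-i}n^i\pi^i_A = 0.\]
Next compute $e_i(n)\circ n_A^*$ directly. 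Lemma \ref{l6} gives $e_i(n)= e_i(n)\circ \pi^{i+1}_A$ or $\pi^{i-1}_A\circ e_i(n)$, whichever is appropriate; hence $e_i(n)\circ n_A^*$ equals $e_i(n)$ times a power of $n$. Equivalently, Proposition \ref{p4} e) says that precomposition by $n_A^*$ acts on the torsion bimodule $t(1)$ by a scalar power of $n$. This gives $2n^{a}e_i(n)=0$, where $a$ is that exponent.

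To eliminate the factor $n^a$, I would instead write the identity for $e_i(n)$ as a cocycle identity in $n$, exactly as in Step 3 of Theorem \ref{t4}, and apply Proposition \ref{l7} a). The main obstacle is getting exactly the factor $2$ here rather than $4$. The point to verify is that the relevant level is $(j,j-1)$ with $j\geq 1$, so $w_{j,j-1}=2$ by Lemma \ref{l11} a). Combining with $n=-1$ and with $n$ coprime to the order should then give $2e_i(n)=0$.

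\emph{Part b).} Rationally ${}^t\pi^{2g-i}_A=\pi^i_A$ by Lemma \ref{l9} c). Given $(\pi^i_A)$ from Theorem \ref{t4} a), set $\rho^i={}^t\pi^{2g-i}_A$. This is again a system of orthogonal projectors with sum $1$ lifting the same Step 1 projectors. By part a) it satisfies the required relations for $n_A^*$ up to factor $2$, and one checks that $n_A^*\circ\rho^1=n\rho^1$ holds exactly. Theorem \ref{t4} b) then gives $\rho^i=(1+z)\pi^i_A(1+z)^{-1}$ with $4z=0$, and $z$ is unique by Theorem \ref{t4} c). Transposition yields ${}^tz=-z$ up to a correction made precise via Corollary \ref{c2}. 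I would therefore conjugate by $1+w$ with $2w=z$, using divisibility of the torsion (Proposition \ref{p4} c)). This halfway conjugation produces a self-conjugate system, and the computation \eqref{eq9} shows that transposition exchanges the two halves.

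\emph{Part c).} Apply transposition to Theorem \ref{t4a}, used for the dual homomorphism setting. Concretely, ${}^t(f^*)=f_*$, and for self-conjugate projectors ${}^t\pi^{2g-i}=\pi^i$. Transposing $4f^*\circ\pi^{2g_B-i}_B=4\pi^{2g_A-i}_A\circ f^*$ yields the claim after reindexing $i$; here one must track that $\dim A$ and $\dim B$ may differ, adjusting the index shift accordingly.
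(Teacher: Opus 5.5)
\textbf{Parts a) and c).} Your a) is the paper's argument with the two steps in the other order. The paper first uses Proposition~\ref{l7} to write $2e_i(n)$ as a refined coboundary of some $b=\pi^i_A\circ b$, and then gets $2b=0$ by composing with $n_A^*$ and using Lemma~\ref{l16}. You first get $2n^{i+1}e_i(n)=0$ and then evaluate the refined coboundary at $n=-1$. This gives $2b=0$, hence $2e_i(n)=0$, as long as $2g-i-1\ge 1$.

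Your c) is the paper's argument (transpose Theorem~\ref{t4a}). You are right that what comes out is $4\pi^{2g_B-j}_B\circ f_*=4f_*\circ\pi^{2g_A-j}_A$, as in Theorem~\ref{t7} d).

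However, your claim that $w_{j,j-1}=2$ for $j\ge 1$ fails at $j=1$. Since $e_i(n)=e_i(n)\circ\pi^{i+1}_A$, the level is $(2g-i,2g-i-1)$, which for $i=2g-1$ is $(1,0)$, and $w_{1,0}=1$. There you only get $e_{2g-1}(n)=(n-1)b$ with $4b=0$, i.e.\ $2e_{2g-1}(n)=0$ for odd $n$ only. This is sharp. Suppose $p\ne 2$ and $u=u\circ\pi^{2g}_A$ has order $4$ (so $4u=0$ and $u\circ\pi^1_A=0$). Conjugating by $1+u$ preserves Theorem~\ref{t4} a), by part b) there. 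But it replaces $\pi^{2g-1}_A$ by $\pi^{2g-1}_A-u$, and hence $2e_{2g-1}(2)$ by $2e_{2g-1}(2)+2u\neq 2e_{2g-1}(2)$. (The printed proof writes $g^i(n)\circ(m_A)_*=m^{2g-i+1}g^i(n)$. This is inconsistent with its own $b\circ n_A^*=n^{i+1}b$ and hides this case. Part b) only needs $n=-1$, where there is no problem.)

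\textbf{Part b): the genuine gap.} Write $x$ for your $z$. There are three problems.

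First, you cannot feed $\rho^i={}^t\pi^{2g-i}_A$ into Theorem~\ref{t4} b). The exact identity $n_A^*\circ\rho^1=n\rho^1$ is the transpose of $\pi^{2g-1}_A\circ(n_A)_*=n\pi^{2g-1}_A$, which fails in the example above. Also, $\rho^0={}^t\pi^{2g}_A$ need not be the origin. The paper uses only the conjugacy $\rho^i=(1+x)\pi^i_A(1+x)^{-1}$ with $x$ torsion, which comes from Jannsen's lifting lemma as at the start of the proof of Theorem~\ref{t4} b). It then proves $4x=0$ directly by composing \eqref{eq15} with $2n_A^*$ and taking $n=-1$.

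Second, transposing the conjugacy shows that $(1+{}^tx)^{-1}(1+x)=1+x-{}^tx$ commutes with every $\pi^i_A$. So Theorem~\ref{t4} c) gives ${}^tx=x$, not ${}^tx=-x$; and $x$ may have order $4$.

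Third, and decisively, halving is the wrong move. A $w$ with $2w=x$ is only killed by $8$. Conjugation by $1+w$ changes $(n_A^*-n^i)\circ\pi^i_A$ by $(n^{i-1}-n^i)\,w\circ\pi^i_A$, which is $\pm 2\,w\circ\pi^i_A$ at $n=-1$. So Theorem~\ref{t4} a) survives only if $4w=0$ and $w\circ\pi^1_A=0$, and self-conjugacy would in addition require $w$ to be symmetric.

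The missing idea is to split rather than halve. From ${}^tx=x$ and Lemma~\ref{l6}, the components $x_j=\pi^j_A\circ x$ satisfy $x_j={}^tx_{2g-1-j}$. So $y=\sum_{j=1}^{g-1}x_j+x_{2g-1}$ satisfies $x=y+{}^ty$, i.e.\ $1+x={}^t(1+y)(1+y)$ by Corollary~\ref{c2}, with $4y=0$ and $y\circ\pi^1_A=0$. Then $(1+y)\pi^i_A(1+y)^{-1}$ is self-conjugate and, by the converse in Theorem~\ref{t4} b), still satisfies Theorem~\ref{t4} a).
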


\begin{proof} a) For $n\in \Z-\{0\}$,
\[g^i(n) = \pi^i_A\circ( (n_A)_* - n^{2g-i})\]
is torsion by Lemma \ref{l9} b), and we have
\[ g^i(mn) = n^{2g-i} g^i(m)+ g^i(n)\circ (m_A)_*\]
because $(m_A)_*$ and $(n_A)_*$ commute. But, by the same reasoning as in the proof of Lemma \ref{l9} b), $(m_A)_*$ acts as multiplication by $m^{2g-i+1}$ on $\hat{H}^{i-1}(A)\otimes \Q$, hence on $\hat{H}^{i-1}(A)$ and finally on $\hat{H}^{i-1}(A)\otimes \Q/\Z$. It follows that $g^i(n)\circ (m_A)_*=m^{2g-i+1} g^i(n)$ and the above identity becomes
\[ g^i(mn) = n^{2g-i} g^i(m)+ m^{2g-i+1}g^i(n).\]

Applying Proposition \ref{l7}, there exists $b$ such that $b= \pi^i\circ b$ and 
\begin{equation}\label{eq23}
2g^i(n)= (n^{2g-i} - n^{2g-i+1} ) b 
\end{equation}
for any $n$, so that
\[(2\pi^i_A - b) \circ (n_A)_* =  n^{2g-i}(2\pi^i_A - b) .\]

Composing with $n_A^*$ on the right, we get
\[n^{2g}(2\pi^i_A - b) =  n^{2g-i}(2\pi^i_A\circ n_A^* - b\circ n_A^*) = n^{2g}2 \pi^i_A- n^{2g-i}b\circ \pi_A^*\]
by Lemma \ref{l16}, i.e.
\[n^{2g}b =  n^{2g-i}b\circ n_A^*.\]

But since $b\in \pi^i_A\circ CH^g_\et(A\times A)[1/p]_\tors=CH^g_\et(A\times A)[1/p]_\tors\bullet \hat{\pi}^{i+1}$ (Lemmas \ref{l5} and \ref{l6}), we have $b\circ n_A^*= n^{i+1} b$, hence $(n^{2g}-n^{2g+1})b=0$ for all $n$,  which finally yields $2b=0$. But then $2g^i(n)=0$ by \eqref{eq23}.

b) By Theorem \ref{t4} b) and  Lemma \ref{l9} c), there exists $x\in CH^g_\et(A\times A)[1/p]_\tors$ such that
\begin{equation}\label{eq15} {}^t \pi^{2g-i}_A = (1+x)\pi^{i}_A(1+x)^{-1}= \pi^i_A + x\pi^i_A-\pi^i_A x=\pi^i_A + x_{i-1} - x_i
\end{equation}
for all $i$, where $x_i=\pi^i_A\circ  x$ (here we used Lemma \ref{l6}).

We first bound the order of $x$. Composing \eqref{eq15} with $2n_A^*$ on the left and using a) and the identities of Theorem \ref{t4} a), we get
\[ 2n^i\, {}^t \pi^{2g-i}_A = 2(n^i\pi^i_A + n^{i-1}x_{i-1} -n^i x_i)
\]
which, compared with \eqref{eq15} multiplied by $2n^i$, yields
\[2(n^i-n^{i-1})x_{i-1}=0.\]

Taking $n=-1$, we find $4x_{i-1}=0$ for all $i$, hence $4x=0$.

Now taking the transpose of \eqref{eq15} after switching $i$ and $2g-i$, we get
\begin{multline*}
\pi^i_A = (1+{}^t x)^{-1}{}^t \pi^{2g-i}_A(1+{}^t x)=(1+{}^t x)^{-1}(1+x)\pi^{i}_A(1+x)^{-1}(1+{}^t x)
\end{multline*}
for all $i$. 

By Theorem \ref{t4} c), this implies $(1+{}^t x)^{-1}(1+x)=1$, i.e. ${}^t x=x$, hence 
\begin{multline*}x_i = \pi^i_A \circ x = \pi^i_A\circ  {}^t x = {}^t(x\circ {}^t \pi^i_A) =  {}^t(x\bullet {}^t \hat{\pi}^i_A)=   {}^t(x\bullet \hat{\pi}^{2g-i}_A)={}^t( \hat{\pi}^{2g-i-1}_A\bullet x)\\
={}^tx_{2g-i-1}.
\end{multline*} 
 
 Let $y=\sum_{i=0}^{g-1} x_i$; noting that $x_{2g}=0$, we have  $x={}^t y+y$, or equivalently $1+x={}^t (1+y) (1+y)$. Therefore, if $\tilde \pi^i_A = (1+y) \pi^i_A (1+y)^{-1}$, we have ${}^t \tilde\pi^i_A = \tilde\pi^{2g-i}_A$ for all $i$, and $4y=0$. Then $y\circ \pi^1_A=y\bullet \hat{\pi}^1_A = \hat{\pi}^0_A\bullet y = x_0$ is not necessarily $0$, but to achieve this we just replace $y$ by $y-x_0 +{}^t x_0$, or equivalently redefine $y$ as $\sum_{i=1}^{g-1} x_i+ x_{2g-1}$.  Then $(\tilde \pi^i_A)$ still verifies Theorem \ref{t4} a), by part b) of this theorem.

c) follows from taking the transpose of the formula in Theorem \ref{t4a}.
\end{proof}

\section{Proof of Corollary \ref{c4}} 

We use the projector $\pi^1_A$ of Theorem \ref{t4}. It defines a motive $h^1_\et(A)\in \sM_\et$, direct summand of $h_\et(A)$. We write $i_A:h^1_\et(A)\to h_\et(A)$ for the inclusion and $p_A:h_\et(A)\to h^1_\et(A)$ for the projection, so that $p_A i_A=1$ and $i_A p_A = \pi^1_A$.

Let $f:A\to B$ be a homomorphism. We define
\[h^1_\et(f) = p_A f^* i_B\]
where $f^*=h(f):h_\et(B)\to h_\et(A)$.

To show that $h^1_\et$ is a (contravariant) functor, let $g:B\to C$ be another morphism. We must show that $h^1_\et(g\circ f) = h^1_\et(f)\circ h^1_\et(g)$. It suffices to prove this equality after composing with $\iota_A$ on the right and with $p_C$ on the left, hence to prove tie identity
\[\pi^1_A \circ f^* \circ g^* \circ \pi^1_C =  \pi^1_A \circ f^*\circ \pi^1_B \circ g^* \circ \pi^1_C.\]

The difference between the two sides is
\begin{multline*}
\pi^1_A \circ (f^*-f^*\circ \pi^1_B) \circ g^* \circ \pi^1_C=\pi^1_A \circ (\pi^1_A\circ f^*-f^*\circ \pi^1_B) \circ g^* \circ \pi^1_C\\
=\pi^1_A \circ (c_{2,1} - c_{1,0}) \circ g^* \circ \pi^1_C= (c_{2,1} - c_{1,0})\circ \pi^2_B  \circ g^* \circ \pi^1_C\\
= (c_{2,1} - c_{1,0})  \circ (g^*\circ \pi^2_C+d_{3,2} -d_{2,1}) \circ \pi^1_C\\
= (c_{2,1} - c_{1,0})  \circ (d_{3,2} -d_{2,1}) \circ \pi^1_C=0
\end{multline*}
where $c_{2,1} - c_{1,0}$ is as in \eqref{eq10}, $d_{3,2} -d_{2,1}$ is similar with respect to $g$ and the final vanishing follows from Corollary \ref{c2}. This proves the existence of the functor, which is obviously additive. Its full faithfulness will be proven in Corollary  \ref{c11}.

For later use, we record here:

\begin{prop}\label{p13}
We have an isomorphism
\[A(k)[1/p]_\tors\iso \sM_\et(h^1_\et(A),h_\et(X))_\tors\]
for any connected smooth projective $k$-variety $X$. 
\end{prop}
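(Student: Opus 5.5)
The plan is to identify $\sM_\et(h^1_\et(A),h_\et(X))$ with the group of correspondences $y\in CH^g_\et(A\times X)[1/p]$ such that $y=y\circ \pi^1_A$, where $g=\dim A$. Under this identification the torsion subgroup is
\[\{y\in CH^g_\et(A\times X)[1/p]_\tors \mid y\circ \pi^1_A=y\}=CH^g_\et(A\times X)[1/p]_\tors\circ \pi^1_A.\]
Since the image of a projector consists of its fixed points, this is exactly the image of right composition with $\pi^1_A$.

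\textbf{Step 1: pass to the bimodule $t(1)$.} The proof of Proposition \ref{p4} c), together with Proposition \ref{p4a} b) (which applies because $A\times X$ is projective over the separably closed field $k$), identifies
\[CH^g_\et(A\times X)[1/p]_\tors = H^{2g}_\et(A\times X,\Z(g))[1/p]_\tors\simeq \bigoplus_{l\ne p} H^{2g-1}_\et(A\times X,\Q_l/\Z_l(g)).\]
This is the bimodule $t(1)$, now with $X$ in place of $B$. By Lemma \ref{l4} (equivalently Lemma \ref{l5}), right composition with $\pi^1_A$ on this torsion group agrees with the adic action $\bullet\,\pi^1_l$ of the $l$-component of $\hat{\pi}^1_A$. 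A small check is needed here: Lemma \ref{l5} was stated for bimodules over $\sM_\et$, but only the right action by correspondences on $A$ is used, so it applies verbatim for arbitrary $X$.

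\textbf{Step 2: compute with \eqref{eq21a}.} The isomorphism \eqref{eq21a} was proved for an arbitrary connected smooth projective $X$. It gives
\[H^{2g-1}_\et(A\times X,\Q_l/\Z_l(g))\circ \pi^1_l\simeq H^1_l(A)^*\otimes \Q_l/\Z_l\simeq A(k)\{l\},\]
where the last isomorphism uses the Tate module $T_l(A)\simeq H^1_l(A)^*$ (twist conventions being absorbed as in \eqref{eq21a}). The key inputs are the Künneth decomposition \eqref{eq20} with the $\hat{\pi}^i_A$ acting, and the fact that $H^1_l(X)$ is torsion-free with $H^0_l(X)=\Z_l$. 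Taking the direct sum over all $l\ne p$ then gives $A(k)[1/p]_\tors\iso \sM_\et(h^1_\et(A),h_\et(X))_\tors$.

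\textbf{Main obstacle: canonicity.} The isomorphism should be canonical, that is, independent of the choice of $\pi^1_A$ in Theorem \ref{t4}. By Theorem \ref{t4} b), any other choice is $(1+z)\pi^1_A(1+z)^{-1}$ with $z$ torsion and $z\circ\pi^1_A=0$. For a torsion $y$, Corollary \ref{c2} kills $y\circ z$, and $z\circ \pi^1_A=0$ controls the other cross term. Hence $y\circ\tilde\pi^1_A=y\circ\pi^1_A$ on torsion, so the subgroup and the isomorphism are unchanged.
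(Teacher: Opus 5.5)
Your proof is correct and is essentially the paper's argument. The paper identifies the Hom group with $CH^g_\et(A\times X)[1/p]\circ\pi^1_A$, uses Proposition \ref{p4a} b) to identify the torsion with $\bigoplus_{l\ne p}H^{2g-1}_\et(A\times X,\Q_l/\Z_l(g))$, and reads off the $\pi^1$-part from \eqref{eq21a}; your canonicity paragraph is extra (the statement does not ask for it), and there Corollary \ref{c2} does not literally apply since $X$ need not be an abelian variety, but the vanishing still holds because $z$ lies in the divisible group $CH^g_\et(A\times A)[1/p]_\tors$, so its products with any torsion class vanish.
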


\begin{proof}  We have $\sM_\et(h_\et(A),h_\et(X))=CH^g_\et(A\times X)$. So this follows from Proposition \ref{p4a} b) and the isomorphism \eqref{eq21a}.
\end{proof}

\section{Getting rid of $2$-torsion: proof  of Theorem \ref{t3}}

\subsection{Good abelian schemes}

\begin{defn}\label{d3} Let $S$ be a smooth $k$-scheme. An abelian scheme $\sA\to S$ is \emph{good} (resp. \emph{étale-good}) if it admits a set  of integral (resp. étale integral) DM projectors (Definition \ref{d4}).  
\end{defn}

\begin{lemma}\label{l12} Suppose that $S=\Spec k$. Then\\ 
a) The sets of \'etale integral DM projectors $(\pi^i_\sA)$ form a torsor for conjugation by elements of the form $1+x$ with $2x=0$ and $x\circ \pi^1=0$.\\
b) They commute with $n_\sA^*$ for any $n\in \Z$, and  verify Theorem \ref{t4a} with multiplying by $2$ instead of $4$.\\
c) For any such set, we have the identity 
\[\pi^i_\sA\circ (n_\sA)_* = n^{2g-i} \pi^i_\sA\]
for any $i\in [0,2g]$ and any $n\in \Z$ except perhaps if $n\equiv 2 \pmod{4}$, in which case it holds at least after multiplication by $2$.
\end{lemma}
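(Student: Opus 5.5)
The plan is to compare any set of étale integral DM projectors with the systems of Theorem \ref{t4}, and to redo the earlier computations with the error terms $g_i(n)$ now equal to $0$. First I check that such a set $(\pi^i_\sA)$ satisfies Theorem \ref{t4} a). The identities $n_\sA^*\circ\pi^i_\sA=n^i\pi^i_\sA$ are stronger than required. For $\pi^0_\sA$, the difference between $\pi^0_\sA$ and the projector given by the origin is torsion, since both lift the rational $\pi^0$. Arguing as in Proposition \ref{p10} (Lemmas \ref{l5} and \ref{l6}), a torsion correspondence $y$ with $y=y\circ\pi^0$ lands in the component $h^{-1}$, hence vanishes. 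So $\pi^0_\sA$ is the origin projector.

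\emph{Part a).} By Theorem \ref{t4} b), two such sets are conjugate by some $1+z$ with $4z=0$ and $z\circ\pi^1=0$. As in \eqref{eq9}, $\tilde\pi^i=\pi^i+z_i-z_{i+1}$. Applying $n_\sA^*-n^i$ and using that both sets are exact DM, i.e.\ $g_i=\tilde g_i=0$, I get $(n^{i-1}-n^i)z_i=0$ for all $n$. Taking $n=-1$ gives $2z_i=0$ for all $i$, hence $2z=0$.

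Conversely, suppose $2x=0$ and $x\circ\pi^1=0$. Then $x_0=0$, because it would lie in $h^{-1}$, and $x_1=0$ by hypothesis, so $\pi^0$ is preserved. The same computation gives an error term $(n^{i-1}-n^i)x_i$. This vanishes because $n^{i-1}(1-n)$ is always even. So the conjugate set is again étale integral DM. Freeness of the action is Theorem \ref{t4} c). This shows that the sets form a torsor (granting nonemptiness, which the statement presupposes).

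\emph{Part b).} The proof of Lemma \ref{l16} now runs with all $g_j(n)=0$, giving $\pi^i_\sA\circ n_\sA^*=n_\sA^*\circ\pi^i_\sA$ exactly. In the proof of Theorem \ref{t4a}, the terms $g_i(n)$ and $h_i(n)$ disappear, so $n^ic_{i,i-1}=n^{i-1}c_{i,i-1}$. With $n=-1$ this gives $2c_{i,i-1}=0$, and by \eqref{eq10} the difference $\pi^i_A\circ f^*-f^*\circ\pi^i_B$ is $2$-torsion.

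\emph{Part c).} This is the step needing care. Put $g^i(n)=\pi^i_\sA\circ((n_\sA)_*-n^{2g-i})$. The proof of Corollary \ref{c5} a) already gives $2g^i(n)=0$. Now compose on the right with $n_\sA^*$. Using $(n_\sA)_*n_\sA^*=n^{2g}$ and the exact commutation from b), I get $g^i(n)\circ n_\sA^*=n^{2g}\pi^i_\sA-n^{2g-i}\,n^i\pi^i_\sA=0$. On the other hand, $g^i(n)$ is torsion and satisfies $g^i(n)=\pi^i_\sA\circ g^i(n)$. By Lemmas \ref{l5} and \ref{l6} and Proposition \ref{p4} e), right composition with $n_\sA^*$ acts on it by $n^{i+1}$. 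Hence $n^{i+1}g^i(n)=0$, which together with $2g^i(n)=0$ kills $g^i(n)$ for odd $n$.

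For even $n$, I use the cocycle relation $g^i(mn)=n^{2g-i}g^i(m)+m^{2g-i+1}g^i(n)$. Modulo $2$, the exponent parities can be adjusted as in Proposition \ref{l7} b), giving $g^i(n^2)=0$ and $g^i(4m)=0$. Concretely, $g^i(4m)=m^{2g-i}g^i(4)+4^{2g-i+1}g^i(m)$; the second term vanishes since $2g^i(m)=0$, and $g^i(4)=g^i(2\cdot2)=0$ because $2g^i(2)=0$ and the coefficients are even. This leaves only the case $n\equiv2\pmod 4$, where we retain just $2g^i(n)=0$. The main subtlety is making the parity bookkeeping of the exponents $2g-i$ versus $2g-i+1$ correct.
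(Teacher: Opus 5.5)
Your route is the paper's own. For a) and b) you use Theorem \ref{t4} b) and the computations of Lemma \ref{l16} and Theorem \ref{t4a} with $g_i=h_i=0$. For odd $n$ in c) you use $g^i(n)\circ n_\sA^*=0=n^{i+1}g^i(n)$ together with $2g^i(n)=0$.

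One preliminary step is not justified. For $y=\pi^0_\sA-(\text{origin projector})$ you assume $y=y\circ\pi^0$. But Lemma \ref{l6} gives $y\circ\pi^0=0$ for every torsion $y$, so that assumption is equivalent to the conclusion. It is easier to take $n=0$ in Definition \ref{d4}: $0_\sA^*=\sum_j 0_\sA^*\circ\pi^j_\sA=\pi^0_\sA$, and $0_\sA^*=\{0\}\times\sA$ is the origin projector.

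The genuine gap is the case $4\mid n$, and for $i=2g-1$ it cannot be closed.

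\emph{The exponent is wrong.} You use the cocycle relation $g^i(mn)=n^{2g-i}g^i(m)+m^{2g-i+1}g^i(n)$ from the proof of Corollary \ref{c5} a). This contradicts your own preceding step. There you use $g^i(n)=\pi^i_\sA\circ g^i(n)=g^i(n)\circ\pi^{i+1}_\sA$, which is why $n_\sA^*$ acts on the right by $n^{i+1}$. For the same reason, right composition with $(m_\sA)_*$ is multiplication by the action of $(m_\sA)_*$ on $\hat{H}^{i+1}$, namely $m^{2g-i-1}$. The paper's proof of Corollary \ref{c5} a) makes the same slip, using $\hat{H}^{i-1}$ instead of $\hat{H}^{i+1}$. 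The correct relation is $g^i(mn)=n^{2g-i}g^i(m)+m^{2g-i-1}g^i(n)$.

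\emph{Where this matters.} For $i\le 2g-2$ both exponents are $\ge 1$, and your parity argument goes through unchanged. For $i=2g-1$ the relation reads $g(mn)=ng(m)+g(n)$. Then $g(4)=3g(2)=g(2)$, and in fact $g^{2g-1}(n)=g^{2g-1}(2)$ for every even $n$, so nothing forces vanishing.

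\emph{A counterexample.} Let $E$ be an elliptic curve in characteristic $\neq 2$ and $0\neq t\in E[2]$. Consider the system $(\{0\}\times E,\ \Delta-\{0\}\times E-E\times\{t\},\ E\times\{t\})$. It is the conjugate of the standard system of Theorem \ref{t6} by $1+x$, where $x=p_2^*([t]-[0])$ satisfies $2x=0$ and $x\circ\pi^1=0$. One also checks directly that it is an integral, hence étale integral, DM system. In $\Pic(E\times E)$ one has $\Gamma_n=n\Delta+(n^2-n)\,\{0\}\times E+(1-n)\,E\times\{0\}$ (seesaw). Using this, one finds $\pi^1\circ(n_E)_*-n\pi^1=(n-1)\,p_2^*([t]-[0])$, which is nonzero for every even $n$, in particular for $n=4$.

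So for $i=2g-1$ only two statements hold: vanishing for odd $n$, and vanishing for all $n$ after multiplication by $2$. The same example shows that the invariance of $g^{2g-1}(2)$ under conjugation claimed in Remark \ref{r3} also fails.
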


\begin{proof} a) follows from Theorem \ref{t4} c), and implies b) as one checks by going through the proofs of Lemma \ref{l16} and Theorem \ref{t4a} (the functions $g_i$ and $h_i$ used there are $0$ in the present case).

For c), recall from Corollary \ref{c5} a) that $2g^i(n)=0$ for all $n$, where
\[g^i(n) = \pi^i_\sA\circ( (n_\sA)_* - n^{2g-i}).\]

By b), we have $g^i(n)\circ n_\sA^*=0$. But, as in the proof of Corollary \ref{c5} a) for $b$, $g^i(n)\circ n_\sA^*= n^{i+1} g^i(n)$. This shows that $g^i(n)=0$ for $n$ odd; also, $g^i(n)=0$ for $n$ divisible by $4$ by Proposition \ref{l7} b).
\end{proof}

\begin{rk}\label{r3} Still by Proposition \ref{l7} b), $g^i(2m)=g^i(2)$ if $m$ is odd. So the defect in Lemma \ref{l12} c) is entirely controlled by $2_\sA$. Moreover, $g^i(2)$ does not change when one modifies the $\pi^i_\sA$ by conjugation as in Lemma \ref{l12} a), as an immediate computation shows; therefore these are invariants of $\sA$.
\end{rk}

\begin{lemma}\label{l17} Suppose that $S=\Spec k$. If $\sA$ is étale good, it enjoys a set of self-conjugate étale integral DM projectors if and only if the invariants $g^i(2)$ of Remark \ref{r3} vanish. In general, we can at least achieve $2\pi^{2g-i}_\sA = 2{}^t \pi^i_\sA$.
\end{lemma}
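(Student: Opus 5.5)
The plan has two directions, plus a final weakened statement obtained by multiplying everything by $2$.

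\emph{Necessity.} Suppose $(\pi^i_\sA)$ is a set of étale integral DM projectors with ${}^t\pi^i_\sA=\pi^{2g-i}_\sA$ for all $i$. Transposing the identity $2_\sA^*\circ\pi^{2g-i}_\sA=2^{2g-i}\pi^{2g-i}_\sA$ gives
\[{}^t\pi^{2g-i}_\sA\circ{}^t(2_\sA^*)=2^{2g-i}\,{}^t\pi^{2g-i}_\sA,\]
that is, $\pi^i_\sA\circ(2_\sA)_*=2^{2g-i}\pi^i_\sA$. Hence $g^i(2)=0$ for this set. By Remark \ref{r3}, $g^i(2)$ does not depend on the chosen set of DM projectors, so the invariants vanish.

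\emph{Sufficiency.} Assume the $g^i(2)$ vanish and fix any set $(\pi^i_\sA)$ of étale integral DM projectors. By Lemma \ref{l12} c), together with $g^i(2m)=g^i(2)=0$ for $m$ odd (Remark \ref{r3}), we get $\pi^i_\sA\circ(n_\sA)_*=n^{2g-i}\pi^i_\sA$ for all $n$. Transposing, $n_\sA^*\circ{}^t\pi^{i}_\sA=n^{2g-i}\,{}^t\pi^i_\sA$, so $({}^t\pi^{2g-i}_\sA)_i$ is again a set of orthogonal projectors with sum $1$ satisfying the DM condition. (The cohomological condition holds because the transpose of the adic Künneth projector $\hat\pi^{2g-i}$ is $\hat\pi^i$ by integral Poincaré duality.)

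By Lemma \ref{l12} a), there is $x$ with $2x=0$ and $x\circ\pi^1=0$ such that ${}^t\pi^{2g-i}_\sA=(1+x)\pi^i_\sA(1+x)^{-1}$. I would then rerun the argument of Corollary \ref{c5} b) verbatim. Transposing twice and applying Theorem \ref{t4} c) gives ${}^tx=x$. This yields $x_i={}^tx_{2g-i-1}$, where $x_i=\pi^i_\sA\circ x$. Put $y=\sum_{i=1}^{g-1}x_i+x_{2g-1}$. Then $2y=0$, $1+x={}^t(1+y)(1+y)$, and $y\circ\pi^1=0$. Conjugating by $1+y$ produces self-conjugate projectors, and these are still DM projectors by Lemma \ref{l12} a).

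The step I expect to need the most care is checking that $y\circ\pi^1_\sA=0$ and that $2y=0$ are preserved when passing from $x$ to $y$, so that Lemma \ref{l12} a) still applies. The torsor condition there is stronger than in Theorem \ref{t4} b), so I would verify these two properties explicitly rather than cite Corollary \ref{c5} b).

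\emph{Weak statement.} In general only $2g^i(2)=0$ is available. Transposing gives $2n_\sA^*\circ{}^t\pi^{2g-i}_\sA=2n^i\,{}^t\pi^{2g-i}_\sA$. Then $({}^t\pi^{2g-i}_\sA)$ satisfies Theorem \ref{t4} a), and Corollary \ref{c5} b) produces projectors conjugate to $(\pi^i_\sA)$ by some $1+z$ with $4z=0$ which are self-conjugate. Alternatively, one keeps $\pi^i_\sA$ and notes that the discrepancy ${}^t\pi^{2g-i}_\sA-\pi^i_\sA=x_{i-1}-x_i$ is killed by $2$, since $2x=0$. This gives $2\pi^{2g-i}_\sA=2\,{}^t\pi^i_\sA$.
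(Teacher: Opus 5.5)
Your proof of the equivalence follows the paper's own argument. Lemma \ref{l12} c) together with the vanishing of the $g^i(2)$ makes $({}^t\pi^{2g-i}_\sA)$ a set of étale integral DM projectors. Rerunning Corollary \ref{c5} b) with the torsor of Lemma \ref{l12} a) then gives $y$ with $2y=0$ and $y\circ\pi^1_\sA=0$, and conjugating by $1+y$ gives self-conjugate DM projectors. Necessity follows by transposition. Your explicit checks on $y$ are correct.

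The final ``weak statement'' paragraph has a gap (the paper leaves this part implicit).

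\emph{The ``$2x=0$'' step.} Without the vanishing of the $g^i(2)$, the family $({}^t\pi^{2g-i}_\sA)$ is no longer known to be DM, so Lemma \ref{l12} a) is unavailable. Your assertion that it satisfies Theorem \ref{t4} a) also fails at $i=1$ whenever $g^{2g-1}(2)\neq 0$. The only bound your argument then gives on the conjugating element $x$ is the one from Corollary \ref{c5} b), namely $4x=0$. So ``since $2x=0$'' is unjustified.

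\emph{The first route.} It does not rescue this. Corollary \ref{c5} b) conjugates by $1+z$ with only $4z=0$, which need not preserve the DM property. It therefore produces self-conjugate projectors of the type of Theorem \ref{t4}, not DM projectors. The weak statement is needed for DM projectors, as its use in Theorem \ref{t7} c) shows.

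\emph{The missing observation.} The exact identity survives for odd $n$. By Lemma \ref{l12} c), or simply $(-1)_{\sA *}=(-1)_\sA^*$ together with Lemma \ref{l12} b), one has $\pi^{2g-i}_\sA\circ(-1)_{\sA *}=(-1)^i\pi^{2g-i}_\sA$. Transposing gives $(-1)_\sA^*\circ{}^t\pi^{2g-i}_\sA=(-1)^i\,{}^t\pi^{2g-i}_\sA$.

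Now write ${}^t\pi^{2g-i}_\sA=\pi^i_\sA+x_{i-1}-x_i$ with $x$ torsion, as in Corollary \ref{c5} b). The DM property of $\pi^j_\sA$ gives $n_\sA^*\circ x_j=n^jx_j$. Composing with $(-1)_\sA^*$ on the left therefore yields $\bigl((-1)^{i-1}-(-1)^i\bigr)x_{i-1}=0$, i.e.\ $2x_{i-1}=0$ for all $i$.

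Hence $2x=0$, and $2\,{}^t\pi^{2g-i}_\sA=2\pi^i_\sA$ holds for every set of étale integral DM projectors. This is the form in which the statement is used.
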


\begin{proof} It improves that of Corollary \ref{c5}: 

Let $(\pi^i_\sA)_{i=0}^{2g}$ be a set of étale integral DM projectors. By Lemma \ref{l12} c) and the hypothesis, $({}^t\pi^{2g-i}_\sA)_{i=0}^{2g}$ is another set of étale integral DM projectors. Reasoning as in the proof of Corollary \ref{c5} b), we find $y$ with $2y=0$ such that the projectors
\[\tilde \pi^i_\sA =(1+y)\pi^i_\sA(1+y)^{-1}\]
are self-conjugate. But the $\tilde \pi^i_\sA$ are also DM projectors, thanks to Lemma \ref{l12} a).  Conversely, if $\sA$ has a set of self-conjugate étale integral DM projectors, the invariants of Remark \ref{r3} must obviously vanish.
\end{proof}

\begin{defn}\label{d5} Under the conditions of Lemma \ref{l17}, we say that $\sA\to \Spec k$ is \emph{very good}.
\end{defn}

\begin{lemma}\label{p1}  If $\sA,\sB$ are good, then $\sA\times_S\sB$ is good;  similarly for ``étale-good'' and ``very good''. The converse is true for étale-good if $S=\Spec k$.
\end{lemma}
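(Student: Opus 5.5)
The plan is to use the Künneth-type product of projectors in both directions. For the forward direction, suppose $(\pi^i_\sA)$ and $(\pi^j_\sB)$ are (étale) integral DM projectors. Let $p_1,p_2$ be the projections of $\sA\times_S\sB$, and define
\[\pi^n_{\sA\times\sB}=\sum_{i+j=n}\pi^i_\sA\times\pi^j_\sB ,\]
where $\alpha\times\beta$ is the exterior product of relative correspondences, i.e.\ the pullback along the obvious isomorphism $(\sA\times_S\sB)\times_S(\sA\times_S\sB)\simeq(\sA\times_S\sA)\times_S(\sB\times_S\sB)$ of $p^*\alpha\cdot q^*\beta$.

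Composition of correspondences is compatible with exterior products: $(\alpha\times\beta)\circ(\alpha'\times\beta')=(\alpha\circ\alpha')\times(\beta\circ\beta')$. This is the standard identity, and it holds in the étale setting via the six-functor formalism already used to define $\sM_\et$. It gives orthogonality at once, and
\[\sum_n\pi^n_{\sA\times\sB}=(\sum\pi^i_\sA)\times(\sum\pi^j_\sB)=\Delta_\sA\times\Delta_\sB=\Delta_{\sA\times\sB}.\]
For the DM condition, note that $n_{\sA\times\sB}=n_\sA\times n_\sB$, so
\[n^*_{\sA\times\sB}\circ(\pi^i_\sA\times\pi^j_\sB)=(n_\sA^*\circ\pi^i_\sA)\times(n_\sB^*\circ\pi^j_\sB)=n^{i+j}\,\pi^i_\sA\times\pi^j_\sB .\]
The cohomological condition follows from the multiplicativity of $\cl_l$ together with the fact that Künneth projectors of a product are sums of exterior products of Künneth projectors. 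This settles "good" and "étale-good".

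For "very good" (over $S=\Spec k$), I would take self-conjugate projectors on each factor. Transposition is compatible with exterior products, so
\[{}^t\pi^n_{\sA\times\sB}=\sum_{i+j=n}\pi^{2g_\sA-i}_\sA\times\pi^{2g_\sB-j}_\sB=\pi^{2g-n}_{\sA\times\sB},\qquad g=g_\sA+g_\sB .\]
Lemma \ref{l17} then shows that the invariants $g^i(2)$ vanish.

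The converse is the harder step. Assume $S=\Spec k$ and that $\sA\times\sB$ is étale-good with projectors $(\Pi^n)$. I would transport them to $\sA$ using the homomorphisms $\iota:\sA\to\sA\times\sB$, $a\mapsto(a,0)$, and $p_1$. Set
\[\pi^i_\sA:=\iota^*\circ\Pi^i\circ p_1^* ,\]
where $\iota^*$ and $p_1^*$ are the motivic morphisms. Since $p_1\circ\iota=1_\sA$, we get $\iota^*\circ p_1^*=1$, hence $\sum_i\pi^i_\sA=1$. Since $\iota$ and $p_1$ commute with multiplication by $n$,
\[n_\sA^*\circ\pi^i_\sA=\iota^*\circ n^*_{\sA\times\sB}\circ\Pi^i\circ p_1^*=n^i\pi^i_\sA .\]
Corollary \ref{c14} then shows that the squares $(\pi^i_\sA)^2$ form a set of étale integral DM projectors. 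For this I must check that the $\pi^i_\sA$ reduce to the Step 1 projectors modulo torsion. Rationally this holds by uniqueness of DM projectors, since the DM condition $n^*\circ\pi^i=n^i\pi^i$ characterizes them by \cite[Th. 3.1]{dm}, and Remark \ref{r2} a) gives uniqueness modulo torsion.

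I expect the main obstacle to be the bookkeeping of compatibilities of exterior products with composition and transposition in étale Chow groups over $S$. For the converse, the delicate point is that conjugating by the maps $\iota^*,p_1^*$ does not preserve idempotence integrally. This is exactly why I plan to pass through Corollary \ref{c14} rather than verify orthogonality directly.
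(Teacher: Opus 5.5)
Your proposal is correct and follows the paper's proof essentially step for step. For the forward direction you use exterior products $\sum_{j+k=i}\pi^j_\sA\times_S\pi^k_\sB$. For the converse you use $\iota^*\circ\Pi^i\circ p_1^*$, which sums to $1$ and satisfies $n^*\circ\pi^i=n^i\pi^i$, and then square via Proposition \ref{l15}; routing this through Corollary \ref{c14}, with Deninger--Murre uniqueness to identify these correspondences modulo torsion with the Step 1 projectors, is just a slightly more explicit version of the paper's remark that they lift the $l$-adic Künneth projectors.
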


\begin{proof}  The `if' part of is obvious by taking the usual choice for the CK projectors
\[\pi^i_{\sA\times_S \sB} = \sum_{j+k=i} \pi^j_\sA\times_S \pi^k_\sB.\]

Conversely, let $(\pi^*_{\sA\times \sB})$ be a set of étale integral DM projectors (here we assume $S=\Spec k$, $k$ separably closed). Let $\iota:\sA\inj \sA\times \sB$ be the inclusion $a\mapsto (a,0)$, and let $\pi:\sA\times \sB\to \sA$ be the projection. Let
\[\pi^i = \iota^*\circ \pi^i_{\sA\times \sB} \circ \pi^*.\]

Then $\pi^i$ lifts the $i$-th $l$-adic Künneth projector of $\sA$ for all $l\ne p$. By Proposition \ref{l15}, the $\pi^i_A=(\pi^i)^2$ form a set of orthogonal projectors with sum $1$. Moreover, for $n\in \Z$ one has
\[n_A^*\circ \pi^i = \iota^*\circ  n_B^*\circ \pi^i_{\sA\times \sB} \circ \pi^* =\iota^*\circ  n^i\circ \pi^i_{\sA\times \sB} \circ \pi^*=n^i\pi^i\]
hence $n_A^*\circ \pi^i_A = n^i\pi^i_A$ as well.
\end{proof}

\subsection{Elliptic curves}

Let $C$ be a curve over $k$, which is supposed here to be the algebraically closed, $c\in C(k)$, and let $X$ be a smooth projective $k$-variety. The standard choice of Chow-Künneth projectors 
\[\pi^0 = \{c\}\times C, \pi^2= C\times \{c\}, \pi^1 = \Delta_C - \pi^0-\pi^2\]
acts on the group of correspondences $CH^1(C\times X)$ by composition on the right. On the other hand, we have the decomposition
\[CH^1(C\times X) = p_1^*CH^1(C)\oplus \Corr((C,c),(X,x)) \oplus p_2^*CH^1(X)\]
attached to a point $x\in X(k)$ as in \cite[\S 6, Remark]{mumford}, where $p_i$ is the $i$-th projection and $\Corr((C,c),(X,x))$ is the group of divisorial correspondences. Write $(p^0,p^1,p^2)$ for the orthogonal projectors on $CH^1(C\times X)$ with images these summands. More specifically, let $p:C\to \Spec k$ be the structural morphism and $i:\Spec k\to C$ be the section at $c$; let $q:X\to \Spec k$ be the structural morphism and $j:\Spec k\to X$ be the section at $x$. We have $p_2=p\times 1_X$, $p_1=1_C\times q$ and 
\[p^0 = (1_C\times (jq))^*,\quad p^2 = ((ip)\times 1_X)^*.\]

We have a further decomposition
\[p^0=p^0_1+p^0_2\]
where $\IM p^0_1=p_1^*\Pic^0(C)$ and $\IM p^0_2=\Z p_1^*[c]$.

\begin{lemma}\label{l14} a) For $\xi\in CH^1(C\times X)$, we have $\xi\circ \pi^2=p^2(\xi)$ and $\xi\circ \pi^0= p^0_2(\xi)$, hence $\xi\circ \pi^1=p^1(\xi)+p^0_1(\xi)$.\\
b) If $f:X\to C$ is a morphism, then
\[p^0_1({}^t\Gamma_f)=p_1^*([c]-[f(x)]).\]
\end{lemma}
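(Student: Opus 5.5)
The plan is to compute the compositions $\xi\circ\pi^0$ and $\xi\circ\pi^2$ directly from the composition rule $\delta\circ\gamma=(p_{13})_*(p_{12}^*\delta\cdot p_{23}^*\gamma)$. We regard $\xi\in CH^1(C\times X)$ as a correspondence from $C$ to $X$, and deduce the formula for $\pi^1$ from $\pi^1=\Delta_C-\pi^0-\pi^2$ and $\xi\circ\Delta_C=\xi$. Part b) will then be a one-line restriction computation.

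For $\pi^2=C\times\{c\}$, I would note that this cycle is the graph of the constant map $ip:C\to C$. For a morphism $h$, composing a correspondence with the graph of $h$ on the right is pull-back by $h\times 1_X$ (the standard graph formula). This gives $\xi\circ\pi^2=((ip)\times 1_X)^*\xi=p^2(\xi)$ directly from the definition of $p^2$.

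For $\pi^0=\{c\}\times C$, the transpose of that graph, I would compute by hand on $C\times C\times X$. The pull-back $p_{12}^*\pi^0$ is $\{c\}\times C\times X$, and intersecting it with $p_{23}^*\xi=C\times\xi$ gives $\{c\}\times\xi$. Pushing forward by $p_{13}$ gives $[c]\times (\mathrm{pr}_X)_*\xi$. Since $\xi$ is a divisor, $(\mathrm{pr}_X)_*\xi=d\,[X]$, where $d$ is the degree of $\xi$ on the fibres $C\times\{y\}$. Hence $\xi\circ\pi^0=d\,p_1^*[c]$.

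On the other side, $p^0(\xi)=(1_C\times(jq))^*\xi=p_1^*(\xi_{|C\times\{x\}})$. Splitting $CH^1(C)=\Pic^0(C)\oplus\Z[c]$, the $\Z[c]$-component is $\deg(\xi_{|C\times\{x\}})\,[c]$. This degree equals $d$ by constancy of the fibre degree over the connected $X$ (or by the projection formula). So $\xi\circ\pi^0=p^0_2(\xi)$, and then $\xi\circ\pi^1=\xi-p^0_2(\xi)-p^2(\xi)=p^1(\xi)+p^0_1(\xi)$. The only point needing care is the proper-intersection/excess check in the $\pi^0$ computation; since $\{c\}\times C\times X$ and $C\times\xi$ meet properly, no moving is needed.

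For b), ${}^t\Gamma_f=\{(f(y),y)\}$, and its restriction to $C\times\{x\}$ is the point $f(x)$, of degree $1$. So $p^0({}^t\Gamma_f)=p_1^*[f(x)]$, and removing the degree part $p_1^*[c]$ leaves $\pm p_1^*([f(x)]-[c])$. The main obstacle I foresee is exactly this sign: it depends on how $p^0_1$ is normalised in Mumford's decomposition, i.e.\ on the identification of $\Pic^0(C)$ with the divisorial-correspondence convention used there. I would check that convention in \cite[\S 6]{mumford} to match the stated form $p_1^*([c]-[f(x)])$, adjusting the definition of $p^0_1$ accordingly if the naive normalisation gives the opposite sign.
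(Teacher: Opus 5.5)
Your proof of a) is the paper's argument. The paper uses the graph formula $\xi\circ\pi^2=((ip)\times 1_X)^*\xi$. It computes $\xi\circ\pi^0=d_1(\xi)\,p_1^*[c]$, where $(p\times 1_X)_*\xi=d_1(\xi)[X]$. It identifies $d_1(\xi)$ with $\deg(\xi_{|C\times\{x\}})$ through $p_*(1\times j)^*=j^*(p\times 1_X)_*$, which is exactly the step behind the paper's $p^0(\xi)\circ\pi^0=\xi\circ\pi^0$. Comparing $\xi\circ\pi^0$ and $p^0_2(\xi)$ directly, as you do, is slightly more economical. For b), you also compute what the paper computes, namely $p^0({}^t\Gamma_f)=p_1^*[f(x)]$.

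What needs correcting is your handling of the sign, because the sign is forced, not a convention. The maps $p^0_1$ and $p^0_2$ are orthogonal projectors with sum $p^0$ and prescribed images $p_1^*\Pic^0(C)$ and $\Z p_1^*[c]$, so they are uniquely determined: $p^0_2(p_1^*\eta)=\deg(\eta)\,p_1^*[c]$. Hence
\[p^0_1({}^t\Gamma_f)=p_1^*([f(x)]-[c]),\]
which is the negative of the printed formula. The printed sign is a misprint; the paper's own proof also stops at $p^0({}^t\Gamma_f)=p_1^*[f(x)]$, ``hence b)''.

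Your proposed remedy cannot work. Looking up Mumford's normalisation is irrelevant, since that concerns the summand $\Corr((C,c),(X,x))$, not $p_1^*\Pic^0(C)$. Nor can you redefine $p^0_1$ to absorb the sign: $-p^0_1$ is not idempotent when $\Pic^0(C)\neq 0$.

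The sign is harmless where the lemma is used. In Theorem \ref{t6} it is applied with $f=n_\sE$ and $x=c=0$, so $f(x)=c$ and the term vanishes. The proof of Theorem \ref{tA1} only needs part a). So state and prove b) with $p_1^*([f(x)]-[c])$.
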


\begin{proof}  We have
\[\xi\circ \pi^2=((ip)\times 1_X)^*\xi, \quad \xi\circ \pi^0=((ip)\times 1_X)_*\xi.\]

The first equality gives the first claim of a), which also implies $p^2(\xi)\circ \pi^0=0$. Then
\begin{multline*}
p^0(\xi)\circ \pi^0=((ip)\times 1_X)_*(1_C\times (jq))^*\xi=(i\times 1)_*q^*p_* (1\times j)^*\xi\\
=(i\times 1)_*q^*j^*(p\times 1)_*\xi=(i\times 1)_*(p\times 1)_*\xi=\xi\circ \pi^0
\end{multline*}
hence $p^1(\xi)\circ \pi^0=0$. Finally, note that
\[(p\times 1_X)_*\xi= d_1(\xi) [X]\]
(which defines $d_1(\xi)$), hence $ \xi\circ \pi^0=d_1(\xi) [c\times X]$. If $\xi=p_1^*\eta$, then $d_1(\xi)=\deg(\eta)$; in particular, $ p_1^*\eta\circ \pi^0=0$ if $\eta\in \Pic^0(C)$. This proves the second claim of a), hence its third claim. We then compute 
\[p^0({}^t\Gamma_f)=(1\times q)^*(1\times j)^*{}^t\Gamma_f=(1\times q)^*[f(x))]=p_1^*[f(x))]\]
hence b).
\end{proof}

\begin{thm}\label{t6} Elliptic curves $\sE\to S$ are (very) good. More precisely, the standard choice of projectors: $\pi^0_\sE = \{0\}\times_S \sE$, $\pi^2_\sE= \sE\times_S \{0\}$ and $\pi^1_\sE = \Delta_\sE - \pi^0_\sE-\pi^2_\sE$ is a set of (self-conjugate) integral DM projectors.
\end{thm}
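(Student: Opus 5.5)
Throughout, $g=1$ and $\sE\times_S\sE$ has relative dimension $2$ over $S$. The plan is to check the conditions of Definition \ref{d4} for the standard projectors directly, with integral Chow groups over $S$; the étale version then follows by applying $\alpha^*$.

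\emph{Orthogonality and sum.} Write $e:S\to\sE$ for the zero section and $\sigma:\sE\to S$ for the structure map. Then $\pi^0_\sE=(e\sigma)^*$ and $\pi^2_\sE=(e\sigma)_*$ as correspondences, i.e. $\pi^0_\sE$ and $\pi^2_\sE$ are the graph of $e\sigma$ and its transpose.
\begin{itemize}
\item Since $\sigma e=1_S$, both are idempotent.
\item The composites $\pi^0_\sE\circ\pi^2_\sE$ and $\pi^2_\sE\circ\pi^0_\sE$ are computed by a degree-zero push-pull through $S$. They reduce to $\sigma_*e_*$ applied to a class of the wrong codimension, hence vanish. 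Concretely, $(e\sigma)^*(e\sigma)_*$ has degree shift $1$ on $CH^*(S)$ and is zero, and the other order gives $\sigma_*(e^*e_*)$, which is $0$ in the relevant degree.
\end{itemize}
Orthogonality of $\pi^1_\sE=\Delta_\sE-\pi^0_\sE-\pi^2_\sE$ with the other two is then formal, and the sum is $1$ by definition. The condition on $\cl_l$ is also immediate: the geometric generic fibre is an elliptic curve, and these cycles restrict to the usual Künneth projectors $\{0\}\times E$, $E\times\{0\}$ and the remainder.

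\emph{DM condition.} We need $n_\sE^*\circ\pi^i_\sE=n^i\pi^i_\sE$, where $n_\sE^*$ is the transposed graph of $n_\sE$.
\begin{itemize}
\item For $i=0$: since $n_\sE e=e$, we get $n_\sE^*\circ(e\sigma)^*=(e\sigma n_\sE)^*$, which equals $(e\sigma)^*$ because $\sigma n_\sE=\sigma$.
\item For $i=2$: $n_\sE^*\circ(e\sigma)_*$ is the class of $\sE\times_S n_\sE^{-1}(0)$, i.e. the pullback of the divisor $[0]$ along $n_\sE$ in the second factor. We need this to be $n^2[\sE\times_S 0]$ in $CH^1(\sE\times_S\sE)$. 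This does not hold literally, since $n_\sE^*[0]=[\sE[n]]$, which is not $n^2[0]$ in $CH^1(\sE)$. So the identity has to be read in the correct direction of composition: as an endomorphism, $n_\sE^*\circ\pi^2_\sE$ is $\pi^2_\sE\circ\ldots$ modulo pullback from the base.
\end{itemize}
My plan is therefore to recompute both sides in $CH^1(\sE\times_S\sE)$ and use Lemma \ref{l14}, in its relative version, to decompose $CH^1(\sE\times_S\sE)$ into pullbacks from the two factors plus divisorial correspondences.

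\emph{Main obstacle: $i=1$.} The identity reduces to
\[{}^t\Gamma_{n}-\pi^0-n_\sE^*\pi^2=n\bigl(\Delta-\pi^0-\pi^2\bigr).\]
I will prove it by applying $p^0,p^1,p^2$ with $X=\sE$ and $x=0$:
\begin{itemize}
\item The $p^1$-component of ${}^t\Gamma_n$ is the divisorial correspondence attached to $n_\sE\in\End(\sE)$. It equals $n$ times that of $\Delta$ by additivity of $\Corr$ in the homomorphism, which is the theorem of the cube.
\item The $p^2$ and $p^0$ components are checked with Lemma \ref{l14}; part b) gives $p^0_1$ of the graphs as $p_1^*([0]-[n\cdot 0])=0$.
\end{itemize}
Over a general base $S$, Mumford's decomposition must be replaced by the relative one, $\Pic(\sE\times_S\sE)=\Pic(S)\oplus\Pic_{\sE/S}$-parts plus $\Hom(\sE,\hat\sE)$. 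I expect this replacement, and the bookkeeping of the terms pulled back from $\Pic(S)$, to be the delicate step. Self-conjugacy is immediate, since ${}^t\pi^0_\sE=\pi^2_\sE$ and $\Delta_\sE$ is symmetric.
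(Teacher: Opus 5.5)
\paragraph{The gap: the case $i=2$.} Over a field, your handling of $\pi^0_\sE$, of idempotence and orthogonality, and of $\pi^1_\sE$ is fine. For $\pi^1_\sE$ it is essentially the paper's argument: Lemma \ref{l14} together with the ring isomorphism $\Corr((\sE,0),(\sE,0))\iso\End(\sE)$. Your componentwise check for $i=1$ does not even need $i=2$, since the $p^2$-components of ${}^t\Gamma_n$ and of $n_\sE^*\circ\pi^2_\sE$ are both $p_2^*[{}_n\sE]$ and cancel.

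The genuine gap is $i=2$. Definition \ref{d4} requires the literal identity $n_\sE^*\circ\pi^2_\sE=n^2\pi^2_\sE$, i.e. $p_2^*([{}_n\sE]-n^2[0])=0$; there is no other ``direction'' in which to read it. Over a field, your claim that $[{}_n\sE]\neq n^2[0]$ in $CH^1(\sE)$ is false, and proving $[{}_nE]=n^2[0]$ in $\Pic(E)$ is exactly the content of the paper's Step 1. The argument goes as follows.
\begin{itemize}
\item By multiplicativity of $n\mapsto n_E^*$, one reduces to $n$ prime to $p$ and to $n=p$.
\item For $n$ prime to $p$, the theorem of the square gives $[{}_nE]-n^2[0]=[\sum_{x\in {}_nE(\bar k)}x]-[0]$. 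This vanishes because ${}_nE(\bar k)\simeq(\Z/n)^2$ is not cyclic, so its elements sum to $0$.
\item For $n=p$, either ${}_pE$ is infinitesimal (supersingular case), so $[{}_pE]=p^2[0]$; or ${}_pE\simeq\Z/p\times\mu_p$ (ordinary case). In the ordinary case $[{}_pE]=p[F]$ with $F\simeq\Z/p$ the reduced subgroup, and $p[F]-p^2[0]=p([\sum_{x\in F}x]-[0])=0$. For $p=2$ this uses $2([t]-[0])=0$, where $t$ is the nonzero $2$-torsion point of $F$.
\item Finally, one descends from $\bar k$ to $k$ using injectivity of $\Pic$ under extension to $\bar k$ (Hilbert 90).
\end{itemize}

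\paragraph{Over a general base.} Here your suspicion about $\Pic(S)$-terms is well founded, and it is precisely your orthogonality argument that breaks. The operator $e^*e_*$ is multiplication by $c_1$ of the normal bundle $e^*\mathcal{O}(e(S))\simeq\omega^{-1}$, where $\omega=e^*\Omega^1_{\sE/S}$. Hence one of the two composites of $\pi^0_\sE$ and $\pi^2_\sE$ equals $-p_{(2)}^*\lambda$, with $\lambda=c_1(\omega)\in\Pic(S)$. Likewise, the theorem of the cube applied to the rigidified symmetric bundle $\mathcal{O}(e(S))\otimes\sigma^*\omega$ gives $[{}_n\sE]=n^2[0]+(n^2-1)\sigma^*\lambda$ in $\Pic(\sE)$.

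The class $\lambda$ is killed by $12$, since the discriminant trivialises $\omega^{\otimes 12}$, so it vanishes rationally. It need not vanish integrally. Take the quadratic twist of $E_0\times S$ by an \'etale double cover of $S$ whose associated $2$-torsion line bundle $M$ is nontrivial; then $\omega\simeq M$. For such $S$ the standard projectors are neither orthogonal nor DM.

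The paper does not use a relative Mumford decomposition. It reduces to the generic fibre via $0\to\Pic(S)\to\Pic(\sE\times_S\sE)\to\Pic(\sE_\eta\times_\eta\sE_\eta)\to 0$, and kills the ambiguity with $p_{(2)}^*x\circ\gamma=d_1(\gamma)p_{(2)}^*x$. That Step 3 has the same blind spot: it asserts $[{}_n\sE]=n^2[0]$ in $\Pic(\sE)$ by the cube theorem, and it uses $(\pi^1_\sE)^2=\pi^1_\sE$.

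Replacing $\pi^2_\sE$ by $\pi^2_\sE+p_{(2)}^*\lambda$ restores orthogonality and the DM identities, and then the paper's reduction to the generic fibre goes through. Self-conjugacy, however, is lost. None of this affects Theorem \ref{t3}, whose proof only uses $S=\Spec k$. There $\Pic(S)=0$ and your orthogonality argument goes through.
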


\begin{proof} We proceed in three steps.

\subsubsection*{Step 1} $S=\Spec k$ with $k$ algebraically closed.

Obviously, $n_\sE^*\circ \pi^0_\sE = (1\times n_\sE)^*\{0\}\times \sE=\{0\}\times \sE$.

Next, $n_\sE^*\circ \pi^2_\sE = (1\times n_\sE)^*\sE\times \{0\}= \sE\times {}_n \sE$. We reduce to two cases:

\begin{description}
\item[$n$ is invertible in $k$] by the theorem of the square, the divisor class $[{}_n \sE] - n^2[0]$ equals $n^2([\sum_{x\in {}_n \sE} x]-[0])$. But  $\sum_{x\in {}_n \sE(k)} x=0$ since ${}_n \sE(k)$ is not cyclic, and $n_\sE^*\circ \pi^2_\sE=n^2\pi^2_\sE$.
\item[$n=p=\car k$] Here the group scheme ${}_n \sE$ is nonreduced. According as $\sE$ is supersingular or not, it is supported by $0$ with multiplicity $p^2$, or is of the form $F\times G$ with $F\simeq \Z/p$ and $G\simeq \mu_p$ \cite[\S 15, p. 247]{mumford}. In the first case, we obviously have $[{}_n \sE]= p^2[0]$, while in the second, we have
\[[{}_n \sE]-n^2[0]=p[F]-p^2[0]=p[\sum_{x\in F(k)} x]-p^2[0]=0\]
because $\sum_{x\in \Z/p} x$ equals $0$ when $p$ is odd and $1$ when $p=2$, but $2([1]-[0])=0$ in this case (phew!)
\end{description}

There remains the case of $\pi^1_\sE$. For this, we apply Lemma \ref{l14} with $(C,c)=(X,x)=(\sE,0)$ and $f=n_\sE$. Since $n_\sE(0)=0$, we find that $\IM \pi^1_\sE= \Corr((\sE,0),(\sE,0))$. But the map $\Corr((\sE,0),(\sE,0))\to \End(\sE)$ given by the action of divisorial correspondences is an isomorphism of rings; therefore $n_\sE^*\circ \pi^1_\sE=n\pi^1_\sE$.

\subsubsection*{Step 2} $S=\Spec k$ with $k$ separably closed. We simply use the fact that $\Pic(\sE\times_k\sE)\to \Pic((\sE\times_k\sE)\times_k \bar k)$ is injective for an algebraic closure $\bar k/k$.

\subsubsection*{Step 3} The general case. Let $\eta$ be the generic point of $S$. We have an exact sequence
\[0\to \Pic(S)\by{p_{(2)}^*} \Pic(\sE\times_S\sE)\to \Pic(\sE_\eta\times_\eta\sE_\eta)\to 0\]
where $p_{(2)}:\sE\times_S\sE\to S$ is the structural map.

Let $n$ be an integer $\ne 0$. For $i=0$, the identity $n_\sE^*\circ \pi^0_\sE=\pi^0_\sE$ is obvious. Suppose that $i=1$. By Step 2 applied over $\eta$, $n_\sE^*\circ \pi^1_\sE-n\pi^1_\sE\in \Pic(S)$. 

Let $x\in \Pic(S)$ and $\gamma\in \Pic(\sE\times_S\sE)$. An easy computation shows that
\[p_{(2)}^*x \circ \gamma =  d_1(\gamma) p_{(2)}^* x\]
where $d_1(\gamma)$ is given by the equality $(p_1)_*\gamma = d_1(\gamma)[\sE]$ in $CH^0(\sE) $ for $p_1:\sE\times_S \sE\to S$ the first projection.
In particular, $p_{(2)}^*x \circ \pi^1_\sE=0$ and $n_\sE^*\circ \pi^1_\sE-n\pi^1_\sE=(n_\sE^*\circ \pi^1_\sE-n\pi^1_\sE)\circ \pi^1_\sE=0$. 

For $\pi^2$, we have to see that $[{}_n \sE] = n^2[0]\in \Pic(\sE)$. The argument is the same as over an algebraically closed field, since the theorem of the cube still holds \cite[\S 10]{mumford}.
\end{proof}

\begin{rk}\label{r6} The proof of Theorem \ref{t6} does not require $k$ to be separably closed, hence this theorem is valid without such hypothesis.
\end{rk}

\subsection{A deformation result}

\begin{prop}\label{pDM} Let $\pi:\sA\to S$ be an abelian scheme and let $s,t\in S(k)$. If $\sA_s$ is étale-good, so is $\sA_t$, and similarly for ``very good''.
\end{prop}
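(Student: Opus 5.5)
The plan is to pass through the geometric generic fibre. Let $\bar\eta$ be a geometric generic point of $S$ (which I may assume connected), with separably closed residue field. I would prove that for each $s\in S(k)$, $\sA_s$ is étale-good (resp. very good) if and only if $\sA_{\bar\eta}$ is. The tool is a specialization map
\[\sigma_s:CH^g_\et(\sA_{\bar\eta}\times\sA_{\bar\eta})[1/p]\to CH^g_\et(\sA_s\times\sA_s)[1/p].\]
To build it, restrict to $\sA\times_S\sA$ over the strict henselisation $\Spec \mathcal{O}^{sh}_{S,s}$. Since $H^*_\et(-,\Z(g))[1/p]$ of the henselian family maps to the generic fibre, I would use the six-operations formalism of \cite{ayoub,cis-deg} to produce a specialization morphism, then pull back to the closed fibre. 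I would then check four properties of $\sigma_s$:
\begin{enumerate}
\item it is compatible with composition of correspondences, with transposition, and with $n^*$ and $(n)_*$, since these are given by global correspondences over $S$;
\item it is compatible with the étale $l$-adic cycle class maps, and hence with the Künneth projectors $\hat\pi^i$;
\item on torsion subgroups it is an isomorphism, because by Proposition \ref{p4} c) and \S\ref{s3} these are $\hat H^{2g-1}(-,g)\otimes\Q/\Z$, and proper smooth base change applies;
\item rationally it carries Deninger–Murre projectors to Deninger–Murre projectors, by uniqueness in \cite{dm}.
\end{enumerate}

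Next I would produce the étale-good transfer. Take a system $(\pi^i)$ at $\bar\eta$ as in Theorem \ref{t4} a). By the four properties, $(\sigma_s\pi^i)$ is a system at $s$ satisfying Theorem \ref{t4} a), with defects $g_i^{(s)}(n)=\sigma_s(g_i(n))$. By Theorem \ref{t4} b), étale-goodness at $s$ is then equivalent to the following condition: there is a torsion $z$ with $4z=0$, $z\circ\pi^1=0$ and
\[g_i^{(s)}(n)=(n^i-n^{i-1})z_i \quad\text{for all } i,n.\]
Indeed, conjugation by $1+z$ changes $g_i(n)$ by $(n^{i-1}-n^i)z_i$, by the computation \eqref{eq9}. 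This is a condition purely inside the torsion bimodules $t_i(1)$. Since $\sigma_s$ is an isomorphism there, compatible with the bimodule structures by Lemma \ref{l6}, the condition holds at $s$ iff it holds at $\bar\eta$. Conjugating produces DM projectors at $s$ exactly when it does at $\bar\eta$, and the same argument then applies at $t$.

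For ``very good'' I would argue the same way with the invariants $g^i(2)=\pi^i\circ((2)_*-2^{2g-i})$ of Remark \ref{r3}. These are intrinsic, being independent of the choice of DM projectors. By the four properties, $\sigma_s$ maps the invariant at $\bar\eta$ to that at $s$ and is injective on torsion, so by Lemma \ref{l17} vanishing at one point is equivalent to vanishing at the other.

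The main obstacle is the construction of $\sigma_s$ with properties (1) and (3). It must be a ring-compatible specialization on étale motivic cohomology of $\sA\times_S\sA$ that is bijective on the torsion described in Proposition \ref{p4} c), i.e. that matches the identifications with $\hat H^{2g-1}\otimes\Q/\Z$ on both fibres. My first attempt would be to show that restriction from $\sA^{sh}\times\sA^{sh}$ to both the closed fibre and $\bar\eta$ induces isomorphisms on $H^{2g-1}_\et(-,\Q_l/\Z_l(g))$, by proper base change over a strictly henselian base. I would then define $\sigma_s$ on torsion as the composite isomorphism. On the full group I would use only the restriction maps from the henselian family, checking that every class needed, namely the $\pi^i$ together with $n^*$ and $(n)_*$, comes from the family after a finite extension. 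Such an extension does not affect the separably closed fibres.
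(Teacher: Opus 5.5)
Your reduction is right. By Theorem \ref{t4} b) and the computation \eqref{eq9}, étale-goodness of a fibre amounts to the vanishing of the class of the defect cocycle $g_i$ in $HH^1(\Lambda,t_i(1))$. The bimodule $t_i(1)$ is built from $\hat H\otimes\Q/\Z$, which smooth proper base change identifies from fibre to fibre. That is also the core of the paper's argument.

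The gap is the step you flag yourself, and it is not routine. Everything hinges on a map $\sigma_s$ that is defined at least on your chosen $\pi^i$ at $\bar\eta$ and is compatible with composition, and the proposal does not produce it.

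A system chosen at $\bar\eta$ is only defined over some finite extension $L$ of the fraction field of $\mathcal{O}^{sh}_{S,s}$. The normalisation of $\Spec\mathcal{O}^{sh}_{S,s}$ in $L$ is in general not regular, so the family over it is not smooth over $k$. You then lose purity and the localization sequences needed to extend classes. You also lose the description of torsion (Proposition \ref{p4} c), Corollary \ref{c2}), which is only available for abelian varieties over separably closed fields.

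Even over $\Spec\mathcal{O}^{sh}_{S,s}$ itself, an integral étale class on the generic fibre need not extend. The obstruction lies in groups such as $H^{2g-1}_\et(Z,\Z(g-1))$ for divisors $Z$ of the family (a Brauer group when $g=2$), which need not vanish. So ``checking that every class needed comes from the family after a finite extension'' is an unproved claim, not a verification.

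The missing idea is to transport in the opposite direction: build the projectors over the strictly henselian family and restrict them to both fibres, instead of specialising from $\bar\eta$.

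Let $\sA'=\sA\times_S\Spec\mathcal{O}^{sh}_{S,s}$ and $X=\sA'\times\sA'$, the fibre product taken over the henselian base. By proper and smooth base change, $H^{q}_\et(X,\Q_l/\Z_l(g))$ maps isomorphically to the corresponding groups of $\sA_s^2$ and of $\sA_{\bar\eta}^2$. Since $H^{2g-1}_\et(\sA_s^2,\Z(g))\otimes\Q_l/\Z_l=0$, two things follow:
\begin{itemize}
\item $CH^g_\et(X)[1/p]_\tors$ maps isomorphically onto the torsion of both fibres, and is therefore square-zero;
\item the rational DM projectors restricted to $X$ are integral, since you can test integrality in $H^{2g}_\et(X,\Q_l/\Z_l(g))\cong H^{2g}_\et(\sA_s^2,\Q_l/\Z_l(g))$.
\end{itemize}
Hence Steps 1 and 2 of Theorem \ref{t4} run over $X$ (compare Remark \ref{r4} with $\cd=0$). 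Restriction is a ring map commuting with $n^*$, $(n)_*$, transposition and $\cl_l$, so it sends the resulting defect cocycle to the defect cocycles at $s$ and at $\bar\eta$.

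No $\sigma_s$ is needed. Any two systems at $\bar\eta$ differ by conjugation by $1+z$ with $z$ torsion, and such a $z$ comes from $X$, so nothing depends on the choice. With this replacement your arguments for étale-good and for very good go through as written.

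The paper does the same thing globally. It sheafifies the obstruction classes on $S$ and uses that $R^{j-1}\pi_*\Z_l\otimes\Q_l/\Z_l(i)\to R^j\pi_*\Z(i)\{l\}$ is an isomorphism of locally constant sheaves, via Proposition \ref{p4} c), Gersten's principle and smooth proper base change.

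Finally, connectedness of $S$ is an implicit hypothesis of the statement, not a reduction you may make.
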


\begin{proof}
If $S=\Spec k$, Step 3 in the proof of Theorem \ref{t4} a) shows that we have a collection of obstruction classes
\[ c_i(\sA)\in HH^1(\Lambda,M_i(\sA))\, (2\le i\le 2g)\]
with
\[M_i(\sA)=\Hom(H^i_2(\sA),H^{i-1}_2(\sA))\otimes \Q/\Z\] 
such that $\sA$ is étale-good if and only if $c_i(\sA)=0$ for all $i$. This extends to any abelian scheme $\pi:\sA\to S$, with obstruction classes:
\[ c_i(\sA)\in HH^1(\Lambda,CH^i_\et(\sA))[1/p]_\tors).\]

Sheafifying for the étale topology, we get local obstruction classes
\[\tilde c_i(\sA)\in H^0_\et(S, HH^1(\Lambda,R^{2i}\pi_*\Z(i))[1/p]_\tors).\]

By Proposition \ref{p4} c) and ``Gersten's principle'' \cite[2.4]{bvk}, the natural maps
\[R^{j-1}\pi_*\Z_l\otimes \Q_l/\Z_l(i)\to R^j\pi_*\Z(i))\{l\}\]
remain isomorphisms for $l\ne p$. By smooth and proper base change, $c_i(\sA_s)=0$ $\iff$ $\tilde c_i(\sA)=0$ $\iff$ $c_i(\sA_t)=0$. Same reasoning for the invariants of Remark \ref{r3}, hence for ``very good'' by Lemma \ref{l17}.
\end{proof}

\subsection{Proof of Theorem \ref{t3}}

 All abelian varieties are supposed to be over $k$. By Lemma \ref{p1} and  Theorem \ref{t6}, products of elliptic curves are good, hence étale-good. By the connectedness of the moduli scheme of principally polarised abelian varieties \cite[Ch. IV, Cor. 5.10]{FC} and Proposition \ref{pDM}, all principally polarised abelian varieties are étale-good. But then so is any abelian variety $A$ by Lemma \ref{p1} again, since $(A\times \hat{A})^4$ is principally polarised (Zarhin's trick). The same argument shows that all principally polarised abelian varieties are very good, since the DM projectors for elliptic curves in Theorem \ref{t6} are self-conjugate.\qed

\section{Proofs of Theorem \ref{t5} and Corollary \ref{c12}}

We give ourselves a system $(\pi^j_A)$ of projectors verifying the conditions of Theorem \ref{t4}.

\subsection{Beauville's decomposition, $p$-integrally}\label{s11.1}

\begin{lemma} \label{l8} Let $i\ge 0$. Then,\\
a) We have $\pi_A^{2i-1}x=x$ for any $x\in  CH^i_\et(A)[1/p]_\tors$.\\
b) For $j\ne 2i$, $\pi^j_A CH^i_\et(A)[1/p]$ is divisible.\\
c) For $j\ne 2i-1$, $\pi^j_A CH^i_\et(A)[1/p]$ is torsion-free.
 \end{lemma}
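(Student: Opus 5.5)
We view $CH^i_\et(A)[1/p]=H^{2i}_\et(A,\Z(i))[1/p]$ as $\sM_\et(\un(-i)\ldots)$, or more concretely as correspondences from $\Spec k$ (or, via Lemma \ref{l6} with $X$ a point replaced by $A$ as the first factor) on which the $\pi^j_A$ act by composition. The two inputs are Proposition \ref{p4} c), which identifies the torsion, and Lemmas \ref{l5}–\ref{l6}, which control how the $\pi^j_A$ act on it. We treat the three parts in the order a), c), b).

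\textbf{Part a).} By Proposition \ref{p4} c), the torsion of $CH^i_\et(A)[1/p]$ is
\[
\bigoplus_{l\ne p} H^{2i-1}_l(A,i)\otimes \Q_l/\Z_l .
\]
By Lemma \ref{l5}, the action of $\pi^j_A$ on this torsion agrees with the action of $\hat{\pi}^j_A$ on the coefficient group $\hat{H}^{2i-1}(A)$. That action is the identity for $j=2i-1$ and zero otherwise, since the $\hat{\pi}^j$ are the integral Künneth projectors of \eqref{eq8}. This is the same bookkeeping as in Lemma \ref{l6}, with the second factor $B$ replaced by a point (the isomorphism \eqref{eq20}, allowed for arbitrary $X$). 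This gives a). We must check the convention, namely that $\pi^j_A$ acts on $CH^*(A)$ by $x\mapsto (p_2)_*(\pi^j\cdot p_1^*x)$, which corresponds to $\hat{\pi}^j$ acting on $H^*$. If instead the transpose convention were used, the index would change, and we will adjust to match the statement.

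\textbf{Part c).} This follows from a). If $x\in \pi^j_A CH^i_\et(A)[1/p]$ is torsion, then $x=\pi^j_A x$ and also $x=\pi^{2i-1}_A x$. Hence
\[
x=\pi^j_A\pi^{2i-1}_A x=0 \quad\text{for } j\ne 2i-1,
\]
by orthogonality.

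\textbf{Part b).} This is the main point. Write $y=\pi^j_A x$ with $j\ne 2i$. Rationally, Beauville's decomposition together with \cite[Th. 3.1]{dm} shows that $\pi^j_A$ kills the cycle class. Concretely, $\cl_l(y)=\hat{\pi}^j\cl_l(x)$ lies in the $H^j$-Künneth component of $H^{2i}_l(A,i)$, and this component is zero unless $j=2i$. So $\cl_l(y)=0$ for all $l\ne p$. By Proposition \ref{p4a} a), $y$ then lies in the divisible subgroup $\Ker\cl_l\otimes\Z_l$ for every $l$. Globally, Proposition \ref{p4a} a) gives that $\Ker(\cl)$ on $CH^i_\et(A)[1/p]$ is divisible, so $y$ is divisible in $CH^i_\et(A)[1/p]$.

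It remains to show that $\pi^j_A CH^i_\et(A)[1/p]$ itself is divisible, not just that each element of it is divisible in the ambient group. Suppose $y=nz$. Then
\[
y=\pi^j_A y=n\,\pi^j_A z,
\]
so $y$ is divisible inside the image. This is the step I expect to require care: one must make sure that $\cl_l$ commutes with the action of $\pi^j_A$ (this is the functoriality of $\hat{R}$), and that the kernel of $\cl_l$ on the whole group $CH^i_\et(A)[1/p]$, rather than on its $l$-adic completion, is divisible. The latter requires combining Remark \ref{r2} with the divisibility statement of Proposition \ref{p4a} a), prime by prime.
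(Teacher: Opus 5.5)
Your proof is correct. Parts b) and c) are essentially the paper's argument.

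\begin{itemize}
\item For b), the paper notes that $\pi^j_A CH^i_\et(A)[1/p]\subseteq N=\bigcap_{l\ne p}\Ker\cl_l$ for $j\ne 2i$, and that $N$ is divisible and $\pi^j_A$-stable. Hence $\pi^j_A CH^i_\et(A)[1/p]=\pi^j_A N$ is a direct summand of a divisible group. This is your idempotence argument in another form.
\item For c), the paper deduces it from a) by orthogonality, as you do.
\end{itemize}

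The point you flag in b) is routine, and the paper passes over it as well. Put $M=CH^i_\et(A)[1/p]$. Then $M/q^kM\iso(M\otimes\Z_q)/q^k$. So if $y\otimes 1$ lies in the divisible group $\Ker(\cl_q)\subseteq M\otimes\Z_q$, then $y\in q^kM$ for every $k$, hence $y\in nM$ for every $n$. This is all your step $y=n\,\pi^j_A z$ needs.

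For a), your route is genuinely different. The paper never computes the action of $\pi^j_A$ on torsion directly.

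\begin{itemize}
\item It combines two facts: $n_A^*$ acts on $CH^i_\et(A)[1/p]_\tors$ by $n^{2i-1}$ (Proposition \ref{p4} e)), and the weight condition $2n_A^*\circ\pi^j_A=2n^j\pi^j_A$ of Theorem \ref{t4} a).
\item Together these show that $\pi^j_A CH^i_\et(A)[1/p]_\tors$ has finite exponent for $j\ne 2i-1$.
\item It then vanishes, being a quotient of the divisible group $CH^i_\et(A)[1/p]_\tors$.
\end{itemize}

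You instead read off the action of $\pi^j_A$ on $\bigoplus_l H^{2i-1}_l(A,i)\otimes\Q_l/\Z_l$ from $\cl_l(\pi^j_A)$, via Lemmas \ref{l4} and \ref{l5}. This gives a sharper conclusion: $\pi^j_A$ acts on torsion as $\delta_{j,2i-1}$. It also works for any lift of the Künneth projectors, using neither the weight condition nor divisibility.

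The price is two checks you leave implicit.
\begin{itemize}
\item You need $\cl_l(\pi^j_A)=\hat\pi^j_A$ integrally. This holds because the two agree rationally and $H^{2g}_l(A\times A,g)$ is torsion-free.
\item You need the correspondence action on $CH^i_\et(A)\{l\}\simeq H^{2i-1}_\et(A,\Q_l/\Z_l(i))$ to be computed through $\cl_l$ for arbitrary $i$. Lemma \ref{l5} is phrased only for the bimodules $t(r)$ on pairs of varieties. So you need Lemma \ref{l4}, together with compatibility of $p_{2*}$ with the Bockstein isomorphism of Proposition \ref{p4} c).
\end{itemize}
The paper's route only needs the naturality of that isomorphism under the morphisms $n_A$.

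Finally, there is nothing to ``adjust'' in the convention. It is fixed by $n_A^*\circ\pi^j_A=n^j\pi^j_A$, i.e.\ $\hat\pi^j_A$ projects onto $H^j$. This is the same convention you use in b).
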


\begin{proof} a) write $x=\sum_j x^j$, with $x^j=\pi^j_A x$, and let  $n\in \Z$. Then each $x^j$ is torsion hence, by Proposition \ref{p4} c), we have $n_A^*x^j =n^{2i-1} x^j$ for any $j$. On the other hand, $2n_A^*x^j = 2n^j x^j$. Therefore, $2(n^j - n^{2i-1}) x^j= 0$. For $j\ne 2i-1$, this means that $\pi^j_A CH^i_\et(A)[1/p]_\tors$ has finite exponent. But then it is $0$, as a quotient of a divisible group.

b) Let $N=\bigcap_{l\ne p}\Ker \cl_l$; we have $N=\bigoplus_{j=0}^{2g} \pi^j_A N$. Since $N$ is divisible by Proposition \ref{p4a} a), all these summands are divisible. But $\pi^j_A CH^i_\et(A)[1/p])\subset N$ if $j\ne 2i$, hence $\pi^j_A CH^i_\et(A)[1/p])= \pi^j_A N$ for those $j$.

c) follows from a). 
\end{proof}

In order to respect Beauville's numbering, we set

\begin{defn}\label{d1} $CH^i_{\et,s}(A) = \pi^{2i-s}_A CH^i_\et(A)[1/p]$.
\end{defn}

Note that $CH^i_\et(A)[1/p]=\bigoplus_{s=2(i-g)}^{2i} CH^i_{\et,s}(A)$.

\begin{prop}\label{p7} Let $B_s=\{x\in CH^i_\et(A)[1/p]\mid n_A^* x = n^{2i-s} x\, \forall n\in \Z\}$. Then\\
a) For $s\ne 0$ we have
\[B_s= F_s\oplus CH^i_{\et,s}(A)  \]
where $F_1=0$ and $F_s={}_{w_{2i-s,2i-1}} CH^i_\et(A)[1/p]$ for $s\ne 1$.\footnote{Here, we write by convention $w_{a,b} := w_{b,a}$ if $a<b$.}\\
b) For $s=0$, we have inclusions
\[2CH^i_{\et,0}(A)\oplus {}_2CH^i_\et(A)[1/p]\subseteq B_0\subseteq CH^i_{\et,0}(A)\oplus {}_2CH^i_\et(A)[1/p].\]
If $(\pi^j_A)$ is a system of étale integral DM projectors, we even have
\begin{equation}\label{eq26}
B_0= CH^i_{\et,0}(A)\oplus {}_2CH^i_\et(A)[1/p].
\end{equation}
\end{prop}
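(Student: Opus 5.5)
The plan is to decompose $x\in CH^i_\et(A)[1/p]$ as $x=\sum_j x^j$ with $x^j=\pi^j_A x$. For each $j$ I would compute the defect of $x^j$ from being an eigenvector of $n_A^*$ with eigenvalue $n^{2i-s}$. Three inputs control this.

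\begin{thlist}
\item By Theorem \ref{t4} a), $n_A^* x^j = n^j x^j + g_j(n)x$, where $2g_j(n)=0$ and $g_j(n)=g_j(n)\circ\pi^j_A$. Here $g_j(n)$ is a torsion correspondence.
\item A torsion correspondence applied to a class lands in the torsion subgroup, which by Lemma \ref{l8} a) equals $\pi^{2i-1}_A CH^i_\et(A)[1/p]_\tors$. So $g_j(n)x$ is a $2$-torsion element lying in the $(2i-1)$-summand. By Lemma \ref{l6}, $g_j(n)x\neq 0$ is only possible for $j=2i$.
\item Torsion classes are eigenvectors with eigenvalue $n^{2i-1}$ (Proposition \ref{p4} e)).
\end{thlist}

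In short, $n_A^*$ acts on $x^j$ by $n^j$ for $j\ne 2i$. On $x^{2i}$ it acts by $n^{2i}$ up to a $2$-torsion error in the $(2i-1)$-summand. On the torsion part it acts by $n^{2i-1}$.

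Now suppose $x\in B_s$. Project the eigen-equation onto each summand with $\pi^j_A$; this is legitimate because the $n_A^*$-action respects the decomposition up to the errors above, and composing with $\pi^j_A$ kills $g_{j'}(n)$ for $j'\neq j$ except possibly into the $(2i-1)$-slot. For $j\notin\{2i-s,2i-1,2i\}$ this gives $(n^j-n^{2i-s})x^j=0$ for all $n$. Then $x^j$ is killed by $w_{j,2i-s}$, so it is torsion, and by Lemma \ref{l8} c) it vanishes. For $j=2i$ and $s\neq 0$ we get $(n^{2i}-n^{2i-s})x^{2i}=0$; again $x^{2i}$ is torsion, so $x^{2i}=0$ by Lemma \ref{l8} c). For $j=2i-1\neq 2i-s$, projection gives $(n^{2i-1}-n^{2i-s})x^{2i-1}=0$, because the error term $g_{2i}(n)x$ has disappeared now that $x^{2i}=0$. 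Hence $x^{2i-1}\in{}_{w}CH^i_\et(A)[1/p]$ with $w=w_{2i-s,2i-1}$.

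Conversely, for $s\neq 0,1$, elements of $CH^i_{\et,s}$ lie in $B_s$, since $g_{2i-s}(n)x=0$ when $2i-s\neq 2i$. Elements of $F_s$ also lie in $B_s$, because $n^{2i-1}\equiv n^{2i-s}$ modulo $w$ on them. The intersection $F_s\cap CH^i_{\et,s}$ is zero for $s\ne1$ by Lemma \ref{l8} a). For $s=1$ the torsion already sits inside $CH^i_{\et,1}$, which gives $F_1=0$. This proves a). The one thing still to check is that the $j=2i-s$ summand needs no correction, and this holds by (i) and (ii) since $2i-s\neq 2i$.

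For b), where $s=0$, the first part of the argument shows that $x\in B_0$ has $x^j=0$ for $j\ne 2i,2i-1$. For $x^{2i-1}$, which is torsion, we get $(n^{2i-1}-n^{2i})x^{2i-1}=g_{2i}(n)x$. Taking $n=-1$ gives $2x^{2i-1}=\pm g_{2i}(-1)x$, which is $2$-torsion, so $4x^{2i-1}=0$. I expect the main obstacle to be sharpening this to ${}_2$ so as to get the upper inclusion. I would use $n=2$ together with Proposition \ref{l7} b) (the relation $f(2n)=f(2)$, or vanishing at squares) to show that $x^{2i-1}$ is killed by $2$. For the lower inclusion, ${}_2CH^i_\et\subset B_0$ because $n^{2i-1}-n^{2i}$ is even. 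For $y\in CH^i_{\et,0}$, the class $2y$ satisfies $n_A^*(2y)=n^{2i}(2y)$ since $2g_{2i}(n)=0$. When the $\pi^j_A$ are étale integral DM projectors, all the $g_j$ vanish. Then $CH^i_{\et,0}\subset B_0$, and $x^{2i-1}$ is killed by $n^{2i-1}-n^{2i}$ for all $n$, hence by $w_{2i,2i-1}=2$ (Lemma \ref{l11} a)). This gives \eqref{eq26}.
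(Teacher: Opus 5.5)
Your part a), the lower inclusion in b) and \eqref{eq26} are correct. For a) your route is a sharper variant of the paper's. The paper only uses that $\pi^j_A$ commutes with $n_A^*$ up to a factor $2$ (Lemma \ref{l16}), and then needs the divisibility of Lemma \ref{l8} b) to upgrade $2CH^i_{\et,s}(A)\subseteq B_s$ to $CH^i_{\et,s}(A)\subseteq B_s$. You instead observe that $g_j(n)x$ lies in both the $(j-1)$- and the $(2i-1)$-summand, hence vanishes for $j\neq 2i$. So $n_A^*$ acts exactly by $n^j$ on every summand except the $2i$-th, which makes it transparent why $s=0$ is the only delicate case.

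The gap is the upper inclusion in b), and it cannot be closed. Your relation $(n^{2i}-n^{2i-1})x^{2i-1}=\pm g_{2i}(n)x$ gives $2x^{2i-1}=\pm g_{2i}(-1)x$, hence only $4x^{2i-1}=0$. Proposition \ref{l7} b) forces a $2$-torsion cocycle of level $(2i,2i-1)$ to vanish only at squares and at multiples of $4$, so it says nothing about the value at $-1$. At $n=2$ your relation reads $2^{2i-1}x^{2i-1}=g_{2i}(2)x$. For $i\ge 2$ this only says $g_{2i}(2)x=0$; for $i=1$ it gives $2x^1=g_2(2)x$, and nothing forces the right side to vanish.

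In fact the inclusion $B_0\subseteq CH^i_{\et,0}(A)\oplus{}_2CH^i_\et(A)[1/p]$ is false for a general system as in Theorem \ref{t4}. Let $E$ be an elliptic curve with $p\neq 2$, let $q\in E(k)$ have exact order $4$, and take $\pi^0=\{0\}\times E$, $\pi^2=E\times\{q\}$, $\pi^1=\Delta_E-\pi^0-\pi^2$.
\begin{itemize}
\item This is the conjugate of the system of Theorem \ref{t6} by $1+z$, with $z=p_2^*([q]-[0])$, $4z=0$ and $z\circ\pi^1=0$, so it is allowed by Theorem \ref{t4} b).
\item Directly, $n_E^*\circ\pi^2-n^2\pi^2=p_2^*\bigl((n-n^2)([q]-[0])\bigr)$ is $2$-torsion, and nonzero for $n=-1$. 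Also $n_E^*\circ\pi^1=n\pi^1$, since $n_E^*$ is multiplication by $n$ on $\Pic^0(E)$.
\item Take $i=1$. Then $[0]\in B_0$ because $n_E^*[0]=n^2[0]$, while $CH^1_{\et,0}(E)=\pi^2\Pic(E)[1/p]=\Z[1/p]\,[q]$.
\item If $[0]=m[q]+b$ with $2b=0$, degrees force $m=1$. Then $b=[0]-[q]$ has order $4$, a contradiction.
\end{itemize}

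So the true general statement is $B_0\subseteq CH^i_{\et,0}(A)\oplus{}_4CH^i_\et(A)[1/p]$. That is exactly what your argument proves; compare the ${}_4$ in Corollary \ref{c13} b). The ${}_2$ form holds under the DM hypothesis, where $g_{2i}\equiv 0$, as you correctly show. The paper's own proof has the same defect: it asserts that the constant $P$ may be taken to be $w_{2i,2i-1}=2$, but its identity $2(n^{2i-1}-n^{2i})x_1=0$ only yields $P=4$.
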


Note that in a) and b), the sums are direct since $CH^i_{\et,s}(A)$ is torsion-free by  Lemma \ref{l8} c).

\begin{proof} If $x\in B_s$, let $x_t=\pi^{2i-t}_A x$ for all $t$. Let $n\in \Z$. On the one hand, $2n_A^* x_t = 2n^{2i-t} x_t$ by Theorem \ref{t4} a). On the other hand, $2n_A^* x_t = 2\pi^{2i-t}_A n_A^*x$ by Lemma \ref{l16}.  So $2n_A^* x_t = 2n^{2i-s}x_t$ and $ 2(n^{2i-t}- n^{2i-s}) x_t=0$, thus $x_t$ is of torsion bounded independently of $x$ if $t\ne s$. Thus there is an integer $P>0$ such that $P(x-x_s)=0$ for all $x\in B_s$, i.e. $B_s\subseteq CH^i_{\et,s}(A)+{}_PCH^i_\et(A)[1/p] $. We also have $2CH^i_{\et,s}(A) \allowbreak\subseteq B_s$  by Theorem \ref{t4} a).

Suppose that $s\ne 0$; then Lemma \ref{l8} b) implies that $CH^i_{\et,s}(A) \allowbreak\subseteq B_s$ and that this inclusion is split. Therefore $B_s=  CH^i_{\et,s}(A)\oplus F_s$ where $F_s$ is of finite exponent. But $F_s\subset CH^i_{\et,1}(A)$ by Lemma \ref{l8} a); this shows that $F_1=0$ and that, for $s\ne 1$, $F_s$ is killed by $w_{2i-s,2i-1}$.  Conversely, ${}_{w_{2i-s,2i-1}} CH^i_\et(A)[1/p]$ is clearly contained in $B_s$. This proves a), and b) is obtained by the same reasonings: more precisely, the integer $P$ of the beginning may be taken as $w_{2i,2i-1}=2$. 

In the case of a system of étale integral DM projectors, the inclusion $2CH^i_{\et,0}(A) \allowbreak\subseteq B_0$ improves to $CH^i_{\et,0}(A) \allowbreak\subseteq B_0$, hence the last claim. 
\end{proof}

\begin{cor}\label{c13} a) The subgroup $CH^i_{\et,s}(A)$ of $CH^i_\et(A)[1/p]$ does not depend on the choice of $(\pi^i_A)$ if $s\ne 0$. The same is true for the subgroup $CH^i_{\et,0}(A)\oplus {}_2CH^i_\et(A)[1/p]$ in the case of étale integral DM projectors.\\
b) Let $(\tilde \pi^j_A)$ be another set of  projectors verifying Theorem \ref{t4}; for $x\in CH^i_\et(A)[1/p]$ and $s$, let $x_s=\pi^{2i-s} x$ and $\tilde x_s = \tilde \pi^{2i-s}_A x$. Then $x_s=\tilde x_s$ for $s\ne 0,1$ and $\tilde x_0-x_0=x_1-\tilde x_1$ belongs to ${}_4CH^i_\et(A)[1/p]$, and even to ${}_2CH^i_\et(A)[1/p]$ in the case of étale integral DM projectors.
\end{cor}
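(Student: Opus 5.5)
Part a) is read off from Proposition \ref{p7}, which describes $B_s$ without reference to the projectors. For $s\ne 0$ we have $B_s=F_s\oplus CH^i_{\et,s}(A)$ with $F_s$ of finite exponent, and $CH^i_{\et,s}(A)$ is divisible (Lemma \ref{l8} b)) and torsion-free (Lemma \ref{l8} c)). A divisible torsion-free subgroup of $B_s$ complementary to a group of finite exponent is canonical. Indeed, it is the maximal divisible subgroup of $B_s$: any divisible $D\subseteq B_s$ maps to $F_s$ with divisible image, and this image must be $0$ since $F_s$ has finite exponent.

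For $s=0$, in the étale integral DM case, \eqref{eq26} expresses $CH^i_{\et,0}(A)\oplus{}_2CH^i_\et(A)[1/p]$ as $B_0$. Since $B_0$ is defined independently of the projectors, this settles a).

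For b), write $\tilde\pi^j_A=(1+z)\pi^j_A(1+z)^{-1}$ with $4z=0$ and $z\circ\pi^1_A=0$, as in Theorem \ref{t4} b). In the DM case we have $2z=0$ instead, by Lemma \ref{l12} a). As in \eqref{eq9} this gives
\[\tilde\pi^j_A=\pi^j_A+z_j-z_{j+1},\qquad z_j=z\circ\pi^j_A=\pi^{j-1}_A\circ z,\]
with $z_1=0$ (since $z\circ\pi^1_A=0$) and each $z_j$ torsion. Applying this to $x$ with $j=2i-s$ yields
\[\tilde x_s-x_s=z_{2i-s}x-z_{2i-s+1}x.\]
Now $z_jx=\pi^{j-1}_A(zx)$, and $zx$ is torsion because $z$ is a torsion correspondence. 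By Lemma \ref{l8} a), torsion classes in $CH^i_\et(A)[1/p]$ lie in the image of $\pi^{2i-1}_A$. Hence $\pi^{j-1}_A(zx)=0$ unless $j-1=2i-1$, i.e.\ $j=2i$.

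It follows that $z_{2i-s}x$ can be nonzero only for $s=0$, and $z_{2i-s+1}x$ only for $s=1$. Therefore $\tilde x_s=x_s$ for $s\ne 0,1$. Moreover,
\[\tilde x_0-x_0=z_{2i}x \quad\text{and}\quad \tilde x_1-x_1=-z_{2i}x,\]
so $\tilde x_0-x_0=x_1-\tilde x_1$. Since $4z=0$ (resp.\ $2z=0$), this element is killed by $4$ (resp.\ $2$), which is the claimed bound.

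The point needing care is the justification that $z_jx=\pi^{j-1}_A(zx)$ for a cycle class $x$, i.e.\ that the identity $z\circ\pi^j_A=\pi^{j-1}_A\circ z$ from Lemma \ref{l6} holds as correspondences acting on $CH^i_\et(A)$. This is immediate once one views $x$ as a correspondence from the point to $A$ and uses associativity of composition. As a consistency check, part b) also recovers part a) for $s\ne0,1$ directly, and for $s=1$ via $F_1=0$.
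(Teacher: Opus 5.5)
Your proof is correct. Part a) follows the paper's argument. For $s\ne 0$, $CH^i_{\et,s}(A)$ is the maximal divisible subgroup of the intrinsically defined $B_s$, and \eqref{eq26} handles $s=0$. There is one small slip: $CH^i_{\et,1}(A)$ is not torsion-free, since it contains all the torsion by Lemma \ref{l8} a). This does no harm, because your maximality argument only uses that $CH^i_{\et,s}(A)$ is divisible and that $F_s$ has finite exponent.

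For b) you take a genuinely different route. The paper argues structurally. The operator $\tilde\pi^{2i-s}_A-\pi^{2i-s}_A$ is killed by $4$ (resp.\ $2$). For $s\ne 0,1$, both $x_s$ and $\tilde x_s$ lie in the same uniquely divisible group $CH^i_{\et,s}(A)$ by a), so their torsion difference vanishes. Summing over $s$ then gives $x_0+x_1=\tilde x_0+\tilde x_1$.

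You instead use the conjugation formula $\tilde\pi^j_A-\pi^j_A=z_j-z_{j+1}$ with $z_j=\pi^{j-1}_A\circ z$, which rests on Lemmas \ref{l5}, \ref{l6} and Corollary \ref{c2}. You then kill all but one term using Lemma \ref{l8} a), which places all torsion of $CH^i_\et(A)[1/p]$ in the $\pi^{2i-1}_A$-component. This makes b) independent of a) and gives the explicit discrepancy $\tilde x_0-x_0=x_1-\tilde x_1=(z\circ\pi^{2i}_A)x$. The price is reliance on the conjugation description of Theorem \ref{t4} b) (resp.\ Lemma \ref{l12} a)) and on the torsion-bimodule identity. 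The paper's argument needs only that the two systems of projectors differ by operators of finite exponent, together with the unique divisibility from a).
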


\begin{proof} a) Indeed, $CH^i_{\et,s}(A)$ is the maximal divisible subgroup of $B_s$ for $s\ne 0$ by Lemma \ref{l8} b) and Proposition \ref{p7} a). For $s=0$, this follows directly from \eqref{eq26}.

b) For any $s$, the operator $\tilde\pi^{2i-s} - \pi^{2i-s}$ is torsion by Theorem \ref{t4} b); therefore it vanishes on the maximal divisible subgroup of  $CH^i_\et(A)[1/p]$, hence on $CH^i_{\et,t}(A)$ for $t\ne 0$. On the other hand, for $s\ne 0,1$, $x_s$ and $\tilde x_s$ belong to a common uniquely divisible subgroup by a), hence so does their difference which is therefore $0$.   This gives the first claim, which implies $x_0+x_1=\tilde x_0+\tilde x_1$. But $4(\tilde \pi^{2i} - \pi^{2i})=0$, and even $2(\tilde \pi^{2i} - \pi^{2i})=0$ if $(\pi^j)$ and $(\tilde \pi^j)$ are étale integral DM projectors.
\end{proof}

\begin{cor}\label{c6} We have $CH^i_{\et,s}(A) =0$ if $s\notin [i-g,i]$.
\end{cor}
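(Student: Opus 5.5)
The plan is to reduce to the rational statement of Beauville, using that $CH^i_{\et,s}(A)$ is torsion-free except possibly for $s=1$, and then to handle $s=1$ through its torsion and divisibility properties. Write $x\in CH^i_{\et,s}(A)$, so $x=\pi^{2i-s}_A x$.

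First, the rational part. Under the isomorphism $CH^i(A)\otimes\Q\iso CH^i_\et(A)\otimes\Q$ of \cite[Th. 2.6 c)]{cycle-etale}, the image of $x$ lies in Beauville's eigenspace $CH^i_s(A)_\Q$. Indeed, $2n_A^*x=2n^{2i-s}x$ by Theorem \ref{t4} a), and the projectors lift the Deninger--Murre ones. Beauville \cite{beauville} (cf. \cite{dm}) shows that this eigenspace vanishes for $s\notin[i-g,i]$. Hence $x\otimes 1=0$, so $x$ is torsion.

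Second, the torsion-free cases. For $s\ne 1$, i.e. $2i-s\ne 2i-1$, Lemma \ref{l8} c) says $CH^i_{\et,s}(A)$ is torsion-free, so $x=0$ immediately.

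The main obstacle is $s=1$. Here I would check whether $1\in[i-g,i]$ holds automatically. For $1\le i\le g+1$ it does, since $i-g\le 1\le i$. The remaining cases are as follows.

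\begin{itemize}
\item $i=0$: the group $CH^0_\et(A)[1/p]=\Z[1/p]$ is torsion-free.
\item $i>g+1$: I would argue by dimension. For $i>g$, the group $CH^i_\et(A)[1/p]$ is $H^{2i}_\et(A,\Z(i))[1/p]$. Its torsion is computed by Proposition \ref{p4} c) as $H^{2i-1}_l(A)\otimes\Q_l/\Z_l(i)$, which vanishes when $2i-1>2g$. So $CH^i_\et(A)[1/p]$ is torsion-free, and again $x=0$.
\end{itemize}

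Thus in all cases $s\notin[i-g,i]$ forces $x=0$. The only delicate point is making sure the rational vanishing from Beauville applies verbatim to the lifted projectors. This holds because rationally the $\pi^j_A$ coincide with the Deninger--Murre projectors, whose images are Beauville's eigenspaces.
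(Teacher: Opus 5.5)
Your proof is correct and follows the paper's argument. Rationally, Beauville's vanishing of the eigenspace (the paper cites \cite[Th\'eor\`eme]{beauville2}) forces $CH^i_{\et,s}(A)$ to be torsion, and Lemma \ref{l8} c) then kills it for $s\ne 1$. Your separate treatment of $s=1$ is actually more careful than the paper's parenthetical, which mentions only $i=0$ and overlooks $i>g+1$; there one can also just note that $2i-1>2g$, so there is no projector $\pi^{2i-1}_A$ and the group is zero by definition.
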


\begin{proof} By \cite[Théorème]{beauville2}, $B_s\otimes \Q$ vanishes in this range. Therefore so does $CH^i_{\et,s}(A)\otimes \Q$ by Proposition \ref{p7}, and also $CH^i_{\et,s}(A)$ by Lemma \ref{l8} c). (Note that $1\in [i-g,i]$ except for $i=0$ and that $CH^0_{\et,1}(A)=0$ trivially.)
\end{proof}

\begin{cor}\label{c6a} We have $CH^1_{\et,1}(A) =\Pic^0(A)$. 
\end{cor}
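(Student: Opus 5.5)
We need to show $CH^1_{\et,1}(A)=\pi^1_A CH^1_\et(A)[1/p]$ equals $\Pic^0(A)[1/p]$, viewed inside $CH^1_\et(A)[1/p]=\Pic(A)[1/p]$. The first step is to identify the ambient group: by \cite[Prop. 2.9]{cycle-etale} (Hilbert 90), $CH^1(A)\to CH^1_\et(A)$ is an isomorphism, so $CH^1_\et(A)[1/p]=\Pic(A)[1/p]=\Pic^0(A)[1/p]\oplus$ (a lattice complement lifting $\NS(A)$, up to torsion). The statement is presumably to be read with $[1/p]$ understood, so what has to be proved is that $\pi^1_A$ cuts out exactly $\Pic^0(A)[1/p]$.

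The strategy is to use the eigenvalue characterisation of Proposition \ref{p7} a) with $i=s=1$:
\[B_1=\{x\mid n_A^*x=nx\ \forall n\}=CH^1_{\et,1}(A),\]
since $F_1=0$. It therefore suffices to prove $B_1=\Pic^0(A)[1/p]$.

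For the inclusion $\Pic^0(A)\subseteq B_1$, recall the classical fact \cite[\S 8]{mumford} that for $L\in\Pic^0(A)$ one has $n_A^*L=L^{\otimes n}$ for all $n\in\Z$; this gives $\Pic^0(A)[1/p]\subseteq B_1$. For the reverse inclusion, let $x\in B_1$. Taking $n=-1$ gives $(-1)_A^*x=-x$. Next use the decomposition of the symmetric and antisymmetric parts: $x+(-1)^*x$ is always symmetric, and every antisymmetric class has trivial image in $\NS(A)$. The latter holds because $(-1)^*$ acts trivially on $\NS(A)$: indeed $\NS(A)\otimes\Q$ sits in $H^2$, where $(-1)^*$ acts by $+1$, and $\NS(A)$ is torsion-free for an abelian variety. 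Hence $2x\in\Pic^0(A)[1/p]$. To get rid of the factor $2$, I would use that $\Pic^0(A)$ is divisible (it is $\hat A(k)$ with $k$ separably closed, and here divisibility only matters away from $p$ after inverting $p$). Thus $2x=2y$ with $y\in\Pic^0(A)[1/p]$, so $x-y\in B_1$ is $2$-torsion. But $2$-torsion in $\Pic(A)$ lies in $\Pic^0(A)$, since $\NS(A)$ is torsion-free. So $x\in\Pic^0(A)[1/p]$.

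The main obstacle is this last torsion bookkeeping, namely checking that the torsion in $CH^1_\et(A)[1/p]$ lies in $\Pic^0$. An alternative route is Lemma \ref{l8} a), which gives $\pi^1_A x=x$ for torsion $x$, consistent with the above. One should also double-check that the identification $CH^1=CH^1_\et$ and the torsion-freeness of $\NS(A)$ hold over a separably closed (not just algebraically closed) field; the latter would follow from the injectivity of $\NS(A)\to\NS(A_{\bar k})$.
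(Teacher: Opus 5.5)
Your proposal is correct and follows the paper's proof. Like the paper, you identify $CH^1_{\et,1}(A)=B_1$ via Proposition \ref{p7} a) (since $F_1=0$), and then get $B_1=\Pic^0(A)[1/p]$ from the torsion-freeness of $\NS(A)$; you simply spell out this second step, which the paper leaves implicit. The divisibility detour is unnecessary, though: once $(-1)_A^*x=-x$, the image of $x$ in $\NS(A)[1/p]$ is killed by $2$, hence is zero, so $x\in\Pic^0(A)[1/p]$ directly.
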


\begin{proof} We have $CH^1_{\et,1}(A) =B_1$ by Proposition \ref{p7} a): this is $\Pic^0(A)$ since $\NS(A)$ is torsion-free.
\end{proof}

\begin{cor}\label{c16} Let $f:A\to B$ be an isogeny. Then $f_*CH^i_{\et,s}(A)\subseteq CH^i_{\et,s}(B)$ for $s\ne 0$. If the $(\pi^i_A)$ and the $(\pi^i_B)$ are both étale integral DM projectors,  then $f_*CH^i_{\et,0}(A)\subseteq CH^i_{\et,0}(B)\oplus {}_2CH^i_\et(B)[1/p]$.
\end{cor}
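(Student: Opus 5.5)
The plan is to characterise $CH^i_{\et,s}$ through the eigenspaces $B_s$ of Proposition \ref{p7} and to check that $f_*$ carries eigenspaces to eigenspaces. Since $f$ is a homomorphism, it commutes with multiplication by $n$: $f\circ n_A = n_B\circ f$. The key identity I need is
\[n_B^*f_* = f_*n_A^* \quad\text{on } CH^*_\et(A)[1/p].\]
For an isogeny this should follow from proper base change in the cartesian square formed by $n_A$, $n_B$ and $f$. That square is cartesian because the kernel of $f$ is finite and $n$ acts on it. Where this fails (e.g.\ $\Ker f$ not killed by $n$), I would instead argue with the degree: write $f_*=\hat f^{\,*}$-type operators, or use $f_* n_A^* \circ f^* = f_*f^*n_B^* = \deg(f)\, n_B^*$ together with $f^*f_*$. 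The cleanest route is probably the base change identity $n_B^*f_* = f'_*n'^*$ for the fibre product $A\times_{B,n_B}B$, which is identified with $A$ modulo torsion issues. So I will first try to prove $n_B^* f_* x= f_* n_A^* x$ directly for all $x$. If there are residual torsion discrepancies, I will kill them using divisibility.

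For $s\neq 0$ the argument runs as follows. Let $x\in CH^i_{\et,s}(A)$. By Theorem \ref{t4} a) we have $2n_A^*x=2n^{2i-s}x$, hence $2n_B^*f_*x=2n^{2i-s}f_*x$. By Lemma \ref{l8} b), $CH^i_{\et,s}(A)$ is divisible, so $x=2x'$ with $x'\in CH^i_{\et,s}(A)$ (it is a direct summand). Then $n_B^*f_*x=2n_B^*f_*x'=n^{2i-s}f_*x$, which gives $f_*x\in B_s(B)$. By Proposition \ref{p7} a), $B_s(B)=F_s\oplus CH^i_{\et,s}(B)$ with $F_s$ of finite exponent. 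Since $f_*CH^i_{\et,s}(A)$ is divisible, it lies in the maximal divisible subgroup of $B_s(B)$, which is $CH^i_{\et,s}(B)$ (as in Corollary \ref{c13} a)).

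For $s=0$ with étale integral DM projectors, Definition \ref{d4} gives $n_A^*x=n^{2i}x$ exactly for $x\in CH^i_{\et,0}(A)$. Hence $f_*x\in B_0(B)$, and \eqref{eq26} gives $B_0(B)=CH^i_{\et,0}(B)\oplus{}_2CH^i_\et(B)[1/p]$, which is the claim.

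The main obstacle is the commutation $n_B^*f_*=f_*n_A^*$ in étale Chow groups. I would establish it via the six-functor formalism (proper base change for the cartesian square $A\to B$ over $n_B$, the fibre product being $A$ when $\Ker n_A\to\Ker n_B$ behaves well). Failing that, I would verify it after tensoring with $\Q$ and on torsion separately, using Proposition \ref{p4} c), e), and then absorb the defect by the divisibility arguments above.
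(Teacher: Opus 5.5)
This is essentially the paper's proof: the commutation $n_B^*f_*=f_*n_A^*$ gives $f_*B_s(A)\subseteq B_s(B)$, and one concludes via the maximal divisible subgroup of $B_s(B)$ for $s\neq 0$ and via \eqref{eq26} for $s=0$, exactly as you do (the paper simply asserts the commutation). One caution: your cartesian-square justification is wrong in general, since the map $A\to A\times_{f,B,n_B}B$, $a\mapsto (na,f(a))$, has kernel $A[n]\cap\Ker f$, so base change only gives the identity for $n$ prime to $\deg f$ --- though these $n$ (including $n=-1$) already suffice, combined with your eigenvalue arguments, Lemma \ref{l8}, Proposition \ref{p4} e) and divisibility.
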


\begin{proof} Since $n_B^*f_* = f_* n_A^*$ for any $n\in\Z$, we have $f_* B_s(A)\subseteq B_s(B)$ for any $s$. The conclusion follows as in the proof of Corollary \ref{c13}.
\end{proof}

For the next corollary, note that $B^i_s\cdot B^j_t\subset B^{i+j}_{s+t}$ for all $i,j,s,t$, with obvious notation.

\begin{cor}\label{c7} We have $CH^i_{\et,s}(A)\cdot CH^j_{\et,t}(A)\subseteq CH^{i+j}_{\et,s+t}(A)$ if $(s,t)\ne (0,0)$.
\end{cor}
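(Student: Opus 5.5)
Write $i+j=k$. If $x\in CH^i_{\et,s}(A)$ and $y\in CH^j_{\et,t}(A)$, the plan is to show that $xy$ lies in $B^{k}_{s+t}$, and then use Proposition \ref{p7} a) together with divisibility and torsion-freeness (Lemma \ref{l8}) to show that $xy$ lies in the summand $CH^{k}_{\et,s+t}(A)$ and not merely in $F_{s+t}\oplus CH^{k}_{\et,s+t}(A)$.

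The first step is to get the $B$-conditions. By Theorem \ref{t4} a) we only know $2n_A^*x=2n^{2i-s}x$, so $x$ need not be in $B^i_s$. If $s\neq 0$, however, Lemma \ref{l8} b) makes $CH^i_{\et,s}(A)$ divisible. We can then write $x=2x'$ with $x'\in CH^i_{\et,s}(A)$, which gives $n_A^*x=2n_A^*x'=2n^{2i-s}x'=n^{2i-s}x$. Hence $CH^i_{\et,s}(A)\subseteq B^i_s$ for $s\neq0$; this is the inclusion already noted in the proof of Proposition \ref{p7}. By symmetry we may assume $s\neq0$.

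\textbf{Case $t\neq 0$.} Both factors are in the appropriate $B$'s, so $xy\in B^k_s\cdot B^j_t\subseteq B^{k}_{s+t}$.

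\textbf{Case $t=0$.} Here $y$ is only known to satisfy $2n_A^*y=2n^{2j}y$. Since $x$ is divisible, we write $x=2x'$ with $x'\in CH^i_{\et,s}(A)\subseteq B^i_s$. Then $n_A^*(xy)=n_A^*x'\cdot 2n_A^*y=n^{2i-s}x'\cdot 2n^{2j}y=n^{2k-s}xy$, so again $xy\in B^k_{s+t}$.

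Moreover, in both cases $xy$ is divisible: writing $x=mx_m$ with $x_m\in CH^i_{\et,s}(A)$ gives $xy=m(x_my)$, and $x_my$ is again in $B^k_{s+t}$ by the same computation.

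Now suppose $s+t\neq0$. By Proposition \ref{p7} a), $B^k_{s+t}=F_{s+t}\oplus CH^k_{\et,s+t}(A)$, where $F_{s+t}$ has finite exponent. The element $xy$ is infinitely divisible inside $B^k_{s+t}$, so its $F_{s+t}$-component is divisible by the exponent of $F_{s+t}$, hence is $0$. Therefore $xy\in CH^{k}_{\et,s+t}(A)$.

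The main obstacle is the remaining case $s+t=0$ with $s\neq0$, which forces $t=-s\neq0$. Then Proposition \ref{p7} b) gives only
\[B^k_0\subseteq CH^k_{\et,0}(A)\oplus {}_2CH^k_\et(A)[1/p],\]
and the same divisibility trick kills the ${}_2$-component: if $xy=2z$ with $z\in B^k_0$, then the ${}_2$-part of $2z$ is $0$. So $xy\in CH^k_{\et,0}(A)$. In fact one only needs that the decomposition $B^k_0\subseteq CH^k_{\et,0}\oplus{}_2CH^k$ is direct, which follows from Lemma \ref{l8} c). The delicate point is checking that the ${}_2$-component of $2z$ really vanishes, which holds since projecting to components is additive.
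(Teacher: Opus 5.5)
Your proof is correct and follows the paper's argument: put the product in $B^{i+j}_{s+t}$, note that it is divisible there because one factor group is divisible (Lemma \ref{l8} b)), and use Proposition \ref{p7} to see that divisible elements of $B^{i+j}_{s+t}$ lie in $CH^{i+j}_{\et,s+t}(A)$, including when $s+t=0$. Your explicit handling of the case $t=0$, where you absorb the factor $2$ by writing $x=2x'$, spells out a step the paper leaves implicit.
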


\begin{proof} Indeed, in this case either $CH^i_{\et,s}(A)$ or $CH^j_{\et,t}(A)$ is divisible by Lemma \ref{l8} b), hence so are their tensor product and its image in  $B^{i+j}_{s+t}$. This image is therefore contained in the largest divisible subgroup of $B^{i+j}_{s+t}$, which is contained in $CH^{i+j}_{\et,s+t}(A)$ by Proposition \ref{p7} (even if $s+t=0$).
\end{proof}

\subsection{The good divided powers}\label{s11.2}

Let $x\in CH^i_\et(A)[1/p]$ with $i>0$, and let $n\ge 2$. We first define $\gamma_n(x)$ when $x\in CH^i_{\et,s}(A) $ for some $s$. Suppose $s\ne 0$. Then $x^n\in CH^{ni}_{\et,ns}(A) $ by Corollary \ref{c7}. Since $ns\ne 0,1$, this group is uniquely divisible by Lemma \ref{l8} and we just set $\gamma_n(x) = \frac{x^n}{n!}$.

Suppose now that $s=0$. Then $x^n= x^{[n]} + y_n $, with $x^{[n]} \in CH^{ni}_{\et,0}(A)$ and $2y_n=0$ (unique decomposition). In the torsion-free group $CH^{ni}_{\et,0}(A)$, $x^{[n]}$ is divisible by $n!$ (same reasoning as in the proof of Proposition \ref{t1}, using Corollary \ref{c8}). We define
\[\gamma_n(x) = \frac{x^{[n]}}{n!}.\]

In general, write $x=\sum_s x_s$ with $x_s\in CH^i_{\et,s}(A) $. We set
\[\gamma_T(x,s) = \sum_{n\ge 0} \gamma_n(x_s)T^n\in CH^*_\et(A)[1/p][T]\]
where $\gamma_0(x_s):=1$ and $\gamma_1(x_s):=x_s$, 
\[\gamma_T(x) = \prod_s \gamma_T(x,s)\]
and $\gamma_n(x)=$ the $n$-th coefficient of $\gamma_T(x)$. 

We may extend the $\gamma_n$ by the same trick to inhomogeneous elements of $CH^{>0}_\et(A)[1/p]$ (this will not be used in the rest of this article).

\begin{rk} This construction depends on the choice of $(\pi^i_A)$, and does not satisfy the divided power identities in $CH^{>0}_\et(A)[1/p]$. To get them, we must go modulo $2$-torsion as in the next subsection.
\end{rk}

\subsection{\'Etale motives modulo $2$-torsion} We introduce here a category which will help formulate our results:

\begin{defn}\label{d6} For an abelian variety $A$ and $i\ge 0$, we set
\[\oCH^i_\et(A) = CH^i_\et(A)[1/p]/{}_2CH^i_\et(A)[1/p].\]
If $B$ is another abelian variety of dimension $g_B$, we set
\[\oCorr_\et(A,B) = \oCH^{g_B}_\et(B\times A).\]
We write $\bar \sM_\et^\ab$ for the pseudo-abelian hull of the additive category with objects abelian varieties and morphisms the $\overline{\Corr}_\et(A,B)$, and $\bar h_\et(A)$ for the image of $A$ in $\bar \sM_\et^\ab$.
\end{defn}

Note that the functors $\hat{R}:\sM^\ab_\et\to \hat{\sM}^\ab$ and $R_B:\sM^\ab_\et\to \sM_B^\ab$ both factor through $\bar \sM_\et^\ab$. One could equally define a larger category $\bar \sM_\et$ encompassing all smooth projective varieties, but the above would fail in general, so this does not seem too useful.

We can now reformulate some of the previous results as follows:

\begin{thm}\label{t7} For any abelian variety $A$ of dimension $g$,\\ 
a) A system of self-conjugate projectors $(\pi^i_A)$ as in Corollary \ref{c5} b) induces a system of self-conjugate projectors in $\oCorr_\et(A,A)$ having the property of \cite{dm} with respect to multiplications; we call it a \emph{system of DM projectors modulo $2$-torsion}. We have
\[\pi^i_A\circ n_A^* = n_A^*\pi^i_A,\quad \pi^i_A\circ (n_A)_* = n^{2g-i}\pi^i_A
\]
for any $i$ and any $n\in \Z$.\\ 
b) For $i\ge 0$, the direct sum decomposition
\[\oCH^i_\et(A) =\bigoplus_s \oCH^i_{\et,s}(A)\]
where $\oCH^i_{\et,s}(A)$ is the image of $CH^i_{\et,s}(A)$ in $\oCH^i_\et(A)$, does not depend on the choice of $(\pi^i_A)$; $\oCH^i_{\et,s}(A)$ is divisible for $s\ne 0$ and torsion-free for $s\ne 1$.\\
c) If $(\pi^i_A)$ is étale integral (Definition \ref{d3}), its image in $\oCorr(A,A)$ is self-conjugate and does not depend on the choice; we call it the \emph{canonical system of DM projectors} for $A$.\\ 
d) If $B$ is another abelian variety provided with a system $(\pi^i_B)$ as in a) and if $f:A\to B$ is a homomorphism, we have
\[ 2f^*\circ \pi^i_B  =2 \pi^i_A\circ f^*\in \oCorr(B,A), \quad 2\pi^{2g_B-i}_B\circ f_* = 2f_*\circ \pi^{2g_A-i}_A\in \oCorr(A, B)\] 
for all $i$, where $g_A$ and $g_B$ are the dimensions of $A$ and $B$. If the systems $(\pi^i_A)$ and $(\pi^i_B)$ are canonical, we can remove the factor $2$ in both identities; in particular, $f^*\oCH^i_{\et,s}(B)\subseteq \oCH^i_{\et,s}(A)$ and  $f_*\oCH^{g_A-i}_{\et,s}(A)\subseteq \oCH^{g_B-i}_{\et,s}(B)$ for any $(i,s)$.
\end{thm}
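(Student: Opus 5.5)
Almost everything here reduces to earlier results once we use one observation: every ambiguity met so far is a $2$-torsion correspondence, and $2$-torsion dies in $\oCorr_\et$. First check that $\bar\sM^\ab_\et$ is a category. Composition in $\sM^\ab_\et$ kills products of torsion correspondences by Corollary \ref{c2}. For $x$ with $2x=0$ and any $y$, we have $2(x\circ y)=(2x)\circ y=0$, so ${}_2$-torsion is a two-sided ideal for composition. Transposition, $f^*$ and $f_*$ are additive and therefore preserve ${}_2$-torsion. Thus all the operations descend to $\oCH^*_\et$ and $\oCorr_\et$.

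\textbf{a)} Take $(\pi^i_A)$ self-conjugate as in Corollary \ref{c5} b). The identities $n_A^*\circ\pi^i_A=n^i\pi^i_A+g_i(n)$ with $2g_i(n)=0$ (Theorem \ref{t4} a)) become exact in $\oCorr_\et$. The same holds for $\pi^i_A\circ(n_A)_*=n^{2g-i}\pi^i_A$, by Corollary \ref{c5} a), and for $\pi^i_A\circ n_A^*=n_A^*\circ\pi^i_A$, by Lemma \ref{l16}. Self-conjugacy and orthogonality pass to the quotient.

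\textbf{b)} Independence of the decomposition follows from Corollary \ref{c13} b). For two systems as in Theorem \ref{t4}, the components agree for $s\neq 0,1$, and $\tilde x_0-x_0=x_1-\tilde x_1$ lies in ${}_4CH$. This is the main obstacle: we only get $4$-torsion, not $2$-torsion. I would handle it through Theorem \ref{t4} b). Two such systems differ by conjugation by $1+z$ with $4z=0$, and for self-conjugate systems the proof of Corollary \ref{c5} b) should yield $y$ with ${}^ty$ controlled. If that does not work, I would use the fact that in a) we chose the system via Corollary \ref{c5} b), hence from a common étale integral lift (Theorem \ref{t3}) up to $2$-torsion conjugation. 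In that case Corollary \ref{c13} b) gives the difference in ${}_2CH$.

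For the remaining properties in b): divisibility for $s\neq0$ comes from Lemma \ref{l8} b), and images of divisible groups stay divisible. For torsion-freeness when $s\neq1$, note that $CH^i_\et(A)[1/p]_\tors$ is divisible (Proposition \ref{p4} c)) and lies in $CH^i_{\et,1}$ (Lemma \ref{l8} a)). So $CH^i_{\et,s}$ is torsion-free for $s\neq1$, and an element whose multiple is $2$-torsion is itself torsion, hence $0$.

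\textbf{c)} Étale integral DM projectors exist by Theorem \ref{t3} a) and are unique up to conjugation by $1+x$ with $2x=0$. By the computation \eqref{eq9}, such a conjugation changes each $\pi^i$ by $x_i-x_{i+1}$, which is $2$-torsion. So the image in $\oCorr$ is canonical. Self-conjugacy comes from Lemma \ref{l17}, which gives $2\pi^{2g-i}={}^t2\pi^i$. Alternatively, the images of Corollary \ref{c5} b) and Theorem \ref{t3} coincide modulo $2$-torsion, again by Theorem \ref{t4} b) combined with Theorem \ref{t3}.

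\textbf{d)} The first identity comes from \eqref{eq10}. The correction terms $c_{i+1,i}-c_{i,i-1}$ satisfy $4c=0$, so $2c$ is $2$-torsion. The second identity is the transpose, as in Corollary \ref{c5} c), combined with self-conjugacy, which reindexes $i\mapsto 2g-i$. For canonical systems, Lemma \ref{l12} b) gives $2c=0$ directly, so the factor $2$ disappears. The inclusions for $\oCH_{\et,s}$ then follow by applying the commutation identities to $x=\pi^{2i-s}x$.
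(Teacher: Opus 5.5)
\textbf{Verdict.} Parts a) and d), your main argument for c), and the divisibility and torsion-freeness claims in b) are correct. They use the same results the paper cites: Lemma \ref{l16} and Corollary \ref{c5} a); Lemmas \ref{l12} and \ref{l17}; Theorem \ref{t4a}, Corollary \ref{c5} c) and Lemma \ref{l12} b). The genuine gap is the independence of the $s=0$ summand in b), and neither of your two patches closes it.

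\textbf{Why both patches fail.} Your fallback rests on a false premise. Corollary \ref{c5} b) starts from an arbitrary system as in Theorem \ref{t4} a). Such systems are related to étale integral ones only by conjugation by $1+z$ with $4z=0$ (Theorem \ref{t4} b)). Self-conjugacy only forces $z$ to be antisymmetric, not $2$-torsion, so your first patch gains nothing either. Here is a concrete instance, assuming $p\ne 2$.
\begin{itemize}
\item Let $A$ be a principally polarised abelian surface with a self-conjugate étale integral system $(\pi^i)$ (Theorem \ref{t3} c)).
\item Take $z_2\in\pi^1\circ CH^2_\et(A\times A)[1/p]_\tors\circ\pi^2$ with $4z_2=0$, and set $z=z_2-{}^tz_2$.
\item Then $4z=0$, $z\circ\pi^1=0$ and ${}^tz=-z$. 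So $(1+z)\pi^i(1+z)^{-1}$ is again a self-conjugate system as in Theorem \ref{t4} a).
\item Its degree-$2$ projector is $\pi^2+z_2+{}^tz_2$. It sends $x_0\in CH^1_{\et,0}(A)$ to $x_0+z_2x_0$, because ${}^tz_2=\pi^2\,{}^tz_2\,\pi^3$ kills $x_0$.
\item By Lemmas \ref{l5} and \ref{l6}, $z_2x_0$ is the image of $\cl(x_0)$ under an element of $\Hom(\hat H^2(A),\hat H^1(A))\otimes\Q/\Z$. For $x_0=\pi^2\theta$ with $\theta$ a symmetric theta divisor, $\cl(x_0)$ is primitive, so $z_2$ can be chosen with $z_2x_0$ of exact order $4$.
\item Torsion lies in $CH^1_{\et,1}(A)$ by Lemma \ref{l8} a). Hence $x_0+z_2x_0\notin CH^1_{\et,0}(A)+{}_2CH^1_\et(A)[1/p]$.
\end{itemize}
So the image of the $s=0$ summand in $\oCH^1_\et(A)$ genuinely moves. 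No argument can give $s=0$ independence for all systems as in a).

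\textbf{What is true, and how to prove it.} The correct statement is Corollary \ref{c13} a), which is exactly what the paper cites for b).
\begin{itemize}
\item For $s\neq0$: $CH^i_{\et,s}(A)$ is the maximal divisible subgroup of $B_s$, hence intrinsic for any system. Your route via Corollary \ref{c13} b) also gives this for $s=1$, once you note that the torsion difference $\tilde x_1-x_1$ lies in $CH^i_{\et,1}(A)$.
\item For $s=0$: you must use étale integral DM projectors. Then \eqref{eq26} shows that $CH^i_{\et,0}(A)\oplus{}_2CH^i_\et(A)[1/p]=B_0$ is defined without reference to projectors. Its image in $\oCH^i_\et(A)$ is $\oCH^i_{\et,0}(A)$, which is therefore canonical.
\end{itemize}
So the $s=0$ part of b) should be stated, and proved, for canonical systems only.

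The same false premise appears in your ``alternatively'' argument for c). Drop that alternative; your main argument for c), via Lemma \ref{l12} a) and Lemma \ref{l17}, is fine.
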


\begin{proof} a) follows from Lemma \ref{l16} and Corollary \ref{c5} a). b) follows from Corollary \ref{c13} a).  c) follows from Lemmas \ref{l12} and \ref{l17}. d)  follows from   Theorem \ref{t4a}, Corollary \ref{c5} c) and Lemma \ref{l12} b).
\end{proof}

We now push $\gamma_T(x)$ in $\oCH^*_\et(A)[T]$. We have:

\begin{prop}\label{p11}  The function $\gamma_T:CH^{>0}_\et(A)[1/p]\to \oCH^*_\et(A)[T]$ factors through $\oCH^{>0}_\et(A)$.
\end{prop}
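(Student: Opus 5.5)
We must show: if $x,x'\in CH^i_\et(A)[1/p]$ with $i>0$ differ by a $2$-torsion element $t$, then $\gamma_T(x)=\gamma_T(x')$ in $\oCH^*_\et(A)[T]$. Since $\gamma_T$ is built from the components $x_s=\pi^{2i-s}_A x$, the plan is to locate the components of $t$ and show that each coefficient $\gamma_n$ changes only by $2$-torsion.

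The first step uses Lemma \ref{l8} a). It gives $\pi^{2i-1}_A t=t$, so $t_s=0$ for $s\ne 1$ and $t=t_1\in CH^i_{\et,1}(A)$. Hence $x'_s=x_s$ for $s\ne 1$ and $x'_1=x_1+t$. It therefore suffices to compare $\gamma_T(x_1,1)$ with $\gamma_T(x_1+t,1)$, since the other factors of $\gamma_T$ are unchanged.

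The second step is the key computation, done in the uniquely divisible group $CH^{ni}_{\et,n}(A)$ for $n\ge2$ (Corollary \ref{c7} and Lemma \ref{l8}). There $\gamma_n(x_1)=x_1^n/n!$ and $\gamma_n(x_1+t)=(x_1+t)^n/n!$. By Proposition \ref{p4} d), torsion elements multiply to zero against torsion, so $t^k=0$ for $k\ge2$. This gives
\[(x_1+t)^n = x_1^n + n\,x_1^{n-1}t .\]
This identity holds in $CH^{ni}_\et(A)[1/p]$, and both sides lie in $CH^{ni}_{\et,n}(A)$. Consequently $n\,x_1^{n-1}t$ lies in a uniquely divisible group. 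However, $2\cdot n x_1^{n-1}t=0$ because $2t=0$, so $n x_1^{n-1}t$ is a torsion element of a uniquely divisible group and hence is $0$. So $\gamma_n(x_1+t)=\gamma_n(x_1)$ for all $n\ge2$. The only change is then in $\gamma_1$, which shifts by $t$, and $t$ vanishes in $\oCH$.

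The third step assembles the product. We have $\gamma_T(x',1)=\gamma_T(x,1)+tT$, hence
\[\gamma_T(x')-\gamma_T(x)=tT\prod_{s\ne1}\gamma_T(x,s).\]
Each coefficient of this difference is $t$ times some element. It is therefore killed by $2$, so it lies in ${}_2CH^*_\et(A)[1/p]$ and vanishes in $\oCH^*_\et(A)[T]$.

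The main point to check carefully is the second step: that $x_1^{n-1}t$ actually lands in the divisible summand $CH^{ni}_{\et,n}(A)$, which has no torsion. I would first try to get this directly from Corollary \ref{c7}, since $t\in CH^i_{\et,1}(A)$ and $(1,1)\ne(0,0)$. If there is trouble with that, I would instead use that a torsion element killed by $2$ in the $B$-eigenspace $B_n$ with $n\ne1$ lies in $F_n$ of Proposition \ref{p7} a), and then conclude because $x_1^n$ and $(x_1+t)^n$ both lie in $CH_{\et,n}$.
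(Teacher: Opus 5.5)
Your proposal is correct and follows essentially the paper's proof. You place $t$ in $CH^i_{\et,1}(A)$ via Lemma \ref{l8} a), so that only the factor $\gamma_T(x,1)$ changes, then expand $(x_1+t)^n=x_1^n+n\,x_1^{n-1}t$ using Proposition \ref{p4} d), and show that the cross term vanishes.

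The only difference is in how you kill $n\,x_1^{n-1}t$. You use torsion-freeness of $CH^{ni}_{\et,n}(A)$ (Corollary \ref{c7} together with Lemma \ref{l8} c)). The paper instead uses divisibility (Lemma \ref{l8} b)): writing $x_1=2z$ gives $t\,x_1^{n-1}=0$ directly. Either justification suffices.
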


\begin{proof}  Let $i>0$ and $x,y\in CH^i_\et(A)[1/p]$, with $2y=0$: we must show that $2\gamma_T(x+y) = 2\gamma_T(x)$. Recall that $y\in CH^i_{\et,1}(A)$ (Lemma \ref{l8} a)). This already shows that $\gamma_T(x,s)=\gamma_T(x+y,s)$ for $s\ne 1$; but for $s=1$ and $n>1$,
\[(x_1+y)^n = x_1^n +nyx_1^{n-1} = x_1^n\]
where the first equality comes from Proposition \ref{p4} d) and the second one from the divisibility of $CH^{ni}_{\et,n}(A)$ (Lemma \ref{l8} b)); which implies that $x_1$ is divisible by $2$. Therefore $\gamma_n(x_1+y) = \gamma_n(x_1)$.
\end{proof}

\begin{proof}[Proof of Theorem \ref{t5}] a) The divided power identities of the introduction are all homogeneous of a certain degree $N$: $N=0,1$ in (1), $N=n$ in (2) and (3), $N=m+n$ in (4) and $N=mn$ is (5). 

They are true tautologically after tensoring with $\Q$, as well as in degree $N=1$. For the identities of degree $N>1$, the target group is torsion free (in $\oCH^*_\et(A)$!), as seen in the definition of the $\gamma_n$'s,  so they remain true.

b) Same argument as in a).
\end{proof}

\begin{defn}\label{d8}
Although the   $\gamma_i$'s now verify the divided power identities, they still depend on the choice of $(\pi^j_A)$ in $\oCH^{>0}_\et(A)$. If we choose the canonical system of DM projectors as in Theorem \ref{t7} c), we shall talk of the \emph{canonical divided powers}.
\end{defn}

\subsection{Proof of Corollary \ref{c12}}\label{s11.3} From now on, all abelian varieties are provided with their canonical  divided powers (Definition \ref{d8}).

For $x\in \oCH^{>0}_\et(A)$, set
\[e^x = \sum_n \gamma_n(x).\]

This is the polynomial $\gamma_T(x)$ evaluated at $T=1$. By Property (3) of divided powers, we have the identity
\begin{equation}\label{eq27}
e^{x+y} = e^x e^y.
\end{equation}

\begin{defn} Let $\ell\in \Pic(A\times \hat{A})$ be the class of the (normalised) Poincaré bundle. We define $\sF_A$ as the (inhomogeneous) correspondence given by $e^\ell$.
\end{defn}

To prove Corollary \ref{c12} (i) and (ii), we want to follow the arguments of \cite[Proof of Proposition 3']{beauville}. However, this proof rests on Corollary 2 to Proposition 3 in the said article. Therefore, we must first prove an integral version of that corollary.

\begin{prop}\label{p18} Let $\pi:A\times \hat{A}\to A$ be the first projection. Then   $\pi_*(e^\ell)=[0]\in \oCH^g_\et(A)$.
\end{prop}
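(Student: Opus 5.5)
Rationally, this is Beauville's Corollary 2 to Proposition 3: $\pi_*(e^\ell)=(-1)^g[0]$ up to sign conventions. In our normalisation the statement reads $\pi_*(e^\ell)=[0]$. So the plan is to show that $\pi_*(e^\ell)-[0]$ is torsion in $\oCH^g_\et(A)$, and then that this torsion class vanishes.

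\emph{Step 1: integrality bookkeeping.} Since $\ell$ is the normalised Poincaré class, $(-1)^*\ell=\ell$ on $A\times\hat A$. Hence $\ell\in CH^1_{\et,0}(A\times\hat A)$ modulo $2$-torsion. Indeed, by Corollary \ref{c6a} and Proposition \ref{p7}, $CH^1=CH^1_{\et,0}\oplus CH^1_{\et,1}$ with $CH^1_{\et,1}=\Pic^0$, and the symmetric part lies in $CH^1_{\et,0}$ up to $2$-torsion. So $\gamma_n(\ell)=\ell^{[n]}/n!\in\oCH^n_{\et,0}(A\times\hat A)$, which is torsion-free by Theorem \ref{t7} b). Only $n=g$ contributes to $\pi_*$ in codimension $g$; the components of $\pi_*(e^\ell)$ in other codimensions vanish rationally by Beauville. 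I still have to check that they vanish integrally as well; see Step 3.

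\emph{Step 2: where $\pi_*\gamma_g(\ell)$ lives.} By Theorem \ref{t7} d), applied to the homomorphism $\pi$ with canonical DM projectors (Definition \ref{d8}), $\pi_*$ maps $\oCH^g_{\et,0}(A\times\hat A)$ into $\oCH^g_{\et,0}(A)$. This group is torsion-free by Theorem \ref{t7} b). Likewise $[0]\in CH^g_\et(A)$ satisfies $n_A^*[0]=[{}_nA]$, which equals $n^{2g}[0]$ rationally, so $[0]$ lies in $\oCH^g_{\et,0}(A)$ up to torsion. In fact $[0]-\frac1{\#}[{}_nA]$ is the standard issue here. I would rather argue that both $\pi_*\gamma_g(\ell)$ and $[0]$ map into the torsion-free group $\oCH^g_{\et,0}(A)\oplus(\text{torsion-free parts})$, and that their difference is zero after $\otimes\Q$ by Beauville, hence zero.

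\emph{Step 3: the other codimensions.} For the remaining terms $\pi_*\gamma_n(\ell)$ with $n\neq g$, the same argument applies: they lie in $\oCH^{n-g}_{\et,0}(A)$, which is torsion-free, and they vanish rationally. So they are $0$.

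The main obstacle is Step 2 for the class $[0]$ itself. Its component in $CH^g_{\et,1}(A)$, the Albanese part, is divisible and may carry torsion. The equality in $\oCH$ therefore requires knowing that $\pi^{2g-1}_A[0]$ vanishes modulo $2$-torsion, not merely rationally. I would handle this using $\pi^{2g-1}_A[0]=\pi^{2g-1}_A\circ\pi^{2g}_A$ applied to the fundamental class, together with self-conjugacy ${}^t\pi^{2g}=\pi^0$ (Theorem \ref{t7} c)). This identifies $[0]$ as the image of $\pi^0$ transposed, so $[0]\in\oCH^g_{\et,0}(A)$ exactly. With that in hand, the torsion-freeness argument finishes the proof.
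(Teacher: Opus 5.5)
Your proposal is correct and follows the paper's proof: $\gamma_n(\ell)\in\oCH^n_{\et,0}(A\times\hat A)$, Theorem~\ref{t7} d) places $\pi_*\gamma_n(\ell)$ in the torsion-free group $\oCH^{n-g}_{\et,0}(A)$, and Beauville's rational identity concludes. (The term landing in codimension $g$ is $\gamma_{2g}(\ell)$, not $\gamma_g(\ell)$, since $\pi_*$ lowers codimension by $g$.)

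Your extra check that $[0]\in\oCH^g_{\et,0}(A)$, which the paper leaves implicit, is right in substance, but state it as $\pi^{2g}_A[0]=[0]$: $\pi^{2g}_A={}^t\pi^0_A=A\times\{0\}$ sends a zero-cycle $z$ to $\deg(z)[0]$, whereas applied to the fundamental class $[A]$ it gives $0$. Alternatively, deduce $\pi^{2g-1}_A[0]=0$ from $(2_A)_*[0]=[0]$ and $\pi^{2g-1}_A\circ(2_A)_*=2\pi^{2g-1}_A$ (Theorem~\ref{t7} a)).
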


\begin{proof} We have to show that
\[\pi_*\gamma_i(\ell) =
\begin{cases}
0&\text{if } i\ne g\\
[0]&\text{if } i= g,
\end{cases}
\]
this being known after tensoring with $\Q$ by \cite[Cor. 2 to Prop. 3]{beauville}. 
Since $\ell\in \oCH^1_{\et,0}(A\times \hat{A})$, $\gamma_i(\ell)\in \oCH^i_{\et,0}(A\times \hat{A})$ by construction. By Theorem \ref{t7} d), $\pi_*\gamma_i(\ell)\in \oCH^{i-g}_{\et,0}(A)$. Since this group is torsion-free, we win.
\end{proof}

Thanks to Proposition \ref{p18} and its dual (transpose $e^\ell$), Corollary \ref{c12} (i) and (ii) are proven as said before, by using \eqref{eq27} and Theorem \ref{t5} a).  Moreover, (iii) is given by the argument of Mukai \cite[proof of (3.4)]{mukai} and Theorem \ref{t5} b).

\begin{rk} If we relax the condition of taking the canonical divided powers, Proposition \ref{p18}, and therefore Corollary \ref{c12}, remain true after multiplying by $2$.
\end{rk}

\section{Further properties of the Fourier transform}\label{s11.5} 

\subsection{DM projectors} Let $(\pi^i_A)$ be a system of DM projectors on $A$ modulo $2$-torsion (see Theorem \ref{t7} a)). We then get a system of projectors in $\oCH^g_\et(\hat{A}\times \hat{A})$
\[\pi^i_{\hat{A}}=\sF_A\circ \pi^{2g-i}_A\circ \sF_A^{-1}.\]

\begin{prop}\label{p15} The $\pi^i_{\hat{A}}$ also form a system of DM projectors modulo $2$-torsion.
\end{prop}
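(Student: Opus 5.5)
Since $\pi^i_{\hat{A}}$ is the conjugate of $\pi^{2g-i}_A$ by the invertible correspondence $\sF_A$, orthogonality, idempotence and $\sum_i\pi^i_{\hat{A}}=1$ transfer from the $\pi^{2g-i}_A$ directly. Here invertibility of $\sF_A$ is the inversion formula of Corollary \ref{c12} (i) in $\oCorr_\et$, namely $\sF_{\hat{A}}\circ\sF_A=(-1)^g(-1_A)^*$. So the content lies in three further properties: the DM multiplication property $n_{\hat{A}}^*\circ\pi^i_{\hat{A}}=n^i\pi^i_{\hat{A}}$, self-conjugacy ${}^t\pi^i_{\hat{A}}=\pi^{2g-i}_{\hat{A}}$, and the fact that $\cl_l(\pi^i_{\hat{A}})$ is the $i$-th Künneth projector of $\hat{A}$.

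For the multiplication property I would use commutation with isogenies, Corollary \ref{c12} (iii), applied to $f=n_A$, whose dual is $n_{\hat{A}}$. In Beauville's form this gives $\sF_A\circ (n_A)_*=n_{\hat{A}}^*\circ\sF_A$, up to the convention for composing $\sF_A$ with pullbacks and pushforwards. Then
\[
n_{\hat{A}}^*\circ\pi^i_{\hat{A}}=n_{\hat{A}}^*\circ\sF_A\circ\pi^{2g-i}_A\circ\sF_A^{-1}=\sF_A\circ(n_A)_*\circ\pi^{2g-i}_A\circ\sF_A^{-1}.
\]
I still need $(n_A)_*\circ\pi^{2g-i}_A=n^{i}\pi^{2g-i}_A$. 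Theorem \ref{t7} a) gives $\pi^j_A\circ (n_A)_*=n^{2g-j}\pi^j_A$ with $(n_A)_*$ on the other side. I would get the commutation by transposing the relation $\pi^j_A\circ n_A^*=n_A^*\circ\pi^j_A$ of Theorem \ref{t7} a) and using self-conjugacy, which gives $(n_A)_*\circ\pi^{2g-j}_A=\pi^{2g-j}_A\circ(n_A)_*$. With $j=2g-i$ the desired identity follows.

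For self-conjugacy I would use ${}^t\sF_A=\sF_{\hat{A}}$, which holds because the Poincaré class $\ell$ is symmetric under swapping the factors. Together with the inversion formula this gives $\sF_A^{-1}=(-1)^g(-1_A)^*\circ\sF_{\hat{A}}$ or a similar expression. Transposing $\pi^i_{\hat{A}}$ then produces $\sF_A\circ{}^t\pi^{2g-i}_A\circ\sF_A^{-1}$, after moving the $(-1)$ pullbacks through the projectors using the previous step with $n=-1$. Since ${}^t\pi^{2g-i}_A=\pi^i_A$, the result is $\pi^{2g-i}_{\hat{A}}$. The cohomological condition needs no separate argument: the cohomological Fourier transform sends $H^{2g-i}(A)$ to $H^i(\hat{A})$, and this is also forced by the multiplication property, which identifies the realisation with the $n^i$-eigenprojector on $\hat{H}^*(\hat{A})\otimes\Q$.

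The main obstacle will be the sign and transpose bookkeeping in the self-conjugacy step, specifically pinning down which of $\sF_{\hat{A}}$ or ${}^t\sF_A$ inverts $\sF_A$ in $\oCorr_\et$. I would first verify it rationally using Beauville's formulas, and then argue integrally: both sides are elements of $\oCorr_\et$ that agree rationally, so by Theorem \ref{t7} b) any discrepancy is torsion in $\oCH^g_{\et,1}$. I would eliminate such a discrepancy using the eigenvalue rigidity of Proposition \ref{p10}, as in the proof of Theorem \ref{t4} c).
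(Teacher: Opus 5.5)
Your proposal is correct and follows essentially the paper's argument, which likewise rests on the commutation of $\sF_A$ and $\sF_{\hat{A}}$ with the isogenies $n_A$, $n_{\hat{A}}$, combined with the relation $\pi^j_A\circ (n_A)_*=n^{2g-j}\pi^j_A$ of Corollary \ref{c5} a). The only difference is that you move $(n_A)_*$ to the left by transposition; there the correct substitution is $j=i$ rather than $j=2g-i$. Your self-conjugacy check, which the paper leaves implicit, closes directly: ${}^t\sF_A=\sF_{\hat{A}}$, $\sF_A^{-1}=(-1)^g(-1_A)^*\circ\sF_{\hat{A}}$, and $(-1_A)_*=(-1_A)^*$ acts on $\pi^i_A$ by $(-1)^i$ on both sides. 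So the torsion-rigidity fallback you sketch is not needed.
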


\begin{proof} This follows from the commutation of $\sF_A$ and $\sF_{\hat{A}}$ with isogenies and from Corollary \ref{c5} a).
\end{proof}

\begin{rk} It seems reasonable to expect that $(\pi^i_{\hat{A}})$ is canonical if $(\pi^i_A)$ is (see Theorem \ref{t7} c)), but I don't see a way to prove it.
\end{rk}

\begin{cor}[cf. {\cite[Prop. 1]{beauville2}}]\label{p14} a) The Fourier transforms $\sF_A$ and $\sF_{\hat{A}}$ exchange $\oCH^i_{\et,s}(A)$ and $\oCH^{g-i+s}_{\et,s}(\hat{A})$ for all $(i,s)$.\\
b) The Pontryagin divided powers $\gamma_n^*$ of Corollary \ref{c8a} respect these subgroups.
\end{cor}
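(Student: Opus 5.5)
The plan for a) follows Beauville's proof of \cite[Prop. 1]{beauville2}: characterise $\oCH^i_{\et,s}(A)$ by eigenvalues and transport that characterisation through $\sF_A$. The rational intertwining formulas are \cite[Prop. 3]{beauville}, lifted integrally by Corollary \ref{c12}. By commutation with isogenies (Corollary \ref{c12} (iii)), applied to $n_A$ and its dual $n_{\hat A}$, we have in $\oCH^*_\et$
\[\sF_A\circ n_A^* = (n_{\hat A})_*\circ \sF_A,\qquad \sF_A\circ (n_A)_*=n_{\hat A}^*\circ \sF_A .\]
Theorem \ref{t7} a) gives $\pi^j_A\circ(n_A)_*=n^{2g-j}\pi^j_A$. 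Transposing and using self-conjugacy, the contravariant action $n_A^*$ is multiplication by $n^j$ on $\pi^j_A$. Hence on $\oCH^i_{\et,s}(A)$ the operator $n_A^*$ acts as $n^{2i-s}$.

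Next, take $x\in \oCH^i_{\et,s}(A)$. The transform $\sF_A(x)$ has components in codimensions $i'$. For its codimension-$i'$ part $y$, the intertwining relation gives $(n_{\hat A})_*y=n^{2i-s}y$. On $\oCH^{i'}_{\et,s'}(\hat A)$, the operator $(n_{\hat A})_*$ acts by $n^{2g-(2i'-s')}$; this follows from Lemma \ref{l9} b), in the form of Theorem \ref{t7} a) for $\hat A$, with the projectors $\pi^j_{\hat A}$ of Proposition \ref{p15}. Comparing exponents gives $2g-2i'+s'=2i-s$. A second constraint is needed to pin down $(i',s')$ separately. For it, use the grading by codimension: $\sF_A=\sum_k\gamma_k(\ell)$ with $\gamma_k(\ell)\in\oCH^k_{\et,0}(A\times\hat A)$. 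Composing with $\gamma_k(\ell)$ sends codimension $i$ to codimension $i+k-g$. Rationally, Beauville's argument (also using $\sF_{\hat A}\circ\sF_A=(-1)^g(-1)_A^*$) forces $k=g-i+s+(g-i')$, which collapses to $i'=g-i+s$, $s'=s$.

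To work integrally, I would decompose $y=\sum_{s'}y_{s'}$ and get $(n^{2g-2i'+s'}-n^{2i-s})y_{s'}=0$ for all $n$. For $s'\ne 0,1$ the group $\oCH^{i'}_{\et,s'}(\hat A)$ is torsion-free by Theorem \ref{t7} b), so the mismatched components vanish; the correct component is then identified with Beauville's rationally. The delicate cases are $s'=0$ and $s'=1$. There $\oCH^{i'}_{\et,1}$ may contain torsion, and the relation only shows that the wrong components are killed by $w$-type integers, as in Proposition \ref{p7}. I expect this to be the main obstacle. To handle it, I would first show that $\sF_A$ maps the divisible part $\oCH^i_{\et,s}(A)$ ($s\ne0$) into a divisible group. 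Its image in a component of bounded exponent that is also $\oCH_{\et,1}$-torsion is then both divisible and of finite exponent, hence zero. For $s=0$, the source is torsion-free, so rational comparison suffices once the targets are shown torsion-free or handled by the same divisibility argument. Since $\sF_{\hat A}\circ\sF_A$ is $\pm(-1)^*$, which preserves each summand, the inclusions in both directions yield the claimed exchange.

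For b), by Corollary \ref{c8a} the Pontryagin divided powers are (as I expect they are defined) $\gamma_n^*=\sF_{\hat A}^{-1}\circ\gamma_n\circ\sF_A$ up to sign, via the exchange of products in Corollary \ref{c12} (ii). The ordinary divided powers $\gamma_n$ send $\oCH^{i}_{\et,s}$ into $\oCH^{ni}_{\et,ns}$ by construction (\S\ref{s11.2} and Corollary \ref{c7}). Combining this with a) gives that $\gamma_n^*$ respects the corresponding Pontryagin-graded pieces, sending the piece indexed $(i,s)$ to the one indexed $(n(i-g)+g,\,ns)$ after reindexing.
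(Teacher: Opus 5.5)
There is a genuine gap in a). It sits exactly where you expected the obstacle, but your proposed fix does not close it. Eigenvalue relations determine the index $s'$ of the components of $\sF_A(x)$ only up to torsion of bounded exponent. That residual torsion can lie in the pieces $\oCH^{i'}_{\et,1}(\hat A)$, which carry all the torsion of $\oCH^*_\et(\hat A)$. Two cases escape your argument.

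\emph{Case (1), $s=0$.} A component $y_1\in\oCH^{i'}_{\et,1}(\hat A)$ satisfies $(n^{2g-2i'+1}-n^{2i})y_1=0$, which only gives $2y_1=0$ in $\oCH$. This does not force $y_1=0$. Since the torsion of $CH^*_\et(\hat A)[1/p]$ is divisible, ${}_2\oCH^{i'}_\et(\hat A)\simeq{}_2CH^{i'}_\et(\hat A)[1/p]$, which is nonzero in general. Moreover $\oCH^i_{\et,0}(A)$ is not divisible. So neither ``the targets are torsion-free'' nor ``the image is divisible'' applies.

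\emph{Case (2), $s$ odd, $s\ne1$.} The intertwinings with $n^*$ and with $n_*$ give the \emph{same} single constraint $2i'-s'=2g-2i+s$. This constraint is also satisfied by $(i',s')=(g-i+\frac{s+1}{2},1)$. Beauville's rational result only says this component is torsion, not that it has bounded exponent. A homomorphism from the uniquely divisible group $\oCH^i_{\et,s}(A)$ to the divisible torsion group $\oCH^{i'}_{\et,1}(\hat A)_\tors$ need not vanish (think of $\Q\to\Q/\Z$).

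Your ``second constraint'' is invoked only rationally, and the displayed formula for $k$ does not yield $i'=g-i+s$. The correct condition is $k=2g-2i+s$, so that $i'=i+k-g$. It comes from $(n_A\times1)^*\gamma_k(\ell)=n^k\gamma_k(\ell)$, not from the inversion formula.

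What is missing is \emph{exact} control of the index $s$ of $\sF_A(x)$. The paper gets it from Proposition \ref{p15}: $\pi^j_{\hat A}=\sF_A\circ\pi^{2g-j}_A\circ\sF_A^{-1}$ is a system of DM projectors modulo $2$-torsion. Hence $\sF_A(x)=\pi^{2g-2i+s}_{\hat A}\sF_A(x)$ on the nose, and Theorem \ref{t7} b) identifies the resulting decomposition with the canonical one. This disposes of (1), since for $s$ even no component can land in an $s'=1$ piece.

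To pin down the codimension as well, in particular in case (2), you need a term-by-term argument. The one that works uniformly is that of Proposition \ref{p18}. The codimension-$(i+k-g)$ part of $\sF_A(x)$ is $p_{2*}(\gamma_k(\ell)\cdot p_1^*x)$. Theorem \ref{t7} d) together with Corollary \ref{c7} (and \eqref{eq26} when $s=0$) places it in $\oCH^{i+k-g}_{\et,s}(\hat A)$ with the \emph{same} $s$.
\begin{itemize}
\item For $s\ne1$ this group is torsion-free, so Beauville's rational statement kills the terms with $k\ne2g-2i+s$.
\item For $s=1$, those terms satisfy the nontrivial relation $(n^{2i+2k-2g-1}-n^{2g-2i+1})y=0$, so they have bounded exponent. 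They also come from the divisible group $\oCH^i_{\et,1}(A)$, hence vanish.
\end{itemize}
With a) in hand, your argument for b) is the paper's.
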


\begin{proof} a) follows from Proposition \ref{p15} and Theorem \ref{t7} b); b) follows from a) and the same property for the $\gamma_n$.
\end{proof}

\begin{cor}[cf. {\cite[Cor. 1 to Prop. 3]{beauville}}]\label{c15} Let $c\in \oCH^i_\et(A)$ be a symmetric (resp. antisymmetric) element. Then the component of $\sF_A(c)$ on $\oCH^{g-i+r}_\et(A)$ is $0$ if $r$ is odd (resp. even).
\end{cor}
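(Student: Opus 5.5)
Since everything lives in $\oCH^*_\et$, I would follow Beauville's proof of Cor. 1 to Prop. 3 in \cite{beauville}, replacing rational arguments by the integral structure built above. The key identity is that $\sF_A$ intertwines $(-1)_A^*$ with $(-1)_{\hat A}^*$, which is the case $f=(-1)_A$ of the commutation with isogenies in Corollary \ref{c12} (iii). So if $(-1)_A^*c=\epsilon c$ with $\epsilon=\pm1$, then $(-1)_{\hat A}^*\sF_A(c)=\epsilon\,\sF_A(c)$.

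Next I decompose $c=\sum_s c_s$ with $c_s\in\oCH^i_{\et,s}(A)$, using Theorem \ref{t7} b). By Corollary \ref{p14} a), $\sF_A(c_s)\in\oCH^{g-i+s}_{\et,s}(\hat A)$. The component of $\sF_A(c)$ in codimension $g-i+r$ is therefore exactly $\sF_A(c_r)$, and it lies in $\oCH^{g-i+r}_{\et,r}(\hat A)$. By Theorem \ref{t7} a), $(-1)^*$ acts on $\oCH^{j}_{\et,s}$ as $(-1)^{2j-s}=(-1)^s$: this holds because $n^*$ commutes with the canonical projectors and acts on $\pi^{2j-s}$ by $n^{2j-s}$ modulo $2$-torsion, hence exactly in $\oCH$. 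Taking the graded component of the symmetry relation then gives
\[(-1)^r\sF_A(c_r)=\epsilon\,\sF_A(c_r),\]
so $2\sF_A(c_r)=0$ whenever $(-1)^r\ne\epsilon$.

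The main obstacle is passing from $2\sF_A(c_r)=0$ to $\sF_A(c_r)=0$. Here $\oCH$ is a quotient by $2$-torsion, but it may itself still contain $2$-torsion: a class $z$ with $4z=0$ has nonzero image $\bar z$ with $2\bar z=0$. The tool is Theorem \ref{t7} b), which says $\oCH^{j}_{\et,s}$ is torsion-free for $s\ne1$. So the conclusion is immediate for $r\ne1$. For $r=1$, with $c$ symmetric, the relevant group $\oCH^{g-i+1}_{\et,1}(\hat A)$ is divisible but may have torsion, and I would handle it by inverting the transform instead. Corollary \ref{c12} (i) shows $\sF_{\hat A}$ is an isomorphism up to sign and $(-1)^*$. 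Applying $\sF_{\hat A}$ sends $\sF_A(c_1)$ into $\oCH^{i}_{\et,1}(A)$, so $\sF_A(c_1)=0$ iff $c_1=0$ up to these automorphisms. By the same parity argument, $2c_1=0$ for symmetric $c$.

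I would then try to show $c_1=0$ directly. Since $c_1\in\pi^{2i-1}_A\,\oCH^i_\et(A)$ satisfies $(-1)^*c_1=c_1=-c_1$ in a group where $(-1)^*=-1$, the element $c_1$ is $2$-torsion in $\oCH$. I would lift it to $\tilde c$ in $CH^i_{\et,1}(A)$ with $4\tilde c=0$, and use the fact that $(-1)^*$ acts on torsion as $(-1)^{2i-1}=-1$ (Proposition \ref{p4} e)). The point is to show that the symmetry hypothesis upstairs forces $2\tilde c=0$. If this fails, I would fall back on stating the result modulo $2$-torsion of $\oCH$, as the earlier remarks allow.
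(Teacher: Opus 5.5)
\textbf{Gap at $r=1$ in the symmetric case; the statement is false there as literally written.}

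Your route is the paper's route. Its proof is the single assertion that a symmetric (resp.\ antisymmetric) $c$ lies in $\bigoplus_{s\text{ even}}\oCH^i_{\et,s}(A)$ (resp.\ $\bigoplus_{s\text{ odd}}$), after which Corollary~\ref{p14} a) finishes. Arguing on $\sF_A(c_r)$ instead of $c_r$, and the detour through $\sF_{\hat A}$, change nothing. Your argument is complete for antisymmetric $c$ and for odd $r\neq 1$, by the torsion-freeness in Theorem~\ref{t7} b).

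The case you flag, $r=1$ with $c$ symmetric, is a genuine gap. No argument can close it under the literal hypothesis $(-1)_A^*c=c$ in $\oCH^i_\et(A)$, because the claim is false there. Here is a counterexample.
\begin{enumerate}
\item Let $p\neq 2$ and $1\le i\le g$. By Proposition~\ref{p4} c), $CH^i_\et(A)\{2\}\simeq H^{2i-1}_\cont(A,\Z_2)\otimes\Q_2/\Z_2(i)$ is nonzero and divisible. So it contains an element $z$ of exact order $4$, whose image $\bar z\in\oCH^i_\et(A)$ is nonzero with $2\bar z=0$.
\item By Proposition~\ref{p4} e), $(-1)_A^*z=-z$. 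Hence $(-1)_A^*\bar z=-\bar z=\bar z$, so $\bar z$ is symmetric.
\item By Lemma~\ref{l8} a), $\bar z\in\oCH^i_{\et,1}(A)$. So by Corollary~\ref{p14} a), $\sF_A(\bar z)$ lies entirely in $\oCH^{g-i+1}_\et(\hat A)$, the component with $r=1$.
\item $\sF_A(\bar z)\neq 0$, since $\sF_A$ is injective by the inversion formula of Corollary~\ref{c12}.
\end{enumerate}
The paper's one-liner hides the same hole: symmetry in $\oCH$ only gives $2c_1=0$, and $\oCH^i_{\et,1}(A)$ is not torsion-free.

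\textbf{How to repair it.} Your ``upstairs'' idea is the right repair, but it needs the symmetry hypothesis to live upstairs. Assume $c$ is the image of some $\tilde c\in CH^i_\et(A)[1/p]$ with $(-1)_A^*\tilde c=\tilde c$; this is presumably the intended meaning. Compute the components with a system of étale integral DM projectors (Theorem~\ref{t3} a)). These satisfy $(-1)_A^*\circ\pi^{2i-1}_A=-\pi^{2i-1}_A$ exactly (Definition~\ref{d4}) and commute with $(-1)_A^*$ (Lemma~\ref{l12} b)). Then $\tilde c_1=\pi^{2i-1}_A\tilde c$ satisfies
\[-\tilde c_1=(-1)_A^*\tilde c_1=\pi^{2i-1}_A((-1)_A^*\tilde c)=\tilde c_1,\]
so $2\tilde c_1=0$ and $c_1=0$ in $\oCH^i_\et(A)$. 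Together with your argument for $r\neq 1$, this proves the corrected statement.

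By contrast, lifting $c_1$ alone to an element of order dividing $4$ gives you nothing to use. That lift is torsion, so $(-1)_A^*$ negates it automatically and it carries no trace of the symmetry of $c$. In the example above, no lift of $\bar z$ is symmetric at all. Without strengthening the hypothesis, the best one can say is your fallback: for $c$ symmetric in $\oCH$, the component with $r=1$ is killed by $2$.
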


\begin{proof} Indeed, $c$ belongs to $\bigoplus\limits_{s \text{ even}} \oCH^i_{\et,s}(A)$ (resp. to $\bigoplus\limits_{s \text{ odd}} \oCH^i_{\et,s}(A)$).
\end{proof}

\subsection{Beauville's identities}

\begin{prop}\label{p16} Suppose that $A$ is provided with a polarisation $\phi:A\to \hat{A}$ of degree $\nu$, induced by the class $d\in CH^1(A)$ of an ample symmetric divisor. Then we have, for any $i\in [0,g]$
\[\nu\sF_A(\gamma_i(d)) = (-1)^{g-i}\phi_*(\gamma_{g-1}(d)),\quad \nu\gamma_i^*(c) = \nu^i\gamma_{g-i}(d)\]
where $c=\gamma_{g-1}(d)$. In particular,
\begin{equation}\label{eq25}
\gamma_g(d)=\nu [0].
\end{equation}
\end{prop}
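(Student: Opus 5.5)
The plan is to lift Beauville's rational identities (\cite[Prop. 3 and its corollaries]{beauville}) to $\oCH^*_\et$ with the same torsion-freeness argument used for Proposition \ref{p18}. Since $d$ is the class of a symmetric ample divisor, $d\in \oCH^1_{\et,0}(A)$. Indeed, symmetry puts $d$ in $\bigoplus_{s \text{ even}}$, and Corollary \ref{c6} (or $\NS$ being torsion-free with $CH^1_{\et,s}=0$ for $s\notin[0,1]$) leaves only $s=0$. By construction of the divided powers (\S\ref{s11.2}), $\gamma_i(d)\in \oCH^i_{\et,0}(A)$ for all $i$. Corollary \ref{p14} a) then places $\sF_A(\gamma_i(d))$ in $\oCH^{g-i}_{\et,0}(\hat{A})$. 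Theorem \ref{t7} d), applied to $\phi$, places $\phi_*(\gamma_{g-i}(d))$ in $\oCH^{g-i}_{\et,0}(\hat{A})$; here I read the stated $\gamma_{g-1}$ as $\gamma_{g-i}$, which is what Beauville's formula gives. Both sides of the first identity therefore lie in the same group $\oCH^{g-i}_{\et,0}(\hat{A})$, which is torsion-free by Theorem \ref{t7} b).

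In a torsion-free group it suffices to check an identity after $\otimes\Q$. Using $CH^*(A)\otimes\Q\iso CH^*_\et(A)\otimes\Q$, the canonical $\gamma_i(d)$ maps to $d^i/i!$. The first identity then becomes Beauville's $\sF_A(d^i/i!)=(-1)^{g-i}\phi_*(d^{g-i}/(g-i)!)/\nu$, up to the normalisation of $\sF$ and $\phi$, which I would check against \cite[Prop. 3, Cor.]{beauville}. Multiplying by $\nu$ clears the denominator, so the first identity follows.

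For the Pontryagin statement, I would take $c=\gamma_{g-1}(d)\in\oCH^{g-1}_{\et,0}(A)$. The Pontryagin divided powers $\gamma^*_i(c)$ of Corollary \ref{c8a} lie in $\oCH^{g-i}_{\et,0}(A)$ by Corollary \ref{p14} b), and the same is true of $\gamma_{g-i}(d)$. This group is again torsion-free, so the identity $\nu\gamma^*_i(c)=\nu^i\gamma_{g-i}(d)$ reduces to Beauville's rational formula $c^{*i}/i! = \nu^{i-1}d^{g-i}/(g-i)!$. That formula is obtained by transporting via $\sF_A$, which exchanges intersection and Pontryagin products (Corollary \ref{c12}), combined with the first identity. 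For \eqref{eq25}, I would take $i=g$ in the first identity. Then $\nu\sF_A(\gamma_g(d))=\phi_*(1)=\nu[\hat{A}]$ up to the normalisation, and applying $\sF_{\hat{A}}$ with the inversion formula gives $\nu\gamma_g(d)=\nu^2[0]$. Since $\oCH^g_{\et,0}(A)$ is torsion-free and $\nu\neq0$, we can divide by $\nu$ to get $\gamma_g(d)=\nu[0]$. Alternatively, $\gamma_g(d)$ and $\nu[0]$ both lie in the torsion-free group $\oCH^g_{\et,0}$ and agree rationally, since $\deg d^g=g!\nu$ and $d^g\in CH^g_0\otimes\Q=\Q[0]$.

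The main obstacle is the bookkeeping of the gradings. I must make sure every term genuinely lies in an $s=0$ piece, which is the only torsion-free piece that is reliably controlled. This needs Theorem \ref{t7} d) for the pushforward $\phi_*$ under a homomorphism that is not an isomorphism, and that requires canonical DM systems on both $A$ and $\hat{A}$. The difficulty is that $\hat{A}$ carries the transported system of Proposition \ref{p15}, which is not known to be canonical. I would circumvent this by applying Theorem \ref{t7} b) instead, which says the decomposition modulo $2$-torsion is independent of the choice of system. This is enough, because membership in $\oCH_{\et,0}$ is all that is needed.
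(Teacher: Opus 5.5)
Your proposal is correct and is essentially the paper's proof. Beauville's rational identities (with $\gamma_{g-1}$ rightly read as $\gamma_{g-i}$) lift because both sides lie in torsion-free $s=0$ pieces: use Theorem~\ref{t7}~d) for $\phi_*$ and Corollary~\ref{p14} for $\sF_A$ and $\gamma^*_i$, with Theorem~\ref{t7}~b) reconciling the transported and canonical systems on $\hat{A}$, just as the paper does implicitly through Corollary~\ref{p14}~a).

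Two small corrections.

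First, $d\in\oCH^1_{\et,0}(A)$ should be deduced from $n_A^*d=n^2d$ and \eqref{eq26}, not from Corollary~\ref{c6}, which only gives $s\in[1-g,1]$.

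Second, $\deg\gamma_g(d)=\nu$ forces $\nu=\deg(d^g)/g!$, hence $\deg\phi=\nu^2$ by Riemann--Roch. So your $i=g$ computation needs $\phi_*(1)=\nu^2[\hat{A}]$, not $\nu[\hat{A}]$. Your direct degree argument for \eqref{eq25} is fine as it stands, and \eqref{eq25} also drops out of the second identity at $i=0$, where $\gamma^*_0(c)=[0]$.
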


\begin{proof} Same as for Proposition \ref{p18}: the identities hold after tensoring with $\Q$ by \cite[Prop. 6 and Cor. 2]{beauville}, but both of their terms are respectively in the torsion-free groups $\oCH^{g-i}_{\et,0}(\hat{A})$ and $\oCH^i_{\et,0}(A)$ (for the first identity, by Theorem \ref{t7} d) and Corollary \ref{p14} a); for the second one, by Corollary \ref{p14} b)).
\end{proof}

\subsection{Application: Scholl's formula for the DM projectors}

\begin{prop}\label{p17} In  Proposition \ref{p16}, suppose that the polarisation $\phi$ is principal. Consider the correspondences
\begin{equation}\label{eqscholl}
p_i= (-1)^i \sum_{2a+b= 2g-i \text{ and } b+2c=i} \gamma_a(\pi_1^*d)\cdot \gamma_b((1\times \phi)^*\ell)\cdot \gamma_c(\pi_2^* d)
\end{equation}
in $\oCH^g_\et(A\times A)$, where $\pi_1,\pi_2$ are the two projections $A\times A\to A$ (recall that $\ell$ is the class of the normalised Poincaré bundle). Let $\pi^i_A=p_i\circ p_i$. Then $(\pi^i_A)$ is a system of DM projectors modulo $2$-torsion.
\end{prop}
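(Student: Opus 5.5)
The plan is to reduce to the rational statement (Scholl's formula, \cite{scholl}) and then control torsion using the structure of $\oCH^*_\et$ established above, in the same way as in Propositions \ref{p18} and \ref{p16}.

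\textbf{Rational step.} Each factor $\gamma_a(\pi_1^*d)$, $\gamma_b((1\times\phi)^*\ell)$, $\gamma_c(\pi_2^*d)$ lies in $\oCH^*_{\et,0}(A\times A)$. This holds because $\pi_1^*d$, $\pi_2^*d$ and $(1\times\phi)^*\ell$ are symmetric divisor classes, hence lie in $\oCH^1_{\et,0}$. It uses Theorem \ref{t7} d) for pull-backs under homomorphisms, together with the fact that the canonical divided powers preserve the $s=0$ part by construction. Products of classes in the $s=0$ part stay there modulo $2$-torsion, by the definition of $x^{[n]}$. Hence $p_i\in \oCH^g_{\et,0}(A\times A)$, which is torsion-free by Theorem \ref{t7} b). After tensoring with $\Q$, the divided powers become $x^n/n!$, and \eqref{eqscholl} becomes exactly Scholl's formula. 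So $p_i\otimes\Q=\pi^i_{DM}$, the rational Deninger--Murre projectors. In particular $p_i$ lifts $\bar\pi^i_A$, and $\sum_i p_i\otimes\Q=1$, so $\sum_i p_i=1$ in the torsion-free group $\oCH^g_{\et,0}(A\times A)$.

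\textbf{Projector and multiplication properties.} I will lift the $p_i$ from $\oCH$ to $CH^g_\et(A\times A)[1/p]$ and apply Proposition \ref{l15}. This shows that the $p_i\circ p_i$ form orthogonal projectors with sum $1$ lifting the $\bar\pi^i_A$; it suffices to lift the $p_i$ so that their sum is still $1$, by adjusting one of them by a $2$-torsion element. Composition in $\oCorr_\et$ is well defined because torsion composes to zero (Corollary \ref{c2}), so the conclusion survives in $\oCorr_\et(A,A)$.

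For the DM condition, I want $n_A^*\circ p_i=n^ip_i$ and the analogous identity for $(n_A)_*$ in $\oCH^g_\et(A\times A)$. I would argue through the bigrading by $s$. The element $n_A^*\circ p_i - n^ip_i$ is the pull-back of $p_i$ along $n\times 1$, up to the $n^i$ term. By Theorem \ref{t7} d) applied to the homomorphism $n\times1$ with canonical systems, this pull-back lies again in $\oCH^g_{\et,0}(A\times A)$. That group is torsion-free, and the identity holds rationally by \cite{dm}, so it holds integrally. Composing with $p_i$ then gives the same identity for $\pi^i_A=p_i\circ p_i$. Self-conjugacy should follow in the same way from symmetry of \eqref{eqscholl} under swapping factors, which replaces $i$ by $2g-i$; again this holds rationally and in a torsion-free group.

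\textbf{Main obstacle.} The delicate point is justifying that the relevant elements live in $\oCH_{\et,0}$. This covers the products of divided powers, the pull-backs of $\ell$ via $1\times\phi$ and of $d$ via projections, and the composites $n_A^*\circ p_i$. Corollary \ref{c7} excludes the case $(s,t)=(0,0)$, so I must use the definition of $x^{[n]}$ and the fact that $\oCH_{\et,0}$ is closed under products modulo $2$-torsion. If this is not available, I would fall back on Proposition \ref{p7} b): show instead that each element satisfies $n^*x=n^{2i}x$ for all $n$, which lands it in $B_0=\oCH_{\et,0}$ modulo $2$-torsion by \eqref{eq26}.
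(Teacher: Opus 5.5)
Your proof works, but it reaches the key identity $\sum_i p_i=1$ by a different route from the paper's, and that step as written skips a verification. You deduce $\sum_i p_i=1$ from the rational identity together with torsion-freeness of $\oCH^g_{\et,0}(A\times A)$. This only works if the identity correspondence $\Delta_A$ also lies in $\oCH^g_{\et,0}(A\times A)$, and you never check this. It is true: $\Delta_A=\delta_*[A]$, with $\delta\colon A\to A\times A$ the diagonal homomorphism and $[A]\in\oCH^0_{\et,0}(A)$. The push-forward part of Theorem \ref{t7} d), applied with canonical systems on $A$ and $A\times A$, then puts $\Delta_A$ in $\oCH^g_{\et,0}(A\times A)$. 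You need to say this.

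The paper instead obtains $\sum_i p_i=1$ on the nose. Since $a+b+c=g$ and $(-1)^i=(-1)^b$, identities (2) and (3) collapse $\sum_i p_i$ to $\gamma_g(\pi_1^*d-(1\times\phi)^*\ell+\pi_2^*d)$. By the theorem of the cube, this argument is $\sigma^*d$ for the difference map $\sigma(x,y)=x-y$. Theorem \ref{t5} b) and \eqref{eq25} with $\nu=1$ then give $\sigma^*[0]=\Delta_A$. From there both arguments apply Proposition \ref{l15} in $\oCH^g_\et(A\times A)$ in the same way.

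The two routes trade off as follows. The paper's needs no information about where $\Delta_A$ sits in the $s$-decomposition; its only integral input is \eqref{eq25}. Yours replaces \eqref{eq25} by a uniform torsion-freeness argument on $A\times A$, at the cost of using the canonical projectors of $A\times A$. In exchange, it also supplies the multiplication and self-conjugacy properties, which the paper's three-line proof leaves implicit. These properties also follow directly from Theorem \ref{t5} b):
\begin{itemize}
\item $(1\times n)^*$ fixes $\pi_1^*d$, multiplies $(1\times\phi)^*\ell$ by $n$ and $\pi_2^*d$ by $n^2$, and $b+2c=i$, so $(1\times n)^*p_i=n^ip_i$;
\item swapping the factors exchanges $a$ and $c$, which gives ${}^tp_i=p_{2g-i}$.
\end{itemize}

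Two small corrections. First, closure of $\oCH_{\et,0}$ under products does not follow from the definition of $x^{[n]}$. It follows from $B^i_0\cdot B^j_0\subseteq B^{i+j}_0$ together with \eqref{eq26} for the canonical system on $A\times A$, which is your fallback. Second, composition in $\oCorr_\et$ is well defined simply because $2x=0$ implies $2(x\circ y)=0$; Corollary \ref{c2} is not what is needed there.
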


\begin{proof} By \cite[5.9]{scholl}, the statement is true after tensoring with $\Q$. Moreover, \eqref{eq25} implies as in loc. cit. that $\sum p_i=1$ (this is the key point). We note that Proposition \ref{l15} works equally well in $\oCH^g_\et(A\times A)$. Hence the claim.
\end{proof}

\begin{rk} I cannot extend Proposition \ref{p17} to a non-principal polarisation as in \cite{scholl}, except of course up to multiplying by its degree. I am also not sure what condition is needed on $d\in CH^1(A)$ to ensure that $(\pi^i_A)$ is canonical.
\end{rk}

\subsection{Jacobians of curves: Suh's formula} Let $C$ be a (smooth projective) curve of genus $g$, with Jacobian $J$. Choose a rational point on $c$ and, for every $i\in [0,g]$, let $W^{[i]}=[C^{(g-i)}]\in CH^i(J)$ be the corresponding cycle class (image of $S^{g-i}(C)$ by the canonical map). 

\begin{thm}[\'Etale integral Poincaré formula]\label{t8} a) We have $W^{[i]}=\gamma_i(d)$ in $\oCH^i_\et(J)$, where $d=W^{[1]}$. \\
b) We have $W^{[r]}\cdot W^{[s]} = \binom{r+s}{r} W^{[r+s]}$ for any $(r,s)$.
\end{thm}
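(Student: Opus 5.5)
The approach is the one used for Propositions \ref{p18} and \ref{p16}. Rationally, part a) is the classical Poincaré formula $W^{[i]}=d^i/i!$ in $CH^i(J)\otimes\Q$. It then suffices to show that $W^{[i]}$ and $\gamma_i(d)$ both lie in the torsion-free group $\oCH^i_{\et,0}(J)$, where $J$ carries its canonical system of DM projectors as in Theorem \ref{t7} c). Two elements of a torsion-free group that agree after $\otimes\Q$ are equal. Part b) then follows formally from a) and identity (4) of divided powers, since $\gamma_r(d)\gamma_s(d)=\binom{r+s}{r}\gamma_{r+s}(d)$ by Theorem \ref{t5} a). For $r+s>g$ both sides vanish: $\oCH^{r+s}_\et(J)=0$ in that range, or one can use the convention $W^{[i]}=0$ for $i>g$.

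The key steps, in order, are as follows.

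\begin{enumerate}
\item The divisor $d=W^{[1]}$ is a theta divisor, up to translation depending on the base point $c$. To place it correctly, decompose $d=d_0+d_1$ with $d_1\in\oCH^1_{\et,1}(J)=\Pic^0(J)$ (Corollary \ref{c6a}) and $d_0\in\oCH^1_{\et,0}(J)$. By definition (\S\ref{s11.2}), $\gamma_T(d)=\gamma_T(d_0)\gamma_T(d_1)$. Rationally the Poincaré formula still holds for $d$ itself, but the components with $s\neq 0$ must be tracked.
\item The cleaner alternative is to compare all Beauville components of $W^{[i]}$ and of $\gamma_i(d)$ one $s$ at a time. For $s\ne 0,1$ the groups $\oCH^i_{\et,s}(J)$ are uniquely divisible (Theorem \ref{t7} b)). Rational equality of these components therefore gives integral equality.
\item For $s=0$ the group is torsion-free, so rational equality again suffices.
\item For $s=1$ there is possible torsion. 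Here I would argue that the $s=1$ component of $\gamma_i(d)$ equals $\gamma_{i-1}(d_0)d_1$, and that this product lies in a divisible group, since $d_1$ is divisible.
\end{enumerate}

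The main obstacle is exactly this $s=1$ component, where $\oCH^i_{\et,1}(J)$ may contain torsion coming from $J(k)\{l\}$. For $W^{[i]}$, I would try to show its $s=1$ component is determined by the Albanese-type invariant $\pi^{2i-1}_J W^{[i]}$ via Proposition \ref{p13} and \eqref{eq21a}. Concretely, the torsion part of $\oCH^i_{\et,1}(J)$ is identified with a torsion group of correspondences from $h^1$. I would then compute it by functoriality, using the symmetric-product maps $C^{g-i}\to J$ and Theorem \ref{t7} d) for the homomorphisms $J^{g-i}\to J$ (sum map). The cycle $C^{(g-i)}$ is then the pushforward under the sum map of the exterior product of copies of $C$. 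This reduces the $s=1$ term to the $i=g-1$ case, which is $d$ itself, where it holds by definition. If the sum-map pushforward argument does not control the torsion cleanly, the fallback is to choose the base point $c$ such that $d$ is symmetric, killing $d_1$. That case is handled by the odd/even vanishing of Corollary \ref{c15}; the general case then follows by translation, noting that translation acts trivially modulo $\Pic^0$-type divisible terms.
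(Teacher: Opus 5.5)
Your argument fails at its first step. Poincaré's formula $W^{[i]}=d^i/i!$ holds in cohomology, not in $CH^i(J)\otimes\Q$. Since $\oCH^i_\et(J)\otimes\Q=CH^i(J)\otimes\Q$ and $\gamma_i(d)$ maps to $d^i/i!$ there, statement a) is actually false as stated, and no bookkeeping can prove it.

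The paper itself supplies a counterexample. For $g=2$, Proposition~\ref{p0} gives $d^2-2[0]=i_*(K_C-2[c])$, with Albanese class $K_C-2[c]\in J(k)$. For a general base point $c$ this has infinite order (e.g.\ $k=\C$). Hence $\gamma_2(d)\neq[0]=W^{[2]}$ and $W^{[1]}\cdot W^{[1]}\neq 2W^{[2]}$ in $\oCH^2_\et(J)$, so a) and b) both fail. For $g\ge 3$ and $i=g-1$, the formula would make $[C]$ and $(-1)_J^*[C]$ rationally algebraically equivalent, since $d$ and $(-1)_J^*d$ are. For a very general complex curve this is ruled out by Ceresa's theorem, whatever the base point.

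So your steps 2--4 cannot be carried out. The components with $s\ge 1$ of $W^{[i]}$ and of $\gamma_i(d)$ genuinely differ rationally: for $g=2$ the difference $\tfrac12 i_*(K_C-2[c])$ has non-zero $s=1$ component, detected by the Albanese map. Unique divisibility and torsion-freeness can only lift identities that already hold in $CH\otimes\Q$. The fallback does not help either:
\begin{itemize}
\item for $g\ge 3$ a base point making $d$ symmetric need not exist;
\item Corollary~\ref{c15} concerns Fourier transforms and says nothing about the components of $W^{[i]}$;
\item changing the base point alters $W^{[i]}$ by classes with $s\ge1$ that are rationally non-zero in general.
\end{itemize}

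The paper's proof has the shape of your opening paragraph and the same defect. It uses $W^{[i]}=\gamma^*_{g-i}([C])$, which is correct. It then invokes Beauville's Cor.~2 to Prop.~6, but that result computes $\gamma^*_{g-i}$ of $\gamma_{g-1}(d)$ (the $c$ of Proposition~\ref{p16}), not of $[C]$. It also asserts $n_J^*W^{[i]}=n^{2i}W^{[i]}$, which already fails for $n=-1$: for $i=1$ it says $d$ is symmetric, and for $i=g-1$ it says $[C]=(-1)_J^*[C]$.

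What your torsion-freeness argument does establish is the identity of $s=0$ components, $\pi^{2i}_J W^{[i]}=\gamma_i(\pi^2_J d)$ in $\oCH^i_{\et,0}(J)$. The reason is as follows.
\begin{itemize}
\item Rationally, $[C]$ and $\gamma_{g-1}(d)$ have the same $s=0$ component. By Corollary~\ref{p14} a), the Fourier transform identifies $\oCH^{g-1}_{\et,0}(J)\otimes\Q$ with $\NS(\hat J)\otimes\Q$, which injects into cohomology, where Poincaré's formula holds.
\item $[C]$ has no components with $s<0$.
\item The Pontryagin product respects the grading.
\end{itemize}
The full statement needs an extra hypothesis forcing the rational components of $[C]$ with $s>0$ to vanish.
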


\begin{proof} a) We first prove this after tensoring with $\Q$. Then $W^{[i]}=\gamma_{g-i}^*(c)$ where $c= [C]$ \cite[(1.2.1]{mo-po}, and the result follows from \cite[Cor. 2 to Prop. 6]{beauville} (the rational version of the second identity in Proposition \ref{p16}). But both terms belong to the torsion-free group $\oCH^i_{\et,0}(J)$: for 
$\gamma_i(d)$ this is because $d\in \oCH^1_{\et,0}(J)$, and for  $W^{[i]}$ this is because $n_A^*W^{[i]}=n^{2i}W^{[i]}$ for all $n\in \Z$, and $\oCH^i_{\et,0}(J)= \overline{B}^i_0(J)$ with obvious notation (which follows from \eqref{eq26}). Thus the formula holds integrally.

b) This follows from a) and the identity (4) in the introduction for divided powers.
\end{proof}

The following corollary is an étale integral analogue (modulo $2$-torsion) of J. Suh's formula for the Künneth projectors of $J$ \cite[Th. 4.2.3]{suh}:

\begin{cor}\label{q8} Consider the classes 
\begin{multline}\label{eqsuhalg}
P_i=(-1)^i\sum_{2a+b=2g-i \text{ and } b+2c=i} p_1^*W^{[a]}\cdot p_2^*W^{[c]}\\
\times \sum_{d+e+f=b}(-1)^{d+f} p_1^*W^{[d]}\cdot \mu^*W^{[e]}\cdot p_2^*W^{[f]}\in \Corr(J, J).
\end{multline}
Then the $\pi^i_A:= P_i\circ P_i$ form a system of DM projectors modulo $2$-torsion in $\oCorr_\et(J,J)$.
\end{cor}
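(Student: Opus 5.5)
The plan is to reduce Corollary \ref{q8} to Proposition \ref{p17}, in the same way Suh deduced his formula from Scholl's. We take the principal polarisation $\phi:J\iso\hat J$ given by the theta divisor $d=W^{[1]}$. After translating the base point by a theta characteristic we may assume $d$ symmetric; the resulting change of $d$ lies in $\Pic^0(J)=CH^1_{\et,1}(J)$ by Corollary \ref{c6a}, which has to be tracked, or avoided by working with $\oCH_{\et,0}$-components. We then express every term of \eqref{eqscholl} through the $W^{[i]}$ modulo $2$-torsion.

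\textbf{Step 1.} By Theorem \ref{t8} a), $\gamma_a(\pi_1^*d)=\pi_1^*\gamma_a(d)=p_1^*W^{[a]}$. The pullback is compatible with divided powers by Theorem \ref{t5} b), since $\pi_1$ is a homomorphism. In the same way $\gamma_c(\pi_2^*d)=p_2^*W^{[c]}$.

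\textbf{Step 2.} We identify $(1\times\phi)^*\ell$. By the theorem of the square, $(1\times\phi)^*\ell=\mu^*d-p_1^*d-p_2^*d$ in $CH^1(J\times J)$, with $d$ symmetric. Using property (3) of divided powers together with the sign rule $\gamma_n(-x)=(-1)^n\gamma_n(x)$ (property (2)), we expand
\[\gamma_b\bigl(\mu^*d-p_1^*d-p_2^*d\bigr)=\sum_{d'+e+f=b}(-1)^{d'+f}\gamma_{d'}(p_1^*d)\,\gamma_e(\mu^*d)\,\gamma_f(p_2^*d).\]
Theorem \ref{t5} b) applied to $\mu$ and the projections, followed by Theorem \ref{t8} a), turns each factor into $p_1^*W^{[d']}$, $\mu^*W^{[e]}$ or $p_2^*W^{[f]}$. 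Substituting into \eqref{eqscholl} gives $p_i=P_i$ in $\oCorr_\et(J,J)$.

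\textbf{Step 3.} Proposition \ref{p17} now shows that $P_i\circ P_i=p_i\circ p_i$ is a system of DM projectors modulo $2$-torsion.

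The main obstacle is the symmetry and normalisation issue in Step 2. Proposition \ref{p16} and Proposition \ref{p17} require $d$ symmetric, but $W^{[1]}$ defined via the base point $c$ is symmetric only up to a translate. The $\gamma_0$-components involved live in torsion-free groups, so I would first try the following. Replace $d$ by its component $d_0\in\oCH^1_{\et,0}(J)$; this component is symmetric and induces the same $\phi$. Then check that the $W^{[i]}$ in Theorem \ref{t8} already live in $\oCH^i_{\et,0}$ (this holds by the proof of Theorem \ref{t8}). The discrepancy coming from $\Pic^0$ then dies under $\gamma$ in the relevant components, and Corollary \ref{c7} controls the mixed terms.
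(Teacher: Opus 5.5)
Your Steps 1--3 are the paper's proof. The paper also identifies the inner $(d,e,f)$-sum in \eqref{eqsuhalg} with the expansion of $\gamma_b((1\times\phi)^*\ell)$ via Theorem~\ref{t8}~a), concludes $P_i=p_i$, and invokes Proposition~\ref{p17}. Your sign $(1\times\phi)^*\ell=\mu^*d-p_1^*d-p_2^*d$ is the opposite of the one written in the paper. Yours is the one that produces the factor $(-1)^{d+f}$; the paper's would only turn $p_i$ into $(-1)^ip_i$, which does not affect $p_i\circ p_i$.

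The trouble is your last paragraph. It raises a real issue, one the paper's proof also passes over in silence, and the fix you propose does not work.
\begin{itemize}
\item Proposition~\ref{p17} (through Proposition~\ref{p16}) needs $d=W^{[1]}$ to be symmetric. The proof of Theorem~\ref{t8}~a) uses $n_J^*W^{[i]}=n^{2i}W^{[i]}$, which for $i=1$ \emph{is} this symmetry. It holds iff $(2g-2)c\sim K_C$, and no such $c$ exists on a general curve of genus $\ge3$.
\item Translating $d$ is not allowed, because \eqref{eqsuhalg} is written with the $W^{[j]}$, not with the $\gamma_j(d_0)$.
\item Your workaround is circular. If $W^{[1]}=d_0+d_1$ with $d_1\neq0$ in $\oCH^1_{\et,1}(J)$, then $W^{[1]}\notin\oCH^1_{\et,0}(J)$. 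So the claim that ``the $W^{[i]}$ already lie in $\oCH^i_{\et,0}$'' fails for $i=1$.
\item The discrepancy does not die. For $g=2$, Proposition~\ref{p0} gives $(W^{[1]})^2-2W^{[2]}=i_*(K_C-2c)$. Its Albanese class $K_C-2c$ is not $2$-torsion for general $c$, so $\gamma_2(W^{[1]})\neq W^{[2]}$ in $\oCH^2_\et(J)$, and your Step~1 fails.
\end{itemize}

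The conclusion itself also fails without a hypothesis on $c$. Take $g=2$ and $\theta=W^{[1]}$. Then $P_1=p_1^*\theta^2-p_1^*\theta\cdot\mu^*\theta+p_1^*\theta\cdot p_2^*\theta$ acts on $CH^1(J)$ by $x\mapsto-\phi_\theta(\Alb(\theta\cdot x))$, where $\phi_\theta(y)=t_y^*\theta-\theta$.
\begin{itemize}
\item Since $\Alb(\theta\cdot\alpha)=i^*\alpha$ and $i^*\circ\phi_\theta=-1$, $P_1$ is the identity on $\Pic^0(J)$.
\item Since $(-1)^*\theta=t_{K_C-2c}^*\theta$, $P_1$ sends $\theta$ to $\theta-(-1)^*\theta$.
\end{itemize}
Hence $(P_1\circ P_1)_*\theta$ is twice the component of $\theta$ in $\oCH^1_{\et,1}(J)$. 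Any system of DM projectors must send $\theta$ to that component itself (Theorem~\ref{t7}~b)), and for general $c$ the two differ.

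So the statement needs an extra hypothesis on $(C,c)$: all $W^{[i]}$ must lie in $\oCH^i_{\et,0}(J)$, which in particular forces $(2g-2)c\sim K_C$. Under that hypothesis your Steps 1--3 are complete and your last paragraph is unnecessary. Without it, no bookkeeping of $\Pic^0$-components can rescue the argument.
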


\begin{proof} I claim that $P_i=p_i$, where $p_i$ is as in \eqref{eqscholl}. The argument is the same as in \cite{suh}: in \eqref{eqsuhalg}, the term indexed by $(d,e,f)$ is an expansion of the term $\gamma_b((1\times \phi)^*\ell)$ in \eqref{eqscholl} given in Proposition \ref{p17}, thanks to Theorem \ref{t8} a) and the identity $(1\times \phi)^*\ell=p_1^*d +p_2^*d - \mu^*d$. 
\end{proof}

\begin{rk} Another formula for $P_i$ in $\oCorr_\et(J,J)$ is
\begin{equation}\label{eqsuh2}
P_i=\sum_{r,s} (-1)^{g-r-s}\binom{r+s}{s+i-g}  p_1^*W^{[r]}\cdot\mu^*W^{[g-r-s]}\cdot   p_2^*W^{[s]}.
\end{equation}

This is proven by using Theorem \ref{t8} b) and the Chu-Vandermonde identity; details are left
 to the reader. 
 \end{rk}

\section{Proof of Theorem \ref{c1}} \label{s9}

Let $S$ be an essentially smooth $k$-scheme, where $k$ is now an arbitrary field.\footnote{$k$ could even be a Dedekind domain, see \cite[Rem. 1.1]{kun-ar}, but one would have to invert all primes $p$ that are not invertible in $k$. This somewhat reduces the interest of this generalisation when $k$ is a localised ring of integers in a number field.} We want to extend what comes before to abelian $S$-schemes $\sA$ in the style of \cite{dm}. This is not straightforward, as the torsion of $CH^g_\et(\sA\times_S \sA)$ is more complicated.  So we proceed step by step.

\subsection{An arithmetic computation}

\begin{nota}\label{n1} Let $A,B>0$ be two integers. We write $A\sim B$ if $A$ and $B$ have the same prime factors.
\end{nota}

\begin{lemma}\label{l2} For $i,c\ge 0$, let
\[N(i,c)= \prod_{a\le c}w_{i,i-a-1}\]
where $w_{i,j}$ is as in Definition \ref{eq16}. Then $N(i,c)\sim\inf(i,c+2)!$.
\end{lemma}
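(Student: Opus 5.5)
The plan is to compare the two sides prime by prime: $N(i,c)\sim \inf(i,c+2)!$ says exactly that a prime $l$ divides $N(i,c)$ if and only if $l\le \inf(i,c+2)$. So I would determine, for each factor $w_{i,i-a-1}$ with $0\le a\le c$, exactly which primes divide it, using Lemma \ref{l11}.

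First I would dispose of the degenerate factors. If $j:=i-a-1<0$, i.e. $a\ge i$, then $w_{i,j}=1$ by Definition \ref{eq16}. If $j=0$, i.e. $a=i-1$, then $w_{i,0}=1$ by Lemma \ref{l11} b). Hence only the indices $a$ with $0\le a\le \inf(c,i-2)$ contribute, and for these $j\ge 1$ and $i-j=a+1$.

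The key step is a clean criterion for $j\ge 1$: a prime $l$ divides $w_{i,j}$ if and only if $(l-1)\mid (i-j)$. I would take $s=1$ in Lemma \ref{l11}.
\begin{itemize}
\item For $l$ odd, part a) gives the forward implication $l\mid w_{i,j}\Rightarrow (l-1)\mid(i-j)$.
\item Part c) gives the converse, since its hypothesis $j\ge s=1$ holds.
\item For $l=2$ the condition $(l-1)\mid(i-j)$ is empty, so the criterion says $2\mid w_{i,j}$. This holds because $m^i-m^j=m^j(m^{i-j}-1)$ is always even when $j\ge1$, which is again part c).
\end{itemize}
The only point needing care is checking that the parity and $l=2$ conventions of Lemma \ref{l11} a) and c) are consistent with this uniform statement at $s=1$. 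This is mild bookkeeping.

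Putting these together, $l\mid N(i,c)$ if and only if some $d=a+1\in[1,\inf(c+1,i-1)]$ is divisible by $l-1$. Since $d=l-1$ is itself a multiple of $l-1$, such a $d$ exists exactly when $l-1\le \inf(c+1,i-1)$, that is, when $l\le \inf(c+2,i)$. The primes dividing $\inf(i,c+2)!$ are precisely the primes $\le \inf(i,c+2)$, which gives $N(i,c)\sim\inf(i,c+2)!$.

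Finally I would check the edge case $\inf(i,c+2)\le 1$. Then no index $a$ contributes, so $N(i,c)=1$, and $\inf(i,c+2)!=1$ as well. The relation $\sim$ still holds, with both sides having empty sets of prime factors.
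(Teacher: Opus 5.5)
Your proof is correct and matches the paper's argument: you discard the factors with $j=i-a-1\le 0$ (which equal $1$), use Lemma \ref{l11} to get $l\mid w_{i,j}\iff (l-1)\mid(i-j)$ for $i>j>0$, and conclude that $l\mid N(i,c)$ iff $l-1\le\inf(i-1,c+1)$, i.e.\ iff $l\le\inf(i,c+2)$. Your direct check for $l=2$ (via the evenness of $m^j(m^{i-j}-1)$) and your handling of the degenerate case $\inf(i,c+2)\le 1$ supply details the paper leaves implicit.
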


\begin{proof} Let $l$ be prime. By Lemma \ref{l11} and Definition \ref{eq16}, $w_{i,j}=1$ for $j\le 0$ and $l\mid w_{i,j}$ for $i>j>0$ $\iff$ $l-1\mid i-j$. Thus $l\mid N(i,c)$ $\iff$ $\exists\ a\le \inf(i-2,c): l-1\mid a+1$ $\iff$ $l-1\le \inf(i-1,c+1)$ $\iff$ $l\le \inf(i,c+2)$ $\iff$ $l\mid \inf(i,c+2)!$.
\end{proof}

\subsection{DM projectors}\label{s12.1}

\begin{lemma}\label{l19} For any $i,j\ge 0$, the map
\begin{equation}\label{eq24}
H^{j}_\cont(\sA,\hat{\Z}(i))\to H^0_\et(S,R^{j}\pi_* \hat{\Z}(i))
\end{equation}
 has cokernel  killed by 
 \[N(j):=\prod_{r=2}^{\cd(S)} w_{j,j-r+1}=N(j,\cd(S)-2)\sim \inf(j,\cd(S))!.\]
\end{lemma}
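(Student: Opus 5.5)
We will use the Leray spectral sequence
\[E_2^{a,b}=H^a_\et(S,R^b\pi_*\hat{\Z}(i))\Rightarrow H^{a+b}_\cont(\sA,\hat{\Z}(i)),\]
which for continuous cohomology is obtained by taking the inverse limit over $\Z/l^\nu$-coefficients; the Mittag-Leffler conditions hold because the sheaves involved are locally constant and constructible. The map \eqref{eq24} is the edge map $H^j\to E_2^{0,j}$, so its cokernel is filtered by the successive images of the differentials $d_r:E_r^{0,j}\to E_r^{r,j-r+1}$ for $r\ge 2$. Since $E_2^{a,b}=0$ for $a>\cd(S)$, only $2\le r\le \cd(S)$ contribute. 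It therefore suffices to show that the image of each $d_r$ is killed by $w_{j,j-r+1}$; multiplying over $r$ then kills the cokernel by $N(j)$. The final equivalence $N(j,\cd(S)-2)\sim\inf(j,\cd(S))!$ is Lemma \ref{l2} with $c=\cd(S)-2$.

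To bound each $d_r$ we use the action of the monoid $\Z-\{0\}$ through the endomorphisms $n_\sA^*$. These are endomorphisms of $\sA$ over $S$, so they act on the whole spectral sequence compatibly with the differentials. On $R^b\pi_*\hat{\Z}$, $n_\sA^*$ acts by $n^b$: by Proposition \ref{p5} and proper base change, $R^b\pi_*\hat{\Z}=\Lambda^b R^1\pi_*\hat{\Z}$, and $n^*$ is multiplication by $n$ on $R^1$. (Twisting by $(i)$ does not affect this.) Hence $n^*$ acts by $n^j$ on $E_r^{0,j}$ and by $n^{j-r+1}$ on $E_r^{r,j-r+1}$. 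Since $d_r$ commutes with $n^*$, for every $x\in E_r^{0,j}$ we get
\[n^jd_r(x)=d_r(n^*x)=n^*d_r(x)=n^{j-r+1}d_r(x),\]
so $(n^j-n^{j-r+1})d_r(x)=0$ for all $n\ne0$. Taking the gcd over $n$ shows that $\IM d_r$ is killed by $w_{j,j-r+1}$; note that $w_{j,j-r+1}=1$ when $j-r+1\le 0$. The cokernel of \eqref{eq24} has a filtration of length at most $\cd(S)-1$ whose graded pieces are quotients of $\IM d_r$, so it is killed by $\prod_{r=2}^{\cd(S)}w_{j,j-r+1}$.

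The main obstacle is making this rigorous for continuous $\hat{\Z}$-cohomology. We must check that the Leray spectral sequence exists for Jannsen's continuous cohomology and is functorial for $n_\sA^*$. The cleanest route is to work with $\Z/l^\nu$ coefficients for each $\nu$, where the argument above holds verbatim with the same uniform bound $N(j)$, and then pass to the limit. Since the bound is independent of $\nu$ and the relevant $\varprojlim^1$ terms are handled by Jannsen's exact sequence, a uniform bound on finite-level cokernels gives the bound for the limit. Here one has to check that the $\varprojlim^1$ contributions cannot enlarge the cokernel beyond exponent $N(j)$, which follows by factoring the cokernel through the finite-level ones. Finally, the factor $l=p$ is excluded throughout, as in the definition of $\hat{H}$.
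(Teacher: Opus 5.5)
Your proof is correct and follows the paper's argument in more detail: the weight argument with $n_\sA^*$ on the Leray spectral sequence kills $\IM d_r^{0,j}$ by $w_{j,j-r+1}$, the cokernel of the edge map is filtered by these images for $2\le r\le \cd(S)$, and Lemma \ref{l2} gives the $\sim$.

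Your last paragraph is not needed, and it has a flaw: bounding the finite-level cokernels does not directly bound the cokernel of the limit map, because a $\varprojlim^1$ of the kernels can also contribute. Instead, work directly with Jannsen's continuous cohomology. As a derived functor on inverse systems of sheaves, it has its own Leray spectral sequence with $E_2^{a,b}=H^a_\cont(S,(R^b\pi_*\Z/l^\nu(i))_\nu)$, and your weight argument applies to it verbatim.
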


\begin{proof} Consider the Leray spectral sequence for continuous étale cohomology. A weight argument using multiplications on $\sA$ shows that it degenerates up to torsion; more precisely,  the differential $d_r^{a,b}$ is killed by $w_{b,b-r+1}$ for all $r$. Applying this to $(a,b)=(0,j)$, we get the claim. The equivalence $\sim$ follows from Lemma \ref{l2}.
\end{proof}

\begin{defn}\label{d7}
Let $T$ be a (not necessarily additive) functor from the category of abelian $S$-schemes to an additive category $\sC$. There is a homomorphism of monoids
\[T_\sA:\End_S(\sA)\to \End_\sC(T(\sA)).\]
We write $\End_\sC(T(\sA))^c$ for the centraliser of $T_\sA(\Z 1_\sA)$ in $\End_\sC(T(\sA))$.
\end{defn}

Take for $T$ the composition of the motive functor and the composition \eqref{eq29} (over the base $S$). The resulting endomorphism ring is $H^{2g}_\cont(\sA\times_S\sA,\hat{\Z}(g))$, and we write $H^{2g}_\cont(\sA\times_S\sA,\hat{\Z}(g))^c$ for the corresponding subring.

\begin{prop} \label{p22} The restriction to $H^{2g}_\cont(\sA\times_S\sA,\hat{\Z}(g))^c$ of the edge homomorphism
\[H^{2g}_\cont(\sA\times_S\sA,\hat{\Z}(g))\to H^0_\et(S,R^{2g}(\pi\times_S\pi)_* \hat{\Z}(g))\]
of the Leray spectral sequence
\[E_2^{a,b}=H^a_\et(S,R^b(\pi\times_S\pi)_* \hat{\Z}(g))\Rightarrow H^{a+b}_\et(\sA\times_S \sA, \hat{\Z}(g))\] 
becomes bijective after inverting $N'=N(2g)$ (cf. Lemma \ref{l19}).
\end{prop}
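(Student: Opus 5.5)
The plan is to analyse the Leray spectral sequence for $\pi\times_S\pi:\sA\times_S\sA\to S$ with $\hat{\Z}(g)$ coefficients, filtered by the Leray filtration $F^a$, and to use the action of multiplications $(n_\sA\times n_\sA)$, or better the bimodule action of $n^*_\sA$ on one side. By relative Künneth and Poincaré duality, which hold integrally as in \S\ref{s3} (Proposition \ref{p5} plus smooth and proper base change), we have
\[R^b(\pi\times_S\pi)_*\hat{\Z}(g)\simeq\bigoplus_i \uHom(R^i\pi_*\hat{\Z},R^{i-2g+b}\pi_*\hat{\Z}).\]
On the summand indexed by $i$, composition with $n_\sA^*$ on the right acts by $n^i$ and on the left by $n^{i-2g+b}$. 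Thus, for an element commuting with all $n_\sA^*$, only the part living in degree-difference zero matters at the level of $E_2^{0,2g}$. The differentials and the graded pieces $E^{a,2g-a}$ for $a>0$ carry actions where the left and right weights differ by $a$, so the usual weight argument (as in Lemma \ref{l19}) applies.

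First, for surjectivity: the differential $d_r^{0,2g}$ leaving $E_r^{0,2g}$ is equivariant for the bimodule structure, going from weights (left, right) $=(j,j)$ on the source to weights differing by $r-1$ on the target. Hence it is killed by $w_{\cdot,\cdot}$-type integers. Arguing exactly as in Lemma \ref{l19}, the cokernel of the edge map is killed by $\prod_{r=2}^{\cd(S)}w_{2g,2g-r+1}=N(2g)$. I also need surjectivity onto the image of the centraliser. Here I expect that the invariant part $H^0_\et(S,\cdot)$ consists of $n$-commuting sections, and that a preimage can be averaged to commute after inverting $N'$. The plan is to correct a lift $\tilde y$ by elements of $F^1$ using a Hochschild $1$-cocycle argument (Proposition \ref{l7} a)), since $n\mapsto n^*\circ\tilde y-\tilde y\circ n^*$ is a cocycle with values in $F^1$.

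Second, for injectivity: let $x\in H^{2g}_\cont(\sA\times_S\sA,\hat{\Z}(g))^c$ map to $0$, so $x\in F^1$. Its image in $E_\infty^{a,2g-a}\subseteq$ a subquotient of $E_2^{a,2g-a}=H^a(S,\bigoplus_i\uHom(R^i,R^{i-a}))$ is fixed by the commutation condition $n^*x=xn^*$. On that piece this condition reads $(n^{i-a}-n^i)\bar x=0$, so $\bar x$ is killed by $w_{i,i-a}$. Descending along the filtration for $1\le a\le\cd(S)$ (higher $E_2^{a,\cdot}$ vanish beyond $\cd(S)$, possibly up to $2$ for continuous cohomology, which I will handle via $\hat{\Z}=\varprojlim$ and Jannsen's $\lim^1$ sequence), $x$ is killed by a product of such $w$'s. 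By Lemma \ref{l11} and Lemma \ref{l2} these have the same prime factors as $N(2g)$. Since $H^*$ here is a $\hat{\Z}$-module, torsion killed by an integer with primes dividing $N'$ disappears after inverting $N'$.

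The main obstacle is the surjectivity onto the centraliser. Specifically, I must check that the commutator cocycle lands in the right graded pieces and that the obstruction in $HH^1$ is killed by $w$'s whose primes divide $N'$, uniformly across all filtration steps. I would do this inductively on $a$, killing the component in $E_\infty^{a,2g-a}$ at each step using Proposition \ref{l7} a), with the level determined as above.
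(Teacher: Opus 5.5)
\textbf{The gap.} Your last step is where the argument breaks: the integers $w_{i,i-a}$ with $1\le a\le\cd(S)$ do \emph{not} have the same prime factors as $N(2g)$. By Lemma \ref{l2},
$\prod_{a=1}^{\cd(S)}w_{i,i-a}=N(i,\cd(S)-1)\sim\inf(i,\cd(S)+1)!$, whereas $N(2g)=N(2g,\cd(S)-2)\sim\inf(2g,\cd(S))!$. The top step $a=\cd(S)$ brings in the prime $\cd(S)+1$ whenever that is a prime $\le 2g$.

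This is not an artefact of the method. For $\cd(S)=1$ one has $N(2g)=1$ but $w_{i,i-1}=2$. Take $S=\Spec\mathbb{F}_q$ with $q$ odd, and an elliptic curve $E$ with a rational point of order $2$. Then $F^1H^2_\cont(E\times E,\Z_2(1))=H^1(\mathbb{F}_q,H^1(\bar E\times\bar E,\Z_2(1)))$ contains the K\"unneth summand of type $\Hom(H^2,H^1)$. This summand is isomorphic to $H^1(\mathbb{F}_q,T_2E)\cong E(\mathbb{F}_q)\{2\}\neq 0$. On it, $n_E^*$ acts by $n$ on the left and by $n^2$ on the right, so its $2$-torsion lies in the centraliser and in $\Ker\phi$.

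Hence the same bookkeeping only yields bijectivity after inverting the primes $\le\inf(2g,\cd(S)+1)$. This applies both to the kernel and to your inductive killing of the commutator cocycle on $\gr^a$. You should state and track that bound rather than claim $N(2g)$.

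\textbf{Comparison with the paper.} Your injectivity argument is exactly the paper's (``the above computation shows inductively that $\Ker\phi$ is killed by $N'$''), and the paper's proof contains the same over-optimistic count.

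The paper proves surjectivity in three steps:
\begin{itemize}
\item degeneration of the Leray spectral sequence after inverting $N'$;
\item a splitting $H^{2g}[1/N']=\bigoplus M(i,j)[1/N']$ into simultaneous eigenspaces;
\item the observation that only the $M(i,i)$, which lie in the centraliser, survive the edge map.
\end{itemize}
That splitting also implicitly needs the $w_{i,i-a}$ inverted, to separate the weights $(i,i)$ from $(i,i-a)$.

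Your surjectivity route is genuinely different. You lift via Lemma \ref{l19} applied to $\sA\times_S\sA$, then kill the cocycle $n\mapsto n_\sA^*\circ\tilde y-\tilde y\circ n_\sA^*$ step by step on $\gr^a$ using Proposition \ref{l7} a). It is somewhat more careful, since it never needs a global eigenspace decomposition. With the corrected integer, either version suffices for Theorem \ref{c1}, where $\inf(2g,\cd(S)+2)!$ is inverted anyway.
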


\begin{proof} By smooth and proper base change, the sheaves $R^b(\pi\times_S\pi)_*  \hat{\Z}(g)$ are locally constant; therefore a similar computation to that in \S \ref{s3} holds, namely
\[R^b(\pi\times_S\pi)_* \hat{\Z}(g)\simeq \bigoplus_i \uHom(R^i\pi_*\hat{\Z},R^{i+b-2g}\hat{\Z})\]
hence a decomposition $E_2^{a,b}= \bigoplus_i E_2^{a,b}(i)$, with
\[E_2^{a,b}(i)=H^a_\et(S,\uHom(R^i\pi_*\hat{\Z},R^{i+b-2g}\hat{\Z}).\]

The monoid $\End_S(\sA)$ acts on the spectral sequence. Let $m\in \Z$. The above shows that $m_\sA$ acts on $E_2^{a,b}(i)$ on the right by multiplication by $m^i$ and on the left by multiplication by $m^{i+b-2g}$.

Let $\phi: H^{2g}_\cont(\sA\times_S\sA,\hat{\Z}(g))^c\to H^0_\et(S,R^{2g}(\pi\times_S\pi)_* \hat{\Z}(g))$ be the morphism under consideration. The above computation shows inductively that $\Ker \phi$ is killed by $N'$.

To handle $\Coker \phi$, consider for all $(i,j)$ the subgroup 
\[M(i,j)=\{x\in H^{2g}_\cont(\sA\times_S\sA,\hat{\Z}(g))\mid \forall\ m\in\Z, xm_\sA^*=m^ix \text{ and } m_\sA^*x =m^jx\}.\]

From (the proof of) Lemma \ref{l19}, we deduce that  the Leray spectral sequence degenerates after inverting $N'$, and from this and the previous computation we deduce that
\[H^{2g}_\cont(\sA\times_S\sA,\hat{\Z}(g))[1/N']= \bigoplus_{i,j} M(i,j)[1/N'].\]

Since $M(i,j)$ maps to $0$ in $H^0_\et(S,R^{2g}(\pi\times_S\pi)_* \hat{\Z}(g))$ for $i\ne j$, it follows that $\phi$ becomes surjective after inverting $N'$.
\end{proof}

\begin{rk} Getting an explicit upper bound for the exponent of $\Coker \phi$ is left to the courageous reader.
\end{rk}

\begin{cor}\label{c19} For $i\in [0,2g]$, let $\Pi^i_\sA\in CH^g(\sA\times_S\sA)\otimes \Q$ be the $i$-th projector of Deninger-Murre. Then its cycle class $\cl(\Pi^i_\sA)\in H^{2g}_\cont(\sA\times_S\sA,\hat{\Z}(g))\otimes \Q$ is in the image of $H^{2g}_\cont(\sA\times_S\sA,\hat{\Z}(g))[1/N']\to H^{2g}_\cont(\sA\times_S\sA,\hat{\Z}(g))\otimes \Q$.
\end{cor}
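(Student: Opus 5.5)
The strategy is to reduce the statement to Proposition \ref{p22}: show that $\cl(\Pi^i_\sA)$ lies in the rationalised centraliser and has an integral image in $H^0_\et(S,R^{2g}(\pi\times_S\pi)_*\hat{\Z}(g))$. Then pull it back via the isomorphism $\phi[1/N']$.

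The first step is to check centrality. By \cite[Th. 3.1]{dm}, we have $m_\sA^*\circ\Pi^i_\sA=m^i\Pi^i_\sA$, and by \cite[Prop. 3.3]{dm} (or Lemma \ref{l16} rationally) also $\Pi^i_\sA\circ m_\sA^*=m^i\Pi^i_\sA$. Hence $\Pi^i_\sA$ commutes with $m_\sA^*$ for all $m$. The cycle class map is compatible with composition of correspondences, so $\cl(\Pi^i_\sA)$ lies in $H^{2g}_\cont(\sA\times_S\sA,\hat{\Z}(g))^c\otimes\Q$, which is the rationalisation of the centraliser. Indeed, $m_\sA^*$ acts integrally, so the centraliser condition commutes with $\otimes\Q$.

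The second step is to identify the image of $\cl(\Pi^i_\sA)$ under the edge map. Here I use the decomposition of $R^{2g}(\pi\times_S\pi)_*\hat{\Z}(g)$ from the proof of Proposition \ref{p22} into $\bigoplus_j\uHom(R^j\pi_*\hat{\Z},R^j\pi_*\hat{\Z})$. The image is a global section of this locally constant sheaf, and its germ at a geometric point is the class of the fibre of $\Pi^i_\sA$. By Definition \ref{d4} and \cite{dm}, this germ is the $i$-th Künneth projector on $H^*(\sA_{\bar s},\Q_l)$. Equivalently, it is the global section given by the identity of the summand $\uHom(R^i\pi_*\hat{\Z},R^i\pi_*\hat{\Z})$, tensored with $\Q$. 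This section is integral, since it is the projection onto a direct summand of the integral sheaf; here I use Proposition \ref{p5} fibrewise and the resulting integral Künneth formula. A rational global section of a locally constant $\hat{\Z}$-sheaf is determined by its stalk at one geometric point of each connected component. Therefore $\phi_\Q(\cl(\Pi^i_\sA))=\hat{\pi}^i\otimes 1$ with $\hat{\pi}^i$ integral.

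The third step concludes. By Proposition \ref{p22}, $\phi[1/N']$ is bijective on the centraliser, so there is a unique $y\in H^{2g}_\cont(\sA\times_S\sA,\hat{\Z}(g))^c[1/N']$ with $\phi(y)=\hat{\pi}^i$. Both $y\otimes\Q$ and $\cl(\Pi^i_\sA)$ lie in the rational centraliser and have the same image under $\phi_\Q$. Since $\phi[1/N']$ is injective, $\phi_\Q$ is injective too, by exactness of localisation. Hence $y\otimes\Q=\cl(\Pi^i_\sA)$, as required.

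The main obstacle is the second step, specifically justifying that the edge map commutes with specialisation to geometric fibres, so that the stalk of $\phi(\cl(\Pi^i_\sA))$ is the cycle class of the restricted correspondence. I would argue this by functoriality of the Leray spectral sequence under base change $\bar s\to S$, together with compatibility of $\cl$ with pull-back. I also need to take care that rationalisation is compatible with the continuous cohomology groups involved. This is fine because $\otimes\Q$ is exact and the centraliser is cut out by finitely many linear conditions per $m$.
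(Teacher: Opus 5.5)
Your proposal is correct and is essentially the paper's argument: $\cl(\Pi^i_\sA)$ lies in the rationalised centraliser, its image under the edge map is the integral Künneth projector (the paper checks this on the geometric generic fibre, you on a geometric point of each component, which is immaterial), and Proposition~\ref{p22} then gives the lift. The one step that both you and the paper pass over quickly is that commuting with all $m_\sA^*$ in $H^{2g}_\cont(\sA\times_S\sA,\hat{\Z}(g))\otimes\Q$ implies lying in $H^{2g}_\cont(\sA\times_S\sA,\hat{\Z}(g))^c\otimes\Q$: because there are infinitely many $m$, this is not a formal consequence of flatness of $\Q$, but it does follow from the decomposition into the $M(i,j)[1/N']$ in the proof of Proposition~\ref{p22}, since the off-diagonal components of such an element are then torsion.
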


\begin{proof} Indeed, $\cl(\Pi^i_\sA)\in H^{2g}_\cont(\sA\times_S\sA,\hat{\Z}(g))^c\otimes \Q$, and its image in \break$H^0_\cont(S,R^{2g}(\pi\times_S\pi)_* \hat{\Z}(g))\otimes \Q$ lies in $H^0_\cont(S,R^{2g}(\pi\times_S\pi)_* \hat{\Z}(g))\subset H^{2g}_\cont(A\times A,\hat{\Z}(g))$, where $A$ is the geometric generic fibre of $\sA$. The corollary therefore follows from the proposition.
\end{proof}

\begin{lemma}\label{l10} The ideal $CH^g_\et(\sA\times_S \sA)[1/p]_\tors$ of $CH^g_\et(\sA\times_S \sA)[1/p]$ is nilpotent.
\end{lemma}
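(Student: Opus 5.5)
I would prove that the torsion ideal $I:=CH^g_\et(\sA\times_S \sA)[1/p]_\tors$ is nilpotent by filtering it along the Leray spectral sequence for $\pi\times_S\pi$ and applying the product-vanishing argument of Corollary \ref{c2} on each graded piece. The analogue over a separably closed field is Corollary \ref{c2}: there $I^2=0$, because by Proposition \ref{p4} c) torsion classes are divisible and torsion, and products of such vanish. Over a general base $S$ we expect a finite filtration instead.

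\emph{Step 1: identify the torsion.} By Proposition \ref{p4a} a), applied to $X=\sA\times_S\sA$, the torsion of $H^{2g}_\et(X,\Z(g))[1/p]$ is the image of $H^{2g-1}_\et(X,(\Q/\Z)'(g))$, since the kernel of $\cl_l$ is divisible. So $I$ is a quotient of $H^{2g-1}_\et(X,(\Q/\Z)'(g))$. Composition of correspondences with a torsion class factors through the torsion bimodule, exactly as in Lemmas \ref{l4} and \ref{l5}, with $\cl_l$ of the other factor acting.

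\emph{Step 2: filter.} Let $F^a$ be the Leray filtration on $H^{2g-1}_\et(X,(\Q/\Z)'(g))$ coming from $S$. Its graded pieces are subquotients of
\[H^a_\et(S,R^{2g-1-a}(\pi\times_S\pi)_*(\Q/\Z)'(g)),\]
and this filtration has length at most $\cd(S)+1$. The relative correspondence product is computed through $p_{13*}(p_{12}^*\,\cdot\, p_{23}^*)$ over $S$. Since cup product is multiplicative for the Leray filtration, and $p_{13*}$ and the pullbacks respect it, we get that $F^a\circ F^b\subseteq F^{a+b}$. Hence $F^{\cd(S)+1}=0$, and it suffices to show that products are "strictly increasing" on the graded pieces. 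Concretely, I want the image of $\mathrm{gr}^a\times\mathrm{gr}^b$ in $\mathrm{gr}^{a+b}$ to vanish.

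\emph{Step 3: the key vanishing on graded pieces.} On the geometric fibres, Proposition \ref{p4} c) and Gersten's principle (as in the proof of Proposition \ref{pDM}) identify the sheaves
\[R^{2g-1-a}(\pi\times\pi)_*(\Q/\Z)'(g)\]
with divisible torsion sheaves, namely $R^{2g-1-a}(\pi\times\pi)_*\hat{\Z}\otimes\Q/\Z$ up to the $R^{2g-a}$ torsion-free term. The fibrewise product of two such classes then factors through a tensor product of two divisible torsion sheaves, which is $0$. The product $\mathrm{gr}^a\otimes\mathrm{gr}^b\to\mathrm{gr}^{a+b}$ is induced by the cup product $E_2^{a,\ast}\otimes E_2^{b,\ast}\to E_2^{a+b,\ast}$, which is given by the coefficient pairing of these sheaves, so it vanishes. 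This yields $I^{\,\cd(S)+2}=0$ (or a similar explicit bound), and in particular $I$ is nilpotent.

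The main obstacle is Step 3. The issue is making precise that the multiplicative structure on $E_\infty$ is induced by the sheaf pairing while the classes really live in $H^{2g-1}$ with torsion coefficients. A torsion class in $CH^g_\et$ lifts to $H^{2g-1}(\Q/\Z)$, whereas its partner in the composition is an integral class. So one actually pairs $(\Q/\Z)'$-coefficients with $\hat{\Z}$-coefficients via $\cl_l$, as in Lemma \ref{l4}. The vanishing then needs the $\hat\Z$-class to be torsion on the relevant graded piece, which holds because its image in $E_2^{b,\ast}$ for small $b$ is killed after $\otimes\Q$.

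If this bookkeeping becomes too delicate, I would fall back on a coarser argument that only needs nilpotence, not a sharp bound. Every torsion class has $E_2^{0,\ast}$-component $0$ by the fibrewise computation of Corollary \ref{c2}, so it lies in $F^1$. Multiplicativity of the filtration then gives $I^{\cd(S)+1}\subseteq F^{\cd(S)+1}=0$, which is enough.
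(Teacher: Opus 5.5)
There is a genuine gap, and it sits exactly where you locate the main obstacle. Your fallback asserts that every torsion class lies in $F^1$ ``by Corollary \ref{c2}''. But Corollary \ref{c2} only says that \emph{composites} of two torsion correspondences vanish over a separably closed field. A torsion class itself restricts to a torsion class on the geometric generic fibre, and such classes are nonzero in general (Proposition \ref{p4} c), \eqref{eq21a}). Already for $S=\Spec k$ with $k$ separably closed one has $F^1=0$, so your argument would give $I=0$, which is false.

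The main line of Step 3 is not established either. Composition does not pair two $(\Q/\Z)'$-coefficient classes. It pairs a $(\Q/\Z)'$-class $x'$ with an integral class $y$, through $\cl_l(y)$ as in Lemma \ref{l4}. Torsion of the integral partner on a graded piece does not by itself kill such a pairing: linking pairings $H^a(S,\Q_l/\Z_l)\times H^b(S,\Z_l)_\tors\to H^{a+b}(S,\Q_l/\Z_l)$ can be nonzero. So the vanishing of $\gr^a\times\gr^b\to\gr^{a+b}$ that you need is not proved.

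The missing idea is to apply Corollary \ref{c2} to a \emph{product} of two torsion classes, which is what the paper does. Use the Leray filtration $F^\bullet$ on $CH^g_\et(\sA\times_S\sA)[1/p]$ itself, coming from $H^a_\et(S,R^b(\pi\times_S\pi)_*\Z(g))$. It has length $\le\inf(\cd(S),2g)$ and satisfies $F^a\circ F^{a'}\subseteq F^{a+a'}$. For $x,y\in I$, the composite $x\circ y$ vanishes on the geometric generic fibre by Corollary \ref{c2}, so $I\circ I\subseteq F^1$. Hence $I^{2m}\subseteq F^m$, and $I$ is nilpotent.

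Alternatively, your own set-up works with a shift by one instead of a graded vanishing. For $y\in I$, the image of $\cl_l(y)$ in $H^0(S,R^{2g}(\pi\times_S\pi)_*\Z_l(g))$ is $0$, because this group is torsion-free (Proposition \ref{p5} and K\"unneth). So $\cl_l(y)$ lies in $F^1$ of the $l$-adic Leray filtration, and $x'\circ y$ lies one step deeper in the $(\Q/\Z)'$-Leray filtration than $x'$. Let $F^aI$ be the Bockstein image of $F^aH^{2g-1}_\et(\sA\times_S\sA,(\Q/\Z)'(g))$. You then get $F^aI\circ I\subseteq F^{a+1}I$, which again yields nilpotence.
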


\begin{proof} Consider the Leray spectral sequence
\begin{equation}\label{eq28}
H^a_\et(S,R^b(\pi\times_S\pi)_* \Z(g))\Rightarrow H^{a+b}_\et(\sA\times_S \sA,\Z(g))
\end{equation}
where $\pi:\sA\to S$ is the projection. It induces a filtration $F^aCH^g_\et(\sA\times_S \sA)[1/p]$ on $CH^g_\et(\sA\times_S \sA)[1/p]$ of length $\le \inf(\cd(S),2g)$, and it is standard that $F^a\circ F^{a'}\subseteq F^{a+a'}$. So it suffices to show that $CH^g_\et(\sA\times_S \sA)[1/p]_\tors\circ CH^g_\et(\sA\times_S \sA)[1/p]_\tors\subseteq F^1$. But this follows from Corollary \ref{c2}, applied over a separable closure of $k(S)$.
\end{proof}

\begin{proof}[Proof of Theorem \ref{c1} a)]
We mimick the proof of Theorem \ref{t4}, starting from the DM projectors of \cite{dm}.  
Corollary \ref{c19} alllows us to get to Step 1, and Lemma \ref{l10} then allows us to reach Step 2. Thus we get CK projectors after inverting $pN'$. For Step 3, we consider this time the Leray spectral sequence
\[H^a_\et(S,R^b(\pi\times_S\pi)_* \Q_l/\Z_l(g))\Rightarrow H^{a+b}_\et(\sA\times_S \sA,\Q_l/\Z_l(g)).\]

Still by smooth and proper base change, we get from Proposition \ref{p4} c) isomorphisms
\[(R^b(\pi\times_S\pi)_* \Z_l(g))\otimes \Q_l/\Z_l\iso R^b(\pi\times_S\pi)_* \Q_l/\Z_l(g).\]

If $(\hat{\pi}^i(\sA))_{i=0}^{2g}$ denote the K\"unneth projectors in $H^0(S,R^{2g}(\pi\times_S\pi)_* \hat{\Z}(g))[1/N']$ from Corollary \ref{c19}, then, by Lemma \ref{l6}, $\hat{\pi}^i(\sA)$ acts on the right on $R^b(\pi\times_S\pi)_* \hat{\Z}(g)$ as $\hat{\pi}^{i+b-2g}(\sA)$ acts on the left, and the same holds for their action on $H^a_\et(S,R^b (\pi\times_S\pi)_* (\Q/\Z)'(g))$ for all $a$. For $a+b=2g$, this group is $0$ for $a> \inf(\cd(S),2g)$.

We show the statement modulo $F^a CH^g_\et(\sA\times_S \sA)[1/N'p]_\tors:= F^a\cap CH^g_\et(\sA\times_S \sA)[1/N'p]_\tors$, by induction on $a$. Let $(\pi^i_\sA(a-1))_{i=0}^{2g}$ be a set of DM projectors modulo $F^{a-1}$, and $(\tilde \pi^i_\sA(a-1))$ a lift modulo $F^a$. As in Step 3 of the proof of Theorem \ref{t4} a), the obstruction for $\pi^i_\sA(a)$ is a $1$-cocycle $f$ with values in $\gr^a$, verifying
\[f(mn) = m^{i-a-1}f(n) + n^if(m).\]

(If $i-a-1\le 0$, there is no obstruction.) By Proposition \ref{l7}, $w_{i,i-a-1} f$ is a coboundary. Noting that
\[\prod_{i=0}^{2g} N(i,\inf(2g,\cd(S)))\sim \prod_{i=0}^{2g} \inf(i,\cd(S)+2)!\sim \inf(2g,\cd(S)+2)!,\]
we see that the primes dividing the integer $N'$ of Proposition \ref{p22} also divide this product.
This shows that the DM projectors exist after inverting $\inf(2g,\cd(S)+2)!$.

We reason in the same way for uniqueness. Let $(\tilde\pi^i)$ be another set of DM projectors; by Step 2, there exists $z\in CH^g_\et(\sA\times_S \sA)[1/p]_\tors$ such that 
\begin{equation}\label{eq30}
\tilde \pi^i=(1+z)\pi^i_\sA(1+z)^{-1}
\end{equation}
for all $i$. We show that $z\equiv 0\pmod{F^{a} CH^g_\et(\sA\times_S \sA)[1/N'p]_\tors}$ for all $a$, by induction on the latter. Assume this is proven modulo $F^{a-1}$, for $a>0$; in particular, $z^2\equiv 0\pmod{F^a}$. Let  $\bar z$ be the image of $z$ in $\gr^{a} CH^g_\et(\sA\times_S \sA)[1/N'p]_\tors$.   Reasoning as in the proof of Theorem \ref{t4} b), we have $z\pi^i\equiv \pi^{i-a}z\pmod{F^a}$, which yields the identity
\[(n^i-n^{i-a})\bar z\pi^i=0\ \forall\ n\]
by applying $n^*_\sA$ to \eqref{eq30} on the left. Since $w_{i,i-a}$ is invertible in $\Z[1/N'p]$, we get $\bar z\pi^i=0$ for all $i$, hence $\bar z=0$.
\end{proof}

\begin{rk}\label{r4} If $\cd(S)<2$, then $N'=1$ in Proposition \ref{p22} and we get CK projectors on the nose; if $\cd(S)=0$, they moreover have the properties of Theorem \ref{t4} by the same arguments. In the general case, by chasing denominators one can get $p$-integral operators $\Pi^i$ and an integer $M_1$ with the same prime divisors $l$ as $\inf(2g,\cd(S)+2)!$ such that $\Pi^i\circ \Pi^j= M_1\delta_{ij}\Pi^i$ and $M_1n^*\Pi^i = M_1n^i\Pi^i$ for all $n\in \Z$; the present method does not yield an explicit upper bound for $v_l(M_1)$, however.     \end{rk}

\begin{rk}\label{r7} Suppose that $\sA\simeq \sA'\times_S \sA''$ for abelian schemes $\sA',\sA''$ of relative dimensions $g',g''$. From DM projectors for $\sA'$ and $\sA''$; one gets DM projectors for $\sA$ by the usual Künneth formula as in the proof of Lemma \ref{p1}. Thus one may replace $g$ with $\sup(g',g')$ in $\inf(2g,\cd(S)+2)!$. When inverting this smaller constant, one may lose uniqueness, however.
\end{rk}

\subsection{Divided powers}

\begin{lemma}\label{l13} For any $i\ge 0$, the group $\pi^{2i}_\sA CH^i_\et(\sA)[1/M!p]$ is torsion-free.
\end{lemma}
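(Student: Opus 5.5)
Let $x\in \pi^{2i}_\sA CH^i_\et(\sA)[1/M!p]$ be torsion, so $x=\pi^{2i}_\sA x$. I will show $x=0$ by the same weight argument as in Lemma \ref{l8} a), c), now over $S$. The approach is to filter by the Leray spectral sequence
\[H^a_\et(S,R^b\pi_*\Z(i))\Rightarrow H^{a+b}_\et(\sA,\Z(i)),\]
as in Lemma \ref{l10}, and to prove $x\in F^a$ for every $a$ by induction on $a$. Since the filtration has length $\le \inf(\cd(S),2g)$, this gives $x=0$.

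Two kinds of weight compete on the graded pieces. On one side, $x=\pi^{2i}_\sA x$ and $n^*_\sA\circ\pi^{2i}_\sA=n^{2i}\pi^{2i}_\sA$ (the DM property from Theorem \ref{c1} a)) give $n^*_\sA x=n^{2i}x$ for all $n$. On the other side, the torsion of $CH^i_\et(\sA)[1/p]$ is described sheaf-theoretically. Using Proposition \ref{p4} c) together with smooth and proper base change (as in the proof of Proposition \ref{pDM}), we have
\[R^b\pi_*\Z(i)\{l\}\simeq R^{b-1}\pi_*\Z_l\otimes\Q_l/\Z_l(i)\quad (l\ne p).\]
Hence on the torsion part of $\gr^a$, which is a subquotient of $H^a_\et(S,R^{2i-a}\pi_*\Z(i))_\tors$, the operator $n^*_\sA$ acts by $n^{2i-a-1}$. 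Part of this graded piece comes from the non-torsion sheaf, through the divisible part of $H^{a-1}$ with coefficients in $R^{2i-a}\pi_*\Z(i)$, or through the torsion of $H^a$ of the torsion-free sheaf $R^{2i-a}\pi_*\Z_l$; there $n^*$ acts by $n^{2i-a}$. The cleanest way to handle this is to work with the short exact sequence $0\to \Z(i)\to\Q(i)\to\Q/\Z(i)\to 0$ and realise torsion classes as images of $H^{2i-1}_\et(\sA,(\Q/\Z)'(i))$. The Leray spectral sequence for $(\Q/\Z)'(i)$ then has $E_2^{a,b}=H^a(S,R^b\pi_*(\Q/\Z)'(i))$, on which $n^*$ acts by $n^b$ with $b=2i-1-a$.

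Concretely, I would lift the torsion $x$ to $\tilde x\in H^{2i-1}_\et(\sA,(\Q/\Z)'(i))$, project it by the (cohomological) action of $\pi^{2i}_\sA$, and compare. On $E^{a,2i-1-a}$ the action of $\pi^{2i}_\sA$ is through $\hat\pi^{2i}$ acting on $R^{2i-1-a}\pi_*$, by Lemma \ref{l6} applied fibrewise. This is zero unless $2i-1-a=2i$, which never happens for $a\ge0$. So $\pi^{2i}_\sA$ kills each graded piece up to the failure of the Leray spectral sequence to split. That failure is controlled by $w_{b,b-r+1}$ as in the proof of Lemma \ref{l19}, and these are invertible after inverting $M!$ by Lemma \ref{l2}. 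Carrying this out, I get $(n^{2i}-n^{2i-1-a})\bar x=0$ on $\gr^a$ for all $n$. Thus $w_{2i,2i-1-a}\bar x=0$, and this integer divides $N(2i,\cd(S))$, which has the same primes as $\inf(2i,\cd(S)+2)!$ and hence divides a power of $M!$. So $\bar x=0$, and the induction goes through.

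The main obstacle is the middle step: making precise that the torsion in each graded piece for $\Z(i)$-coefficients really carries weight $2i-1-a$. Otherwise one must also rule out the competing weight $2i-a$, coming from torsion in $H^a(S,R^{2i-a}\pi_*\Z_l)$. I would first try to handle that term by noting that its weight $2i-a$ differs from $2i$ by $a\ge1$. This again yields the relation $w_{2i,2i-a}\bar x=0$, with $w_{2i,2i-a}$ invertible in $\Z[1/M!p]$. Both cases therefore die after inverting $M!p$. Note that $a=0$ is harmless: over the geometric generic fibre, Lemma \ref{l8} a) already puts torsion in $\pi^{2i-1}$, not $\pi^{2i}$.
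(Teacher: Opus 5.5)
Your proof is correct in its $(\Q/\Z)'(i)$-coefficient form, and it starts as the paper does. You reduce to the image of $H^{2i-1}_\et(\sA,(\Q/\Z)'(i))$ under the Bockstein $\partial$, and you use the Leray spectral sequence $E_2^{a,b}=H^a_\et(S,R^b\pi_*(\Q/\Z)'(i))$. The routes diverge only in the finish.

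The paper concludes directly from a fact you note in passing. $\pi^{2i}_\sA$ acts on $R^b\pi_*(\Q/\Z)'(i)$ through the Künneth projector, so it acts by $0$ for $b\neq 2i$, i.e.\ on every $E_2^{a,2i-1-a}$. Your caveat ``up to the failure of the Leray spectral sequence to split'' is unnecessary. An idempotent compatible with a finite filtration that kills every graded piece sends $F^a$ into $F^{a+1}$, so it is nilpotent, hence $0$.

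You close instead with a weight comparison. The class $y=\pi^{2i}_\sA\tilde x$ satisfies $n^*_\sA y=n^{2i}y$, while $n^*_\sA$ acts by $n^{2i-1-a}$ on $E_2^{a,2i-1-a}$. So the leading graded component of $y$ is killed by $w_{2i,2i-1-a}$. Its prime factors are at most $\inf(2i,\cd(S)+2)\le M$ by Lemma \ref{l2}, since $a\le\cd(S)$ and $i\le g$. Induction on $a$ then gives $y=0$, hence $x=\pi^{2i}_\sA x=\partial y=0$.

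This is valid, but it costs more. It uses the multiplication-by-$n$ property of $\pi^{2i}_\sA$ and the arithmetic of Lemma \ref{l2}. The paper needs only the fibrewise Künneth property of $\pi^{2i}_\sA$, and no inversion beyond what is required to define $\pi^{2i}_\sA$.

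Your discussion of $\Z(i)$-coefficients and the ``competing weight $2i-a$'' is moot, including the loose appeal to Lemma \ref{l8} for $a=0$. After lifting to $(\Q/\Z)'(i)$-coefficients, each $E_2^{a,2i-1-a}$ carries the single weight $2i-1-a$, so that part can simply be dropped.
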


\begin{proof} The torsion subgroup of $CH^i_\et(\sA)$ is the image of $H^{2i-1}_\et(\sA,\Q/\Z(i))$; therefore it suffices to show that $\pi^{2i}_\sA H^{2i-1}_\et(\sA,\Q/\Z(i)))[1/M!p]=0$. This follows from the Leray spectral sequence
\[H^a_\et(S,R^b\pi_*\Q/\Z(i)))\Rightarrow H^{a+b}_\et(\sA,\Q/\Z(i)))\]
as $\pi^{2i}_\sA R^b\pi_*\Q/\Z(i)=0$ for $b\ne 2i$, by smooth and proper base change. 
\end{proof}

\begin{proof}[Proof of Theorem \ref{c1} b)] Since the bigraded sheaf $R^*\hat\Z(*)$ is lisse, its global sections are its invariants under $\pi_1(S,\bar \eta)$ where $\bar \eta$ is a geometric generic point of $S$; since this sheaf is torsion-free, so are its global sections. It follows that the augmentation ideal of $\bigoplus_{i\ge 0} H^0_\cont(S,R^{2i}\pi_*\hat\Z(i)))$ has divided powers. Consider now the composition
\[CH^i_\et(\sA)\otimes \hat\Z\by{\cl} H^{2i}_\cont(\sA,\hat\Z(i))\to H^0_\cont(S,R^{2i}\pi_*\hat\Z(i))).\]

Inverting $M!p$ and applying $\pi^{2i}_\sA$ yields a new composition
\begin{multline}\label{eq3}
\pi^{2i}_\sA CH^i_\et(\sA)[1/M!p]\otimes \hat\Z\by{\cl} \pi^{2i}_\sA H^{2i}_\cont(\sA,\hat\Z(i))[1/M!p]\\ 
\iso H^0_\cont(S,R^{2i}\pi_*\hat\Z(i)))[1/M!p]
\end{multline}
where the isomorphism follows from the degeneration of the Leray spectral sequence (proof of Lemma \ref{l19}) and the fact that $\pi^{2i}_\sA$ is a DM projector. Note that $\xi\in \pi^{2}_\sA CH^1_\et(\sA)[1/M!p]$ by hypothesis. Since $\cl(\xi)^i$ is divisible by $i!$ in the last term of \eqref{eq3}, $\xi^i$ is divisible by $i!$ in $\pi^{2i}_\sA CH^i_\et(\sA)[1/M!p]$ by Corollary \ref{c8}, and uniquely so by Lemma \ref{l13}. The same torsion-freeness then implies the divided power identities.
\end{proof}

\subsection{The Fourier transform} In the previous subsection, replace $\sA$ with $\sB=\sA\times_S \hat{\sA}$ and take $\pi^*_\sB=\pi^*_\sA\otimes_S \pi^*_{\hat{\sA}}$ as in Remark \ref{r7}. According to this remark, we also replace $\inf(4g,\cd(S)+2)$ with $\inf(2g,\cd(S)+2)=M$. Applying Theorem \ref{c1} b) to the normalised Poincaré bundle $\ell$, we define the Fourier transform
\begin{equation}\label{eq31}
\sF_\sA = \sum_{i\ge 0} \gamma_i(\ell)\in CH^*(\sB)[1/M!p].
\end{equation}

Thanks to Lemma \ref{l13}, Proposition \ref{p18} extends to this case, hence we get the desired properties of $\sF_\sA$. This concludes the proof of Theorem \ref{c1}.

\appendix

\section{The Picard and Albanese functors}

The base field $k$ is separably closed.

\begin{defn}\label{dA1} We let $\Abs$ denote the additive category of commutative $k$-group schemes whose objects are extensions of a constant, finitely generated group scheme by an abelian variety. If $P\in \Abs$, we write $P^0$ for its identity component and $\bar P$ for $P/P^0$. We say that $P$ is \emph{discrete} if $P^0=0$.\\
Let $P,Q\in \Abs$. We say that a homomorphism $P(k)\to Q(k)$ is \emph{representable} if it is induced by a morphism in $\Abs$. 
\end{defn}

The aim of this appendix is to prove Theorems \ref{t1a} and \ref{tA1} below.

\begin{thm}\label{t1a} There exists a naturally commutative diagram of additive functors 
\[\begin{CD}
\sM@>\underline{\Pic}>> \Abs\\
@VVV @VVV\\
\sM_\et@>\underline{\Pic}_\et>> \Abs[1/p]
\end{CD}\]
such that $\underline{\Pic}(h(X)) = \underline{\Pic}_\et(h_\et(X))=\Pic_{X/k}$ for any smooth projective $X$.
\end{thm}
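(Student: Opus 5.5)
We define $\underline{\Pic}$ on the category of effective Chow motives $\sM$ first, then show that it factors through $\sM_\et$ after inverting $p$.

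For $X$ smooth projective, set $\underline{\Pic}(h(X))=\Pic_{X/k}$. This is an object of $\Abs$: its identity component $\Pic^0_{X/k}$ is an abelian variety once we pass to the reduced scheme, which is allowed since $k$ is separably closed and we only care about $k$-points and representable maps. Its component group is $\NS(X)$, which is finitely generated. For a correspondence $\alpha\in CH^{\dim Y}(Y\times X)$, viewed as a morphism $h(Y)\to h(X)$, we let it act on $\Pic$ by $\alpha^*(L)=(p_Y)_*(\alpha\cdot p_X^*L)$. This is a homomorphism $\Pic(X)\to\Pic(Y)$, contravariant in the motive; we follow the paper's convention for which direction the functor $\Pic$ goes. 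It is compatible with composition of correspondences by the projection formula, and it sends the graph of a morphism to pullback.

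Representability of $\alpha^*$ is the classical divisorial-correspondence argument. Use the line bundle $(1\times\alpha)^*\mathcal{P}$ on $\Pic_X\times Y$, where $\mathcal{P}$ is a Poincaré bundle on $\Pic_X\times X$; more concretely, $\alpha$ acting on a family of line bundles parametrised by $\Pic_X$ produces a family on $Y$. This gives a morphism of group schemes $\Pic_X\to\Pic_Y$. Additivity in $\alpha$ and compatibility with idempotents then extend $\underline{\Pic}$ to the pseudo-abelian hull $\sM$, using that $\Abs$ is idempotent complete. That holds because it is stable under kernels of idempotents: kernels of endomorphisms are extensions of finitely generated groups by abelian varieties, up to finite group schemes. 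Here we need care, but we can first work with $k$-points and representability.

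The main step is descending to $\sM_\et$, whose Hom groups are $CH^{\dim Y}_\et(Y\times X)[1/p]$. We must show that $\alpha\mapsto\alpha^*$ on $\Pic(X)[1/p]$ extends to étale correspondences.

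We plan to avoid the Poincaré bundle and instead use $\Pic(X)=CH^1_\et(X)=H^2_\et(X,\Z(1))$; this identification holds by Hilbert 90, since $\Z(1)=\G_m[-1]$ étale-locally. The action $\alpha^*(L)=(p_Y)_*(\alpha\cdot p_X^*L)$ then makes sense in étale motivic cohomology via the six-functor formalism mentioned on page \pageref{page}. It is compatible with composition for the same reasons composition in $\sM_\et$ is associative, and it agrees with the Chow action through $\alpha^*:\sM\to\sM_\et$, since cycle maps commute with products and pushforward. So we obtain a homomorphism $\Pic(X)[1/p]\to\Pic(Y)[1/p]$ on $k$-points, and the commutativity of the square follows.

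The obstacle is representability of the action of an étale correspondence $\alpha$, which need not be algebraic. We will use that $CH^{\dim Y}(Y\times X)[1/p]\to CH^{\dim Y}_\et(Y\times X)[1/p]$ has torsion cokernel, by \cite[Th. 2.6 c)]{cycle-etale}. Given $\alpha$, choose $N$ prime to $p$, or a power of $p$ after the inversion, such that $N\alpha$ is algebraic. Then $N\alpha^*$ is representable, and $\alpha^*$ is a homomorphism whose $N$-multiple is representable.

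Pass to $\Abs[1/p]$ and restrict to identity components. The induced map $\Pic^0_X(k)\to\Pic^0_Y(k)$ is a group homomorphism. It is compatible with torsion, which is detected by $H^1(-,\mu_n)$ and preserved by the étale action. Also, $N$ times it is a morphism of abelian varieties. A homomorphism $u$ of abelian varieties' $k$-points such that $Nu=v$ is algebraic and $u$ respects Tate modules comes from $v/N$ in $\Hom(A,B)[1/p]$ exactly when $v$ kills $A[N]$. We will verify this kernel condition using the compatibility of $\alpha^*$ on $N$-torsion through the $l$-adic realisation $\hat{R}$ and Proposition \ref{p5}. The $l$-adic action on $H^1$ is integral, so $v=N\cdot(\text{integral Tate map})$, and Tate-module maps arising from algebraic $v/N$ are homomorphisms.

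On component groups the map is just a group homomorphism of finitely generated groups, so it is automatically representable.
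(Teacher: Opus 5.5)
Your étale step has the same skeleton as the paper's proof. You extend the action of correspondences to $\Pic(-)[1/p]=H^2_\et(-,\Z(1))[1/p]$, choose $N$ prime to $p$ with $N\alpha$ algebraic, and divide the representable map $(N\alpha)^*$ by $N$.

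Where you genuinely differ is the construction of $\underline{\Pic}$ on $\sM$. You represent $\alpha^*$ by letting $\alpha$ act on a universal family of line bundles. To make this honest you need three things you do not write out: intersection theory on $T\times Y\times X$ for a smooth parameter scheme $T$, compatibility with base change in $T$, and a universal bundle on a smooth (reduced) $\Pic_X$. The paper avoids all of this. By Friedlander--Voevodsky, $CH^{d_X}(X\times Y)$ is generated by finite correspondences. For an integral one $Z\subset X\times Y$, the action factors as $q^*$ followed by the norm $p_*:\Pic(Z)\to\Pic(X)$ of the finite projection $p:Z\to X$. Both are visibly morphisms of Picard functors compatible with base change, so Yoneda gives representability on all of $\Pic_{X/k}$ at once, components included and without inverting $p$. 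Your route is the more classical one and does not need the Friedlander--Voevodsky generation result, at the price of those verifications.

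Several steps in your étale argument should be tightened.

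\emph{Division by $N$.} This does not need $\hat{R}$, Tate modules or Proposition \ref{p5}. Since $u$ is additive, $v=Nu$ kills $A[N](k)$, hence the étale group scheme $A[N]$. So $v=Nw$ with $w\in\Hom(A,B)$, and $u=w$ because $A(k)$ is $N$-divisible (Lemma \ref{lA2}).

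\emph{Identity components.} Before restricting to them, you must know that an étale $\alpha$ maps $\Pic^0_X(k)[1/p]$ into $\Pic^0_Y(k)[1/p]$. Write $x=Ny$ and apply $(N\alpha)^*$, or use maximal divisibility as in Lemma \ref{lA1}.

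\emph{Component groups.} Your last sentence is true but is not an argument. Representability of the map $\bar P\to\bar Q$ says nothing about how $\phi$ glues with its restriction to $P^0$. The paper splits $P\simeq P^0\oplus\bar P$ in $\Abs[1/p]$ (Lemma \ref{lA4}) and chases a diagram (Lemma \ref{lA3}). Alternatively, since $k$ is separably closed, $P$ is the disjoint union of translates $x_c+P^0$ with $x_c\in P(k)$. Setting $f(y)=\phi(x_c)+f^0(y-x_c)$ on $x_c+P^0$ gives a morphism with $f(k)=\phi$, and it is a homomorphism because $k$-points are dense in the smooth scheme $P\times P$.

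\emph{Idempotents.} For $\Abs$ you only need idempotent completeness, which is immediate: $\ker e=\IM(1-e)$ is a direct factor of $P$. Your remark about kernels of arbitrary endomorphisms is beside the point.
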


The functor $X\mapsto \Pic(X)=H^2_\et(X,\Z(1))$ from smooth projective varieties to abelian groups extends to a functor on $\sM$, and even on $\sM_\et$ when we invert $p$; we want to show that these functors induce the desired functors. For this, we need a few lemmas:

\begin{lemma}\label{lA1}
The subfunctor $X\mapsto \Pic^0(X)$ of $\Pic$ extends to a functor on $\sM$, and even on $\sM_\et$ after inverting $p$.
\end{lemma}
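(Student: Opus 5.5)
We have to show that a correspondence $\gamma\in CH^{\dim Y}(Y\times X)$, or $\gamma\in CH^{\dim Y}_\et(Y\times X)[1/p]$ in the étale case, sends $\Pic^0(X)$ into $\Pic^0(Y)$ under its action on $\Pic$. We use the action that extends $X\mapsto \Pic(X)$ to motives, namely $\gamma^*(\xi)=(p_Y)_*(\gamma\cdot p_X^*\xi)$. Functoriality is then automatic: composition of correspondences is compatible with this action, as already recorded for $\Pic$. So only stability of the subgroup has to be proved.

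\textbf{Case of $\sM$.} Let $\xi\in\Pic^0(X)$. Then $\xi$ is algebraically equivalent to $0$: there is a connected pointed curve (or a connected parameter variety) $T$ and a family $\eta\in\Pic(T\times X)$ whose fibres at $t_0$ and $t_1$ are $0$ and $\xi$. Apply $\gamma$ in the family, i.e.\ use $\gamma\times 1_T$ acting on $\Pic(T\times X)\to\Pic(T\times Y)$. This commutes with restriction to fibres by the projection formula and base change. Hence $\gamma^*\xi$ is algebraically equivalent to $\gamma^*0=0$, so it lies in $\Pic^0(Y)$, since $k$ is separably closed and $\Pic^0$ consists of the algebraically trivial classes. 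An equivalent and more direct formulation: $\Pic^0(X)$ is the subgroup of $l$-divisible elements modulo torsion of $\NS$. Equivalently, $\Pic^0(X)$ is exactly the kernel of the $l$-adic cycle class map into $H^2_l(X,1)$ modulo the finite torsion of $\NS(X)$.

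\textbf{Case of $\sM_\et$.} Here the deformation argument is unavailable because étale cycles have no geometric representatives. So we use the cohomological characterisation: after inverting $p$, $\Pic^0(X)[1/p]$ is the maximal divisible subgroup of $\Pic(X)[1/p]=H^2_\et(X,\Z(1))[1/p]$. Indeed $\Pic^0(X)(k)$ is divisible, being the points of an abelian variety over a separably closed field, up to $p$. The quotient $\NS(X)[1/p]$ is finitely generated, so it has no nonzero divisible subgroup. The action of $\gamma$ on $H^2_\et(-,\Z(1))[1/p]$ is a group homomorphism, and homomorphisms send divisible subgroups into divisible subgroups. Therefore $\gamma^*$ preserves the maximal divisible subgroups, which settles the étale case. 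The same argument reproves the case of $\sM$, so the compatibility of the two extensions under $\alpha^*$ is clear.

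The main obstacle is the identification of $\Pic^0(X)[1/p]$ with the maximal divisible subgroup of $\Pic(X)[1/p]$. This requires the divisibility of $\Pic^0_{X/k}(k)$ prime to $p$ when $k$ is only separably closed. One must also check that $\Pic^0_{X/k}$ is reduced up to $p$ issues, i.e.\ to treat $(\Pic^0_{X/k})_{\mathrm{red}}$. Multiplication by $n$ prime to $p$ is étale and surjective, so it is surjective on $k$-points for separably closed $k$; this gives the divisibility. Finite generation of $\NS$ (theorem of the base) gives the rest.
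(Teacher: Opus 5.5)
Your proof is correct and agrees with the paper's in substance; the étale step is slightly more direct. For $\sM$, your deformation argument is exactly the verification that algebraic equivalence is an adequate relation, which is all the paper invokes. For $\sM_\et$, the paper first notes that $n\alpha$ is algebraic for some $n>0$, and then uses that $\Pic^0(Y)[1/p]$ is the maximal divisible subgroup of $\Pic(Y)[1/p]$. You skip the reduction to Chow correspondences. You argue directly that any homomorphism carries the divisible group $\Pic^0(X)[1/p]$ into the maximal divisible subgroup of $\Pic(Y)[1/p]$. That is legitimate and a little more economical: once maximal divisibility is known, the algebraicity of $n\alpha$ plays no role. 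You also sketch why maximal divisibility holds (prime-to-$p$ divisibility of $\Pic^0$ and finite generation of $\NS$), which the paper takes for granted.

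Two small corrections. First, for $\gamma\in CH^{\dim Y}(Y\times X)$, your formula $(p_Y)_*(\gamma\cdot p_X^*\xi)$ lands in $CH^{\dim Y+1-\dim X}(Y)$, not in $\Pic(Y)$. The codimension-preserving action of such a $\gamma$, a morphism $h(Y)\to h(X)$ in the paper's convention, is $\eta\mapsto (p_X)_*(\gamma\cdot p_Y^*\eta)$ from $\Pic(Y)$ to $\Pic(X)$. This is bookkeeping and does not affect the argument. Second, your remark that the divisibility argument ``reproves the case of $\sM$'' holds only after inverting $p$. Over an imperfect separably closed field, $\Pic^0(X)$ need not be $p$-divisible (e.g.\ $E(k)\neq pE(k)$ for an elliptic curve $E$), so for $\sM$ itself you really need the algebraic-equivalence argument you gave. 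Your side remarks describing $\Pic^0(X)$ through the kernel of $\cl_l$ are imprecise, but they are not used.
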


\begin{proof} If $\alpha:X\to Y$ is a Chow correspondence, then $\alpha(\Pic^0(X))\subseteq \Pic^0(Y)$ because algebraic equivalence is an adequate equivalence relation. If $\alpha$ is now an étale Chow correspondence, there is an integer $n>0$ such that $n\alpha$ is algebraic. Since $\Pic^0(Y)$ is the maximal divisible subgroup of $\Pic(Y)$, we also have $\alpha(\Pic^0(X)[1/p])\subseteq \Pic^0(Y)[1/p]$.
\end{proof}

\begin{lemma} \label{lA5} The category $\Abs$ is pseudo-abelian.
\end{lemma}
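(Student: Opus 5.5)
Pseudo-abelian means every idempotent endomorphism $e$ of an object $P\in\Abs$ has a kernel in $\Abs$ (equivalently an image, namely the kernel of $1-e$), and then $P\simeq \Ker e\oplus\Ker(1-e)$. I will first show that $\Abs$ is a full subcategory of the abelian category of commutative group schemes of finite type over $k$ (fppf sheaves). There kernels, images and direct sums of idempotents always exist, and $P\simeq \Ker e\oplus \Ker(1-e)$ holds automatically, with $\Ker(1-e)=\IM e$. So it only remains to check that $\Ker e$ and $\Ker(1-e)$ lie in $\Abs$, i.e. that $\Abs$ is stable under direct summands.

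Let $Q$ be a direct summand of $P$, say $P=Q\oplus Q'$. I will take identity components: $P^0=Q^0\times Q'^0$. The identity component of a product is the product of identity components, so $Q^0$ is a direct factor of the abelian variety $P^0$. A direct factor of an abelian variety is a connected, smooth, proper group scheme, hence an abelian variety. (Smoothness holds because $Q\times Q'\simeq P$ is smooth; alternatively, $Q^0$ is the image of the idempotent $e|_{P^0}$, and the image of an abelian variety under a homomorphism is an abelian variety.)

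Then $\bar Q=Q/Q^0$ is a direct summand of $\bar P=P/P^0$. Since $\bar P$ is constant and finitely generated, $e$ induces an idempotent of this constant group. Over the separably closed field $k$, the category of constant finitely generated étale group schemes is equivalent to the category of finitely generated abelian groups, via $k$-points; this uses that $\bar P$ is étale and $k$ is separably closed. A direct summand of a finitely generated abelian group is again finitely generated, so $\bar Q$ is constant and finitely generated. Hence $Q$ is an extension of a constant finitely generated group scheme by an abelian variety, i.e. $Q\in\Abs$.

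The main subtle point is the first step. One must make sure the summand is taken in a category where it makes sense, since $\Abs$ contains non-finite-type objects such as $\Z$. So I would work in commutative group schemes locally of finite type over $k$, which still form an abelian category. I must also check that the exact sequence $0\to Q^0\to Q\to \bar Q\to 0$ is compatible with the splitting: this holds because formation of $P^0$ and $P/P^0$ is functorial and additive. The remaining steps are routine.
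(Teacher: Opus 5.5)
Your proof is correct and follows the same route as the paper. The paper's one-line argument is that an idempotent $e$ has a kernel, hence an image $\Ker(1-e)$; you additionally check what it leaves implicit, namely that these kernels stay in $\Abs$ because $\Abs$ is closed under direct factors (the identity component is an abelian variety, and the component group is constant and finitely generated). There is one cosmetic slip: if $Q=\Ker e$, then $Q^0$ is the image of $(1-e)|_{P^0}$, not of $e|_{P^0}$.
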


\begin{proof} Indeed, any idempotent endomorphism $e$ of $\Abs$ has a kernel, hence an image (= kernel of $1-e$).
\end{proof}

\begin{lemma}\label{lA2} Let $A,B$ be two abelian varieties, and let $\phi:A(k)\to B(k)$ be a homomorphism. If there exists an integer $n>0$ prime to $p$ such that $n\phi$ is representable, then so is $\phi$.
\end{lemma}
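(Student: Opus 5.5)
We plan to show that $\phi$ is itself induced by a homomorphism of abelian varieties, using that $n\phi=\psi(k)$ for some homomorphism $\psi:A\to B$. The idea is that $\psi$ must kill the finite étale group scheme ${}_nA$, because $n$ is prime to $p$. Hence $\psi$ factors through multiplication by $n$, and the resulting factor will be the representative of $\phi$.

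First we check that $\psi$ vanishes on ${}_nA(k)$. For $a\in {}_nA(k)$ we have $\psi(a)=n\phi(a)=\phi(na)=0$. Since $n$ is prime to $p$ and $k$ is separably closed, the group scheme ${}_nA$ is finite étale and constant, with $k$-points ${}_nA(k)\simeq(\Z/n)^{2g}$. A homomorphism of group schemes that vanishes on all $k$-points of a reduced constant group scheme vanishes on the group scheme itself. Therefore $\psi|_{{}_nA}=0$.

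Next we factor. The isogeny $n_A:A\to A$ is étale because $n$ is prime to $p$, and it identifies $A$ with the quotient $A/{}_nA$. By the universal property of quotients of abelian varieties by finite subgroup schemes (Mumford, \S 7, Thm. 4), $\psi$ factors uniquely as $\psi=\chi\circ n_A$ for a homomorphism $\chi:A\to B$. Then for every $a\in A(k)$ we get $n\chi(a)=\chi(na)=\psi(a)=n\phi(a)$. So $n(\chi(k)-\phi)=0$.

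It remains to show that $\chi(k)=\phi$. The difference $\delta=\chi(k)-\phi$ takes values in ${}_nB(k)$. We also use divisibility of $A(k)$: since $k$ is separably closed and $n_A$ is étale and surjective, $n_A:A(k)\to A(k)$ is surjective. Given $a\in A(k)$, write $a=nb$. Then $\delta(a)=\delta(nb)=n\delta(b)=0$. Hence $\phi=\chi(k)$ is representable.

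The main obstacle is the factorization step. It needs the quotient theory together with the fact that ${}_nA$ is étale; this is exactly where the hypothesis that $n$ is prime to $p$ enters. For $n=p$ the kernel is non-reduced, so vanishing on $k$-points would not suffice.
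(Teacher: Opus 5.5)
Your proof is correct and is essentially the paper's argument written out elementwise. The paper runs a diagram chase on the sequences obtained by applying $\Hom(-,B)$, both in $\Abs$ and on $k$-points, to $0\to {}_nA\to A\xrightarrow{n} A\to 0$: your vanishing step is the isomorphism $\Abs({}_nA,B)\iso\Hom({}_nA,B(k))$, your factorization is exactness of the top row, and your last step is injectivity of multiplication by $n$ on $\Hom(A(k),B(k))$, which comes from $n$-divisibility of $A(k)$. One small correction to your closing remark: the hypothesis $p\nmid n$ is really used in the vanishing step, through étaleness of ${}_nA$, because factoring a homomorphism that kills ${}_nA$ scheme-theoretically through $n_A$ works for any $n$.
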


\begin{proof} Since $B(k)$ is Zariski dense in $B$, the homomorphism $\Abs(A,B)\to \Hom(A(k),B(k))$ is injective. The conclusion follows from a chase in the commutative diagram of exact sequences
\[\begin{CD}
0&\to& \Abs(A,B)@>n>> \Abs(A,B)@>>> \Abs({}_n A,B)\\
&& @VVV @VVV @V\wr VV\\
0&\to& \Hom(A(k),B(k))@>n>> \Hom(A(k),B(k))@>>> \Hom({}_n A,B(k))
\end{CD}\]
where the last vertical map is an isomorphism. (We used that $A$ and $A(k)$ are $n$-divisible.)
\end{proof}

\begin{lemma}\label{lA4} For any $P\in \Abs$, the inclusion $P^0\inj P$ has a retraction in $\Abs[1/p]$. 
\end{lemma}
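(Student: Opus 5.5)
The plan is to split off the discrete part of $P$ up to $p$-power torsion and then lift a section using divisibility of $P^0(k)$. We have the exact sequence $0\to P^0\to P\to \bar P\to 0$ in $\Abs$, where $P^0$ is an abelian variety and $\bar P$ is a constant finitely generated group scheme. Write $\bar P(k)\simeq \Z^r\oplus F$ with $F$ finite. Because $k$ is separably closed, the sequence on $k$-points $0\to P^0(k)\to P(k)\to \bar P(k)\to 0$ is exact: $P\to\bar P$ is smooth, so the fibres over $k$-points have $k$-points. I would first construct a retraction of $P^0(k)\inj P(k)$ after inverting $p$, and then check that it is representable, i.e. comes from a morphism in $\Abs[1/p]$.

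\emph{Free part.} Lift a basis of $\Z^r$ to points $x_1,\dots,x_r\in P(k)$. This gives a homomorphism of constant group schemes $\Z^r\to P$, and hence a splitting over the free part. \emph{Torsion part.} Let $t$ be a generator of a cyclic summand $\Z/m$ of $F$, and let $x\in P(k)$ be a lift of $t$. Then $mx\in P^0(k)$. Since $P^0(k)$ is divisible, choose $y\in P^0(k)$ with $my=mx$. The point $x-y$ is then killed by $m$ and still lifts $t$. This gives a splitting $\bar P\to P$ as constant group schemes, because morphisms out of a constant group scheme are determined by $k$-points. No inversion of $p$ is needed for this, since $P^0(k)$ is divisible even by $p$ ($k$-points of an abelian variety over a separably closed field). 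However, the resulting section $s:\bar P\to P$ must be a morphism of group schemes, and here $p$ may enter. Over a separably closed field, $P$ need not be the product $P^0\times \bar P$ as schemes in a way that is étale-compatible. The lift $x$ gives a $k$-point of the component over $t$, and the map $\Z/m\to P$ defined by $x-y$ is a genuine homomorphism of group schemes since $\Z/m$ is constant. So $s$ exists in $\Abs$ itself.

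The retraction is then $r=1-s\circ q:P\to P^0$, where $q:P\to\bar P$. This is a morphism of commutative group schemes, and its image lies in $\ker q=P^0$. The step I expect to be the main obstacle is the identification $P^0=\ker q$ as group schemes when $k$ is imperfect. There $P^0$ may be non-smooth or non-reduced in an extension, and the claim "$\ker q$ is the abelian variety $P^0$" could fail up to $p$-primary infinitesimal phenomena. This is why the statement only asserts the retraction in $\Abs[1/p]$. To handle it, I would compare $\ker q$ with $P^0$. Their quotient is a finite group scheme of $p$-power order, so after inverting $p$ the morphism $r$ factors through $P^0$. For this I would use Lemma \ref{lA2}: some $p^e r$ is representable into $P^0$, hence $r$ is representable in $\Abs[1/p]$.
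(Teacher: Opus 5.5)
Your construction is the hands-on version of the paper's proof. The paper shows $\Ext^1(\bar P,P^0)=0$ after inverting $p$ by reducing to two cases: $\bar P=\Z$, where $\Ext^1=H^1_\et(k,P^0)=0$ (your lifting of free generators), and $\bar P=\Z/n$ with $p\nmid n$, where $\Ext^1=P^0(k)/n$ (your correction of $x$ by $y$). But you misplace the role of $p$, and one step is false. Over a separably closed \emph{imperfect} field, $P^0(k)$ need not be $p$-divisible. For instance, let $E$ be an elliptic curve over $\mathbf{F}_p$ with $p$ odd, and $k=\mathbf{F}_p(t)^{\mathrm{sep}}$. Since $[p]=V\circ F$ with $F,V$ defined over $\mathbf{F}_p$, we get $pE(k)\subseteq E(k^p)$. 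Yet $E$ has a $k$-point $a$ with $x$-coordinate $t\notin k^p$. Pushing out $0\to\Z\xrightarrow{p}\Z\to\Z/p\to 0$ along $1\mapsto a$ gives $P\in\Abs$ with $P^0=E$ and $\bar P=\Z/p$. This $P$ has no section of $P\to\bar P$ in $\Abs$, hence no retraction of $E\inj P$ there. So your conclusion that $s$ exists in $\Abs$ itself is false. This is why the lemma is stated in $\Abs[1/p]$, not anything about $\ker q$.

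The obstacle you single out is not one. By definition of $\Abs$, $P$ is an extension of a constant group by an abelian variety, and its fibres over $\bar P(k)$ are torsors under that abelian variety. So $\ker q$ equals $P^0$ on the nose. Also, Lemma~\ref{lA2} cannot be applied with $n=p^e$, since it requires $n$ prime to $p$.

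The repair is the paper's reduction. Write $\bar P=\Z^r\oplus F'\oplus F_p$, with $F'$ finite of order prime to $p$ and $F_p$ a $p$-group killed by $p^e$. Build your section $s$ only on $\Z^r\oplus F'$; this uses only prime-to-$p$ divisibility of $P^0(k)$, which does hold. Put $u=s\circ\mathrm{pr}$, where $\mathrm{pr}$ is the idempotent of $\bar P$ with image $\Z^r\oplus F'$. Then $q\circ p^e(1-uq)=p^e(1-\mathrm{pr})\circ q=0$. Hence $p^e(1-uq)$ factors through $P^0$ and restricts to $p^e$ on $P^0$. Dividing by $p^e$ gives the retraction in $\Abs[1/p]$; equivalently, $F_p\simeq 0$ in $\Abs[1/p]$.
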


\begin{proof} It is equivalent to show that $P\to \bar P$ has a section in the category of étale sheaves tensored with $\Z[1/p]$, and this is further equivalent to showing that $\Ext^1(\bar P, P^0)=0$ in this category. We immediately reduce to $\bar P=\Z$ or $\bar P=\Z/n$ for $n>0$ prime to $p$. In the first case, the $\Ext^1$ is $H^1_\et(k,P^0)=0$. In the second case, the exact sequence $0\to \Z\by{n}\Z\to \Z/n\to 0$ yields an exact sequence
\[P^0(k)\by{n}P^0(k)\to \Ext^1(\Z/n,P^0)\to 0\]
and we conclude since $P^0(k)$ is $n$-divisible.
\end{proof}

\begin{lemma}\label{lA3} Let $P,Q\in \Abs$ and let $\phi:P(k)\to Q(k)$ be a homomorphism. Assume that the restriction of $\phi$ to $P^0(k)$ is representable. Then $\phi$ is representable by a unique morphism $f:P\to Q$.
\end{lemma}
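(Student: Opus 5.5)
The idea is to build $f$ component by component on $P$ and then use Zariski density of $k$-points to get uniqueness and the homomorphism property. This avoids the retraction of Lemma \ref{lA4}, which only exists after inverting $p$.

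\emph{Setup.} Let $g:P^0\to Q$ be a morphism in $\Abs$ representing $\phi_{|P^0(k)}$. Since $\bar P$ is constant, $P$ is the disjoint union $\coprod_{c\in\bar P(k)}P_c$ of its connected components. Each $P_c$ is a $P^0$-torsor over $k$. As $k$ is separably closed and $P^0$ is smooth, $H^1_\et(k,P^0)=0$, so every $P_c$ has a $k$-point $x_c$. We take $x_0=0$.

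\emph{Density.} All schemes involved ($P$, $Q$, $P\times P$) are smooth, hence geometrically reduced, over the separably closed field $k$. Therefore their $k$-points are Zariski dense: each component is smooth over $k$, so $k$-points are dense in it. Since $Q$ is separated, two morphisms $X\to Q$ from a reduced $k$-scheme $X$ with dense $k$-points coincide as soon as they agree on $X(k)$. Applied to $X=P$, this gives the uniqueness of $f$: two representing morphisms agree on $P(k)$. This is the same argument as in the proof of Lemma \ref{lA2}.

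\emph{Existence.} We define $f$ on each component by
\[f(x_c+y)=\phi(x_c)+g(y),\qquad y\in P^0.\]
Concretely, $f_{|P_c}$ is the composition of the isomorphism $P_c\iso P^0$ given by translation by $-x_c$, the morphism $g$, and translation by the $k$-point $\phi(x_c)$ of $Q$. This is a morphism of $k$-schemes $P\to Q$. On $k$-points it equals $\phi$, because
\[\phi(x_c+y)=\phi(x_c)+\phi(y)=\phi(x_c)+g(y)\]
for $y\in P^0(k)$.

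\emph{Homomorphism property.} Consider the morphism
\[\delta:P\times P\to Q,\qquad (a,b)\mapsto f(a+b)-f(a)-f(b).\]
It vanishes on $(P\times P)(k)=P(k)\times P(k)$, because $\phi$ is a homomorphism. By the density statement above, $\delta=0$. Hence $f$ is a homomorphism of group schemes, i.e.\ a morphism in $\Abs$ representing $\phi$.

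The only step needing real care is the density and reducedness input. If $p>1$ and $k$ is not perfect, one must make sure that smooth varieties over a separably closed field have dense $k$-points. This holds because smooth morphisms admit sections étale locally, so every nonempty open subset has a $k$-point. With that in hand the rest is formal.
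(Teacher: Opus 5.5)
Your proof is correct, and it takes a different route from the paper.

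\textbf{The paper's argument.} The paper proves the lemma by a diagram chase between the exact rows
\[0\to\Abs(\bar P,Q)\to\Abs(P,Q)\by{a}\Abs(P^0,Q)\]
and
\[0\to\Hom(\bar P,Q(k))\to\Hom(P(k),Q(k))\to\Hom(P^0(k),Q(k)).\]
The three ingredients are:
\begin{itemize}
\item the left vertical map is an isomorphism because $\bar P$ is constant;
\item the right vertical map is injective by density of $P^0(k)$;
\item $a$ is surjective, which the paper deduces from the retraction of Lemma \ref{lA4}.
\end{itemize}
Existence and uniqueness then both come out of the same chase.

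\textbf{Your argument.} You construct $f$ by hand on each component, translating $g$ by the chosen $k$-points $x_c$ and $\phi(x_c)$. Additivity and uniqueness then follow from density of $k$-points on the smooth schemes $P\times P$ and $P$.

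\textbf{Comparison.} The paper's version is shorter once Lemma \ref{lA4} is available. However, that lemma only gives the retraction in $\Abs[1/p]$: it uses $n$-divisibility of $P^0(k)$ for $n$ prime to $p$. Over an imperfect separably closed field, $P^0(k)$ need not be $p$-divisible, and then $a$ can fail to be surjective in $\Abs$ when $\bar P$ has $p$-torsion. So the paper's argument, as written, really proves the statement after inverting $p$. That is all that is needed in Theorem \ref{t1a}.

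Your argument never has to extend an arbitrary morphism $P^0\to Q$ to $P$. It only extends the one that is already compatible with the abstract homomorphism $\phi$ on $k$-points. So it proves the lemma in $\Abs$ exactly as stated. The cost is the density of $k$-points on smooth schemes over a separably closed field, which you justify correctly.
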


\begin{proof} First, notice that the representing morphism $f^0:P^0\to Q$ is unique, again because $P^0(k)$ is Zariski dense in $P^0$. Consider now the commutative diagram of exact sequences
\begin{equation}\label{eqA1}\small
\begin{CD}
0&\to& \Abs(\bar P,Q)@>>> \Abs(P,Q)@>a>> \Abs(P^0,Q)\\
&& @V\wr VV @VVV @V VV\\
0&\to& \Hom(\bar P,Q(k))@>>> \Hom(P(k),Q(k))@>>> \Hom(P^0(k),Q(k))
\end{CD}
\end{equation}
where the left vertical map is an isomorphism. By Lemma \ref{lA4},  $a$ is surjective and the claim follows from diagram chase.
\end{proof}

\begin{proof}[Proof of Theorem \ref{t1a}] We first deal with $\underline{\Pic}$. 
By Lemma \ref{lA5}, it suffices to define it on motives of smooth projective varieties. For this, we use a theorem of Friedlander-Voevodsky (cf. \cite[proof of Prop. 2.1.4]{voetri}): for two  (connected) smooth projective varieties $X,Y$, the group $CH^{d_X}(X\times Y)$ is generated by classes of finite correspondences. Let $\alpha$ be such a finite correspondence, assumed to be integral:  the map
\[\alpha:\Pic(Y)\to \Pic(X)\]
factors as a composition
\[\Pic(Y)\by{q^*}\Pic(Z)\by{p_*} \Pic(X)\]
where $p$ and $q$ are the projections of $Z$ on $X$ and $Y$ (here the existence of $p_*$ follows from the normality of $X$, \cite[Déf. 21.5.5 and (21.5.5.3)]{ega}\footnote{In loc. cit., §21.5, finite morphisms should presumably be assumed to be surjective.}). 
 By Yoneda's lemma, to show that $q^*$ and $p_*$ are representable it suffices to show that they extend to morphisms of Picard functors. This is trivial for $q^*$, while for $p_*$ it follows from \cite[Prop. 21.5.8]{ega}.
 
 More directly, thanks to the finiteness of $p$, ``Schapiro's lemma'' yields an isomorphism
\[H^1_\et(Z,\G_m)\simeq H^1_\et(X,p_*\G_m)\]
as well as a norm map $N:p_*\G_m\to \G_m$ induced by the norm on the function fields; their composition is $p_*$. This is obviously compatible with base change.

To construct $\underline{\Pic}_\et$, it suffices to prove the representability of morphisms $\alpha:h_\et(X)\to h_\et(Y)$ for $X,Y$ smooth projective. By the existence of $\underline{\Pic}$, $\Pic^0(n\alpha)$ is representable for some $n>0$, and therefore so is $\Pic(\alpha)$ by Lemmas \ref{lA2} and \ref{lA3}.
\end{proof}

In order to get a correct adjunction statement, we need to introduce a rather baroque category:

\begin{defn}\label{dA2}a) We write $\widetilde{\Abs}$ for the category where
\begin{itemize}
\item objects are triples $(P,Q,\alpha)$ where $P,Q\in \Abs$, $\bar P$ is free,  and $\alpha$ is an isomorphism $\widehat{P^0}\iso Q^0$;
\item a morphism $(P,Q,\alpha)\to (P',Q',\alpha')$ is a pair of morphisms $f:P'\to P$, $g:Q\to Q'$ such that $\alpha'\circ\widehat{f^0}= g^0\circ \alpha$.
\end{itemize}
b) Let $\Sigma$ be the set of cyclic groups of the form $\Z/l^n$ where $l$ is a prime number different from $p$, $n\ge 1$ and $\Z/l^n$ is a direct summand of $\NS(X)$ for some smooth projective $X$. We write $\widetilde{\Abs}'$ for the full subcategory of $\widetilde{\Abs}$ whose objects are the  triples $(P,Q,\alpha)$ such that any cyclic direct summand of $\bar Q_\tors[1/p]$ of prime power order belongs to $\Sigma$.
\end{defn}

The explanation of this category is given by the following definition:

\begin{defn}\label{dA3}We write $\widetilde{\Pic_\et}$ for the functor $\sM_\et\to \widetilde{\Abs}'[1/p]$ induced by the rule
\[\widetilde{\Pic_\et}(h_\et(X))=(\widetilde{\Alb}_X,\underline{\Pic}_\et(h_\et(X)),\alpha)\]
for $X$ smooth projective, where $\widetilde{\Alb}_X$ is the Albanese scheme of $X$ and $\alpha:\widehat{\Alb_X}\iso \Pic^0_X$ is the canonical isomorphism between the Picard variety of $X$ and the dual of its Albanese variety.
\end{defn}

(The existence of the component $\widetilde{\Alb}_X$ as a functor on $\sM_\et$ is shown as for $\underline{\Pic}_\et$.)

We now have the following complement toi Theorem \ref{t1a}.

\begin{thm}\label{tA1}  The functor $\widetilde{\Pic_\et}$ has a fully faithful left adjoint $h^1_+$.
\end{thm}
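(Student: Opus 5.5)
We will construct $h^1_+$ explicitly on objects and then check the adjunction by computing both Hom groups. Let $(P,Q,\alpha)\in\widetilde{\Abs}'[1/p]$. By Lemma \ref{lA4}, $P\simeq P^0\oplus\bar P$ and $Q\simeq Q^0\oplus \bar Q$ in $\Abs[1/p]$, with $\bar P$ free by assumption. We set
\[h^1_+(P,Q,\alpha)=\un^{\oplus\mathrm{rk}\bar P}\oplus h^1_\et(P^0)\oplus N(\bar Q),\]
where $h^1_\et(P^0)$ is given by the projector $\pi^1_{P^0}$ of Theorem \ref{t4}. The term $N(\bar Q)$ is a motive realising $\bar Q$ as Néron–Severi, built from twisted Lefschetz motives $\L$ for the free part of $\bar Q$. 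For each cyclic summand $\Z/l^n$ of $\bar Q_\tors$, which lies in $\Sigma$, we take a suitable direct summand of $h_\et(X)$ for an $X$ realising it; this is exactly why the category $\widetilde{\Abs}'$ is introduced. Functoriality of $h^1_+$ on the abelian part comes from Corollary \ref{c4}, where we only use the functor, not its full faithfulness. The identification $\alpha$ is used to glue the contributions of $P^0$ and $Q^0=\widehat{P^0}$.

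The key step is the adjunction isomorphism
\[\sM_\et(h^1_+(P,Q,\alpha),h_\et(X))\simeq \widetilde{\Abs}'[1/p]\bigl((P,Q,\alpha),\widetilde{\Pic_\et}(h_\et(X))\bigr),\]
naturally in $X$ smooth projective. By additivity we may treat the summands separately. For the $\un$ summands both sides are $\Z[1/p]$-linear maps involving $\bar P$ and $CH^0$, matched with $\widetilde{\Alb}_X/\Alb_X$. For the $\L$ summands both sides reduce to $\NS(X)[1/p]$-type data. The heart is the summand $h^1_\et(A)$, $A=P^0$, with target group $\sM_\et(h^1_\et(A),h_\et(X))=\pi^1_A$-part of $CH^{g}_\et(A\times X)[1/p]$. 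We would prove that this maps isomorphically to $\Hom(\Alb_X, \hat A)\simeq\Hom(A,\Pic^0_X)$, compatibly with $\alpha$. Surjectivity uses divisorial correspondences, i.e. the Poincaré bundle pulled back, just as in Lemma \ref{l14}. For injectivity we argue in two parts. Modulo torsion we use the rational statement together with Proposition \ref{p10} (the $i=j=1$ group is torsion-free). On torsion we use Proposition \ref{p13}, which identifies $\sM_\et(h^1_\et(A),h_\et(X))_\tors$ with $A(k)[1/p]_\tors$. This must be matched with the torsion in the extension data on the $\Abs$ side. Representability of the resulting maps follows from Theorem \ref{t1a} together with Lemmas \ref{lA2} and \ref{lA3}.

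Full faithfulness of $h^1_+$ is equivalent to the unit $\mathrm{id}\to\widetilde{\Pic_\et}\circ h^1_+$ being an isomorphism. Once $\widetilde{\Pic_\et}$ has been computed on each building block, this reduces to $\underline{\Pic}_\et(h^1_\et(A))=\hat A$ and $\widetilde{\Alb}=A$. These follow from the fact that $\pi^1_A$ acts as the identity on $\Pic^0(A)$ and kills $\NS(A)$ (Corollary \ref{c6a}, Lemma \ref{l8}), together with the analogous statements for $\un$ and $\L$. We expect the main obstacle to be the torsion bookkeeping in the adjunction for $h^1_\et(A)$. One has to show that the $A(k)[1/p]_\tors$ from Proposition \ref{p13} corresponds exactly to the torsion of $\bar Q$ and $\Ext$ data on the $\Abs$ side, and that the choice of $\pi^1_A$, unique only up to $4$-torsion, does not spoil naturality. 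For the latter we would first try Theorem \ref{t4} b), which says $z\circ\pi^1_A=0$, to show that the induced maps are independent of the choice.
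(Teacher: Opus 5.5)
\textbf{Verdict: genuine gap.} Your architecture matches the paper's: reduce via Lemma \ref{lA4} to the building blocks $(P,0,1)$, $(0,Q,1)$ and $(A,\hat A,1_A)$, check the adjunction against $h_\et(X)$, and get full faithfulness from the unit. The gap is in the key case $h^1_\et(A)$, where you misidentify the target.

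By Definition \ref{dA2}, a morphism $(A,\hat A,1_A)\to(\widetilde{\Alb}_X,\Pic_X,\alpha)$ is a morphism $f\colon\widetilde{\Alb}_X\to A$ in $\Abs$; the second component is forced because $\hat A$ is connected. So the target is $\Abs(\widetilde{\Alb}_X,A)$, an extension of $\Ab(\Alb_X,A)$ by $A(k)$ as in \eqref{eq22}. It is not $\Hom(\Alb_X,\hat A)$, and note that $A$, not $\hat A$, appears.
\begin{itemize}
\item The $A(k)$ term is not a torsion correction. For $X=\Spec k$ your target is $0$, while the source has torsion $A(k)[1/p]_\tors\neq 0$ by Proposition \ref{p13}. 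For $A=J(C)$, the $A(k)$ term is what matches the non-torsion summand $\Pic^0(C)$ of Lemma \ref{l14}.
\item Your plan is therefore inconsistent: you invoke Proposition \ref{p13} for injectivity on torsion, but the group you map to is torsion-free.
\item Your ``modulo torsion'' step relies on Proposition \ref{p10}, which only concerns $\sM_\et(h^i(A),h^j(B))$ with $B$ abelian and says nothing about $h_\et(X)$.
\item It also relies on an unspecified ``rational statement'' for arbitrary $X$, which is essentially what has to be proven.
\end{itemize}

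The missing mechanism is the paper's. With the correct target, one first shows two facts:
\begin{enumerate}
\item $\rho$ is an isomorphism on $l$-primary torsion for $l\neq p$, by Proposition \ref{p13} together with $\Abs(\widetilde{\Alb}_X,A)\{l\}=A(k)\{l\}$ from \eqref{eq22};
\item $\Coker\rho$ is torsion-free, using $\cl_l$ and Proposition \ref{p4a} a).
\end{enumerate}
It then suffices to show that $\rho$ has torsion kernel and cokernel. For $A=J(C)$, Lemma \ref{l14} gives $\sM_\et(h^1_\et(A),h_\et(X))\simeq\Pic^0(C)\oplus\Corr((C,c),(X,x))$, which matches \eqref{eq22}, so $\rho$ is an isomorphism. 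For general $A$, take an ample curve $C\subset A$; then $A$ is a direct summand of $J(C)$ up to isogeny, which gives torsion kernel and cokernel.

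This also repairs your surjectivity step. A divisorial correspondence is a degree-$0$ correspondence only out of a curve: for $g>1$, a divisor on $A\times X$ is not an element of $CH^g(A\times X)$, so Lemma \ref{l14} cannot be applied to $A$ directly.

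Your worry about the $4$-torsion ambiguity of $\pi^1_A$ is harmless. Conjugation by $1+z$ identifies the resulting summands, and a left adjoint is unique up to unique isomorphism.
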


\begin{cor}\label{c11} The functor $h^1_\et$ of Corollary \ref{c4} is fully faithful.\qed
\end{cor}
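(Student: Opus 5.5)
The plan is to deduce the corollary from Theorem~\ref{tA1} by showing that $h^1_\et$ factors, up to natural isomorphism, as the composition of a fully faithful functor $\Ab[1/p]\to\widetilde{\Abs}'[1/p]$ with $h^1_+$. For an abelian variety $A$, let
\[T(A)=(A,\hat A,\mathrm{id})\in\widetilde{\Abs}'[1/p].\]
This is an object of $\widetilde{\Abs}'$ because $\bar A=0$ is free and $\bar{\hat A}=0$. By Definition~\ref{dA2}, a morphism $T(A)\to T(B)$ is a pair consisting of $f:B\to A$ and $g:\hat A\to\hat B$ with $g=\hat f$. Hence $g$ is determined by $f$, and $T$ is a fully faithful contravariant functor $\Ab[1/p]\to\widetilde{\Abs}'[1/p]$. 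Since $h^1_\et$ is also contravariant ($h^1_\et(f)=p_Af^*i_B$), the composite $h^1_+\circ T$ has the right variance, and by Theorem~\ref{tA1} it is fully faithful. It therefore suffices to construct an isomorphism $h^1_+(T(A))\simeq h^1_\et(A)$ that is natural in $A$.

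Step 1 is to compute $\widetilde{\Pic_\et}(h^1_\et(A))$. We have $\widetilde{\Pic_\et}(h_\et(A))=(\widetilde{\Alb}_A,\Pic_A,\alpha)$, where $\widetilde{\Alb}_A\simeq\Z\times A$ and $\Pic_A$ is an extension of $\NS(A)$ by $\hat A$. The projector $\pi^1_A$ acts on this triple, and I will show that it acts as the projection onto $T(A)$. On $\Pic$ this follows from the Beauville decomposition of \S\ref{s11.1}: $\pi^1_A CH^1_\et(A)[1/p]=CH^1_{\et,1}(A)=\Pic^0(A)[1/p]$ by Corollary~\ref{c6a}, while $\pi^2_A$ picks out the torsion-free part $CH^1_{\et,0}$ and $\pi^0_A$ kills $CH^1$. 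On $\widetilde{\Alb}$ I argue dually via $CH_0$, or by transposition using Corollary~\ref{c5}. Both actions are representable by Theorem~\ref{t1a}, so by Lemma~\ref{lA3} it is enough to check them on $k$-points. Since $\widetilde{\Pic_\et}$ is additive and $\Abs$ is pseudo-abelian (Lemma~\ref{lA5}), this gives
\[\widetilde{\Pic_\et}(h^1_\et(A))\simeq T(A),\]
naturally in $A$, because $h^1_\et(f)$ is induced by $f^*$.

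Step 2 is to show that the counit $\epsilon:h^1_+(T(A))\to h^1_\et(A)$ of the adjunction is an isomorphism. Full faithfulness of $h^1_+$ makes the unit $T\to\widetilde{\Pic_\et}h^1_+T$ an isomorphism, so $\widetilde{\Pic_\et}(\epsilon)$ is an isomorphism by the triangle identities. It remains to show that $\widetilde{\Pic_\et}$ detects isomorphisms between such motives. I will compute $\End(h^1_\et(A))=\pi^1_A\circ CH^g_\et(A\times A)[1/p]\circ\pi^1_A$ and show that the map to $\End(T(A))=\End(A)[1/p]$ is bijective.
\begin{itemize}
\item Rationally this map is an isomorphism by the rational theory (\cite{dm} together with the adjunction).
\item On torsion, Proposition~\ref{p10} gives $\sM_\et(h^1(A),h^1(A))_\tors=0$, because the torsion sits only in $\sM_\et(h^i,h^{i-1})$.
\item The integrality lemma, Corollary~\ref{c8}, then gives surjectivity onto $\End(A)[1/p]$.
\end{itemize}
With this, both $\epsilon$ and a candidate inverse built from these endomorphism identifications can be compared, yielding $\epsilon$ invertible.

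The main obstacle is Step 2, namely checking that $h^1_\et(A)$ lies in the essential image of $h^1_+$ without circularity. Rather than arguing abstractly, I would first try to show directly that the map $\sM_\et(h^1_\et(A),h_\et(X))\to\Hom(T(A),\widetilde{\Pic_\et}(h_\et(X)))$ is bijective for every $X$. Its torsion part is handled by Proposition~\ref{p13}, which identifies the torsion with $A(k)[1/p]_\tors$; this matches the torsion on the $\widetilde{\Abs}$ side coming from maps $A\to\Pic^0_X$ and $\widetilde{\Alb}_X\to A$. Its torsion-free part reduces via Corollary~\ref{c8} to the rational statement. This Yoneda-type comparison identifies $h^1_\et(A)$ with $h^1_+(T(A))$ directly.
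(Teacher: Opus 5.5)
This is essentially the paper's argument. The paper also writes $h^1_\et=h^1_+\circ T$ with $T(A)=(A,\hat A,1_A)$ fully faithful. There the identification $h^1_+(T(A))=h^1_\et(A)$ holds by construction: case (iii) of the proof of Theorem \ref{tA1} defines $h^1_+(A,\hat A,1_A):=h^1_\et(A)$ and verifies the adjunction bijection by exactly your final Yoneda-type comparison. That verification handles torsion via Proposition \ref{p13} and \eqref{eq22}, shows the cokernel is torsion-free using $\cl_l$, and then does the Jacobian case via Lemma \ref{l14} before passing to an arbitrary $A$ as an isogeny factor of a Jacobian.

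Keep that route and drop the abstract counit argument of Step 2. As stated it only gives bijectivity on $\End(h^1_\et(A))$, which gives no control over maps into $h^1_+(T(A))$, so it would not by itself invert $\epsilon$.
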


\begin{proof}[Proof of Theorem \ref{tA1}] The category $\widetilde{\Abs}'[1/p]$ is additive; to prove the existence of $h^1_+$ it suffices  in view of Lemma \ref{lA4} to show that it is defined at objects of the form $(A,\hat{A},1_A)$, $(P,0,1)$ and $(0,Q,1)$ where $A\in \Ab$, $P$ and $Q$ are discrete and $P$ is free.

We set
\begin{thlist}
\item  $h^1_+(P,0,1) = P^*\otimes \un$ where $\un$ is the unit motive and $P^*$ is the $\Z$-dual of $P$,
\item $h^1_+(0,Q,1) = Q\otimes\L$ where $\L$ is the Lefschetz motive; this makes sense since $\sM_\et$ is pseudo-abelian,
\item $h^1_+(A,\hat{A},1_A) = h^1_\et(A)$,
\end{thlist}
and check the adjunction property in each case; it is enough to check it ``against'' motives of the form $h_\et(X)$ for $X$ smooth projective and connected.

We also have an obvious unit isomorphism 
\[\eta_{(P,Q,\alpha)}:(P,Q,\alpha)\iso \widetilde{\Pic_\et} h^1_+(P,Q,\alpha)\]
in each special case. We now check that the composition
\begin{multline*}
\rho:\sM_\et(h^1_+(P,Q,\alpha)),h_\et(X))\to  \widetilde{\Abs}'[1/p](\widetilde{\Pic_\et}h^1_+(P,Q,\alpha)),\widetilde{\Pic_\et} h_\et(X))\\
\by{\eta_{(P,Q,\alpha)}^*}\widetilde{\Abs}'[1/p]((P,Q,\alpha)),\widetilde{\Pic_\et} h_\et(X))
\end{multline*}
is an isomorphism in each case:

\subsubsection*{Case (i)} 
\begin{multline*}
\sM_\et(P^*\otimes \un,h_\et(X))=\Hom(P^*,CH^0(X))\\
=\Hom(P^*,\Z)=P=\widetilde{\Abs}'[1/p]((P,0,1),\widetilde{\Pic_\et}(h_\et(X)). 
\end{multline*}

\subsubsection*{Case (ii)} 
\begin{align*}
\sM_\et(Q\otimes \L,h_\et(X))&=\Hom(Q,CH^1(X))\\
 \widetilde{\Abs}'[1/p]((0,Q,1),\widetilde{\Pic_\et}(h_\et(X))&= \Abs(Q,\Pic_X).
 \end{align*} 

The isomorphism follows from a diagram chase similar to that in the proof of Lemma \ref{lA3}.
\subsubsection*{Case (iii)} 
\begin{align*}
\sM_\et(h^1_\et(A),h_\et(X))&=CH^g_\et(A\times X)[1/p]\circ \pi^1_A \\
 \widetilde{\Abs}'[1/p]((A,\hat{A},1_A),\widetilde{\Pic_\et}(h_\et(X))&= \Abs(\widetilde{\Alb}_X,A)
 \end{align*} 
where $g=\dim A$. The map 
\[\rho:CH^g_\et(A\times X)[1/p]\circ \pi^1_A\to \Abs(\widetilde{\Alb}_X,A)[1/p]\] 
is induced by the functor $\widetilde{\Alb}$. We have an exact sequence
\begin{equation}\label{eq22}
0\to A(k)\to \Abs(\widetilde{\Alb}_X,A)\to \Ab(\Alb_X,A)\to 0.
\end{equation}

For $l\ne p$, $\rho$ is an isomorphism on $l$-primary torsion by Proposition \ref{p13} and the isomorphism
\[ \Abs(\widetilde{\Alb}_X,A)\{l\}=A(k)\{l\}\]
stemming from \eqref{eq22}. On the other hand, $\Coker \rho$ is torsion-free, as one sees by using $\cl_l$ and Proposition \ref{p4a} a).

Suppose first that $A=J(C)$ for a curve $C$. Pick a point $x\in X(k)$. Then  Lemma \ref{l14} yields an isomorphism
\[ \sM_\et(h^1_\et(A),h_\et(X))\simeq \Pic^0(C)\oplus \Corr((C,c),(X,x))\]
which, in view of \eqref{eq22}, implies that $\rho$ is an isomorphism. In general, let $C$ be an ample curve traced on $A$. Then $A$ is almost a direct summand of $J(C)$, hence $\rho$ has torsion kernel and cokernel and is therefore an isomorphism.

The fact that the unit morphism is an isomorphism implies that $h^1_+$ is fully faithful.
\end{proof}

\section{Open questions}

\subsection{$p$-torsion in characteristic $p$}\label{sB.1}

Suppose $k$ algebraically closed. If the cohomology algebra $\bigoplus_{i\ge 0} H^{2i}_\cont(A,\Z_p(i))$ also has divided powers, we can replace $(r!)_p$ with $r!$ in Proposition \ref{t1}. If, on the other hand, one looks for a counterexample, the simplest possible instance is for $p=2$ and a divided square on $H^{2}_\cont(A,\Z_p(1))$. More specifically, one may ask the following question: 

\begin{qn}\label{q1}
Is there an abelian variety $A$ over an algebraically closed field of characteristic $2$ and a divisor $D$ on $A$ such that $x^2\ne 0$ in $H^2(A,\nu(2))$, where $x$ is the cycle class of $D$ in $H^1(A,\nu(1))$?
\end{qn}

Here, $\nu(n)=\Omega^n_{\log}$. Note that one must have $\dim A\ge 3$ in view of Ro\v\i tman's theorem, which is independent of the characteristic by  \cite{milne2}.

In general, it would be interesting to understand what happens if one does not invert $p$ (see also footnote \ref{f2} on p. \pageref{page}).

\subsection{Self-conjugate DM projectors} 

\begin{qn}\label{q6}
Can one remove the condition that $A$ is principally polarised in Theorem \ref{t3} c)? This would be possible if, in Lemma \ref{p1}, the converse statement could be extended from ``étale good'' to ``very good''. I was not able to prove this.
\end{qn}

\subsection{Pontryagin divided powers}

\begin{qn}\label{q4} In Corollary \ref{c8a}, we constructed a system of divided powers on the ring $\oCH_*^\et(A)$ provided with the Pontryagin product.  In \cite{mo-po}, the same was done on $CH_*(A)$ by a completely different (geometric) method. Does the natural map $CH_*(A)\to \oCH_*^\et(A)$ commute with divided powers?
\end{qn}

\subsection{Torsion in Néron-Severi groups}

\begin{qn}\label{q3} How large is the set $\Sigma$ in Definition \ref{dA2} b)?
\end{qn}

\subsection{\'Etale Picard motive} Let $M\in \sM_\et$. The counit of the adjunction of Theorem \ref{tA1} is a morphism
\[h^1_+(\widetilde{\Pic_\et} M)\to M\]
which yields in particular a morphism $h^1_\et(\underline{\Pic}_\et M)\to M$. 

After tensoring with $\Q$, this morphism becomes split for $M=h(X)$, $X$ smooth projective, by a Hard Lefschetz argument (see \cite[Th. 4.4 (ii)]{scholl}), and one recovers Murre's Picard motive from \cite{murre}. 

\begin{qn}\label{q5}
Does this morphism already split in $\sM_\et$?
\end{qn}

\subsection{Comparison with ordinary Chow groups}\label{sB.6} For a smooth projective $k$-variety $X$ of dimension $g$, we have the following result (\cite[Th. 3.11]{ct-v}, \cite[Cor. 4.10]{pirutka}):

\begin{prop} For any prime $l$, there is a surjection with divisible kernel
\[ H^{g-3}(X,\sH^g_\et(\Q_l/\Z_l(g-1)))\Surj C_\tors\]
where $C=\Coker(CH^{g-1}(X)\otimes \Z_l\by{\cl_l} H^{2g-2}_\cont(X,\Z_l(g-1)))$.
\end{prop}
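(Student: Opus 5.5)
The plan is to combine the Bloch–Kato/Merkurjev–Suslin description of the cokernel of the cycle class map in codimension $g-1$ with the coniveau (Bloch–Ogus) spectral sequence for $\Q_l/\Z_l$-coefficients. Write $n=g-1$. The proof follows Colliot-Thélène–Voisin \cite{ct-v} for $l\ne p$, and Pirutka \cite{pirutka} in general.

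\emph{Step 1.} I first treat integral versus divisible parts. By Proposition \ref{p4a} a), applied in the form of the exact sequence relating $CH^n$, $CH^n_\et$ and unramified cohomology (the analogue in codimension $n$ of the sequence of \cite[Prop. 2.9]{cycle-etale} quoted in the introduction), the cokernel of $CH^n_\et(X)\otimes\Z_l\to H^{2n}_\cont(X,\Z_l(n))$ is torsion-free. Hence $C_\tors$ equals the torsion of the cokernel of $CH^n(X)\otimes \Z_l\to CH^n_\et(X)\otimes \Z_l$, modulo the image of the divisible kernel of $\cl_l$. So it suffices to describe $\Coker(CH^n(X)\to CH^n_\et(X))\{l\}$ up to a divisible group.

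\emph{Step 2.} Next I identify that cokernel. The Lichtenbaum-vs-Zariski comparison in \cite{cycle-etale} (via Beilinson–Lichtenbaum/Bloch–Kato) gives $\Coker(CH^n(X)\to CH^n_\et(X))\simeq H^0_{\rm Zar}(X,\sH^{n+1}_\et(\Q/\Z(n)))$ after taking $l$-parts, up to a divisible subgroup. Then I would run the Bloch–Ogus coniveau spectral sequence for $\sH^*_\et(\Q_l/\Z_l(n))$ on $X$ of dimension $g=n+1$. The Gersten complexes give $H^a(X,\sH^b)=0$ for $a>b$ and for $a>g$. Using purity and Poincaré duality (the top-degree groups $H^{g}(X,\sH^{q})$ are controlled by $H^{2g-\cdot}$ of $\Q_l/\Z_l$-cohomology, which is divisible up to finite torsion killed by cycle classes), the differentials chain $H^0(X,\sH^{n+1})$ up to $H^{g-3}(X,\sH^{g}(\Q_l/\Z_l(g-1)))$. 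This is the dual statement, in the manner of Colliot-Thélène–Voisin's theorem that $Z^4$ torsion is $H^3_{nr}$ modulo divisible. Equivalently, and more cleanly, I would apply Kahn's \cite{cycle-etale} description of $CH^n_\et/CH^n$ and compare degree-wise.

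\emph{Step 3 (main obstacle).} The hard part is proving that the kernel is divisible and not merely torsion of finite exponent. For this I would use that the $H^a(X,\sH^b(\Q_l/\Z_l))$ are quotients of groups of the form $H^*(\cdot,\Q_l/\Z_l)$ which are divisible modulo finite groups. I would also use Proposition \ref{p4a} b)-style arguments to show that the intermediate terms contributing to the kernel are images of divisible groups. In characteristic $p=l$, I would replace étale cohomology by logarithmic Hodge–Witt cohomology, following \cite{pirutka}.
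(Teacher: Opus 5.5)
\textbf{Gap in Step 2.} The identification $\Coker(CH^n(X)\to CH^n_\et(X))\simeq H^0(X,\sH^{n+1}_\et(\Q/\Z(n)))$ is only the case $n=2$ of \cite[Prop. 2.9]{cycle-etale}. For $n=g-1\neq 2$ it is the wrong group.

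The ``chain of differentials'' you then invoke to pass from $H^0(X,\sH^g)$ to $H^{g-3}(X,\sH^g)$ does not exist. Coniveau differentials go $E_2^{a,b}\to E_2^{a+2,b-1}$ (more generally they lower $b$ by $r-1\ge 1$), so they never relate two cohomology groups of the same sheaf $\sH^g$. The duality remarks about $H^g(X,\sH^q)$ play no role either.

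What you need is the general form of the comparison. Let $\alpha:X_\et\to X_{\mathrm{Zar}}$ be the change of sites. By Beilinson--Lichtenbaum, $\Z(n)_{\mathrm{Zar}}\simeq\tau_{\le n+1}R\alpha_*\Z(n)$, so the cone of $\Z(n)_{\mathrm{Zar}}\to R\alpha_*\Z(n)$ is $\tau_{\ge n+2}R\alpha_*\Z(n)$. Moreover $R^b\alpha_*\Z(n)\simeq\sH^{b-1}_\et(\Q/\Z(n))$ for $b\ge n+2$, because $R^b\alpha_*\Q(n)=0$ for $b>n$.

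Now take $n=g-1$ and $l\ne p$. Since $\cd_l(k(X))=g$, we have $\sH^q_\et(\Q_l/\Z_l(n))=0$ for $q>g$. Hence the $l$-primary part of the cone is the single sheaf $\sH^g_\et(\Q_l/\Z_l(g-1))$ placed in degree $g+1$. Together with $H^{2g-1}_{\mathrm{Zar}}(X,\Z(g-1))=0$, this gives an exact isomorphism
\[\Coker\bigl(CH^{g-1}(X)\to CH^{g-1}_\et(X)\bigr)\{l\}\simeq H^{g-3}(X,\sH^g_\et(\Q_l/\Z_l(g-1))),\]
not merely an identification up to a divisible group.

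\textbf{Consequences for Step 3.} Once this is in place, your Step 1 (which is correct) finishes the proof. So the ``main obstacle'' of Step 3 is misidentified: the kernel of the resulting surjection onto $C_\tors$ is the image of $\Ker(\cl_l)$, which is divisible by Proposition \ref{p4a} a). The argument you sketch there, with groups $H^a(X,\sH^b)$ being quotients of cohomology that is divisible modulo finite groups, is neither justified nor needed.

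\textbf{Comparison with the paper.} The paper gives no argument of its own; it quotes \cite{ct-v} and \cite{pirutka}. Their proofs go through the coniveau spectral sequence with $\Z_l$-coefficients. The vanishing of $H^a(X,\sH^b)$ for $a>b$ or $b>g$ identifies $C$ with $H^{g-2}(X,\sH^g(\Z_l(g-1)))$. Bloch--Kato then gives torsion-freeness of $\sH^g(\Z_l(g-1))$, which produces the surjection from $H^{g-3}(X,\sH^g(\Q_l/\Z_l(g-1)))$ onto the torsion, with kernel the image of a $\Q_l$-vector space. Your route, once corrected, replaces this by Beilinson--Lichtenbaum plus Kahn's divisibility of $\Ker(\cl_l)$, which fits the framework of this paper well.
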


Suppose that $X$ is an abelian variety $A$. Understanding the group $C_\tors$ is important in view of the results of \cite{BG,EGS,EGS2}. For any $n\in\Z$, $n_A^*$ acts on this group by multiplication by $n^{2g-2}$. In view of \S \S \ref{s6} and \ref{s7}, this motivates the following

\begin{qn}\label{q7} What is the action of $n_A^*$ on $H^{g-3}(A,\sH^g_\et(\Q_l/\Z_l(g-1)))$?
\end{qn}

Any information on the said action (except if it involves multiplication by $n^{2g-2}$) would bound the order of $C_\tors$.

\subsection{Jacobians of curves}\label{sB.7}

\begin{qn} Do the $P_i$ of \eqref{eqsuhalg} define integral DM projectors already in $\Corr(J,J)=CH^g(J\times J)$? Does the identity \eqref{eqsuh2} already hold in this group? (In \cite[Th. 2.3]{mo-po}, the identity of Theorem \ref{t8} b) is proven in $CH^{r+s}(J)$, modulo $2$-torsion, for certain hyperelliptic Jacobians).
\end{qn}

\end{document}